\newtheorem{thm}{Theorem}[section]
\newtheorem{cor}[thm]{Corollary}
\newtheorem{lem}[thm]{Lemma}
\newtheorem{prop}[thm]{Proposition}
\theoremstyle{definition}
\newtheorem{df}[thm]{Definition}
\newtheorem{ex}[thm]{Example}
\newtheorem{rem}[thm]{Remark}
\newtheorem*{thma}{Theorem A}
\numberwithin{equation}{section}
\def\UU{\mathcal{U}}
\def\VV{\mathcal{V}}
\def\diam{\text{\rm diam}}
\def\Leb{\text{\rm Leb}}
\def\FF{\mathcal{F}}
\def\UU{\mathcal{U}}
\def\VV{\mathcal{V}}
\def\diam{\text{\rm diam}}
\def\Leb{\text{\rm Leb}} 
\def\logf{\log\frac{1}{\epsilon}}
\newcommand{\Rmnum}[1]{\expandafter\@slowromancap\romannumeral #1@}
\begin{document}

\title{Metric mean dimension and the variational principle for actions of amenable groups}

\author{Rui Yang$^{*,1,2}$  and Xiaoyao Zhou$^{3}$
}
\address
{1.College of Mathematics and Statistics, Chongqing University, Chongqing 401331, P.R.China}
\address{2. Key Laboratory of Nonlinear Analysis and its Applications (Chongqing University), Ministry of Education}
\address
{3.School of Mathematical Sciences and Institute of Mathematics, Ministry of Education Key Laboratory of NSLSCS, Nanjing Normal University, Nanjing, 210023, Jiangsu, P.R.China}

\email{zkyangrui2015@163.com}
\email{zhouxiaoyaodeyouxian@126.com}

\renewcommand{\thefootnote}{}

\footnotetext{*Corresponding author}
\footnote{2020 \emph{Mathematics Subject Classification}: 37A05,  37A35, 37B40, 94A34.}

\keywords{Metric mean dimension; variational principle; infinite entropy dimension; metric mean dimension point.}
 
\renewcommand{\thefootnote}{\arabic{footnote}}
\setcounter{footnote}{0}

\begin{abstract}

Metric mean dimension is a  dynamical counterpart of the box dimension in fractal geometry to  characterize the topological complexity of infinite entropy systems. The classical variational principle states that topological entropy equals the supremum of measure-theoretic entropy over the set of invariant measures. Lindenstrauss and Tsukamoto proved that this variational principle fails for metric mean dimension in terms of rate-distortion dimensions. For the actions of amenable groups, we define a new  measure-theoretic metric mean dimension for invariant measures and establish a classical-type variational principle for metric mean dimension. In particular, we extend the Lindenstrauss-Tsukamoto variational principles to the classical variational principle by defining modified rate-distortion dimensions.

As applications, for systems with zero metric mean dimension, we introduce infinite entropy dimensions in both topological and measure-theoretic settings, and relate them via a variational principle. For systems with positive metric mean dimension, we introduce local metric mean dimension from a local perspective, and relate it to the metric mean dimension of the whole phase space via variational principles.
\end{abstract}


\maketitle
\pagestyle{plain}  
\tableofcontents 
\section{Introduction}

 Measure-theoretic entropy \cite{kol58, s59} and topological entropy \cite{akm65, bow71} are two crucial invariants for characterizing the complexity of dynamical systems. They connect information theory, dimension theory, fractal geometry, and other mathematical concepts. For $\mathbb{Z}$-actions, there is a classical variational principle stating that topological entropy equals the supremum of measure-theoretic entropy over the set of invariant measures (cf. \cite{mis75, w82}). It turns out that this variational principle permits the application of powerful ergodic theorems and related entropy formulas to the study of dynamical systems, particularly to dimension theory and multifractal analysis of dynamical systems \cite{p97}, and reveals chaotic phenomena in the stable and unstable sets of positive-entropy systems \cite{h08}.  Since then, a central question in entropy theory has been whether the classical variational principle exists for entropy-like quantities of general group actions. Beyond $\mathbb{Z}$-actions, it is well-known that the amenability of a group guarantees the existence of invariant probability measures. Ornstein and Weiss’s pioneering work \cite{ow87} laid a solid foundation for the entropy theory of actions of amenable groups. To advance the investigation of the dynamics of amenable group actions, the corresponding variational principle between amenable topological entropy and measure-theoretic entropy was established in \cite{st80, op82}. Moreover, Lindenstrauss \cite{lin01} extended several powerful theorems in ergodic theory, such as the Pointwise Ergodic Theorem and the Shannon–McMillan–Breiman Theorem, from $\mathbb{Z}$-actions to amenable groups. See also \cite{hyz11, ly12, dz15} for local variational principles of amenable topological entropy-like quantities and \cite{kl11} for variational principles of actions of sofic groups.

Notice that topological entropy no longer provides additional information about the dynamics of systems with infinite entropy. To characterize the complexity of such infinite entropy systems, Gromov \cite{gromov} first introduced a new topological invariant called mean dimension. In 2000, inspired by the definition of box dimension in fractal geometry, Lindenstrauss and Weiss \cite{lw00} introduced metric mean dimension to quantify the divergent rate at which the $\epsilon$-topological entropy diverges to $\infty$ as $\epsilon \to 0$. Mean dimensions have found applications in solving embedding problems of dynamical systems \cite{l99, lw00, g15, glt16, g17, gt20} and in the analog compression framework of information theory \cite{gs,y25}. For progress on the mean dimension theory of the actions of amenable groups and sofic groups, one can refer to \cite{l13, ll19, cdz22, gg25}.

The aforementioned work motivates us to ask the following question:

\emph{Question: For the actions of amenable groups,  can we define a  measure-theoretic counterpart to metric mean dimension such that the   classical-type variational principle holds for the metric mean dimension in terms of this measure-theoretic metric mean dimension?}

Even for $\mathbb{Z}$-actions, the above question remains highly challenging and unsolved, primarily due to the lack of a measure-theoretic metric mean dimension for invariant measures. In 2018, for $\mathbb{Z}$-actions Lindenstrauss and Tsukamoto \cite{lt18} introduced $L^p$ ($1 \leq p < \infty$) and $L^{\infty}$-rate-distortion functions from information theory \cite{ct06, gra 11}, and first established the \emph{analogous} variational principles for upper metric mean dimension:
\begin{thma}
Let $T:X\rightarrow X$ be a homeomorphism map on a compact metric space $(X,d)$. Then 
$${\overline{\rm  mdim}}_M(T,X,d)=\limsup_{\epsilon \to 0}\frac{1}{\logf}\sup_{\mu \in M(X,T)}R_{\mu, L^{\infty}}(\epsilon).$$
Additionally, if $(X,d)$ has  the tame growth of  covering numbers\footnote[1]{That is, $\lim_{\epsilon \to 0}\epsilon^{\theta}\log r(X,d,\epsilon)=0$ for each $\theta>0$, where $r(X,d,\epsilon)$ denotes the smallest cardinality of  $(d,\epsilon)$-spanning sets of $X$. This definition is purely geometric and is independent of the dynamics. It is a mild condition  since every compact metrizable space admits a metric satisfying the tame growth of covering numbers \cite[Lemma 4]{lt18}.}, then for any $1\leq p<\infty$, 
$${\overline{\rm  mdim}}_M(T,X,d)=\limsup_{\epsilon \to 0}\frac{1}{\logf}\sup_{\mu \in M(X,T)}R_{\mu, L^{p}}(\epsilon),$$
where  ${ \overline{\rm  mdim}}_M(T,X,d)$ denotes the upper metric mean dimension of $X$,  and $R_{\mu, L^{\infty}}(\epsilon), R_{\mu, L^{p}}(\epsilon)$ are respectively the $L^{\infty}$, $L^p$ rate-distortion functions of $\mu$, and $M(X,T)$ denotes the set of $T$-invariant Borel probability measures on $X$.
\end{thma}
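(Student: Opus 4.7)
The plan is to prove both identities by bracketing $\sup_{\mu\in M(X,T)}R_{\mu,L^\infty}(\epsilon)$ between two scale-versions of the $\epsilon$-topological entropy $h_{\rm top}(T,d,\epsilon)$, where the scale adjustments disappear after dividing by $\log(1/\epsilon)$ and taking $\limsup$ as $\epsilon\to 0$; the $L^p$ identity is then obtained by sandwiching $R_{\mu,L^p}$ between $R_{\mu,L^\infty}$ at two nearby scales, with the gap controlled by the tame growth condition.

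For the upper-bound direction of the $L^\infty$ identity, I would fix $\mu\in M(X,T)$ and a finite Borel partition $\PP$ of $X$ with $\diam(\PP)<\epsilon$. The deterministic encoder sending $x$ to a fixed representative of its atom in $\bigvee_{i=0}^{n-1}T^{-i}\PP$ is admissible at $L^\infty$-distortion $\epsilon$ in the Bowen metric $d_n$, with rate at most $\frac{1}{n}H_\mu\bigl(\bigvee_{i=0}^{n-1}T^{-i}\PP\bigr)\to h_\mu(T,\PP)$. The classical Misiurewicz proof, applied verbatim at scale $\epsilon$, gives $h_\mu(T,\PP)\leq h_{\rm top}(T,d,\epsilon)$, so $R_{\mu,L^\infty}(\epsilon)\leq h_{\rm top}(T,d,\epsilon)$.

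The reverse inequality is where the real work lies. Following a Misiurewicz-type strategy, pick for each $n$ a maximal $(n,\epsilon)$-separated set $E_n\subset X$, form the empirical measures $\nu_n=|E_n|^{-1}\sum_{x\in E_n}\delta_x$ and the invariantized averages $\mu_n=\frac{1}{n}\sum_{k=0}^{n-1}T^k_*\nu_n$, and pass to a weak-$*$ accumulation point $\mu\in M(X,T)$. The main obstacle is the information-theoretic bound $R_{\mu,L^\infty}(\epsilon')\geq\liminf_n\frac{1}{n}\log|E_n|$ for some $\epsilon'<\epsilon$: any coupling $(X,Y)$ with $X\sim\mu$ and $d_n(X,Y)\leq\epsilon'$ essentially, once post-composed with nearest-point projection onto $E_n$, becomes an essentially injective assignment into $E_n$, forcing $I(X;Y)\geq\log|E_n|$. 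Passing this finite-level estimate through the weak-$*$ limit and the time-averaging defining $\mu_n$ is the delicate step, relying on lower semicontinuity of the rate-distortion function in $\mu$ together with concavity of mutual information.

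For the $L^p$ formula, $R_{\mu,L^p}(\epsilon)\leq R_{\mu,L^\infty}(\epsilon)$ yields one direction for free. For the reverse, starting from a near-optimal $L^p$-coupling with $\mathbb{E}[d_n(X,Y)^p]\leq\epsilon^p$, I would truncate at the slightly coarser scale $\epsilon^{1-\theta}$: by Markov's inequality the bad event $B=\{d_n(X,Y)>\epsilon^{1-\theta}\}$ has probability at most $\epsilon^{p\theta}$, and on $B$ one replaces each $Y_i$ by the closest point to $X_i$ in a minimal $\epsilon^{1-\theta}$-spanning set of $(X,d)$. The extra per-symbol rate is then at most $\epsilon^{p\theta}\log r(X,d,\epsilon^{1-\theta})$, which the tame growth condition renders $o(1)$ for any fixed $\theta>0$. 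The modified coupling is an $L^\infty$-code at distortion $\epsilon^{1-\theta}$; applying the already-proved $L^\infty$ identity, dividing by $\log(1/\epsilon)$, taking $\limsup$, and letting $\theta\to 0$ closes the gap.
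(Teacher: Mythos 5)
The paper does not contain its own proof of Theorem A; it is quoted verbatim from Lindenstrauss--Tsukamoto \cite{lt18}, and the amenable-group extension is attributed to \cite{cdz22}. So there is nothing in this document to compare against directly, and I have to assess your argument on its own merits.

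Your upper-bound direction and your $L^p\leftrightarrow L^\infty$ bridge are both sound in outline (the $L^p$ reduction is essentially the argument behind inequality \eqref{equu 3.13} in this paper, and the per-symbol bookkeeping via Markov plus tame growth does work out). The genuine gap is in the reverse $L^\infty$ inequality, exactly at the step you yourself flag as ``delicate.'' The injectivity heuristic — project the reproduction $Y$ onto $E_n$ and argue that the assignment into $E_n$ is nearly injective, hence $I(X;Y)\gtrsim\log|E_n|$ — is only meaningful when the source $X$ is distributed according to $\nu_n$, the uniform measure on $E_n$. For the weak-$*$ limit $\mu=\lim_j\mu_{n_j}$, the source is not concentrated on (or near) $E_n$, so there is no ``nearest-point projection onto $E_n$'' that is close to the identity, and the Fano-type lower bound on mutual information does not follow. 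You propose to repair this by invoking lower semicontinuity of the rate-distortion function in $\mu$ together with concavity of mutual information, but neither closes the gap: rate-distortion is not lower semicontinuous in the weak-$*$ topology in any generality, and even if it were, lower semicontinuity at a weak limit would yield $R_{\mu,L^\infty}\leq\liminf_j R_{\mu_{n_j},L^\infty}$, i.e.\ an \emph{upper} bound on $R_{\mu,L^\infty}$, the opposite of what you need.

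The mechanism that actually works (and is the technical heart of \cite{lt18}) goes through a partition-entropy intermediary. One first fixes a finite partition $\mathcal{P}$ with $\diam(\mathcal{P})$ comparable to $\epsilon$ and $\mu(\partial\mathcal{P})=0$, and runs the Misiurewicz subadditivity argument — which is legitimate at the level of $H_{\mu_n}(\mathcal{P}^{(q)})\to H_\mu(\mathcal{P}^{(q)})$ precisely because partition entropy, unlike rate-distortion, \emph{is} continuous along $\mu_n\to\mu$ when the boundary is null — to conclude $h_\mu(T,\mathcal{P})\geq\liminf_n\frac1n\log|E_n|$. Separately, one proves a Fano-type inequality converting this into a rate-distortion bound: $R_{\mu,L^\infty}(\epsilon')\geq h_\mu(T,\mathcal{P})-o(\log(1/\epsilon))$, where the error term is controlled by how many atoms of $\mathcal{P}$ a ball of radius $\epsilon'$ can meet (this is where the geometry of $\mathcal{P}$ and, in the $L^p$ case, the tame-growth hypothesis are used). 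That second inequality is the nontrivial input your outline omits; without it the weak-$*$ limit cannot be made to carry the lower bound on mutual information.
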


Comparing Lindenstrauss-Tsukamoto’s variational principles with the classical ones, exchanging the order of $\sup_{\mu \in \mathcal{M}(X,T)}$ and $\limsup_{\epsilon \to 0}$ is doomed to \emph{fail}, since the counterexample in \cite[Section VIII]{lt18} shows that the supremum of rate-distortion dimensions over the set of invariant measures is strictly less than the metric mean dimension of $X$. Later, Chen, Dou, and Zheng \cite{cdz22} extended Lindenstrauss-Tsukamoto’s variational principles to actions of amenable groups. Furthermore, by replacing the rate-distortion functions, the authors in \cite{vv17, gs20, shi} proved that Lindenstrauss-Tsukamoto’s variational principles still hold for Katok’s $\epsilon$-entropy, Kolmogorov-Sinai $\epsilon$-entropy, Brin-Katok $\epsilon$-entropy, and other measure-theoretic $\epsilon$-entropies. Readers can refer to \cite{ycz25} for a systematic study of these variational principles and a comparison of the divergent rates of these measure-theoretic $\epsilon$-entropies.


The goal of this paper is to give a positive answer to the aforementioned question within the framework of amenable groups. We first collect several candidates for  measure-theoretic $\epsilon$-entropies and calculate their divergent rates. Partially inspired by the ideas in \cite{ycz22b, cpv24, ycz25}, we define an upper measure-theoretic metric mean dimension for invariant measures, and establish the following classical-type variational principle for  upper metric mean dimension:

\begin{thm}\label{thm 1.1}
Let $(X, d, G)$ be a $G$-system, and   let $\{F_n\}$ be a tempered  F\o lner sequence of $G$.  Then 
\begin{align*}
{\overline{\rm  mdim}}_M(G,X,d)&=\max_{\mu \in M(X,G)}\overline{\rm {mdim}}_M(\mu,\{F_n\},d),
\end{align*}
where  ${\overline{\rm  mdim}}_M(G,X,d)$ denotes the   upper  metric mean dimension of $X$, $\overline{\rm {mdim}}_M(\mu,\{F_n\},d)$ denotes the upper  measure-theoretic  metric mean dimension of $\mu$ (see Definition \ref{df 3.26}), and $ M(X,G)$ is the set of  $G$-invariant Borel probability measures on $X$.
\end{thm}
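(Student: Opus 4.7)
The plan is to prove the variational principle by establishing the two inequalities separately, with the bulk of the work going into the lower bound, where the sup--limsup interchange must be justified despite its known failure for the rate--distortion formulation.

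For the easy inequality $\sup_\mu \overline{\mathrm{mdim}}_M(\mu,\{F_n\},d) \le \overline{\mathrm{mdim}}_M(G,X,d)$, I would read off from Definition~\ref{df 3.26} that, at every scale $\epsilon>0$, the measure-theoretic $\epsilon$-entropy of any $\mu \in M(X,G)$ is controlled by the topological $\epsilon$-entropy defining $\overline{\mathrm{mdim}}_M(G,X,d)$. Such a scale-wise domination is the analog of $h_\mu \le h_{\mathrm{top}}$ in the classical variational principle and should follow from a partition/covering comparison together with an invariant subadditivity of $H_\mu(\cdot \mid F_n)$ along the tempered F\o lner sequence. Dividing by $\log\frac{1}{\epsilon}$ and taking the limsup as $\epsilon \to 0$ delivers the bound uniformly in $\mu$.

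For the matching lower bound, I would adapt the Misiurewicz construction to the amenable setting at each fixed scale $\epsilon$. Choose a maximal $(F_n,\epsilon)$-separated set $E_n$, form the empirical measure $\sigma_n = \frac{1}{|E_n|}\sum_{x\in E_n}\delta_x$, and average along the F\o lner set via $\nu_n = \frac{1}{|F_n|}\sum_{g \in F_n} g_*\sigma_n$. By weak-$*$ compactness of $\mathcal{M}(X)$ and the F\o lner property, any weak-$*$ limit $\mu_\epsilon$ lies in $M(X,G)$; using a measurable partition of diameter less than $\epsilon$ together with the amenable analog of Misiurewicz's estimate (which is where the tempered hypothesis enters, through the Lindenstrauss pointwise ergodic and Shannon--McMillan--Breiman theorems), one obtains that the measure-theoretic $\epsilon$-entropy of $\mu_\epsilon$ is at least the topological $\epsilon$-entropy minus an error that is $o(\log\frac{1}{\epsilon})$ as $\epsilon \to 0$.

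The main obstacle, and the reason the theorem is nontrivial given the counterexample in \cite{lt18}, is passing from the scale-dependent family $\{\mu_\epsilon\}$ to a \emph{single} invariant measure $\mu^\star$ that attains the maximum. I would select a sequence $\epsilon_k \to 0$ realizing the limsup in $\overline{\mathrm{mdim}}_M(G,X,d)$ and extract, by weak-$*$ compactness of $M(X,G)$, a subsequential limit $\mu^\star$ of $\{\mu_{\epsilon_k}\}$. The crucial point is that Definition~\ref{df 3.26} must be chosen so that the scale-$\epsilon$ measure-theoretic entropy is upper semi-continuous (or monotone-refinable) in $\mu$ in a way that is absent from rate--distortion functions; concretely, one expects an estimate of the form
\[
h^{\mathrm{meas}}_{\mu^\star}(\epsilon_k) \ge h^{\mathrm{meas}}_{\mu_{\epsilon_k}}(\epsilon_k') - o\!\left(\log\tfrac{1}{\epsilon_k}\right)
\]
for suitable $\epsilon_k' \ge \epsilon_k$, obtained by comparing the limit measure with the approximating measures on a fixed finite set of partitions at each scale. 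Dividing by $\log\frac{1}{\epsilon_k}$ and letting $k \to \infty$ then yields $\overline{\mathrm{mdim}}_M(\mu^\star,\{F_n\},d) \ge \overline{\mathrm{mdim}}_M(G,X,d)$, which simultaneously gives the lower bound and promotes the supremum to an attained maximum, completing the proof.
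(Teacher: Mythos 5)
Your Misiurewicz-type construction at each scale, and the weak-$*$ extraction of a limit measure $\mu^\star$ along a sequence $\epsilon_k\to 0$ realizing the limsup, are exactly what the paper does (Lemma~\ref{lem 4.2}, which is lifted from the half of the variational principle for topological entropy in~\cite{kl16}, followed by the compactness of $M(X,G)$). So the architecture of your argument is right. The problem lies in the step you yourself flag as the crux.

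You propose to finish by establishing, for a single limit measure $\mu^\star$, an estimate of the form
\[
h^{\mathrm{meas}}_{\mu^\star}(\epsilon_k)\;\geq\;h^{\mathrm{meas}}_{\mu_{\epsilon_k}}(\epsilon_k')-o\bigl(\log\tfrac{1}{\epsilon_k}\bigr),
\]
i.e.\ a lower-semicontinuity-type comparison of the scale-$\epsilon$ measure-theoretic entropy evaluated at $\mu^\star$ against the approximating measures. This estimate cannot hold, and the reason is precisely the counterexample you cite. If it held for any candidate in the class $\mathcal{E}$ of Theorem~\ref{thm 2.15} (Kolmogorov--Sinai $\epsilon$-entropy, Katok $\epsilon$-entropy, etc.), it would deliver the ``naive'' variational principle $\sup_\mu\limsup_\epsilon h^{\mathrm{meas}}_\mu(\epsilon)/\log\tfrac{1}{\epsilon}=\overline{\mathrm{mdim}}_M(G,X,d)$, i.e.\ for the quantity called $\overline{\mathrm{mdim}}_M^*(\mu,\{F_n\},d)$ in Remark~\ref{rem 3.24}(ii). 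But by Theorem~\ref{thm 2.15} all candidates in $\mathcal{E}$ share the same divergent rate over ergodic measures as the $L^\infty$ rate--distortion function, and Example~\ref{ex 4.3} shows that for the full shift over $\{0\}\cup\{1/n:n\geq 1\}$ one has $\sup_\mu\overline{\mathrm{mdim}}_M^*(\mu,\{F_n\},D)=0<\tfrac12=\mathrm{mdim}_M(G,X^{\mathbb{Z}},D)$. So your comparison estimate is false in general, and the proof breaks at exactly the point you identified as the main obstacle.

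The missing idea is that Definition~\ref{df 3.26} is engineered so that no such comparison is needed. The measure-theoretic metric mean dimension of $\mu^\star$ is \emph{not} $\limsup_\epsilon F(\mu^\star,\epsilon)/\log\tfrac{1}{\epsilon}$; it is the supremum over all families $(\nu_\epsilon)\in M_G(\mu^\star)$ of $\limsup_\epsilon F(\nu_\epsilon,\epsilon)/\log\tfrac{1}{\epsilon}$, where $F$ is evaluated at the $\epsilon$-dependent approximating measures. Having extracted $\mu_{\epsilon_k}\to\mu^\star$, you simply interpolate (set $\nu_\epsilon:=\mu_{\epsilon_k}$ for $\epsilon\in(\epsilon_{k+1},\epsilon_k]$) to obtain a family $(\nu_\epsilon)_\epsilon\in M_G(\mu^\star)$, and then the Misiurewicz inequality $h_{\mathrm{top}}(G,X,d,\{F_n\},\epsilon_k)\leq F(\nu_{\epsilon_k},\{F_n\},\epsilon_k/64)$ already gives $\overline{\mathrm{mdim}}_M(\mu^\star,\{F_n\},d)\geq\overline{\mathrm{mdim}}_M(G,X,d)$ directly from the definition, with no semicontinuity of $F(\cdot,\epsilon)$ in the measure argument required. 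Your first inequality is fine (in the paper it is even simpler: Katok's $\epsilon$-entropy of an ergodic measure is manifestly bounded by $\hat{h}_{\mathrm{top}}(G,X,d,\{F_n\},\epsilon)$, which passes to convex combinations and then to limsups).
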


Based on the idea of the formulation of  measure-theoretic metric mean dimension, we define the modified $L^p$ ($1\leq p<\infty$) and $L^{\infty}$ rate-distortion dimensions for invariant measures, and establish  an alternative Lindenstrauss-Tsukamoto variational principle for  upper metric mean dimension:
\begin{thm}\label{thm 1.2}
Let $(X, d, G)$ be a $G$-system,  and let  $\{F_n\}$ be a tempered F\o lner sequence of $G$.  Then
\begin{align*}
{\overline{\rm  mdim}}_M(G,X,d)&=\max_{\mu \in M(X,G)}\overline{\rm {mdim}}_{M,L^{\infty}}(\mu,\{F_n\},d).
\end{align*}
If  $(X,d)$ has the tame growth of covering numbers, then for any $1\leq p<\infty$,
$${\overline{\rm  mdim}}_M(G,X,d)=\max_{\mu \in M(X,G)}\overline{\rm {mdim}}_{M,L^p}(\mu,\{F_n\},d),$$
where $\overline{\rm {mdim}}_{M,L^{p}}(\mu,\{F_n\},d)$ and $ \overline{\rm {mdim}}_{M,L^{\infty}}(\mu,\{F_n\},d)$  respectively denote the modified $L^p$ and $L^{\infty}$ rate-distortion  dimensions of $\mu$ (see Definition \ref{df 3.27}). 
\end{thm}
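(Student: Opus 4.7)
The plan is to derive Theorem 1.2 from Theorem 1.1 by sandwiching the modified rate-distortion dimensions between the measure-theoretic metric mean dimension of Definition 3.26 and the topological upper metric mean dimension. More precisely, I aim to prove the two inequalities
\[
\overline{\rm mdim}_M(\mu,\{F_n\},d)\;\leq\;\overline{\rm mdim}_{M,L^{\infty}}(\mu,\{F_n\},d)\;\leq\;\overline{\rm mdim}_M(G,X,d)
\]
for every $\mu\in M(X,G)$; taking the max over $\mu$ then collapses the three quantities by Theorem 1.1, so the maximum is attained at the measure produced there.

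For the right-hand inequality I would run the amenable-group analogue of the ``easy half'' of Theorem A: the infimum defining $R_{\mu,L^{\infty}}(F_n,\epsilon)$ can be restricted to couplings supported on a minimal $(d_{F_n},\epsilon)$-spanning set of $X$, which yields
\[
R_{\mu,L^{\infty}}(F_n,\epsilon)\;\leq\;\log r(F_n,d,\epsilon)+o(|F_n|)
\]
uniformly in $\mu$; dividing by $|F_n|\log(1/\epsilon)$, applying the Ornstein--Weiss lemma to the topological right-hand side along the tempered F\o lner sequence, and taking $\limsup_{\epsilon\to 0}$ produces the asserted bound via the prescribed modification in Definition 3.27. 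The left-hand inequality is the delicate half: given any finite Borel partition $\xi$ of diameter at most $\epsilon$ and any $(F_n,\epsilon)$-optimal $L^{\infty}$ coupling $(X,Y)$, I would use the Fano-type inequality together with the observation that $d_{F_n}(X,Y)\leq\epsilon$ forces $Y$ to determine $\xi^{F_n}(X)$ up to a controlled number of alternatives, so that $H_\mu(\xi^{F_n})\leq I_\mu(X;Y)+o(|F_n|\log(1/\epsilon))$. Normalising by $|F_n|\log(1/\epsilon)$ and taking $\limsup$ yields the comparison of dimensions, with the modification in Definition 3.27 designed precisely to absorb the additive $o(|F_n|\log(1/\epsilon))$ slack.

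For the $L^{p}$ statement ($1\leq p<\infty$) under the tame growth of covering numbers, I would then invoke the amenable-group version of the sandwich
\[
R_{\mu,L^{p}}(F_n,2\epsilon)\;\leq\;R_{\mu,L^{\infty}}(F_n,\epsilon)\;\leq\;R_{\mu,L^{p}}(F_n,\epsilon)+C(\epsilon)|F_n|,
\]
obtained by truncating an $L^{p}$-optimal coupling on the set where the distortion exceeds $\epsilon$ and re-encoding that set with a covering of $X$; the tame growth condition forces $C(\epsilon)=o(\log(1/\epsilon))$, so after dividing by $|F_n|\log(1/\epsilon)$ the error disappears and the $L^{\infty}$ identity promotes to the $L^{p}$ identity.

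The main obstacle will be Step 2, namely the precise form of the modification used in Definition 3.27: it must be weak enough that the $L^{\infty}$ rate-distortion can dominate the partition-based measure-theoretic metric mean dimension of Definition 3.26 (so that the lower bound of the sandwich actually holds), yet strong enough not to destroy the trivial upper bound against the topological $\epsilon$-entropy. Calibrating this, and carrying out the Fano-style coupling argument in a way that is uniform in the partition and in the F\o lner stage $F_n$, is the technical heart of the proof; once it is in place the remaining ingredients -- a straightforward spanning-set bound, the Ornstein--Weiss subadditivity along $\{F_n\}$, and a truncation lemma for the $L^p$-to-$L^\infty$ comparison -- are routine, and Theorem 1.2 drops out of Theorem 1.1 by the squeeze described above.
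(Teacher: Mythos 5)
Your overall strategy is the same as the paper's: derive Theorem 1.2 from Theorem 1.1 by showing that the modified rate--distortion dimensions sit between $\overline{\rm mdim}_M(\mu,\{F_n\},d)$ and $\overline{\rm mdim}_M(G,X,d)$, and then use the already-known variational principle to collapse the sandwich. The difference is in how the intermediate inequalities are obtained. The paper has already done this work: Theorem~\ref{thm 2.15}, together with the inequalities (\ref{equ 2.10}), (\ref{equu 3.11})--(\ref{equu 3.14}) (the latter imported from [LJZZ25]), puts $R_{\mu,L^{\infty}}$ into the candidate set $\mathcal{E}$ unconditionally and puts $R_{\mu,L^{p}}$ into $\mathcal{E}$ under tame growth, so the $L^{\infty}$ statement is literally a restatement of Theorem~\ref{thm 1.1} and the $L^{p}$ statement follows from (\ref{equu 4.4})--(\ref{equu 4.5}) plus (\ref{equu 3.14}). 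You instead propose to re-derive the needed two-sided comparison between $R_{\mu,L^{\infty}}$ and the Kolmogorov--Sinai $\epsilon$-entropy from scratch via a Fano-type coupling argument, whereas the paper reaches the same bound by a covering argument that passes through Katok's $\epsilon$-entropy, $\overline{h}_{\mu}^{K}(\{F_n\},\epsilon)\leq R_{\mu,L^\infty}(\{F_n\},\epsilon/8)$. Both routes produce the same divergent-rate comparison, so your plan is workable; it is just more self-contained than what the paper does.

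Two caveats. First, there is an unaddressed step in your left-hand inequality $\overline{\rm mdim}_M(\mu,\{F_n\},d)\leq \overline{\rm mdim}_{M,L^{\infty}}(\mu,\{F_n\},d)$: Definition~\ref{df 3.26} evaluates $F(\mu_\epsilon,\{F_n\},\epsilon)$ as a \emph{convex combination} $\sum_j\lambda_j h_{\mu_j}(G,\{F_n\},\epsilon)$ over the ergodic components of $\mu_\epsilon$, whereas Definition~\ref{df 3.27} evaluates $R_{\mu_\epsilon,L^{\infty}}(\{F_n\},\epsilon)$ directly on $\mu_\epsilon$. Your Fano argument applied componentwise would bound $\sum_j\lambda_j R_{\mu_j,L^{\infty}}$, so you still need $\sum_j\lambda_j R_{\mu_j,L^{\infty}}\leq R_{\mu_\epsilon,L^{\infty}}$ (concavity of the rate--distortion function in the source measure) or the stronger affinity over ergodic decompositions that the paper tacitly invokes in (\ref{equu 4.5}). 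Second, your closing diagnosis of Definition~\ref{df 3.27} is slightly misdirected: the supremum over approximating families $(\mu_\epsilon)_\epsilon\in M_G(\mu)$ is not there to absorb $o(|F_n|\log(1/\epsilon))$ slack (that is handled automatically by dividing by $\log(1/\epsilon)$ and letting $\epsilon\to 0$); it is there to repair the failure of the classical-form variational principle exhibited by Lindenstrauss--Tsukamoto's counterexample (cf.\ Example~\ref{ex 4.3}), where $\sup_\mu\lim_\epsilon R_{\mu,L^p}/\log(1/\epsilon)$ is strictly smaller than $\overline{\rm mdim}_M(G,X,d)$. Once these two points are addressed, your argument goes through.
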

We explore two applications based on Theorem \ref{thm 1.1}. The first one concerns how to characterize the topological complexity  of $G$-systems with zero metric mean dimension.  Two vital quantities, which have been introduced to characterize the complexity of zero entropy systems, are the topological and measure-theoretic entropy dimensions \cite{c97}. However, the classical variational principle fails for topological entropy dimension in terms of measure-theoretic entropy dimension \cite{adp10}. Analogous to Carvalho’s work \cite{c97} on zero entropy systems,  for infinite-entropy $G$-systems with zero metric mean dimension, we introduce topological and measure-theoretic versions of infinite entropy dimensions, and derive several elementary properties of infinite entropy dimensions. In particular, for any $s \in (0,1)$, we construct an infinite-entropy $G$-system with  zero metric mean dimension, while its infinite entropy dimension equals $s$. Besides, a variational principle between topological and measure-theoretic infinite entropy dimensions is established in terms of \emph{invariant measures}.

\begin{thm}\label{thm 1.3}
Let $(X, d, G)$ be a $G$-system, and  $\{F_n\}$ be a tempered  F\o lner sequence  of $G$. Then
$$\overline{D}_M(G,X,d)=\max_{\mu \in M(X,G)}\overline{D}_M(\mu,\{F_n\},d),$$
where $\overline{D}_M(G,X,d)$   and $\overline{D}_M(\mu,\{F_n\},d)$ denote the infinite upper entropy dimensions of $X$ and $\mu$, respectively. See Subsection \ref{subsec 5.1} for their precise definitions.
\end{thm}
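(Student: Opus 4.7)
The plan is to follow the blueprint of Theorem~\ref{thm 1.1}. The infinite upper entropy dimension is, by definition, an asymptotic invariant built from the divergence rate of the $\epsilon$-topological (respectively measure-theoretic) entropy as $\epsilon\to 0$; it differs from the upper metric mean dimension only in that the normalization $\log(1/\epsilon)$ is replaced by a slower scale tailored to systems with vanishing metric mean dimension. Consequently, the argument should be essentially identical to that of Theorem~\ref{thm 1.1} up to this change of normalization.

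The easy ``$\geq$'' inequality is the Goodwyn-type half. For each fixed $\mu\in M(X,G)$ and each scale $\epsilon>0$, the measure-theoretic $\epsilon$-entropy of $\mu$ along the tempered F\o lner sequence $\{F_n\}$ is dominated by the topological $\epsilon$-entropy of $X$. Dividing by the relevant normalization and taking $\limsup$ as $\epsilon\to 0$ yields $\overline{D}_M(\mu,\{F_n\},d)\leq \overline{D}_M(G,X,d)$; taking the supremum over $\mu$ completes this half.

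The reverse direction, together with attainment of the supremum, is the main work. The key input is a per-scale variational principle: for each fixed $\epsilon>0$, the topological $\epsilon$-entropy equals the supremum of the corresponding measure-theoretic $\epsilon$-entropies over $M(X,G)$; this is exactly the tool used inside the proof of Theorem~\ref{thm 1.1}. I would fix a sequence $\epsilon_k\to 0$ realizing the $\limsup$ defining $\overline{D}_M(G,X,d)$, and for each $k$ select $\mu_k\in M(X,G)$ whose scale-$\epsilon_k$ measure-theoretic entropy is within $1/k$ of the topological $\epsilon_k$-entropy. Following Theorem~\ref{thm 1.1}, the witnessing invariant measure $\mu^{\ast}$ would be built as a convex combination $\mu^{\ast}=\sum_{k}\lambda_k \mu_k$ with carefully decreasing weights, together with a weak-$*$ compactness/diagonal argument; the tempered F\o lner assumption and Lindenstrauss' Shannon--McMillan--Breiman theorem then allow one to compare the scale-$\epsilon_k$ entropy of $\mu^{\ast}$ with that of $\mu_k$ up to a controlled error.

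The principal obstacle, as in Theorem~\ref{thm 1.1}, is the absence of upper semicontinuity of $\mu \mapsto \overline{D}_M(\mu,\{F_n\},d)$ on $M(X,G)$: one cannot simply extract a weak-$*$ limit from $\{\mu_k\}$ and hope that the dimension passes to the limit. The remedy is precisely the scale-by-scale convex-combination construction above, with the weights $\lambda_k$ tuned to the slower normalization governing the infinite entropy dimension, so that the divergence rate of the topological side is preserved by $\mu^{\ast}$ along the selected sequence of scales. This is where the fact that $\{F_n\}$ is tempered becomes essential, as it underlies the almost-everywhere convergence needed to transfer the scale-$\epsilon_k$ entropy estimates from the approximating $\mu_k$ to the limit measure.
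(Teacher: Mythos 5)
Your proposal goes in the wrong direction on two substantive points, and it misses the structural move that makes the paper's proof short.

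First, your claimed obstacle is incorrect. You write that ``the principal obstacle \ldots is the absence of upper semicontinuity of $\mu\mapsto \overline{D}_M(\mu,\{F_n\},d)$.'' In fact the paper proves exactly the opposite: $\overline{\rm{mdim}}_M(\cdot,\{F_n\},d)$ is upper semi-continuous on $M(X,G)$ (Proposition~\ref{prop 3.26}), and so is $\overline{D}_M(\cdot,\{F_n\},d)$ (Proposition~\ref{prop 5.8}); the latter is used precisely to upgrade the ``$\sup$'' in the variational inequality to a ``$\max$.'' Upper semi-continuity holds here because Definition~\ref{df 3.26}/\ref{df 3.44} build the supremum over approximating families $(\mu_\epsilon)_\epsilon\in M_G(\mu)$ directly into the definition; this is the device that circumvents the Lindenstrauss--Tsukamoto failure, not a hurdle to be overcome.

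Second, your construction $\mu^{\ast}=\sum_k\lambda_k\mu_k$ is not what Theorem~\ref{thm 1.1} does, and it does not obviously work in the framework of Definition~\ref{df 3.44}. To lower-bound $\overline{D}_M(\mu^{\ast},\{F_n\},d)$ you must exhibit a single family $(\nu_\epsilon)_\epsilon\in M_G(\mu^{\ast})$, i.e.\ $\nu_\epsilon\in\mathrm{co}(E(X,G))$ with $\nu_\epsilon\to\mu^{\ast}$ weak-$*$, whose scaled $\epsilon$-entropies blow up. The natural candidate $\nu_{\epsilon_k}:=\mu_k$ converges to the weak-$*$ limit of $\{\mu_k\}$, not to the convex combination $\sum_k\lambda_k\mu_k$; these differ. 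Moreover, the $\epsilon_k$-entropy of $\mu^{\ast}$ is controlled by a convex combination of the $\epsilon_k$-entropies of \emph{all} components, and there is no cheap monotonicity reducing it to the $\mu_k$-term alone. The paper's actual construction (inequality~\eqref{equ 3.6}) simply extracts a weak-$*$ accumulation point $\mu$ of the $\mu_{\epsilon_n}$'s and takes the piecewise-defined family $\nu_\epsilon=\mu_{\epsilon_n}$ for $\epsilon\in(\epsilon_{n+1},\epsilon_n]$, which then lies in $M_G(\mu)$ by construction; no convex combination, no SMB transfer argument.

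Finally, you skip the key structural step. The paper does not re-run the proof of Theorem~\ref{thm 1.1} at the level of $\overline{D}_M$; it first establishes, for each fixed $s>0$, the $s$-parametrized variational principle
$$\overline{\rm mdim}_M(G,X,s,d)=\max_{\mu\in M(X,G)}\overline{\rm mdim}_M(\mu,s,\{F_n\},d),$$
by repeating the argument of Theorem~\ref{thm 3.30} with $(\logf)^s$ in place of $\logf$. The statement about $\overline{D}_M$ then follows purely from the definition of $\overline{D}_M$ as a critical exponent of the map $s\mapsto\overline{\rm mdim}_M(\cdot,s,\cdot)$: if $s<\overline{D}_M(G,X,d)$ the topological side at exponent $s$ is infinite, hence some $\mu$ has infinite $s$-measure-theoretic metric mean dimension, hence $\overline{D}_M(\mu,\{F_n\},d)\ge s$; the reverse is symmetric. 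Your phrase ``fix a sequence $\epsilon_k\to 0$ realizing the $\limsup$ defining $\overline{D}_M(G,X,d)$'' does not parse for the same reason: $\overline{D}_M(G,X,d)$ is an infimum of exponents, not a $\limsup$.
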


The second one concerns the local structure of $G$-systems with positive metric mean dimension. Inspired by the concept of entropy point \cite{yz07, cpv24}, from a local perspective we define  local metric mean dimension, and establish variational principles for  upper metric mean dimension in terms of local metric mean dimension.  Then we use the variational principles to  investigate the local structures of $G$-systems with positive metric mean dimension by introducing  the (full) metric mean dimension  point.


\begin{thm}\label{thm 1.4}
Let $(X, d, G)$ be a $G$-system.  Then for every tempered  two-sided F\o lner sequence  $\{F_n\}$ of $G$,
\begin{align*}
\overline{\rm mdim}_M(G,X,d)
&=\max_{x\in X}  \overline{\rm mdim}_M(x,\{F_n\},d),\\
&=\max_{\mu \in M(X,G)}\int \overline{\rm mdim}_M(x,\{F_n\},d) d\mu,
\end{align*}
where $\overline{\rm mdim}_M(x,\{F_n\},d)$ denotes the  local upper  metric mean  dimension of $x$. See Subsection \ref{subsec 5.2} for the precise definition.
\end{thm}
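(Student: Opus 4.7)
The plan is to leverage Theorem \ref{thm 1.1} and reduce Theorem \ref{thm 1.4} to establishing a Brin--Katok type integral formula for the upper measure-theoretic metric mean dimension. Theorem \ref{thm 1.1} already gives $\overline{\rm mdim}_M(G,X,d) = \max_{\mu \in M(X,G)} \overline{\rm mdim}_M(\mu, \{F_n\}, d)$, so it suffices to prove, for every $\mu \in M(X,G)$, the identity
\begin{equation*}
\overline{\rm mdim}_M(\mu, \{F_n\}, d) \;=\; \int_X \overline{\rm mdim}_M(x, \{F_n\}, d)\, d\mu(x),
\end{equation*}
together with the uniform pointwise bound $\overline{\rm mdim}_M(x, \{F_n\}, d) \leq \overline{\rm mdim}_M(G,X,d)$ for every $x \in X$, which is expected to follow directly from the definitions of the local and global quantities.

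Granting these two ingredients, one obtains the chain
\begin{equation*}
\max_{\mu} \int \overline{\rm mdim}_M(x, \{F_n\}, d)\, d\mu \;\leq\; \sup_{x\in X} \overline{\rm mdim}_M(x, \{F_n\}, d) \;\leq\; \overline{\rm mdim}_M(G,X,d),
\end{equation*}
whose right-hand side equals the left-hand side by the integral formula combined with Theorem \ref{thm 1.1}, forcing all three to coincide. That the outer suprema are maxima follows: for $\max_\mu$, via upper semicontinuity of $\mu \mapsto \int \overline{\rm mdim}_M(x, \cdot)\, d\mu$ inherited from that of $\mu \mapsto \overline{\rm mdim}_M(\mu, \{F_n\}, d)$ together with weak-$*$ compactness of $M(X,G)$; for $\max_x$, by ergodic decomposition, since for an ergodic maximizer $\nu$ the integral identity combined with ergodicity forces $\overline{\rm mdim}_M(x, \{F_n\}, d)$ to be $\nu$-a.e.\ constant and equal to $\overline{\rm mdim}_M(\nu, \{F_n\}, d)$, so the common value is attained at $\nu$-typical points.

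The integral identity is the technical core and should be proved in the spirit of Brin--Katok. Fix $\epsilon > 0$; Lindenstrauss's pointwise ergodic theorem for tempered two-sided F\o lner sequences \cite{lin01}, together with the amenable Shannon--McMillan--Breiman theorem, yields pointwise convergence, for $\mu$-a.e.\ $x$, of $-\frac{1}{|F_n|}\log \mu(B_{F_n}(x,\epsilon))$ to a local $\epsilon$-entropy function $h_\mu(x,\epsilon)$ whose integral equals the Brin--Katok $\epsilon$-entropy $h_\mu^{\rm BK}(\epsilon)$ of $\mu$. Dividing by $\log(1/\epsilon)$ and passing $\limsup_{\epsilon \to 0}$ inside the integral via Fatou yields the $\leq$ direction automatically; the reverse direction requires choosing a countable scale sequence $\epsilon_k \to 0$ which realizes the $\limsup$ on an invariant full-measure set and invoking bounded convergence against the uniform bound furnished by the global metric mean dimension.

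The main obstacle is precisely this interchange of $\limsup_{\epsilon \to 0}$ with $\int d\mu$. Unlike classical entropy, where $\epsilon$ is fixed before integration and the Brin--Katok formula is immediate, metric mean dimension is an asymptotic exponent in $\epsilon$, so one must control how uniform the convergence in $n$ is across scales. Identifying the right notion of local metric mean dimension --- robust enough to simultaneously admit an ergodic-theoretic pointwise limit and an upper semicontinuity property along sequences $\epsilon_k \to 0$ --- is where most of the effort will go. Once this is in place, the rest of the argument is a clean combination of Theorem \ref{thm 1.1}, Fatou's lemma, and the ergodic decomposition.
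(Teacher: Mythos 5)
The proposed reduction to the identity $\overline{\rm mdim}_M(\mu,\{F_n\},d)=\int_X \overline{\rm mdim}_M(x,\{F_n\},d)\,d\mu(x)$ for \emph{every} $\mu\in M(X,G)$ cannot work, because that identity is false. Take the full shift $([0,1]^G,D,G)$ of Example \ref{ex 3.6} and $\mu=\delta_{(0)_g}$, the point mass at the all-zero configuration: then $\overline{\rm mdim}_M(\mu,\{F_n\},d)=0$ (the measure-theoretic side sees only the fixed point), but $\overline{\rm mdim}_M(x,\{F_n\},d)=1$ for \emph{every} $x$, since any closed neighborhood $K$ of $x$ contains a cylinder set whose metric mean dimension is already $1$; hence the integral equals $1\neq 0$. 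The deeper source of the error is a conflation of two inequivalent ``local'' quantities: the paper's $\overline{\rm mdim}_M(x,\{F_n\},d)$ is defined in Definition \ref{df 4.9} as $\inf_K \overline{\rm mdim}_M(G,K,\{F_n\},d)$ over closed neighborhoods $K\ni x$, a \emph{topological} complexity of small sets around $x$, whereas your Brin--Katok construction $-\tfrac{1}{|F_n|}\log\mu(B_{F_n}(x,\epsilon))$ measures decay of Bowen-ball $\mu$-mass and yields a genuinely different (and generally much smaller) function of $x$. No Fatou/bounded-convergence interchange will bridge that gap, because the two local objects do not coincide even before taking $\limsup_{\epsilon\to 0}$.

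The paper's actual argument does not establish an integral identity for general $\mu$. Instead it recycles the construction from the proof of Theorem \ref{thm 1.1}: there exist $\mu_0\in M(X,G)$ and ergodic $\nu_\epsilon\to\mu_0$ realizing the metric mean dimension as $\limsup_{\epsilon\to 0}\frac{1}{\log(1/\epsilon)}\overline{h}_{\nu_\epsilon}^K(\{F_n\},\epsilon/64)$. The key new ingredient is Lemma \ref{lem 3.12}, which uses the \emph{two-sided} F\o lner hypothesis (via $|F_nS\setminus F_n|/|F_n|\to 0$) to transfer Katok's $\epsilon$-entropy of an ergodic measure into the $\epsilon$-topological entropy of \emph{any} positive-measure Borel set. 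Applied to an open neighborhood $O$ of a point $x_0\in\mathrm{supp}(\mu_0)$, this gives the pointwise lower bound $\overline{\rm mdim}_M(x,\{F_n\},d)\geq\overline{\rm mdim}_M(G,X,d)$ for all $x\in\mathrm{supp}(\mu_0)$, which is what integrating against $\mu_0$ requires. The second equality (max over $x$) is then proved directly by a compactness/finite-subcover contradiction, not by ergodic decomposition: if every point had a neighborhood of strictly smaller metric mean dimension, a finite subcover would force $\overline{\rm mdim}_M(G,X,d)<\overline{\rm mdim}_M(G,X,d)$. Your sketch also never engages with why two-sidedness of $\{F_n\}$ is needed, which is exactly where Lemma \ref{lem 3.12} earns its keep.
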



The rest of this paper is organized as follows: In Section \ref{sec 2}, we review the basic setting and relevant concepts of $G$-systems. In Section \ref{sec 3}, we introduce metric mean dimension on subsets, define measure-theoretic metric mean dimension for invariant measures, and derive several fundamental properties for them. In Section \ref{sec 4}, we prove the main Theorems \ref{thm 1.1} and \ref{thm 1.2}. In Section \ref{sec 5}, we apply the variational principles to study infinite-entropy systems with zero and positive metric mean dimension, and prove Theorems \ref{thm 1.3} and \ref{thm 1.4}.

\section{Preliminaries}\label{sec 2}


\subsection{The setup of $G$-systems}
We first recall several fundamental facts about amenable groups. Let $G$ be a group. The cardinality of a subset $F$ of $G$ is denoted by $|F|$. Denote the symmetric difference  of two subsets  $E, F$ of $G$ by $E\Delta F:=(E\backslash F)\cup (F\backslash E)$.  We write  $\FF(G)$  to denote  the set of all  non-empty  finite subsets of $G$.  A countable discrete  group $G$ is  \emph{amenable} if there exists a sequence  $\{F_n\}$ of non-empty finite subsets in $ \FF(G)$  such that for all $g\in G$,
$$\lim_{n\to\infty}\frac{|gF_n\triangle F_n|}{|F_n|}=0.$$

Such a sequence $\{F_n\}$  is called a   \emph{F\o lner sequence} of $G$. Additionally, the  F\o lner sequence $\{F_n\}$  is said to be a two-sided F\o lner sequence if  $\lim_{n\to\infty}\frac{|F_ng\triangle F_n|}{|F_n|}=0$ for all $g\in G$. Examples of amenable groups  include  all finite groups, abelian groups, and  all finitely generated groups of sub-exponential growth. The free group of rank 2 is a non-amenable group.   A  F\o lner sequence  $\{F_n\}$ of $G$  is  \emph{tempered} if there  exists a constant $C>0$ (independent of $n$)  such that for all $n\geq 1$, 
$$|\bigcup_{j<n}F_j^{-1}F_n|\leq C\cdot |F_n|.$$

This condition is \emph{mild} since  every  F\o lner sequence  $\{F_n\}$ of $G$ admits a tempered  subsequence  \cite[Proposition 1.4]{lin01}. 

Throughout this paper, we always assume that  $G$ is a countably infinite discrete  amenable group.

 A \emph{$G$-system} is a pair $(X,G)$, where $X$ is a compact metrizable topological space  and $\Gamma: G\times X \rightarrow X$ is a continuous map satisfying: 
\begin{enumerate}
\item$\Gamma(e_G,x)=x$ for every $x\in X$, where $e_G$ is the identity element of  $G$; 
\item  $\Gamma(g_1,\Gamma(g_2,x))=\Gamma(g_1g_2,x)$ for every $g_1,g_2\in G$ and $x\in X$.
\end{enumerate}
For notational simplicity, we  write $gx:=\Gamma(g,x)$(or $T_gx:=\Gamma(g,x)$). Furthermore, to  emphasize the metric $d$ on $X$,  we sometimes use $(X,d,G)$ to denote a $G$-system.

\subsection{Topological entropy and its variational principle}

Next  we review the definitions of topological entropy and  measure-theoretic entropy, and present the variational principle for topological entropy within the framework of  $G$-systems.

Let $(X,d)$ be a compact metric space. Denote by $\mathcal{C}_X, \mathcal{P}_{X},  \mathcal{C}_{X}^o$ the collections of finite Borel   covers of $X$,  finite Borel  partitions of $X$ and  finite open covers of $X$, respectively.  Given $\alpha \in \mathcal{C}_{X}$, the \emph{diameter} of $\alpha$, denoted by $\diam (\alpha)$,  is the maximum of the diameters of the elements  of $\alpha$  with respect to (w.r.t.) $d$. The \emph{Lebesgue number} of  $\mathcal{U}\in \mathcal{C}_{X}^o$, denoted by $\Leb (\UU)$, is the largest positive number $\delta>0$ such that each  $d$-open ball $B_d(x,\delta)$ of $X$ is contained in some element of $\UU$. A fundamental and useful fact is that for every $\epsilon >0$,  if we consider the family $\UU=\{B_d(x,\frac{\epsilon}{2}):x\in E\}$, where $E$ is a finite $\frac{\epsilon}{4}$-net of $X$, then $\UU$ is an open cover of $X$ with $\diam(\UU)\leq \epsilon$ and $\Leb(\UU)\geq \frac{\epsilon}{4}$ (cf. \cite[Lemma 3.4]{gs20}). For $F\in \FF(G)$,  the join of $\alpha \in  \mathcal{C}_X$  w.r.t. $F$ is the cover   $\alpha_F:=\vee_{g\in F}g^{-1}\alpha$, where $g^{-1}\alpha=\{g^{-1}A: A\in \alpha\}$. 

Let $\UU \in \mathcal{C}_X^o$. We define $N(\UU)$ as  the minimal cardinality of a subcover of $\UU$ covering $X$. Recall that
a set function $h:\FF(G)\rightarrow \mathbb{R}$  is said  to be \\
(1)  monotone if $h(E)\le h(F)$ for  $\forall E,F\in\FF(G)$ with $E\subset F$;\\
(2) $G$-invariant if $h(Eg)= h(E)$  for $\forall g\in G$, $\forall E\in\FF(G)$; \\
(3) sub-additive if $h(E\cup F)\le h(E)+h(F)$   for any disjoint $E,F\in\FF(G)$.

The following lemma is the well-known Ornstein-Weiss Theorem, which  allows  us to determine whether amenable entropy-like quantities  depend on the  F\o lner sequences  of amenable groups. 
\begin{lem}\cite{ow87,gromov} (See also \cite[Theorem 6.1]{lw00})\label{lem 2.1}
Let $G$ be an amenable group. Let  $h:\FF(G)\rightarrow \mathbb{R}$  be a monotone $G$-invariant sub-additive function. Then there exists $\lambda\in [-\infty,\infty)$ depending on $G$ and $h$  such  that
$$\lim_{n\to\infty}\frac{h(F_n)}{|F_n|}=\lambda$$
for all F\o lner sequences $\{F_n\}_{n\ge1}$  of $G$.
\end{lem}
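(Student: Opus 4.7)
The plan is to identify the limit $\lambda$ with $\inf_{F \in \FF(G)} h(F)/|F|$ and then to show, via the Ornstein--Weiss quasi-tiling machinery, that $h(F_n)/|F_n|$ converges to this infimum along every F\o lner sequence. Setting $c := h(\{e_G\})$, the right $G$-invariance forces $h(\{g\}) = c$ for every $g \in G$, and iterating sub-additivity on the disjoint singletons of $F$ gives $h(F) \leq c|F|$ for all $F \in \FF(G)$. Therefore $\lambda := \inf_{F \in \FF(G)} h(F)/|F|$ lies in $[-\infty, c]$, and the bound $\liminf_n h(F_n)/|F_n| \geq \lambda$ is immediate from the definition of the infimum.

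The substantive content is the reverse inequality $\limsup_n h(F_n)/|F_n| \leq \lambda$. Fix $\epsilon > 0$ and choose $F^{\ast} \in \FF(G)$ with $h(F^{\ast})/|F^{\ast}| < \lambda + \epsilon$ (if $\lambda = -\infty$, replace $\lambda + \epsilon$ by an arbitrarily negative real $M$). The key step is to invoke the Ornstein--Weiss quasi-tiling theorem: one can produce a finite collection of tiles $T_1, \dots, T_k$, each obtained from sub-copies of $F^{\ast}$ and hence satisfying $h(T_i)/|T_i| < \lambda + \epsilon$, together with a finite $K \subset G$ and $\delta > 0$ such that whenever $F$ is $(K, \delta)$-invariant one can extract a disjoint family of right translates $\{T_i g_{i,j}\} \subset F$ whose union covers at least $(1-\epsilon)|F|$. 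Every F\o lner sequence eventually satisfies this invariance condition.

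Given such a quasi-tiling of $F_n$ for large $n$, write $F_n = \bigsqcup_{i,j} T_i g_{i,j} \;\sqcup\; R_n$ with $|R_n| \leq \epsilon |F_n|$. Sub-additivity on this disjoint decomposition, combined with the right $G$-invariance and the singleton bound $h(R_n) \leq c|R_n|$, yields
\begin{align*}
h(F_n) \;&\leq\; \sum_{i,j} h(T_i g_{i,j}) + c\,|R_n| \\
\;&=\; \sum_{i,j} h(T_i) + c\,|R_n| \\
\;&\leq\; (\lambda + \epsilon)\sum_{i,j}|T_i| + c\epsilon|F_n| \;\leq\; (\lambda + \epsilon + c\epsilon)\,|F_n|.
\end{align*}
Dividing by $|F_n|$, taking $\limsup_n$, and then letting $\epsilon \to 0$ gives $\limsup_n h(F_n)/|F_n| \leq \lambda$. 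The argument is insensitive to which F\o lner sequence one picks, so all F\o lner sequences produce the same limit $\lambda$.

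The main obstacle is the quasi-tiling construction itself: producing a finite family of tiles, each with small $h$-ratio, whose disjoint translates almost fill any sufficiently invariant $F$. This is the deep combinatorial heart of the Ornstein--Weiss theory, established by an inductive Vitali-type covering argument that depends only on the amenability of $G$ and not on the specific function $h$. Once this tool is available, the remaining verification is a routine bookkeeping exercise built from monotonicity, $G$-invariance, and sub-additivity, exactly as above.
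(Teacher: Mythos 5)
The paper does not prove this lemma --- it is imported verbatim from Ornstein--Weiss, Gromov, and Lindenstrauss--Weiss (Theorem~6.1 in \cite{lw00}) --- so there is no internal proof to compare against. Your outline is the standard Ornstein--Weiss argument: identify $\lambda$ with $\inf_{F}h(F)/|F|$, note the lower bound is free, and obtain the upper bound by quasi-tiling a sufficiently invariant set by translates of tiles with small $h$-ratio, paying for the remainder with the singleton bound $h(R)\leq c|R|$. That is the right strategy, and you correctly identify the quasi-tiling construction as the deep technical ingredient.

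Two steps are stated loosely enough to be misleading if taken at face value. First, the pure quasi-tiling theorem is a combinatorial statement about $G$ alone: it produces a nested family $T_1\subset\cdots\subset T_k$ of F\o lner sets with prescribed invariance, and it knows nothing about $h$ or $F^*$. The control $h(T_i)/|T_i|<\lambda+O(\epsilon)$ is not a direct output of the tiling theorem but an \emph{inductive} corollary of the very same subadditivity $+$ remainder estimate you apply to $F_n$ at the end: one arranges $T_1$ to be a right translate of $F^*$ (so its $h$-ratio is $<\lambda+\epsilon$), arranges each $T_{i+1}$ to be $\eta_i$-quasi-tiled by disjoint translates of $T_1,\dots,T_i$, and then propagates the bound up the tower, keeping $\sum_i c\eta_i<\epsilon$ so the accumulated error stays bounded. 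Writing the tiles as ``obtained from sub-copies of $F^*$'' buries exactly the bookkeeping that makes the theorem nontrivial. Second, the inequality
$(\lambda+\epsilon)\sum_{i,j}|T_i|\leq(\lambda+\epsilon)|F_n|$
in your display uses $\sum|T_i|\leq|F_n|$, which points the wrong way when $\lambda+\epsilon<0$ (a case you allow, since $\lambda$ can be $-\infty$). In that regime use the other side of the quasi-tiling, $\sum_{i,j}|T_i|\geq(1-\epsilon)|F_n|$, to get $h(F_n)/|F_n|\leq(\lambda+\epsilon)(1-\epsilon)+c\epsilon$, which still tends to $\lambda$ (respectively $-\infty$) as $\epsilon\to 0$.
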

One can check  that the function
$F\in \FF(G)\mapsto  \log N(\UU_F)$ is a monotone, $G$-invariant and sub-additive  non-negative function, and hence satisfies   the conditions of  Ornstein-Weiss Theorem. Hence, by  Lemma \ref{lem 2.1}  the limit
\begin{align}\label{equu 3.1}
h_{top}(G,X,\UU):=\lim_{n \to \infty}\frac{\log N(\UU_{F_n})}{|F_n|}
\end{align}
exists, and is independent of the choice  of   F\o lner sequence  $\{F_n\}$  of $G$. 

The \emph{topological entropy of $X$} is defined by 
$$h_{top}(G,X)=\sup_{\UU \in \mathcal{C}_X^o} h_{top}(G,X,\UU).$$

By $M(X)$ we denote the  set of  all Borel probability measures on $X$.  The  $G$-invariant and  $G$-ergodic Borel probability measures on $X$  are  respectively denoted by  $M(X,G)$ and $E(X,G)$. Endow $M(X)$ with the weak*-topology. A standard fact is that both  the  sets $M(X)$ and $M(X,G)$  are  compact convex sets in the weak$^{*}$-topology. Furthermore, the amenability of $G$ guarantees that $M(X,G)$ is non-empty.

Let $\mu \in M(X,G)$ and  $\alpha  \in  \mathcal{P}_X$.
The \emph{measure-theoretic  entropy of $\mu$ w.r.t. $\alpha$}  is given by
$$h_\mu(G,\alpha)=\lim_{n\to \infty}\frac{1}{|F_n|}H_\mu(\alpha_{F_n}),$$
where $H_{\mu}(\alpha)$ denotes the usual partition entropy of $\alpha$. The limit exists and is independent of the choice of F\o lner sequence of $G$ since the function
$F\in \mathcal{F}(G)\mapsto H_\mu(\alpha_{F})$ satisfies the conditions of  the Ornstein-Weiss Theorem.  Moreover, by  \cite[Lemma 3.1, (4)]{hyz11}, the limit can be replaced by the infimum, i.e., $h_\mu(G,\alpha)=\inf_{F\in \mathcal{F}(G)} \frac{1}{|F|}H_\mu(\alpha_{F})$.

We define the \emph{measure-theoretic entropy of $\mu$} as
$$h_{\mu}(G,X)=\sup_{\alpha  \in  \mathcal{P}_X}h_\mu(G,\alpha).$$

The classical variational principle \cite{st80, op82} (see also \cite[Theorem 9.48, p.226]{kl16}) relates the topological entropy with measure-theoretic entropy of $G$-systems:
\begin{align*}
h_{top}(G,X)=\sup_{\mu \in M(X,G)}h_{\mu}(G,X).
\end{align*}

\section{Metric mean dimension of   infinite  entropy systems}\label{sec 3}


\subsection{Metric mean dimension on subsets}

In this subsection, we introduce the metric mean dimension on subsets using spanning sets, separated sets and open covers, and derive several fundamental properties for metric mean dimension.

\subsubsection{Definitions of metric mean dimension on subsets} 

Let $(X,d, G)$ be a $G$-system. Given a non-empty subset $Z \subset X$ and $\epsilon>0$, a set $E \subset X$ is a  \emph{$(d,\epsilon)$-spanning set} of $Z$ if for any $x\in Z$, there exists $y \in E$ such that $d(x,y)\leq\epsilon$; a subset $S \subset Z$  is a  \emph{$(d,\epsilon)$-separated set} of $Z$ if for any distinct $x,y \in S$, one has $d(x,y)> \epsilon$. Denote  the  smallest cardinality of $(d,\epsilon)$-spanning sets of $Z$ by  $r(Z,d,\epsilon)$, and   the largest cardinality of $(d,\epsilon)$-separated sets of $Z$ by $s(Z,d,\epsilon)$.  The compactness of $X$ ensures  $r(Z,d,\epsilon)\leq s(Z,d,\epsilon)\leq r(Z,d,\frac{\epsilon}{2})<\infty$ for every $\epsilon >0$.

 The  metric mean dimension of  $X$  was introduced  in \cite{cdz22} using spanning sets and separated sets. We  extend this concept to  any  non-empty subset.  Let $\epsilon>0$ and $\{F_n\}$ be a F\o lner sequence of $G$.  We define the   $\epsilon$-topological entropy of $Z$ w.r.t. $\{F_n\}$ as 
 $$h_{top}(G,Z,d, \{F_n\}, \epsilon):=\limsup_{n \to \infty}\frac{\log s(Z,d_{F_n},\epsilon)}{|F_n|},$$
 where  the Bowen metric  $d_F$  on $X$ w.r.t.  $F\in \FF(G)$ is  given by $$d_F(x,y)=\max_{g\in F}d(gx,gy).$$ 
 
\begin{df}\label{df 3.1}
The   \emph{upper and lower metric mean dimensions of $Z$} are  respectively defined   by 
\begin{align*}
{\overline{\rm  mdim}}_M(G,Z,\{F_n\},d)&:=\limsup_{\epsilon \to 0}\frac{h_{top}(G,Z,d, \{F_n\}, \epsilon)}{\logf},\\
{\underline{\rm  mdim}}_M(G,Z,\{F_n\},d)&:=\liminf_{\epsilon \to 0}\frac{h_{top}(G,Z,d, \{F_n\}, \epsilon)}{\logf}.
\end{align*}
\end{df} 
 
If $Z=\emptyset$, we let ${\overline{\rm  mdim}}_M(G,Z,\{F_n\},d)={\underline{\rm  mdim}}_M(G,Z,\{F_n\},d):=0$.  Definition \ref{df 3.1} can be equivalently defined using spanning sets.
\begin{prop}\label{prop 3.1}
Let $(X, d, G)$ be a $G$-system, and let $Z$ be  a non-empty subset of  $X$. Then for every  F\o lner sequence  $\{F_n\}$ of $G$,
\begin{align*}
{\overline{\rm  mdim}}_M(G,Z,\{F_n\},d)&=\limsup_{\epsilon \to 0}\frac{\hat{h}_{top}(G,Z,d, \{F_n\}, \epsilon)}{\logf},\\
{\underline{\rm  mdim}}_M(G,Z,\{F_n\},d)&=\liminf_{\epsilon \to 0}\frac{\hat{h}_{top}(G,Z,d, \{F_n\}, \epsilon)}{\logf},
\end{align*}
where  $\hat{h}_{top}(G,Z,d,\{F_n\}, \epsilon):=\limsup_{n \to \infty}\frac{\log r(Z, d_{F_n}, \epsilon)}{|F_n|}$.
\end{prop}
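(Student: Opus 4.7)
The plan is to derive Proposition \ref{prop 3.1} from the standard sandwich inequality between spanning and separated set cardinalities, which holds metric-by-metric, hence in particular for each Bowen metric $d_{F_n}$.

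First I would recall the basic inequality
\[
r(Z, d_{F_n}, \epsilon) \leq s(Z, d_{F_n}, \epsilon) \leq r(Z, d_{F_n}, \tfrac{\epsilon}{2}),
\]
which holds for every $\epsilon>0$ (the left inequality because any maximal $(d_{F_n},\epsilon)$-separated set spans, and the right one because a $(d_{F_n},\epsilon)$-separated set has at most one point in each $(d_{F_n},\tfrac{\epsilon}{2})$-ball of an optimal spanning set). Taking logarithms, dividing by $|F_n|$, and passing to $\limsup_{n\to\infty}$ yields
\[
\hat h_{top}(G,Z,d,\{F_n\},\epsilon) \leq h_{top}(G,Z,d,\{F_n\},\epsilon) \leq \hat h_{top}(G,Z,d,\{F_n\},\tfrac{\epsilon}{2}).
\]

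Next I would divide through by $\log\frac{1}{\epsilon}$ and exploit the elementary identity
\[
\frac{1}{\log\frac{1}{\epsilon}} = \frac{1}{\log\frac{1}{\epsilon/2}}\cdot\frac{\log\frac{1}{\epsilon/2}}{\log\frac{1}{\epsilon}},
\]
together with $\lim_{\epsilon\to 0}\frac{\log(2/\epsilon)}{\log(1/\epsilon)}=1$. This lets me rewrite the right-hand side as $\frac{\hat h_{top}(G,Z,d,\{F_n\},\epsilon/2)}{\log\frac{1}{\epsilon/2}}$ times a factor tending to $1$, so substituting $\epsilon'=\epsilon/2$ shows that the $\limsup$ (resp.\ $\liminf$) as $\epsilon\to 0$ of the right-hand side equals the $\limsup$ (resp.\ $\liminf$) of the spanning-set expression. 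Squeezing gives the desired equalities of both ${\overline{\rm mdim}}_M$ and ${\underline{\rm mdim}}_M$.

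There is no real obstacle here; the only point demanding a line of care is the harmless shift from $\epsilon$ to $\epsilon/2$ in the denominator, which is handled by the logarithmic ratio going to $1$. The argument is parallel for $\limsup$ and $\liminf$, and no amenability or Ornstein--Weiss input is needed since $\{F_n\}$ is simply fixed throughout and we use $\limsup_{n\to\infty}$ inside the definitions of $h_{top}$ and $\hat h_{top}$.
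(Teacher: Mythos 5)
Your proof is correct and follows exactly the route the paper intends: the paper states the sandwich inequality $r(Z,d,\epsilon)\leq s(Z,d,\epsilon)\leq r(Z,d,\tfrac{\epsilon}{2})$ immediately before Proposition \ref{prop 3.1} and leaves the proposition without explicit proof, treating it as an immediate consequence. You have simply filled in the (routine) details of passing this inequality through $\log$, $\limsup_n$, division by $\log\frac{1}{\epsilon}$, and the change of variable $\epsilon\mapsto\epsilon/2$.
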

 
To clarify the dependence of   metric mean dimension of $Z$ on F\o lner sequences of $G$, we define metric mean dimension on subsets using  open covers. Let $\UU \in \mathcal{C}_X^o$ and $Z$ be a non-empty subset of $X$. We put
$$N(\UU,Z):=\min\{\#\mathcal{V}: \mathcal{V}\subset \mathcal{U}~\text{and}~Z\subset \cup_{V\in \mathcal{V}}V\}.$$ 
If $Z$ is a $G$-invariant subset of $X$ (i.e., $gZ=Z$ for all $g\in G$), then  the function
$F\in \FF(G)\mapsto  \log N(\UU_F,Z)$ is a monotone, $G$-invariant and sub-additive  non-negative function.  Then the Ornstein-Weiss Theorem  guarantees  that  the limit
\begin{align}\label{equu 3.1}
h_{top}(G,Z,\UU):=\lim_{n \to \infty}\frac{\log N(\UU_{F_n},Z)}{|F_n|}
\end{align}
exists, and is independent of the choice of  F\o lner sequence  $\{F_n\}$  of $G$.

 \begin{prop} \label{prop 3.4}
Let $(X, d, G)$ be a $G$-system, and let $Z \subset X$ be a non-empty $G$-invariant subset. Then for every F\o lner  sequence $\{F_n\}$ of $G$,
\begin{align*}
{\overline{\rm  mdim}}_M(G,Z,\{F_n\},d)&=\limsup_{\epsilon \to 0}\frac{1}{\logf}\inf_{\diam (\UU) \leq \epsilon}h_{top}(G,Z,\UU),\\
{\underline{\rm  mdim}}_M(G,Z,\{F_n\},d)&=\liminf_{\epsilon \to 0}\frac{1}{\logf}\inf_{\diam (\UU) \leq \epsilon}h_{top}(G,Z,\UU).
\end{align*}
Moreover,  $\inf_{\diam (\UU) \leq \epsilon}\limits$ can be replaced by $\sup_{\diam (\UU) \leq \epsilon, \Leb (\UU)\geq \frac{\epsilon}{4}}\limits$ and $\sup_{\Leb (\UU)\geq \frac{\epsilon}{4}}\limits$.
 \end{prop}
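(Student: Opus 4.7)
The plan is to bracket the cover quantity $\inf_{\diam(\UU)\leq \epsilon} h_{top}(G,Z,\UU)$ between $\hat h_{top}(G,Z,d,\{F_n\},\epsilon)$ and $\hat h_{top}(G,Z,d,\{F_n\},\epsilon/4)$, and then transfer the conclusion to upper and lower metric mean dimension through the spanning-set characterization of Proposition \ref{prop 3.1}. First I will record two elementary comparisons between subcover numbers and spanning numbers in the Bowen metric. If $\UU\in \mathcal{C}_X^o$ satisfies $\diam(\UU)\leq \epsilon$, then every element of the join $\UU_{F_n}$ has $d_{F_n}$-diameter at most $\epsilon$, so picking a single representative point from each set of a minimal subcover of $Z$ yields a $(d_{F_n},\epsilon)$-spanning set of $Z$, whence $r(Z,d_{F_n},\epsilon)\leq N(\UU_{F_n},Z)$. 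Conversely, if $\Leb(\UU)\geq \delta$, then every $d_{F_n}$-ball of radius $\delta$ lies in some element of $\UU_{F_n}$, so a minimal $(d_{F_n},\delta)$-spanning set of $Z$ produces a subcover of $\UU_{F_n}$ of the same cardinality, giving $N(\UU_{F_n},Z)\leq r(Z,d_{F_n},\delta)$.

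Dividing each of these inequalities by $|F_n|$ and invoking the Ornstein--Weiss limit (\ref{equu 3.1})---which applies precisely because $Z$ is $G$-invariant---yields
\[
\hat h_{top}(G,Z,d,\{F_n\},\epsilon) \leq h_{top}(G,Z,\UU) \quad\text{whenever } \diam(\UU)\leq \epsilon,
\]
and
\[
h_{top}(G,Z,\UU) \leq \hat h_{top}(G,Z,d,\{F_n\},\delta) \quad\text{whenever } \Leb(\UU)\geq \delta.
\]
Using the geometric fact recalled in the preliminaries, for each $\epsilon>0$ one can exhibit a cover $\UU_\epsilon\in \mathcal{C}_X^o$ with $\diam(\UU_\epsilon)\leq \epsilon$ and $\Leb(\UU_\epsilon)\geq \epsilon/4$ simultaneously; this $\UU_\epsilon$ serves as a concrete witness, and chaining the two comparisons produces
\[
\hat h_{top}(G,Z,d,\{F_n\},\epsilon) \leq \inf_{\diam(\UU)\leq \epsilon} h_{top}(G,Z,\UU) \leq h_{top}(G,Z,\UU_\epsilon) \leq \hat h_{top}(G,Z,d,\{F_n\},\epsilon/4).
\]
The same sandwich holds with $\inf_{\diam(\UU)\leq \epsilon}$ replaced by $\sup_{\diam(\UU)\leq \epsilon,\,\Leb(\UU)\geq \epsilon/4}$ or by $\sup_{\Leb(\UU)\geq \epsilon/4}$: in each case $\UU_\epsilon$ still supplies the lower bound, while the Lebesgue-number comparison with $\delta=\epsilon/4$ supplies the common upper bound.

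To finish, I divide through by $\logf$ and take $\limsup_{\epsilon\to 0}$ and $\liminf_{\epsilon\to 0}$. Since $\log(4/\epsilon)/\logf \to 1$, the upper and lower limits of $\hat h_{top}(G,Z,d,\{F_n\},\epsilon/4)/\logf$ coincide with those of $\hat h_{top}(G,Z,d,\{F_n\},\epsilon)/\logf$, and Proposition \ref{prop 3.1} identifies these with $\overline{\rm mdim}_M(G,Z,\{F_n\},d)$ and $\underline{\rm mdim}_M(G,Z,\{F_n\},d)$; the three middle expressions are then squeezed to the same value. The only technical point I expect is verifying that the set function $F\in \FF(G)\mapsto \log N(\UU_F,Z)$ is monotone, $G$-invariant and subadditive so that Lemma \ref{lem 2.1} delivers an honest limit; the $G$-invariance of $Z$ is essential to obtain $G$-invariance of this set function (via $\UU_{Fg}=g^{-1}\UU_F$ together with $gZ=Z$), and without it the cover-based formulation would lose the limit structure needed here. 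Beyond this observation, the argument is a routine monotone sandwich.
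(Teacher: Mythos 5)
Your proof is correct and follows essentially the same route as the paper's: both reduce the cover quantity to spanning numbers via the two comparisons ($\diam(\UU)\leq\epsilon$ gives $r(Z,d_F,\epsilon)\leq N(\UU_F,Z)$, and $\Leb(\UU)\geq\delta$ gives $N(\UU_F,Z)\leq r(Z,d_F,\delta)$), invoke Ornstein--Weiss via the $G$-invariance of $Z$, and use the witness cover with small diameter and comparable Lebesgue number to sandwich all three expressions; the only cosmetic difference is that the paper chains through $\inf_{\diam(\UU)\leq\epsilon/8}$ and loses a slightly looser constant ($2\epsilon$ instead of your $\epsilon$) in the first comparison, whereas you bound each of the three quantities directly by the two spanning-set entropies, which neither changes the limit nor the substance of the argument.
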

 
 \begin{proof}
 Fix a  F\o lner sequence $\{F_n\}$ of $G$, and let $\epsilon >0$ and $F \in \FF(G)$.
 
  Choose  $\UU \in \mathcal{C}_X^o$  with $\diam (\UU) \leq \epsilon$ and $\Leb (\UU)\geq \frac{\epsilon}{4}$. Assume that $E$ is a $(d_F,\frac{\epsilon}{4})$-spanning  set of $Z$ with the smallest cardinality. Then for each $x \in E$, the Bowen ball $B_F(x,\frac{\epsilon}{4}):=\{y\in X:d_F(x,y)<\frac{\epsilon}{4}\}$ is contained an element  $U_x$ of $\mathcal{U}_F$. This yields that $N(\UU_{F_n},Z)\leq  r(Z,d_{F_n},\frac{\epsilon}{4})$ for every $n\geq 1$, and hence
\begin{align}\label{equu 3.2}
\inf_{\diam (\UU) \leq \epsilon}h_{top}(G,Z,\UU)\leq  \hat{h}_{top}(G,Z,d, \{F_n\}, \frac{\epsilon}{4}).
\end{align}

On the other hand,  now we fix $\UU \in \mathcal{C}_X^o$ with $\diam (\UU) \leq \epsilon$. Let $\mathcal{V}$ be  a subcover of $\UU_F$ covering $Z$ with the smallest cardinality. Take a point $x_V$ for each $V\in  \mathcal{V}$, and denote the set of these points  by $E$. Then $E$ is a $(d_F, 2\epsilon)$-spanning set of $Z$. This implies that $r(Z,d_{F_n},2\epsilon)\leq N(\UU_{F_n},Z)$ for every $n\geq 1$, and hence
\begin{align}\label{equu 3.3}
\begin{split}
  \hat{h}_{top}(G,Z,d, \{F_n\}, 2\epsilon)&\leq\inf_{\diam (\UU) \leq \epsilon}h_{top}(G,Z,\UU) \leq \sup_{\diam (\UU) \leq \epsilon, \Leb (\UU)\geq \frac{\epsilon}{4}}\limits h_{top}(G,Z,\UU)\\
&\leq \sup_{\Leb (\UU)\geq \frac{\epsilon}{4}}\limits h_{top}(G,Z,\UU) \leq  \inf_{\diam (\UU) \leq \frac{\epsilon}{8}}h_{top}(G,Z,\UU).
\end{split}
 \end{align}
By (\ref{equu 3.2}) and (\ref{equu 3.3}), taking the corresponding  limits  we complete the proof.
 \end{proof}

 Proposition \ref{prop 3.4} allows us to  show that  $\limsup_{n \to \infty}$ in  the metric mean dimension, defined by spanning sets and separated sets, can be  replaced by $\liminf_{n \to \infty}$.  This fact shall be useful for  deriving the product formula of  metric mean dimension.

\begin{prop}\label{prop 3.13}
 Let $(X, d, G)$ be a $G$-system, and let $Z \subset X$ be a non-empty $G$-invariant subset. Then for every F\o lner  sequence $\{F_n\}$ of $G$,
 \begin{align*}
{\overline{\rm  mdim}}_M(G,Z,\{F_n\},d)&=\limsup_{\epsilon \to 0}\frac{1}{\logf}\liminf_{n \to \infty} \frac{1}{|F_n|}\log r(Z,d_{F_n},\epsilon),\\
 &=\limsup_{\epsilon \to 0}\frac{1}{\logf}\liminf_{n \to \infty} \frac{1}{|F_n|}\log  s(Z,d_{F_n},\epsilon).
 \end{align*}
 The results are valid for ${\underline{\rm  mdim}}_M(G,Z,\{F_n\},d)$ by changing $\limsup_{\epsilon \to 0}$ into $\liminf_{\epsilon \to 0}$.
 \end{prop}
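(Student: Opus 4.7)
The plan is to sandwich the liminf-type quantities between cover-based counts, using the inequalities already established in the proof of Proposition \ref{prop 3.4}. The decisive point is that, thanks to the $G$-invariance of $Z$ and the Ornstein--Weiss theorem applied to $F \mapsto \log N(\UU_F, Z)$, the quantity
$$h_{top}(G,Z,\UU) = \lim_{n \to \infty}\frac{\log N(\UU_{F_n},Z)}{|F_n|}$$
is a genuine limit, not merely a limsup. Hence it controls the liminf of $\frac{1}{|F_n|}\log r(Z,d_{F_n},\epsilon)$ from either side, depending on the direction of the inequality.

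Concretely, write $\underline{h}(\epsilon) := \liminf_{n \to \infty} \frac{1}{|F_n|}\log r(Z,d_{F_n},\epsilon)$. The first step is to extract from the proof of Proposition \ref{prop 3.4} the pointwise comparisons
$$r(Z,d_{F_n},2\epsilon) \leq N(\UU_{F_n},Z) \leq r(Z,d_{F_n},\tfrac{\epsilon}{4})$$
valid for every $n$, where in the left inequality $\UU$ is any open cover with $\diam(\UU) \leq \epsilon$, and in the right one $\UU$ additionally satisfies $\Leb(\UU) \geq \frac{\epsilon}{4}$. Dividing by $|F_n|$ and taking $\liminf_{n \to \infty}$ on all sides, and using that the middle term converges, I obtain
$$\underline{h}(2\epsilon) \leq h_{top}(G,Z,\UU) \leq \underline{h}(\tfrac{\epsilon}{4}).$$
Taking the infimum of $h_{top}(G,Z,\UU)$ over $\diam(\UU) \leq \epsilon$ on the right, then dividing by $\logf$ and passing to $\limsup_{\epsilon \to 0}$, Proposition \ref{prop 3.4} identifies both squeezing terms with $\overline{\rm mdim}_M(G,Z,\{F_n\},d)$, yielding the desired formula for spanning sets.

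The separated-set version is then obtained from the standard chain $r(Z,d_{F_n},\epsilon) \leq s(Z,d_{F_n},\epsilon) \leq r(Z,d_{F_n},\epsilon/2)$, which shows that replacing $r$ by $s$ only perturbs $\epsilon$ by a factor of $2$ and hence does not affect the outer $\limsup_{\epsilon \to 0}$, since $\logf / \log\tfrac{2}{\epsilon} \to 1$. For the lower metric mean dimension, the same argument applies verbatim with $\limsup_{\epsilon \to 0}$ replaced by $\liminf_{\epsilon \to 0}$. I do not expect a serious obstacle: the proposition is essentially a corollary of Proposition \ref{prop 3.4}. The only conceptual point worth emphasising is that the $G$-invariance hypothesis on $Z$ is what allows Ornstein--Weiss to upgrade the cover-based limsup to an honest limit, and it is precisely this upgrade that lets liminf and limsup over $\{F_n\}$ yield the same dimension.
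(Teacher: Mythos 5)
Your proof is correct and takes essentially the same approach as the paper: both arguments sandwich the liminf-based counting quantities between the cover-based quantity $h_{top}(G,Z,\UU)$ (a genuine limit by Ornstein--Weiss, thanks to $G$-invariance of $Z$), invoke the pointwise comparisons from the proof of Proposition~\ref{prop 3.4}, and close via Proposition~\ref{prop 3.4} after dividing by $\logf$ and taking $\limsup_{\epsilon\to 0}$. The paper's written chain actually runs through $\liminf_n\frac{1}{|F_n|}\log s(Z,d_{F_n},\epsilon/4)$ rather than squeezing the spanning-set liminf from both sides with a single $\UU$ as you do, but the underlying mechanism is identical and the small rescalings of $\epsilon$ wash out in the outer $\limsup$ exactly as you note.
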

 
 \begin{proof}
 Similar to (\ref{equu 3.2}), we have
 \begin{align*}
 \inf_{\diam (\UU) \leq \epsilon}h_{top}(G,Z,\UU)\leq \liminf_{n \to \infty} \frac{1}{|F_n|}\log r(Z,d_{F_n},\frac{\epsilon}{4})\leq& \liminf_{n \to \infty} \frac{1}{|F_n|}\log s(Z,d_{F_n},\frac{\epsilon}{4})\\
  \leq& h_{top}(G,Z,d, \{F_n\}, \epsilon).
 \end{align*}
The proof is completed by Proposition  \ref{prop 3.4}. 
 \end{proof}
 
\begin{rem}
$(i)$. To sum up, if $Z \subset X$  is a non-empty $G$-invariant subset, then,  by Proposition \ref{prop 3.4} the upper and lower metric mean dimensions of $Z$ are independent of the choice  of F\o lner sequence of $G$. For this  case, we  drop the dependence on  $\{F_n\}$ for  ${\overline{\rm  mdim}}_M(G,Z,\{F_n\},d)$ and ${\underline{\rm  mdim}}_M(G,Z,\{F_n\},d)$. Additionally, if  ${\overline{\rm  mdim}}_M(G,Z,d)={\underline{\rm  mdim}}_M(G,Z,d)$,   we denote the common value by  ${\rm {mdim}}_M(G,Z,d)$,  and call ${\rm {mdim}}_M(G,Z,d)$ the metric mean dimension of $Z$.

$(ii)$.  Unlike topological entropy,  metric mean dimensions of $Z$  are   metric-dependent quantities and measure  how fast  $\epsilon$-topological entropy $h_{top}(G,Z,d,\{F_n\},\epsilon)$ diverges to $\infty$ in the   sense that if $\epsilon>0$ is sufficiently small,  then one may approximately think of 
$${h}_{top}(G,Z,d,\{F_n\},\epsilon)\approx {\overline{\rm  mdim}}_M(G,Z,\{F_n\},d)\cdot \logf.$$
Clearly,  every $G$-system with finite topological entropy has zero metric mean dimension for every subset $Z \subset X$. This clarifies  that metric mean dimensions  are  useful to capture the  dynamics of $G$-systems with infinite topological entropy.
\end{rem}

We  calculate the metric mean dimension of  the full shift over   compact metric space $(X,d)$. Recall that the upper and lower box dimensions of $X$ w.r.t. $d$ are respectively given  by
\begin{align*}
\overline{\rm dim}_B(X,d)&=\limsup_{\epsilon \to 0}\frac{\log r(X,d,\epsilon)}{\logf}=\limsup_{\epsilon \to 0}\frac{\log s(X,d,\epsilon)}{\logf},\\
\underline{\rm dim}_B(X,d)&=\liminf_{\epsilon \to 0}\frac{\log r(X,d,\epsilon)}{\logf}=\liminf_{\epsilon \to 0}\frac{\log s(X,d,\epsilon)}{\logf}.
\end{align*}

\begin{ex}\label{ex 3.6}
Let $(X,d)$ be a compact metric space.    Endow  the product space $X^G$  with the product topology, which is  metrizable by the following  metric:
$$D((x_g)_g, (y_g)_g)=\sum_{g\in G}\alpha_g d(x_g,y_g),$$
where $(\alpha_g)_g$ is a sequence of positive real  numbers such that $\sum_{g\in G}\alpha_g <\infty$ and $\alpha_{e_G}=1$.

Fix $h\in G$.  The right shift map $\sigma_h: X^G \rightarrow X^G$   is given  by $\sigma_h(x)=(x_{gh})_g$ for every $x=(x_g)_g$.  Then we have
\begin{align*}
{\overline{\rm  mdim}}_M(G, X^G, D)&=\overline{\rm dim}_B(X,d),\\
{\underline{\rm  mdim}}_M(G, X^G, D)&=\underline{\rm dim}_B(X,d).
\end{align*}
\end{ex}
\begin{proof}
Let $\{F_n\}$ be a F\o lner  sequence of $G$.  Fix $\epsilon >0$ and put $M:=\sum_{g\in G}\alpha_g<\infty$. Then there is a finite subset $S\subset G$ containing $e_G$ such that $\sum_{g\in G\backslash S}\alpha_g \cdot \diam(X,d)<\frac{\epsilon}{2}$.  Given a  $(d,\frac{\epsilon}{2M})$-spanning set $E$ of $X$ and $y \in X$, denote by $E_n$ the set of the points in $X^G$ consisting of the sequence $(x_g)_g$ with $x_g \in E$ for  the coordinates $g\in SF_n$ and with $x_g=y$  for the others. Then $E_n$ is a $(D_{F_n},\epsilon)$-spanning set of $X^G$. Indeed,  for any $x=(x_g)_g \in X^G$, choose a point $y_g$ in $E$ such that $d(x_g,y_g)<\frac{\epsilon}{2M}$ for all $g\in SF_n$. Hence, for any $h \in F_n$, we have 
$$D(hx,hy)\leq\sum_{g\in S} \alpha_g d(x_{gh},y_{gh})+\frac{\epsilon}{2}<\epsilon.$$ So $D_{F_n}(x,y)<\epsilon$. This shows that  $r(X^G, D_{F_n},\epsilon)\leq r(X,d,\frac{\epsilon}{2M})^{|SF_n|}$, and hence
\begin{align}\label{equu 3.4}
\hat{h}_{top}(G,X^G,D, \{F_n\},\epsilon)
\leq  \log r(X,d,\frac{\epsilon}{2M}),
\end{align}  
where we used the fact $\lim_{n \to \infty}\frac{|SF_n|}{|F_n|}=1$ since $\{F_n\}$ is a F\o lner sequence. 

Let $R$ be a $(d,\epsilon)$-separated set of $X$ with the largest cardinality and $y\in X$, and let $R_n$  be the set of the points in $X^G$ consisting of the sequence $(x_g)_g$ with $x_g \in R$ for  the coordinates $g \in F_n$ and with $x_g=y$ for the others. Then $R_n$ is a $(D_{F_n},\epsilon)$-separated set of $X^G$. Therefore,  we have $s(X^G, D_{F_n},\epsilon)\geq s(X,d,\epsilon)^{|F_n|}$. This means that
\begin{align}\label{equu 3.5}
h_{top}(G,X^G,D, \{F_n\},\epsilon)
\geq \log s(X,d,\epsilon).
\end{align}
The desired results follow from (\ref {equu 3.4}) and (\ref {equu 3.5}).
\end{proof}

In the following, we present several fundamental properties for metric mean dimension in the rest of this subsection.

\subsubsection{Lipschitz factor map}

One may easily construct a compact metrizable topological space $X$ with  two compatible metrics $d_1,d_2$ on $X$ such that $\overline{\rm dim}_B(X,d_1)\not= \overline{\rm dim}_B(X,d_2)$. Take  $X$, $D_1$ and $D_2$ (corresponding $d_1$ and $d_2$) as in Example \ref{ex 3.6}. The identity map $id: X^G \rightarrow X^G$ is a  conjugate map between the two full shifts $(X^G,D_1,G)$ and  $(X^G,D_2,G)$, while ${\overline{\rm  mdim}}_M(G, X^G, D_1)\not={\overline{\rm  mdim}}_M(G, X^G, D_2)$. Now we introduce the  concept of bi-Lipschitz conjugate map:

Let $(X,d_X, G)$ and $(Y,d_Y, G)$ be two $G$-systems. A map $\pi: X\rightarrow Y$ is a \emph{(bi-)Lipschitz $G$-equivariant} if $\pi$ is   a (bi-)Lipschitz  map satisfying  $\pi\circ T_g(x)=S_g\circ\pi(x)$ for all $g\in G$ and $x\in X$. In addition, $\pi: X\rightarrow Y$ is called a \emph{Lipschitz factor map}  if $\pi$ is  a surjective Lipschitz  $G$-equivariant map.  $\pi$ is called a \emph{bi-Lipschitz conjugate map} if $\pi$ is a bijection bi-Lipschitz  
$G$-equivariant map; in this case,  $(X,d_X,G)$ and $(Y,d_Y, G)$ are said to be  bi-Lipschitz  conjugate.

The metric mean dimensions  are preserved  for bi-Lipschitz conjugate maps between two $G$-systems.

\begin{prop}\label{prop 3.3}
Let $(X,d_X,G)$  and $(Y,d_Y,G)$ be two  $G$-systems. Suppose that
$\pi:X\rightarrow Y$ is a Lipschitz factor map.   Then
\begin{align*}
{\overline{\rm  mdim}}_M(G,X,d_X)&\geq {\overline{\rm  mdim}}_M(G,Y,d_Y),\\ {\underline{\rm  mdim}}_M(G,X,d_X)&\geq {\underline{\rm  mdim}}_M(G,Y,d_Y).
\end{align*}
Additionally, the  equalities  hold if  $\pi$ is a bi-Lipschitz conjugate map.  
\end{prop}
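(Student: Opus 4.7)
The plan is to use the spanning-set characterization of metric mean dimension from Proposition \ref{prop 3.1}, push it through the Lipschitz condition, and check that the multiplicative Lipschitz constant is asymptotically harmless when divided by $\log\frac{1}{\epsilon}$. Let $L>0$ be a Lipschitz constant for $\pi$, so that $d_Y(\pi(x),\pi(x'))\leq L\, d_X(x,x')$ for all $x,x'\in X$. The crucial point is that $G$-equivariance upgrades this to a Lipschitz estimate for the Bowen metrics: for every $F\in\FF(G)$ and $x,x'\in X$,
\begin{equation*}
(d_Y)_F(\pi(x),\pi(x'))=\max_{g\in F}d_Y(\pi(gx),\pi(gx'))\leq L\max_{g\in F}d_X(gx,gx')=L\,(d_X)_F(x,x').
\end{equation*}

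First, I would show that the map $\pi$ transports spanning sets. Fix a F\o lner sequence $\{F_n\}$ and $\epsilon>0$. Let $E\subset X$ be a $((d_X)_{F_n},\epsilon/L)$-spanning set of $X$ of minimal cardinality. For an arbitrary $y\in Y$, use surjectivity of $\pi$ to pick $x\in X$ with $\pi(x)=y$, and then choose $x'\in E$ with $(d_X)_{F_n}(x,x')\leq \epsilon/L$. The Bowen-Lipschitz estimate then gives $(d_Y)_{F_n}(y,\pi(x'))\leq\epsilon$, so $\pi(E)$ is a $((d_Y)_{F_n},\epsilon)$-spanning set of $Y$. Consequently,
\begin{equation*}
r(Y,(d_Y)_{F_n},\epsilon)\leq r(X,(d_X)_{F_n},\epsilon/L),
\end{equation*}
and passing to $\limsup_{n\to\infty}\frac{1}{|F_n|}\log(\cdot)$ yields
\begin{equation*}
\hat h_{top}(G,Y,d_Y,\{F_n\},\epsilon)\leq \hat h_{top}(G,X,d_X,\{F_n\},\epsilon/L).
\end{equation*}

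Next, I would divide both sides by $\log\frac{1}{\epsilon}$ and take $\limsup_{\epsilon\to 0}$ (resp. $\liminf_{\epsilon\to 0}$). After the change of variables $\delta=\epsilon/L$, the right-hand side becomes
\begin{equation*}
\frac{\hat h_{top}(G,X,d_X,\{F_n\},\delta)}{\log\frac{1}{\delta}}\cdot\frac{\log\frac{1}{\delta}}{\log\frac{1}{\delta}-\log L},
\end{equation*}
and the second factor tends to $1$ as $\delta\to 0$, so the Lipschitz constant $L$ is absorbed in the limit. Invoking Proposition \ref{prop 3.1}, this gives
\begin{equation*}
{\overline{\rm mdim}}_M(G,Y,d_Y)\leq {\overline{\rm mdim}}_M(G,X,d_X),\qquad {\underline{\rm mdim}}_M(G,Y,d_Y)\leq {\underline{\rm mdim}}_M(G,X,d_X).
\end{equation*}

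Finally, when $\pi$ is a bi-Lipschitz conjugate map, applying the above inequalities also to $\pi^{-1}:Y\to X$ (which is again a Lipschitz $G$-equivariant surjection by assumption) yields the reverse inequalities and hence equalities. The only non-routine piece is confirming that the factor $\log L$ is asymptotically negligible against $\log\frac{1}{\epsilon}$; once one is slightly careful about the case when the right-hand entropy exponent is infinite (handled by the monotonicity of $\epsilon\mapsto\hat h_{top}(G,X,d_X,\{F_n\},\epsilon)$ and the fact that the ratio $\log\frac{1}{\delta}/(\log\frac{1}{\delta}-\log L)\to 1$), the argument is straightforward.
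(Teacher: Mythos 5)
Your proof is correct. The paper states Proposition \ref{prop 3.3} without proof, and your argument is the natural one the authors surely intended: use $G$-equivariance to lift the Lipschitz bound to Bowen metrics, push spanning sets forward through the surjection $\pi$ to obtain $r(Y,(d_Y)_{F_n},\epsilon)\leq r(X,(d_X)_{F_n},\epsilon/L)$, pass to $\hat h_{top}$ via Proposition \ref{prop 3.1}, and observe that the substitution $\delta=\epsilon/L$ only shifts $\log\frac{1}{\epsilon}$ by the constant $\log L$, which is negligible in the normalization $1/\log\frac{1}{\epsilon}$. Your handling of the ratio $\log\frac{1}{\delta}/(\log\frac{1}{\delta}-\log L)\to 1$, including the infinite case via the convergent-factor rule for $\limsup/\liminf$, and the reduction of the bi-Lipschitz case to applying the inequality also to $\pi^{-1}$, are all sound.
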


\begin{ex}
Let $([0,1]^G, G)$ and $([a,b]^G, G)$ be two  $G$-systems,  where $a,b\in \mathbb{R}$, and  the metric $D((x_g)_g, (y_g)_g)=\sum_{g\in G}\alpha_g |x_g-y_g|$  is given as in Example \ref{ex 3.6}. Then
$${{\rm  mdim}}_M(G,[a,b]^G,D)=1={{\rm  mdim}}_M(G,[0,1]^G,D).$$
\end{ex}

\subsubsection{Power rule}
We clarify the relation of the metric mean dimensions of a $G$-system and the action of its subgroup. Let  $H$ be a subgroup of $G$. A subset $R \subset G$ is called a \emph{complete  set of representatives} of left-cosets $\{gH:g\in G\}$ of $H$ if each left-coset of $H$  exactly contains a  unique element belonging to $R$. This means the cardinality of $R$  coincides with the index of $H$ in $G$, i.e.,  $|R|=[G:H]$.

An  amenable group $G$ continuously acting on $X$   naturally  induces an action $H\times X \rightarrow X$ by the homeomorphisms $\{T_g: g\in H\}$ of $X$. 

\begin{prop}\label{prop 3.9}
Let $(X, d, G)$ be a $G$-system, and let $H$ be a subgroup of $G$ with finite index in $G$.
   Then 
$${\overline{\rm  mdim}}_M(H,X,d)\leq {\overline{\rm  mdim}}_M(G,X, d)\cdot [G:H].$$
Additionally, if  $e_G \in R \subset G$ is a finite complete  set of  representatives  of the left-cosets of $H$ and  $T|_{R\setminus \{e_G\}}$ are  Lipschitz maps, then $${\overline{\rm  mdim}}_M(H,X,d)= {\overline{\rm  mdim}}_M(G,X, d)\cdot [G:H].$$
The  results are also valid  for the corresponding  lower metric mean dimensions. 
\end{prop}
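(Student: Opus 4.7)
The plan is to transfer the computation from $H$ to $G$ by lifting a Følner sequence of $H$ to one of $G$ via the coset representatives. Fix $R = \{g_1 = e_G, g_2, \ldots, g_k\}$ with $k = [G:H]$, and for any Følner sequence $\{F_n\}$ of $H$ I would set
\[
E_n := \bigsqcup_{i=1}^k g_i F_n,
\]
so that $|E_n| = k \cdot |F_n|$, disjointness being inherited from the disjointness of the cosets $g_i H$. The first step is to verify that $\{E_n\}$ is itself a Følner sequence of $G$: for $g \in G$, writing $g g_i = g_{\sigma(i)} h_i$ with $\sigma$ the permutation of indices induced by left multiplication on cosets and $h_i \in H$, a coset-by-coset calculation gives $|g E_n \triangle E_n| = \sum_i |h_i F_n \triangle F_n|$, which is $o(|F_n|)$ by the Følner property of $\{F_n\}$ in $H$.

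Once this is in place, the forward inequality follows cheaply. Since $e_G \in R$ we have $F_n \subseteq E_n$, so $d_{F_n} \leq d_{E_n}$, and hence $s(X, d_{F_n}, \epsilon) \leq s(X, d_{E_n}, \epsilon)$. Dividing by $|F_n| = |E_n|/k$, taking $\limsup_n$, and then $\limsup_{\epsilon \to 0}$ after dividing by $\log(1/\epsilon)$ produces
\[
\overline{\rm mdim}_M(H, X, d) \leq [G:H] \cdot \overline{\rm mdim}_M(G, X, d),
\]
where Proposition \ref{prop 3.4} together with the subsequent remark ensures that the right-hand side does not depend on the Følner sequence $\{E_n\}$ chosen for $G$.

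For the reverse inequality under the Lipschitz hypothesis, I would set $L := \max\{1, \mathrm{Lip}(T_{g_i}) : 2 \leq i \leq k\}$ and observe that for any $h \in F_n$ and $g_i \in R$,
\[
d(g_i h x, g_i h y) \leq L \cdot d(h x, h y) \leq L \cdot d_{F_n}(x, y),
\]
which yields $d_{E_n} \leq L \cdot d_{F_n}$. Hence every $(d_{E_n}, \epsilon)$-separated set is a $(d_{F_n}, \epsilon/L)$-separated set, so $s(X, d_{E_n}, \epsilon) \leq s(X, d_{F_n}, \epsilon/L)$. Dividing by $|E_n| = k|F_n|$, taking $\limsup_n$ and then $\limsup_{\epsilon \to 0}$ after dividing by $\log(1/\epsilon)$, and noting that $\log(L/\epsilon)/\log(1/\epsilon) \to 1$, gives $[G:H] \cdot \overline{\rm mdim}_M(G, X, d) \leq \overline{\rm mdim}_M(H, X, d)$. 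The same chain of estimates works verbatim for the lower metric mean dimensions with $\liminf_{\epsilon \to 0}$ replacing $\limsup_{\epsilon \to 0}$.

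The main obstacle will be the verification that $\{E_n\}$ is a Følner sequence of $G$, which requires careful handling of the permutation $\sigma$ of cosets and the coset-by-coset decomposition of the symmetric difference. Everything else then reduces to the standard comparisons between the two Bowen metrics $d_{F_n}$ and $d_{E_n}$, combined with the counting identity $|E_n| = [G:H] \cdot |F_n|$ which supplies the multiplicative factor $[G:H]$ in the final inequality.
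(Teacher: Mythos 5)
Your proposal is correct and follows essentially the same route as the paper: both lift the F\o lner sequence $\{F_n\}$ of $H$ to $E_n = RF_n$ in $G$, use the cardinality identity $|E_n| = [G:H]\,|F_n|$, obtain the forward inequality from the inclusion $F_n \subseteq E_n$, and obtain the reverse inequality from the Lipschitz bound $d_{E_n} \le L\, d_{F_n}$. The only difference is that the paper cites \cite[Lemma 10.3.7]{coo15} for the fact that $\{RF_n\}$ is a F\o lner sequence of $G$, whereas you verify it directly via the coset-by-coset computation with the permutation $\sigma$, which is a valid self-contained alternative to the citation.
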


\begin{proof}
Choose   a finite complete  set   $R$  of  representatives  of the left-cosets of $H$ satisfying $e_G \in R$ and  $|R|=[G:H]$. Let $\{L_n\}$ be a F\o lner sequence  of $H$.
By \cite[Lemma 10.3.7]{coo15},  $\{F_n\}:=\{RL_n\}$  is a F\o lner sequence of $G$.  Then for every $\epsilon >0$,  we have $s(X,d_{L_n},\epsilon) \leq s(X,d_{F_n},\epsilon)$.  This yields that
\begin{align}\label{inequ 3.4}
h_{top}(H,X,d, \{L_n\}, \epsilon)\leq h_{top}(G,X,d, \{F_n\}, \epsilon)\cdot[G:H].
\end{align} 

On the other hand, since $T|_{R\setminus \{e_G\}}: X \rightarrow X$ are  Lipschitz maps, one can choose a  constant $L\geq 1$ such that  for any $x,y \in X$,
$$d(T_gx,T_gy)\leq L \cdot d(x,y)$$   for  all $g\in R$. Using this fact, if $E $  is a $(d_{F_n},\epsilon)$-separated set of $X$, then $E$ is also a $(d_{L_n},\frac{\epsilon}{L})$-separated set of $X$. Similarly, we obtain
\begin{align}\label{inequ 2.10}
h_{top}(H,X,d, \{L_n\}, \frac{\epsilon}{L})\geq h_{top}(G,X,d, \{F_n\}, \epsilon)\cdot[G:H].
\end{align}
By (\ref{inequ 3.4}) and (\ref{inequ 2.10}), we get    the  desired results by taking the corresponding limits.
\end{proof}

\begin{ex}
Consider  $G=\mathbb{Z}$ and $H=m\mathbb{Z}$, where $m\geq 2$, and take $\alpha_n=\frac{1}{2^{|n|}}$  as in Example \ref{ex 3.6}. In this case,  the $G$-systems  $([0,1]^{\mathbb{Z}},D,\mathbb{Z})$ and $([0,1]^{\mathbb{Z}},D,m\mathbb{Z})$ are generated by  the homeomorphisms $T:=T_1$, which is given by $T((x_n)_{n\in \mathbb{Z}})=(x_{n+1})_{n\in \mathbb{Z}}$, and  the iteration $T^m$,   respectively.

Moreover, $L=2^m$ is a uniformly Lipschitz constant such that 
$D(T^jx,T^jy)\leq L \cdot D(x,y)$ for all $1\leq j\leq m-1$. Then, by Proposition  \ref{prop 3.9}, we have
$${\rm mdim}_M(T^m, [0,1]^{\mathbb{Z}},D)=m.$$
\end{ex}

Now applying  Proposition \ref{prop 3.9} to a continuous action of a finite group $G$ and  its trivial subgroup $H=\{e_G\}$,  we get the following corollary.

\begin{cor}
Let  $G$ be a finite group. 
Assume that  $(X,G)$ is  a  $G$-system with a metric $d$ such that $T|_{G\setminus \{e_G\}}$ are  Lipschitz maps. Then
\begin{align*}
{\overline{\rm  mdim}}_M(G,X,d)=\frac{1}{|G|}\overline{\rm dim}_B(X,d),\\
{\underline{\rm  mdim}}_M(G,X,d)=\frac{1}{|G|}\underline{\rm dim}_B(X,d).
\end{align*}
\end{cor}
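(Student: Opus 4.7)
The plan is to apply Proposition \ref{prop 3.9} directly with $H=\{e_G\}$ the trivial subgroup of the finite group $G$. Since $G$ is finite, $H$ has finite index $[G:H]=|G|$, so the hypotheses of Proposition \ref{prop 3.9} on the index are satisfied. Moreover, a complete set of representatives of left-cosets of $H=\{e_G\}$ in $G$ is $R=G$ itself, so the condition ``$T|_{R\setminus\{e_G\}}$ are Lipschitz'' in Proposition \ref{prop 3.9} coincides exactly with the hypothesis ``$T|_{G\setminus\{e_G\}}$ are Lipschitz'' of the corollary. Thus the full equality case of Proposition \ref{prop 3.9} applies and yields
\[
{\overline{\rm  mdim}}_M(H,X,d)={\overline{\rm  mdim}}_M(G,X, d)\cdot |G|,
\]
and analogously for the lower metric mean dimension.

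It remains to identify ${\overline{\rm  mdim}}_M(H,X,d)$ with $\overline{\rm dim}_B(X,d)$. For this, I would take the trivial F\o lner sequence $L_n=\{e_G\}$ of $H$. Then $|L_n|=1$ and the Bowen metric $d_{L_n}$ equals $d$ itself, so
\[
h_{top}(H,X,d,\{L_n\},\epsilon)=\limsup_{n\to\infty}\frac{\log s(X,d_{L_n},\epsilon)}{|L_n|}=\log s(X,d,\epsilon).
\]
Dividing by $\log(1/\epsilon)$ and passing to $\limsup$ (respectively $\liminf$) as $\epsilon\to 0$, Definition \ref{df 3.1} gives
\[
{\overline{\rm  mdim}}_M(H,X,d)=\overline{\rm dim}_B(X,d),\qquad {\underline{\rm  mdim}}_M(H,X,d)=\underline{\rm dim}_B(X,d),
\]
using the formulas for upper and lower box dimensions recalled just before Example \ref{ex 3.6}.

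Combining the two steps yields $|G|\cdot{\overline{\rm  mdim}}_M(G,X,d)=\overline{\rm dim}_B(X,d)$, and similarly for the lower versions, which is the claimed identity. There is essentially no main obstacle here, as the corollary is a direct specialization of Proposition \ref{prop 3.9}; the only mild point to verify is that the trivial F\o lner sequence of $H=\{e_G\}$ is admissible and reduces the Bowen metric to $d$, so that the metric mean dimension of the trivial action indeed collapses to the box dimension of $(X,d)$.
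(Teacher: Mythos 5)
Your proof is correct and matches the paper's intended argument exactly: the paper itself introduces the corollary with the sentence ``Now applying Proposition \ref{prop 3.9} to a continuous action of a finite group $G$ and its trivial subgroup $H=\{e_G\}$, we get the following corollary,'' and you have simply fleshed out that specialization, including the correct identification of the metric mean dimension of the trivial $H$-action with the box dimension via the constant F{\o}lner sequence $L_n=\{e_G\}$.
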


\subsubsection{Product formula}

Given two  $G$-systems $(X,d_X,G)$ and $(Y,d_Y,G)$, we consider a  $G$-action on $X \times Y$ by the homeomorphism $g(x,y)=(gx,gy)$, where $g\in G$,  and equip $X\times Y$  with a compatible product metric $$d_X\times d_Y\big((x_1,y_1),(x_2,y_2)\big)=\max\{d_X(x_1,x_2), d_Y(y_1,y_2)\}.$$

\begin{prop}\label{prop 3.14}
Let $(X,d_X,G)$ and $(Y,d_Y,G)$ be two  $G$-systems. Then
\begin{align*}
{\underline{\rm  mdim}}_M(G,X\times Y, d_X\times d_Y)
\geq  {\underline{\rm  mdim}}_M(G,X,d_X)+{\underline{\rm  mdim}}_M(G,Y,d_Y),
\end{align*}
and 
\begin{align*}
{\overline{\rm  mdim}}_M(G,X\times Y, d_X\times d_Y)
\leq  {\overline{\rm  mdim}}_M(G,X,d_X)+{\overline{\rm  mdim}}_M(G,Y,d_Y).
\end{align*}
Additionally, if  ${\overline{\rm  mdim}}_M(G,X,d_X)={\underline{\rm  mdim}}_M(G,X,d_X)$, then the  above two equalities hold.
\end{prop}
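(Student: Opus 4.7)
The plan is to exploit the product structure of the Bowen max-metric together with Proposition \ref{prop 3.13}, which permits us to interchange $\limsup_n$ and $\liminf_n$ in the definition of either metric mean dimension. The key geometric input is the identity
\begin{align*}
(d_X\times d_Y)_{F_n}\big((x_1,y_1),(x_2,y_2)\big)=\max\{(d_X)_{F_n}(x_1,x_2),(d_Y)_{F_n}(y_1,y_2)\},
\end{align*}
from which it is immediate that a product of spanning (resp.\ separated) sets is spanning (resp.\ separated) in the max metric, hence for every $n\ge 1$ and every $\epsilon>0$,
\begin{align*}
r(X\times Y,(d_X\times d_Y)_{F_n},\epsilon)&\le r(X,(d_X)_{F_n},\epsilon)\cdot r(Y,(d_Y)_{F_n},\epsilon),\\
s(X\times Y,(d_X\times d_Y)_{F_n},\epsilon)&\ge s(X,(d_X)_{F_n},\epsilon)\cdot s(Y,(d_Y)_{F_n},\epsilon).
\end{align*}

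For the upper product inequality for $\overline{\rm mdim}_M(G,X\times Y,d_X\times d_Y)$, I take logs of the spanning inequality, divide by $|F_n|$, and use $\limsup_n(a_n+b_n)\le\limsup_n a_n+\limsup_n b_n$ to conclude $\hat h_{top}(G,X\times Y,d_X\times d_Y,\{F_n\},\epsilon)\le \hat h_{top}(G,X,d_X,\{F_n\},\epsilon)+\hat h_{top}(G,Y,d_Y,\{F_n\},\epsilon)$; dividing by $\logf$ and applying the same subadditivity to $\limsup_{\epsilon\to 0}$ gives the stated upper bound. The lower product inequality for $\underline{\rm mdim}_M$ is dual: starting from the separated-set inequality and invoking the $\liminf_n$ formulation from Proposition \ref{prop 3.13}, I apply $\liminf_n(a_n+b_n)\ge \liminf_n a_n+\liminf_n b_n$ inside and $\liminf_\epsilon(f+g)\ge\liminf_\epsilon f+\liminf_\epsilon g$ outside.

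For the equality case, set $m:=\overline{\rm mdim}_M(G,X,d_X)=\underline{\rm mdim}_M(G,X,d_X)$. By Proposition \ref{prop 3.13} both the $\limsup_n$ and $\liminf_n$ versions of the $X$-factor share the same $\epsilon\to 0$ limit $m$, so morally the $X$-factor contributes a genuine limit and we can absorb cross-term losses. Concretely, from $s(X\times Y,(d_X\times d_Y)_{F_n},\epsilon)\ge s(X,(d_X)_{F_n},\epsilon)\cdot s(Y,(d_Y)_{F_n},\epsilon)$ and the mixed rule $\limsup_n(a_n+b_n)\ge \liminf_n a_n+\limsup_n b_n$, I lower-bound the separated-set $\epsilon$-topological entropy of the product by the $\liminf_n$-quantity for $X$ plus the $\limsup_n$-quantity for $Y$; dividing by $\logf$ and using $\limsup_\epsilon(f+g)\ge\liminf_\epsilon f+\limsup_\epsilon g$ yields $\overline{\rm mdim}_M(G,X\times Y,d_X\times d_Y)\ge m+\overline{\rm mdim}_M(G,Y,d_Y)$. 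The symmetric argument with spanning sets and $\liminf_\epsilon$ on the outside gives $\underline{\rm mdim}_M(G,X\times Y,d_X\times d_Y)\le m+\underline{\rm mdim}_M(G,Y,d_Y)$. Combined with the two inequalities of the previous paragraph, these yield both equalities. The only real hurdle is careful bookkeeping of which of $\limsup$ or $\liminf$ acts at the $n$-level versus the $\epsilon$-level; I would record the two mixed $\limsup/\liminf$ inequalities once at the outset to keep the chain of estimates transparent.
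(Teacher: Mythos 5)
Your proposal is correct and follows essentially the same route as the paper: the product inequalities for spanning and separated sets, Proposition \ref{prop 3.13} to switch to the $\liminf_n$ version, and the mixed $\limsup/\liminf$ superadditivity rule to exploit that the $X$-factor contributes a genuine limit when $\overline{\rm mdim}_M={\underline{\rm mdim}}_M$. The paper records the $\liminf_n$-level inequality \eqref{equ 3.3} with $\liminf_n$ on both factors and then (tersely) defers the mixed rule entirely to the $\epsilon$-level; you instead apply a mixed bound already at the $n$-level before passing to $\epsilon$, but this is a cosmetic rearrangement of the same chain of estimates, not a different argument.
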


\begin{proof}
Fix  $F\in  \FF(G)$. If  $E_1$ and $E_2$ are  $(d_F,\epsilon)$-separated sets of $X$ and $Y$  respectively, then $E_1\times E_2$ is a $(d_F,\epsilon)$-separated set of $X\times Y$. Then $s(X\times Y, (d_X\times d_Y)_{F_n},\epsilon)\geq  s(X, (d_X)_{F_n},\epsilon)\cdot s(Y, (d_Y)_{F_n},\epsilon)$.
This implies that
\begin{align}\label{equ 3.3}
\begin{split}
&\liminf_{n \to \infty} \frac{1}{|F_n|} \log s(X, (d_X)_{F_n},\epsilon) + \liminf_{n \to \infty} \frac{1}{|F_n|} \log s(Y, (d_Y)_{F_n},\epsilon)\\
\leq & \liminf_{n \to \infty} \frac{1}{|F_n|} \log s(X\times Y, (d_X\times d_Y)_{F_n},\epsilon).
\end{split}
\end{align}
We obtain the first inequality by Proposition \ref{prop 3.13}. 

Similarly, using spanning sets we get the following  inequality:
\begin{align}\label{equ 3.4}
\begin{split}
\hat{h}_{top}(G,X\times Y,d_X\times d_Y,\{F_n\}, \epsilon)\leq \hat{h}_{top}(G,X,d_X,\{F_n\}, \epsilon)+\hat{h}_{top}(G,Y,d_Y,\{F_n\}, \epsilon).
\end{split}
\end{align} 
This implies the second inequality.

If ${\overline{\rm  mdim}}_M(G,X,d_X)={\underline{\rm  mdim}}_M(G,X,d_X)$, the remaining statement holds by the inequalities (\ref{equ 3.3}) and  (\ref{equ 3.4}).
\end{proof}

\begin{rem}
There are abundant examples  in fractal geometry  (cf. \cite[Example 3.7]{rs12}, \cite[Theorem 3]{www16})  showing  that for some compact metric spaces $(X,d_X)$ and $(Y,d_Y)$, it holds that
\begin{align*}
&\overline{\rm dim}_B(X\times Y,d_X\times d_Y)< \overline{\rm dim}_B(X,d_X)+\overline{\rm dim}_B(Y,d_Y)\\
&\underline{\rm dim}_B(X,d_X)+\underline{\rm dim}_B(Y,d_Y)<\underline{\rm dim}_B(X\times Y,d_X\times d_Y).
\end{align*}
Using this fact and  considering the full shifts over  $X$, $Y$ and the product $X\times Y$ as in Example \ref{ex 3.6}, we conclude that  the  strict inequalities in Proposition \ref{prop 3.14} can happen  for two \emph{different}  $G$-systems\footnote[2]{A similar phenomenon for mean dimension has been revealed  in \cite{t19,jq24}.}.   However, if two  $G$-systems are identical,  by \eqref{equ 3.3} and \eqref{equ 3.4} again, the product formula  of metric mean dimension always holds for  a single $G$-system.
\end{rem}

\begin{prop}
Let $(X, d, G)$ be a $G$-system. Then 
\begin{align*}
{\overline{\rm  mdim}}_M(G,X\times X,d\times d)&=2\cdot{\overline{\rm  mdim}}_M(G,X,d),\\
{\underline{\rm  mdim}}_M(G,X\times X,d\times d)&=2\cdot{\underline{\rm  mdim}}_M(G,X,d).
\end{align*}
\end{prop}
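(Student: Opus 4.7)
The plan is to combine Proposition~\ref{prop 3.14} (applied with $Y=X$ and $d_Y=d_X=d$) with a sharper look at the diagonal case $X=Y$. Direct substitution into Proposition~\ref{prop 3.14} immediately gives the two inequalities
$$\overline{\rm mdim}_M(G,X\times X,d\times d) \leq 2\, \overline{\rm mdim}_M(G,X,d), \qquad \underline{\rm mdim}_M(G,X\times X,d\times d) \geq 2\, \underline{\rm mdim}_M(G,X,d),$$
so everything reduces to establishing the two reverse inequalities.

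For $\overline{\rm mdim}_M(G,X\times X,d\times d) \geq 2\, \overline{\rm mdim}_M(G,X,d)$, I would revisit inequality \eqref{equ 3.3} from the proof of Proposition~\ref{prop 3.14} with the specialization $X=Y$ and $d_X=d_Y=d$; the two identical terms on the left collapse into
$$\liminf_{n\to\infty} \frac{\log s(X\times X,(d\times d)_{F_n},\epsilon)}{|F_n|} \,\geq\, 2\liminf_{n\to\infty} \frac{\log s(X,d_{F_n},\epsilon)}{|F_n|}.$$
Dividing by $\log(1/\epsilon)$, taking $\limsup_{\epsilon\to 0}$ on both sides, and then using the $\liminf_n$-representation of $\overline{\rm mdim}_M$ provided by Proposition~\ref{prop 3.13}, yields the claim.

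For $\underline{\rm mdim}_M(G,X\times X,d\times d) \leq 2\, \underline{\rm mdim}_M(G,X,d)$, I would similarly specialize \eqref{equ 3.4} to $X=Y$, where it reduces to
$$\hat{h}_{top}(G,X\times X,d\times d,\{F_n\},\epsilon) \,\leq\, 2\, \hat{h}_{top}(G,X,d,\{F_n\},\epsilon).$$
Dividing by $\log(1/\epsilon)$, taking $\liminf_{\epsilon\to 0}$, and then using Proposition~\ref{prop 3.1} to write $\underline{\rm mdim}_M$ in terms of spanning sets, delivers the required bound.

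There is no genuine obstacle in this proof: the reason strict inequalities are possible in the general product formula of Proposition~\ref{prop 3.14} (as the preceding Remark observes) is precisely the failure of additivity $\liminf(a_n+b_n)\geq \liminf a_n+\liminf b_n$ and $\limsup(a_n+b_n)\leq \limsup a_n+\limsup b_n$ to be tight when $a_n,b_n$ are drawn from two \emph{independent} systems. For $X=Y$ the two sequences coincide, and the trivial identities $\liminf(2a_n)=2\liminf a_n$ and $\limsup(2a_n)=2\limsup a_n$ close that gap automatically, turning \eqref{equ 3.3} and \eqref{equ 3.4} into sharp estimates.
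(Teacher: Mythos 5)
Your proposal is correct and matches the paper's intended argument exactly. The paper does not spell out a separate proof for this proposition; it points to the remark preceding it, which observes that when the two factor systems are identical, the specializations of \eqref{equ 3.3} and \eqref{equ 3.4} close the gap. Your write-up simply fills in those details, and the passages through Proposition \ref{prop 3.13} (for the $\liminf_n$ form of $\overline{\rm mdim}_M$) and Proposition \ref{prop 3.1} (for the spanning-set form of $\underline{\rm mdim}_M$) are applied correctly, both $X$ and $X\times X$ being $G$-invariant as required.
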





\subsection{Divergent  rate of  measure-theoretic $\epsilon$-entropy}

In this subsection, we collect several types of  measure-theoretic $\epsilon$-entropies of invariant measures, and prove that their divergent rates coincide in terms of ergodic measures (i.e., Theorem \ref{thm 2.15}). 



\subsubsection{Local measure-theoretic entropy of open covers}

There are two types of  local measure-theoretic entropies for finite open covers of $X$ \cite{hyz11}.

Given $\UU \in \mathcal{C}_X^o$ and $\mu \in M(X,G)$, the \emph{local entropy of $\UU$ w.r.t. $\mu$} is defined by
$$h_{\mu}(G,\UU):=\inf_{\alpha \succ \UU}h_{\mu}(G,\alpha),$$
where the infimum ranges over  the set of all finite Borel  partitions of $X$ that refine $\UU$.

It was shown  in \cite[Theorem 3.5]{hyz11} that,  for every $\mu \in M(X,G)$, one has $$h_{\mu}(G,X)=\sup_{\UU \in \mathcal{C}_X^o}h_{\mu}(G,\UU).$$ 

The other  is  Shapira's   local  entropy. Let $0<\delta <1$ and $\mu \in M(X,G)$. We define
$$N_{\mu}(\UU,\delta):=\min\{\#\mathcal{V}: \mathcal{V}\subset \UU~\text{and}~\mu(\cup_{V\in \mathcal{V}}V)\geq 1-\delta\}.$$

The  \emph{Shapira's   local upper and lower entropies of $\UU$ w.r.t. $\mu$} are  respectively defined by
\begin{align*}
\overline{h}_{\mu}^S(G,\UU,\{F_n\})&=\lim_{\delta \to 0}\liminf_{n \to \infty}\frac{1}{|F_n|} \log N_{\mu}(\UU_{F_n},\delta),\\
\underline{h}_{\mu}^S(G,\UU,\{F_n\})&=\lim_{\delta \to 0}\limsup_{n \to \infty}\frac{1}{|F_n|} \log N_{\mu}(\UU_{F_n},\delta).
\end{align*}

The  local  measure-theoretic entropy  and Shapira's   local entropy  of  $\UU$ are  related by the following lemma.

\begin{lem}\label{lem 3.17}\cite[Theorem 4.19]{hyz11}
Let $(X,G)$ be a $G$-system with an open cover  $\UU \in \mathcal{C}_X^o$, and let $\mu \in E(X,G)$. Then 
\begin{align*}
h_{\mu}(G,\UU)=\overline{h}_{\mu}^S(G,\UU)=\underline{h}_{\mu}^S(G,\UU).
\end{align*}
\end{lem}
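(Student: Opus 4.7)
The plan is to sandwich $h_\mu(G, \UU)$ between $\overline{h}_\mu^S(G, \UU)$ and $\underline{h}_\mu^S(G, \UU)$, observing first that the trivial inequality $\overline{h}_\mu^S(G, \UU) \le \underline{h}_\mu^S(G, \UU)$ follows from $\liminf_n \le \limsup_n$ in the definitions. The main engine is Lindenstrauss's Shannon--McMillan--Breiman theorem for amenable group actions: for any $\mu \in E(X,G)$, any finite Borel partition $\alpha$, and any tempered F\o lner sequence $\{F_n\}$,
$$-\frac{1}{|F_n|}\log \mu(\alpha_{F_n}(x)) \longrightarrow h_\mu(G,\alpha) \quad \mu\text{-a.e. and in } L^1(\mu).$$

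For the easier direction $\underline{h}_\mu^S(G, \UU) \le h_\mu(G, \UU)$, I would fix any finite Borel partition $\alpha \succ \UU$ and $\delta, \epsilon > 0$. SMB combined with Egorov's theorem produces, for all sufficiently large $n$, a set $A_n$ with $\mu(A_n) \ge 1 - \delta$ that meets at most $e^{|F_n|(h_\mu(G,\alpha) + \epsilon)}$ atoms of $\alpha_{F_n}$. Since $\alpha \succ \UU$ implies $\alpha_{F_n} \succ \UU_{F_n}$, picking one element of $\UU_{F_n}$ containing each such atom yields a subcover of $\UU_{F_n}$ of the same cardinality that still covers $A_n$; hence $N_\mu(\UU_{F_n}, \delta) \le e^{|F_n|(h_\mu(G,\alpha) + \epsilon)}$. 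Taking $\limsup_n$, then $\epsilon, \delta \to 0$, and finally $\inf_{\alpha \succ \UU}$ finishes this half.

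The harder direction is $h_\mu(G, \UU) \le \overline{h}_\mu^S(G, \UU)$, and the main obstacle is that a minimal subcover $\mathcal{V}_n \subset \UU_{F_n}$ of size $N_\mu(\UU_{F_n}, \delta)$ is a subcover of $\UU_{F_n}$, not of $\UU$, so one cannot directly read off a low-entropy refinement of $\UU$ from it. My plan is to argue by contradiction: supposing $\overline{h}_\mu^S(G, \UU) + 3\eta < h_\mu(G, \UU)$ for some $\eta > 0$, I would fix $\delta$ small enough that $\liminf_n \frac{1}{|F_n|} \log N_\mu(\UU_{F_n}, \delta) \le \overline{h}_\mu^S(G, \UU) + \eta$, and pass to a subsequence along which $|\mathcal{V}_n| \le e^{|F_n|(\overline{h}_\mu^S(G, \UU) + 2\eta)}$ with $\mu(\bigcup \mathcal{V}_n) \ge 1 - \delta$. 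Each $V \in \mathcal{V}_n$ encodes a sequence $(U_g^V)_{g \in F_n} \in \UU^{F_n}$ via $V = \bigcap_{g \in F_n} g^{-1} U_g^V$, and a measurable selector on $\bigcup \mathcal{V}_n$ assigns such an encoding to each point.

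Using this selector together with a fixed auxiliary partition $\xi \succ \UU$ of bounded cardinality $K$, I would build a partition $\zeta_n \succ \UU$ whose atoms of positive measure on $\bigcup \mathcal{V}_n$ are controlled in number by $|\mathcal{V}_n|$ and whose single error atom on the complement has $\mu$-mass at most $\delta$. A Fano-type estimate then yields
$$H_\mu((\zeta_n)_{F_n}) \le \log N_\mu(\UU_{F_n}, \delta) + H(\delta) + \delta |F_n| \log K,$$
where $H(\delta) = -\delta\log\delta - (1-\delta)\log(1-\delta)$. Combining this with $h_\mu(G, \zeta_n) \le \frac{1}{|F_n|} H_\mu((\zeta_n)_{F_n})$ (which uses the infimum characterization $h_\mu(G, \zeta_n) = \inf_F \frac{1}{|F|} H_\mu((\zeta_n)_F)$ recalled in the preliminaries), dividing by $|F_n|$, and taking $\liminf_n$ along the good subsequence followed by $\delta \to 0$, produces partitions $\zeta_n \succ \UU$ with $h_\mu(G, \zeta_n) \le \overline{h}_\mu^S(G, \UU) + 2\eta$, contradicting $h_\mu(G, \UU) = \inf_{\alpha \succ \UU} h_\mu(G, \alpha)$. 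The hardest technical point is the Fano-type inequality in the amenable-group setting, which is exactly where the ergodicity of $\mu$ and the temperedness of $\{F_n\}$ combine through SMB to make the construction coherent across $G$-translates.
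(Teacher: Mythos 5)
The paper does not prove this lemma --- it is imported verbatim from Huang--Ye--Zhang \cite[Theorem 4.19]{hyz11} --- so there is no in-paper proof to compare against; you are in effect reproving the cited result. Of your two directions, the easy one $\underline{h}_\mu^S(G,\UU) \le h_\mu(G,\UU)$ via SMB and Egorov is correct and standard: on the Egorov set every atom of $\alpha_{F_n}$ that meets it has mass $\ge e^{-|F_n|(h_\mu(G,\alpha)+\epsilon)}$, so at most $e^{|F_n|(h_\mu(G,\alpha)+\epsilon)}$ such atoms cover it, and since $\alpha \succ \UU$ forces $\alpha_{F_n} \succ \UU_{F_n}$, picking one element of $\UU_{F_n}$ per atom bounds $N_\mu(\UU_{F_n},\delta)$. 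Your remark that $\overline{h}_\mu^S \le \underline{h}_\mu^S$ is the trivial inequality is also right given the paper's (deliberately swapped) $\liminf/\limsup$ convention.

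The hard direction $h_\mu(G,\UU) \le \overline{h}_\mu^S(G,\UU)$ has a genuine gap at the Fano step. You want a partition $\zeta_n \succ \UU$ built from a measurable selector on $\bigcup\mathcal{V}_n$ together with a bounded auxiliary $\xi$ of cardinality $K$, and you claim
\begin{align*}
H_\mu\bigl((\zeta_n)_{F_n}\bigr) \le \log N_\mu(\UU_{F_n},\delta) + H(\delta) + \delta\,|F_n|\log K.
\end{align*}
Fano decomposes $H_\mu((\zeta_n)_{F_n}) \le H(\hat{Y}) + H\bigl((\zeta_n)_{F_n}\mid\hat{Y}\bigr)$ where $\hat{Y}$ is the index of the $V\in\mathcal{V}_n$ containing $x$ (or an error symbol); the first term is fine, but the bound on the second requires that $\hat{Y}$ determine $(\zeta_n)_{F_n}(x)$ outside a set of mass $\le\delta$. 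It does not. The code $V\mapsto (U_g^V)_{g\in F_n}$ records, for $x\in V$, \emph{some} element $U_g^V\in\UU$ with $gx\in U_g^V$; but $\zeta_n(gx)$ is governed by whichever $V'\in\mathcal{V}_n$ contains $gx$ and the selector's choice there, which is unrelated to $U_g^V$. Conditionally on $\hat{Y}=V$ the $(\zeta_n)_{F_n}$-coordinate of $x$ is essentially unconstrained, so $H((\zeta_n)_{F_n}\mid\hat{Y})$ is of order $|F_n|\log K$ rather than $\delta|F_n|\log K$, and the estimate collapses. This inconsistency --- a minimal subcover of $\UU_{F_n}$ does not canonically induce a single partition refining $\UU$ whose $F_n$-join has comparable entropy --- is exactly what makes Shapira's theorem nontrivial; the proof in \cite{hyz11} circumvents it with a more delicate combinatorial construction (quasi-tiling in the amenable setting, following Shapira's $\mathbb{Z}$-argument) that forces the cover data to be coherent across $G$-translates \emph{before} any entropy estimate is made. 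Fano alone cannot supply that coherence, and the contradiction you aim for never materializes.
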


\begin{rem}
We drop the dependence  of  Shapira's   local upper and lower entropies  on  the F\o lner sequence $\{F_n\}$ if $\mu$ is a ergodic measure.
\end{rem}
\subsubsection{L$^{P}$ and L$^{\infty}$ rate-distortion functions}
For $\mathbb{Z}$-actions, under  certain rate-distortion conditions,  $L^p$ and $L^{\infty}$ rate-distortion functions   were defined by   mutual information of random variables  \cite{lt18}.   Here, we state the amenable version of $L^p$ and $L^{\infty}$ rate-distortion functions introduced in \cite{cdz22}. Readers can refer to the  monographs \cite{ct06,gra 11} for a comprehensive treatment of rate-distortion theory.

Let $(\Omega,\mathbb{P})$ be a  probability space. Suppose that  $\mathcal{X}$, $\mathcal{Y}$ are  two measurable spaces and   $\xi:\Omega \rightarrow \mathcal{X}$ and $\eta: \Omega \rightarrow \mathcal{Y}$ are two measurable maps. The $\emph{mutual information}$ $I(\xi;\eta)$  of $\xi$ and $\eta$  is defined  as 
\begin{align*}
\sup_{\mathcal{P},\mathcal{Q}}\sum_{P \in \mathcal{P}, Q \in \mathcal{Q}}\mathbb{P}((\xi, \eta) \in P \times  Q)\log \frac{\mathbb{P}((\xi, \eta) \in P \times  Q)}{\mathbb{P}(\xi \in P)\mathbb{P}(\eta \in Q)},
\end{align*}
where $\mathcal{P}$ and $\mathcal{Q}$  range over  all  finite   partitions of $\mathcal{X}$  and $\mathcal{Y}$, respectively. Here, we  obey  the convention that $0\log \frac{0}{a}:=0$ for all $a\geq0$. 

Clearly, $I(\xi;\eta)=I(\eta;\xi)\geq 0$.  In particular, $I(\xi;\eta)=0$ if $\xi,\eta$ are two independent random variables. Hence,  mutual information  gives the  amount of  the total information  shared  by both $\xi$ and $\eta$.

Let  $(X,G)$ be a $G$-system with a metric $d$, and let $\mu \in M(X,G)$. For $\epsilon >0$, $1\leq p<\infty$ and $F\in \FF(G)$, we define  $R_{\mu,L^p}(F,\epsilon)$  as the infimum of
$I(\xi;\eta),$
where $\xi$ and $\eta=(\eta_g)_{g\in F}$  are random variables  defined on some probability space $(\Omega, \mathbb{P})$ such that
\begin{enumerate}
\item $\xi$ takes values in $X$, and its law is given by $\mu$.
\item For every $g\in F$, $\eta_g$ takes  values in $X$ and 
\begin{align*}
\mathbb{E}\left(\frac{1}{|F|}\sum_{g\in F}d(g\xi,\eta_g)^p\right)<\epsilon^p,
\end{align*}
\end{enumerate} 
where $\mathbb{E}(\cdot)$  is  the  usual expectation w.r.t.  $\mathbb{P}$.

We define the  $\emph{$L^{p}$ rate-distortion function}$  of $\mu$ w.r.t. a F\o lner sequence $\{F_n\}$ as   $$R_{\mu,L^p}(\{F_n\},\epsilon)=\liminf_{n\to \infty}\frac{R_{\mu,L^p}(F_n,\epsilon)}{|F_n|}.$$

Let $s>0$. We define  $R_{\mu,L^\infty}(F,\epsilon,s)$  as the infimum of
$I(\xi;\eta),$
where  $\xi$ and $\eta=(\eta_g)_{g\in F}$  are random variables  defined on some probability space $(\Omega, \mathbb{P})$ such that
\begin{enumerate}
\item $\xi$ takes values in $X$, and its law is given by $\mu$.
\item For every $g\in F$, $\eta_g$ takes  values in $X$ and 
\begin{align*}
\mathbb{E}\left( \frac{1}{|F|}\#\{g \in F: d(g\xi,\eta_g)\geq \epsilon\}\right)<s.
\end{align*}
\end{enumerate} 
We set $R_{\mu,L^{\infty}}(\{F_n\},\epsilon,s)=\liminf_{n\to \infty}\frac{R_{\mu,L^\infty}(F_n,\epsilon,s)}{|F_n|}$. The $\emph{$L^{\infty}$ rate-distortion function}$  of $\mu$ w.r.t. a F\o lner sequence $\{F_n\}$ is defined as   $$R_{\mu,L^{\infty}}(\{F_n\},\epsilon)=\lim_{s \to 0} R_{\mu,L^{\infty}}(\{F_n\},\epsilon,s).$$ 

\subsubsection{Katok's $\epsilon$-entropies}
Measure-theoretic entropy of ergodic measures admits a  classical Katok's  entropy formula \cite{k80}.

Let $\delta \in (0,1)$, $\mu \in M(X)$ and $\{F_n\}$ be a F\o lner sequence of $G$. For every $F\in \FF(G)$ and $\epsilon >0$, we put
$$R_{\mu}(F,\delta):=\min\{m: \mu(\bigcup_{j=1}^mB_F(x_j,\epsilon)) \geq 1-\delta, x_j \in X, 1\leq j \leq m\}.$$ 
We  define \emph{the upper and lower Katok's  $\epsilon$-entropies of $\mu$ w.r.t. $\{F_n\}$} as
\begin{align*}
\overline{h}_{\mu}^K(\{F_n\},\epsilon):&=\lim_{\delta \to 0} \limsup_{n \to \infty}\frac{1}{|F_n|}\log R_{\mu}(F_n,\delta),\\
\underline{h}_{\mu}^K(\{F_n\},\epsilon):&=\lim_{\delta \to 0} \liminf_{n \to \infty}\frac{1}{|F_n|}\log R_{\mu}(F_n,\delta),
\end{align*}
respectively.

 It was shown in \cite[Theorem 3.1]{zcy16} that  if $\{F_n\}$ is a tempered F\o lner sequence of $G$ with $\lim_{n\to \infty} \frac{|F_n|}{\log n}=\infty$, then for  every $\mu \in E(X,G)$,  $$\lim_{\epsilon \to 0} \overline{h}_{\mu}^K(\{F_n\},\epsilon)=\lim_{\epsilon \to 0} \underline{h}_{\mu}^K(\{F_n\},\epsilon)=h_{\mu}(G,X).$$

Given $F\in \FF(G)$, we define the average metric on $X$ as $$\overline{d}_F(x,y)=\frac{1}{|F|}\sum_{g\in F}d(gx,gy).$$ 
Denote by $\overline{R}_{\mu}(F,\delta)$  the smallest integer $m$ such that  there exists $x_j \in X$, $1\leq j\leq m$, the  $\mu$-measure of the union  of the average $\overline{d}_F$-Bowen balls $B_{\overline{d}_F}(x_j,\epsilon)$ is  no less than $1-\delta$. 

 We  define \emph{the upper and lower average Katok's  $\epsilon$-entropies of $\mu$ w.r.t. $\{F_n\}$} as
 \begin{align*}
 \overline{h}_{\mu,L^1}^K(\{F_n\},\epsilon):&=\lim_{\delta \to 0} \limsup_{n \to \infty}\frac{1}{|F_n|}\log \overline{R}_{\mu}(F_n,\delta),\\
 \underline{h}_{\mu,L^1}^K(\{F_n\},\epsilon):&=\lim_{\delta \to 0} \liminf_{n \to \infty}\frac{1}{|F_n|}\log \overline{R}_{\mu}(F_n,\delta),
 \end{align*}
 respectively.

\subsubsection{Pfister and Sullivan's  $\epsilon$-entropy}
Measure-theoretic entropy can be equivalently defined using separated sets.

Given $F\in \FF(G)$ and $x\in X$, we define a  Borel probability measure  $\mathcal{E}_F(x)=\frac{1}{|F|}\sum_{g\in F}\delta_{gx}$ on $X$,
where $\delta_x$ is the Dirac measure   at $x$.

Let $\mu\in M(X)$. The \emph{Pfister and Sullivan's  $\epsilon$-entropy of $\mu$} is defined by
$$PS_\mu(\{F_n\},\epsilon)=\inf_{O\ni \mu}\limsup_{n \to \infty}\frac{1}{|F_n|}\log s(X_{F_n, O}, d_{F_n},\epsilon),$$
where the infimum  is taken over  all neighborhoods  $O$  of $\mu$ in $M(X)$, and 
$X_{F, O}:=\{x\in X:  \mathcal{E}_F(x) \in O\}.$

In \cite{rtz23}, the authors showed that  for every $\mu \in E(X,G)$,  $$\lim_{\epsilon \to 0}\limits PS_\mu(\{F_n\},\epsilon)=h_{\mu}(G,X).$$

Letting $\epsilon \to 0$, different  measure-theoretic 
$\epsilon$-entropies of ergodic measures may lead to infinite measure-theoretic entropy.
In  the following,  we demonstrate that these  measure-theoretic $\epsilon$-entropies, chosen from the candidate set
$$\mathcal{E}:=\left\{\inf_{
\diam (\alpha)\leq \epsilon}\limits h_{\mu}(G,\alpha), PS_\mu(\{F_n\},\epsilon), \underline{h}_{\mu}^K(\{F_n\}, \epsilon), \overline{h}_{\mu}^K(\{F_n\},\epsilon),\atop  \inf_{\diam (\UU) \leq \epsilon}\limits h_{\mu}(G,\UU), \sup_{\diam (\UU) \leq \epsilon, \atop \Leb (\UU) \geq \frac{\epsilon}{4} } \limits h_{\mu}(G,\UU),\sup_{\Leb (\UU) \geq \epsilon } \limits h_{\mu}(G,\UU),R_{\mu,L^{\infty}}(\{F_n\},\epsilon) \right\},$$
have the same divergent rate as $\epsilon \to 0$.




\begin{thm}\label{thm 2.15}
Let  $(X,d,G)$ be a $G$-system and $\mu \in E(X,G)$, and let $\{F_n\}$ be a tempered F\o lner sequence of $G$. Then for every $h_{\mu}(G,\{F_n\},\epsilon) \in \mathcal{E}$,
\begin{align*}
\limsup_{\epsilon \to 0}\frac{h_{\mu}(G,\{F_n\},\epsilon)}{\logf}&=\limsup_{\epsilon \to 0}\frac{1}{\logf} \lim_{\delta \to 0}\inf\{h_{top}(G, Z,d,\{F_n\},\epsilon):\mu(Z)\geq 1-\delta\},\\
\liminf_{\epsilon \to 0}\frac{h_{\mu}(G,\{F_n\},\epsilon)}{\logf}&=\liminf_{\epsilon \to 0}\frac{1}{\logf} \lim_{\delta  \to 0}\inf\{h_{top}(G, Z,d,\{F_n\},\epsilon):\mu(Z)\geq 1-\delta\}.
\end{align*}
Additionally, if $(X,d)$ has the tame growth  of covering numbers, then   $R_{\mu,L^p}(\{F_n\},\epsilon)$ ($1\leq p <\infty$) can be included in $\mathcal{E}$.
\end{thm}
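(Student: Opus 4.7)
The plan is to reduce every candidate in $\mathcal{E}$ to a single reference quantity
\[
h^{top}_\mu(\{F_n\},\epsilon) := \lim_{\delta \to 0}\inf\{h_{top}(G,Z,d,\{F_n\},\epsilon): \mu(Z)\geq 1-\delta\}.
\]
For each $h_{\mu}(G,\{F_n\},\epsilon) \in \mathcal{E}$ I would produce universal constants $0 < c_1 < c_2$ and prove a sandwich of the form
\[
h^{top}_\mu(\{F_n\},c_2\epsilon) - o(\logf) \;\le\; h_{\mu}(G,\{F_n\},\epsilon) \;\le\; h^{top}_\mu(\{F_n\},c_1\epsilon) + o(\logf).
\]
Since dilating $\epsilon$ by a universal constant does not affect $\limsup_{\epsilon\to 0}(\cdot)/\logf$ (nor its $\liminf$ analogue), both ends of the sandwich have the same divergent rate, which is exactly what the theorem asserts.

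First I would handle the ``geometric'' entropies. For the two Katok $\epsilon$-entropies, the identity $R_\mu(F,\delta) = \inf\{r(Z,d_F,\epsilon):\mu(Z)\geq 1-\delta\}$, obtained by taking $Z$ to be the relevant union of Bowen balls, makes the sandwich essentially tautological, modulo interchanging $\limsup_n$ and $\inf_Z$, which is justified for ergodic $\mu$ via Lindenstrauss' pointwise ergodic theorem along the tempered sequence $\{F_n\}$. The three open-cover quantities are handled via Lemma \ref{lem 3.17}, which equates $h_\mu(G,\UU)$ with Shapira's local entropies, combined with the subcover/spanning correspondence from the proof of Proposition \ref{prop 3.4}; this forces them to share the same divergent rate. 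The partition entropy $\inf_{\diam(\alpha)\le\epsilon}h_\mu(G,\alpha)$ is then squeezed between the open-cover quantities by refining any fine open cover by a partition of comparable diameter. For $PS_\mu(\{F_n\},\epsilon)$, a similar reduction uses the fact that $\mu(X_{F_n,O})\to 1$ for any neighborhood $O$ of $\mu$, so that $X_{F_n,O}$ is itself a legitimate candidate set $Z$ of $\mu$-measure $\ge 1-\delta$ in the definition of $h^{top}_\mu$.

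The main obstacle is reconciling the rate-distortion functions with the above picture. Following the amenable-group Lindenstrauss-Tsukamoto sandwich of \cite{cdz22,lt18}, the upper bound on $R_{\mu,L^\infty}(\{F_n\},\epsilon)$ is obtained by constructing a near-optimal coupling $(\xi,\eta)$ in which, on a set of $\mu$-measure close to one, $\eta$ takes values in a minimal $(d_{F_n},\epsilon)$-spanning set of that set; the resulting mutual information is controlled by the Shannon entropy of $\eta$, and hence by the logarithm of the spanning number. The lower bound is a Fano-type inequality that bounds $I(\xi;\eta)$ from below by the logarithm of a covering number on a large-measure set. The $L^p$ case is the most delicate point of the proof: one converts an average $L^p$-constraint into a coordinate-wise $L^\infty$-constraint on a positive-density subset of coordinates via Markov's inequality, which produces an error term of order $\epsilon\cdot\log r(X,d,\epsilon)$. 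The tame growth hypothesis is precisely what forces this error to be $o(\logf)$, so the sandwich survives division by $\logf$. Combining all the sandwiches delivers the claimed equality of divergent rates and finishes the proof.
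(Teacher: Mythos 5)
Your overall plan is the paper's plan: pick a reference quantity, sandwich every candidate between copies of it with dilated $\epsilon$, and divide by $\log(1/\epsilon)$. The identifications you sketch for the open-cover, partition, and Pfister--Sullivan quantities also match the paper's chain (\ref{equ 2.10}), and the rate-distortion endgame (Fano lower bound, $L^p\to L^\infty$ conversion via Markov, and the error term $\epsilon^\theta\log r(X,d,\epsilon)$ killed by tame growth) is exactly what (\ref{equu 3.11})--(\ref{equu 3.14}), imported from \cite{ljzz25}, encode.

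The genuine gap is in your treatment of the Katok entropies. You observe (correctly) that $R_\mu(F,\delta)=\inf\{r(Z,d_F,\epsilon):\mu(Z)\geq 1-\delta\}$, so that
\[
\overline{h}_\mu^K(\{F_n\},\epsilon)=\lim_{\delta\to 0}\limsup_n\inf_Z\tfrac{1}{|F_n|}\log r(Z,d_{F_n},\epsilon),
\]
while the reference quantity is $\lim_{\delta\to 0}\inf_Z\limsup_n\tfrac{1}{|F_n|}\log s(Z,d_{F_n},\epsilon)$. One inequality ($\limsup_n\inf_Z\leq\inf_Z\limsup_n$) is free and gives $\overline{h}_\mu^K\leq h^{top}_\mu$, which is Lemma~\ref{lem 3.25}\,(3). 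But the \emph{reverse} inequality requires producing one fixed $Z$ of large measure whose $d_{F_n}$-spanning numbers are controlled simultaneously for all large $n$, and Lindenstrauss' pointwise ergodic theorem does not by itself justify this interchange. The paper bypasses the interchange entirely, closing a cycle: generic points (Lemma~\ref{lem 3.20}) bound $h^{top}_\mu$ by $PS_\mu$; the measure-construction argument of Proposition~\ref{lem 3.23} feeding Lemma~\ref{lem 3.24} bounds $PS_\mu$ by $\inf_{\diam\alpha\leq\epsilon}h_\mu(G,\alpha)$; and Shapira's local entropy (Lemma~\ref{lem 3.17}) bounds that by $\underline{h}_\mu^K(\{F_n\},\epsilon/4)\leq\overline{h}_\mu^K(\{F_n\},\epsilon/4)$, which then wraps around to $h^{top}_\mu(\{F_n\},\epsilon/4)$. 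You should replace the claim that the Katok sandwich is ``essentially tautological modulo the interchange'' by an explicit acknowledgment that the hard direction rides on this cycle, and in particular on the lower-semicontinuity/measure-construction step (Proposition~\ref{lem 3.23}) and the ergodic equality $h_\mu(G,\UU)=\overline{h}_\mu^S(G,\UU)$, neither of which is a consequence of the pointwise ergodic theorem alone.
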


By Theorem \ref{thm 2.15},  modulo a sufficiently small constant $\delta>0$,  one may approximately  regard  the precise divergent rates of these measure-theoretic $\epsilon$-entropies  as the  metric mean dimension of a   
Borel set with $\mu$-measure no less than $1-\delta$.


Several auxiliary lemmas are presented to pave the way for the proof of Theorem \ref{thm 2.15}.
\begin{lem}\label{lem 3.20}
Let  $(X,d,G)$ be a $G$-system and $\mu \in E(X,G)$, and let $\{F_n\}$ be a tempered F\o lner sequence of $G$. Then for every  $\epsilon >0$,
$$\lim_{\delta \to 0}\inf\{h_{top}(G, Z,d,\{F_n\},\epsilon):\mu(Z)\geq 1-\delta\}\leq PS_\mu(\{F_n\},\epsilon).$$
\end{lem}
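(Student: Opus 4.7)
The plan is to exploit the pointwise ergodic theorem of Lindenstrauss for tempered F\o lner sequences in order to exhibit, for each neighborhood $O$ of $\mu$ and each small $\delta>0$, a Borel set $Z\subset X$ with $\mu(Z)\ge 1-\delta$ that is eventually contained in $X_{F_n,O}$. Once such a $Z$ is produced, the monotonicity $s(Z,d_{F_n},\epsilon)\le s(X_{F_n,O},d_{F_n},\epsilon)$ for $n$ large will upgrade it into an estimate on $h_{top}(G,Z,d,\{F_n\},\epsilon)$, after which we can optimize over $O$.

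First I would fix a metric $\rho$ compatible with the weak-$\ast$ topology on $M(X)$ (available since $X$ is compact metrizable). Given $\mu\in E(X,G)$ and a tempered F\o lner sequence $\{F_n\}$, Lindenstrauss' pointwise ergodic theorem, applied to a countable dense family of continuous functions on $X$, shows that $\mathcal{E}_{F_n}(x)\to\mu$ in the weak-$\ast$ topology for $\mu$-a.e.\ $x\in X$; equivalently, $\rho(\mathcal{E}_{F_n}(x),\mu)\to 0$ $\mu$-a.e. By Egorov's theorem, for every $\delta>0$ there exists a Borel set $A_\delta\subset X$ with $\mu(A_\delta)\ge 1-\delta$ on which this convergence is uniform. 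Consequently, for every neighborhood $O$ of $\mu$ in $M(X)$ there exists an index $N=N(O,\delta)$ such that $A_\delta\subset X_{F_n,O}$ for all $n\ge N$.

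Next I would feed this into the definition of $PS_\mu$. Taking $Z:=A_\delta$, I have $\mu(Z)\ge 1-\delta$ and $s(Z,d_{F_n},\epsilon)\le s(X_{F_n,O},d_{F_n},\epsilon)$ for all sufficiently large $n$, whence
\begin{align*}
\inf\{h_{top}(G,Z,d,\{F_n\},\epsilon):\mu(Z)\ge 1-\delta\}
&\le h_{top}(G,A_\delta,d,\{F_n\},\epsilon)\\
&\le \limsup_{n\to\infty}\frac{\log s(X_{F_n,O},d_{F_n},\epsilon)}{|F_n|}.
\end{align*}
Since $O$ is an arbitrary neighborhood of $\mu$, taking the infimum over such $O$ yields
\[
\inf\{h_{top}(G,Z,d,\{F_n\},\epsilon):\mu(Z)\ge 1-\delta\}\le PS_\mu(\{F_n\},\epsilon),
\]
and the conclusion follows by letting $\delta\to 0$, since the left-hand side is monotone in $\delta$.

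The only subtle point is the passage from pointwise a.e.\ weak-$\ast$ convergence $\mathcal{E}_{F_n}(x)\to\mu$ to the uniform version on a large set; this is where the tempered assumption on $\{F_n\}$ is essential, since it allows us to invoke Lindenstrauss' pointwise ergodic theorem on a countable dense subalgebra of $C(X)$ and then combine the resulting full-measure sets via countable intersection before applying Egorov. Apart from this, the argument is a straightforward monotonicity chain and presents no additional obstacle.
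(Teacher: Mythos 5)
Your proof is correct and follows essentially the same approach as the paper: both rest on Lindenstrauss's pointwise ergodic theorem for tempered F\o lner sequences, applied across a countable dense family of continuous functions, to produce a large-$\mu$-measure set of generic points that is eventually contained in $X_{F_n,O}$. The only variation is that you obtain such a set via Egorov's theorem, giving a single $A_\delta$ that works uniformly across all neighborhoods $O$, whereas the paper fixes $O$ first (from $s>PS_\mu(\{F_n\},\epsilon)$) and then exhausts the generic points by the monotone union of the sets $G_{\mu,\{F_n\}}^N=\{x:\mathcal{E}_{F_n}(x)\in O\ \forall n\ge N\}$, thereby sidestepping Egorov entirely; both routes are standard and equally valid.
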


\begin{proof}
Let  $s> PS_\mu(\{F_n\},\epsilon)$.  Then there exists an open neighborhood $O$ of $\mu$ such that $$\limsup_{n \to \infty}\frac{1}{|F_n|}\log s(X_{F_n, O}, d_{F_n},\epsilon)<s.$$
 We define the  set of generic points of $\mu$ along  the tempered F\o lner sequence $\{F_{n}\}$ as
$$G_{\mu, \{F_{n}\}}:=\{x\in X:\lim_{n \to \infty}\mathcal{E}_{F_{n}}(x)=\mu ~\text{in the weak}^{*} \text{-topology}\}.$$
By  Lindenstrauss's pointwise ergodic theorem \cite[Theorem  1.2]{lin01}, for  any $f\in C(X,\mathbb{R})$, one has 
$\lim_{n\to \infty}\frac{1}{|F_n|}\sum_{g\in F_n}f(gx)=\int fd\mu$
for $\mu$-a.e.$x\in X$. This implies that $\mu (G_{\mu, \{F_{n}\}})=1$. 
Let $N\in \mathbb{N}$ and put
$$G_{\mu, \{F_{n}\}}^N=\{x\in  G_{\mu, \{F_{n}\}}: \mathcal{E}_{F_{n}}(x)\in O ~~\forall n\geq N\}.$$
Since the map  $x\in X \mapsto  \mathcal{E}_{F_{n}}(x)$ is continuous for each fixed $n\geq 1$, $G_{\mu, \{F_{n}\}}^N$ is Borel measurable for all $N \geq 1$.  Fix $\delta \in (0,1)$. Notice that $G_{\mu, \{F_{n}\}}^N$ is non-decreasing in $N$ and the union  $\bigcup_{N\geq 1}G_{\mu, \{F_{n}\}}^N$ has $\mu$-full measure. There exists  a positive integer $N_0$ such that $\mu(G_{\mu, \{F_{n}\}}^{N_0}) \geq 1-\delta$.  Since $G_{\mu, \{F_{n}\}}^{N_0}\subset X_{F_{n},O}$ for every $n\geq N_0$, we have
$$\inf\{h_{top}(G, Z,d,\{F_n\},\epsilon):\mu(Z)\geq 1-\delta\}\leq h_{top}(G,G_{\mu, \{F_n\}}^{N_0},d, \{F_n\}, \epsilon)<s.$$
Letting $\delta \to 0$ and $s \to PS_\mu(\{F_n\},\epsilon)$, we get the desired inequality.
\end{proof}

We  need the  following  three  propositions to  link the Pfister and Sullivan's  $\epsilon$-entropy and  Kolmogorov-Sinai $\epsilon$-entropy of ergodic measures.

Let $(X,d)$ be a compact metric space. Recall that  the \emph{boundary} of a finite Borel partition $\alpha$ of $X$ is the union of the boundaries of the atoms of  $\alpha$, i.e., $\partial \alpha =\bigcup_{A\in \alpha}\partial A$.

\begin{prop}\label{lem 3.21}
Let $\mu \in M(X,G)$.
For any $\gamma >0$, $F\in \FF(G)$ and $\UU \in  \mathcal{C}_X^o$, if $\alpha  \in \mathcal{P}_X$ refines $\UU$,  then there exists $\beta \in \mathcal{P}_X$  with $\mu(\partial \beta)=0$ refining  $ \UU$  such that
$$H_{\mu}(\beta_F|\alpha_F)<\frac{\gamma}{2},$$
where $H_{\mu}(\beta |\alpha)$ denotes the usual conditional entropy of $\beta$ given $\alpha$. 
\end{prop}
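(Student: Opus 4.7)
The plan is first to reduce matters to controlling the non-dynamical conditional entropy $H_{\mu}(\beta|\alpha)$. By subadditivity of conditional entropy, monotonicity in the conditioning variable, and the $G$-invariance of $\mu$,
\begin{align*}
H_{\mu}(\beta_F|\alpha_F) \leq \sum_{g\in F} H_{\mu}(g^{-1}\beta|\alpha_F) \leq \sum_{g\in F} H_{\mu}(g^{-1}\beta|g^{-1}\alpha) = |F|\cdot H_{\mu}(\beta|\alpha),
\end{align*}
where the second inequality uses $\alpha_F \succ g^{-1}\alpha$ for every $g\in F$. It therefore suffices to construct $\beta \in \mathcal{P}_X$ with $\beta \succ \UU$, $\mu(\partial \beta)=0$, and $H_{\mu}(\beta|\alpha) < \gamma/(2|F|)$.

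Enumerate $\alpha=\{A_1,\dots,A_k\}$ and, using $\alpha\succ \UU$, fix $U_{j(i)} \in \UU$ with $A_i \subset U_{j(i)}$ for each $i$. For a small parameter $\epsilon > 0$ to be specified later, use regularity of $\mu$ to choose compact sets $K_i \subset A_i$ with $\mu(A_i \setminus K_i) < \epsilon$. Since the $K_i$ are pairwise disjoint compact sets with $K_i \subset U_{j(i)}$ open, pick $r_0 > 0$ for which the closed $r_0$-neighborhoods are pairwise disjoint and satisfy $\overline{B}(K_i,r_0) \subset U_{j(i)}$. The level sets $\{x: d(x,K_i) = r\}$ are pairwise disjoint as $r$ varies, so $\mu$ charges at most countably many of them; hence we may fix $r_i \in (0, r_0)$ with $\mu(\{d(\cdot, K_i)=r_i\}) = 0$. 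Using $\bigcap_{r>0}\{d(\cdot,K_i)<r\}=K_i$ and the monotone continuity of $\mu$, we may further shrink $r_i$ so that $V_i := \{x: d(x,K_i) < r_i\}$ also satisfies $\mu(V_i \setminus K_i) < \epsilon$. The resulting $V_i$'s are pairwise disjoint open sets, each contained in $U_{j(i)}$, with $\mu(\partial V_i) = 0$ and $\mu(V_i \triangle A_i) < 2\epsilon$.

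To complete the $V_i$'s to a partition refining $\UU$, fix once and for all an auxiliary Borel partition $\{W_1,\dots,W_m\}$ of $X$ that refines $\UU$ and satisfies $\mu(\partial W_l)=0$ for every $l$; such a $\{W_l\}$ is obtained by covering $X$ with finitely many closed balls of radius below $\Leb(\UU)/2$ whose bounding spheres are $\mu$-null (the same Fubini-type argument) and then inductively taking set differences. Put $B_0 := X \setminus \bigcup_{i=1}^k V_i$ and define
\begin{align*}
\beta := \{V_1, \dots, V_k\} \cup \{B_0 \cap W_l : 1 \leq l \leq m\},
\end{align*}
discarding any empty atoms. Then $\beta \in \mathcal{P}_X$, $\beta \succ \UU$, and the standard inclusion $\partial(A \cap B) \subset \partial A \cup \partial B$ combined with the boundary-null choices for the $V_i$ and $W_l$ yields $\mu(\partial \beta) = 0$.

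Finally, $\mu(V_i \triangle A_i) < 2\epsilon$ and $\mu(B_0) \leq \sum_i \mu(A_i \setminus V_i) < k\epsilon$, so $\beta$ is a small perturbation of $\alpha$ in the Rokhlin pseudo-metric, and standard continuity of conditional entropy with respect to this pseudo-metric yields $H_{\mu}(\beta|\alpha) \to 0$ as $\epsilon \to 0$. A suitably small choice of $\epsilon$ therefore gives $H_{\mu}(\beta|\alpha) < \gamma/(2|F|)$, completing the proof. The main technical step is the joint calibration of the radii $r_i$: each property (pairwise disjointness of the $V_i$, $\mu$-null boundary, containment in $U_{j(i)}$, and small symmetric difference with $A_i$) is routine in isolation, but all four must be arranged simultaneously, which is where the countability of mass-charging level sets and the monotone continuity of $\mu$ play the essential role.
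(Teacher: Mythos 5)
Your proposal is correct. Since the paper disposes of this statement with a one-sentence citation to \cite[Lemma 3.6]{hyz11} ``with a slight modification,'' there is no detailed proof in the paper to compare against; but your self-contained argument implements what is surely the intended mechanism: first reduce to the static quantity $H_{\mu}(\beta|\alpha)$ via subadditivity, monotonicity in the conditioning partition, and $G$-invariance of $\mu$ (giving the bound $H_{\mu}(\beta_F|\alpha_F)\le |F|\, H_{\mu}(\beta|\alpha)$), and then perturb $\alpha$ to a nearby partition $\beta$ refining $\UU$ with $\mu$-null boundary. The geometric construction you give (inner-regular compact cores $K_i\subset A_i$, open $r_i$-neighborhoods $V_i$ with $\mu$-null spheres chosen by the countability-of-charged-levels argument, a ball-built $\mu$-null-boundary partition $\{W_l\}$ refining $\UU$ to absorb the leftover set $B_0$) is sound, and all four constraints on the $r_i$ can indeed be met simultaneously.

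One phrase deserves tightening: you invoke ``standard continuity of conditional entropy with respect to the Rokhlin pseudo-metric'' between $\alpha$ and $\beta$, but $\beta$ has $k+m$ atoms while $\alpha$ has $k$, so the usual matched-cardinality statement does not apply verbatim. The estimate does go through: compare $\beta$ first to $\gamma:=\{V_1,\dots,V_k,B_0\}$ via $H_\mu(\beta|\gamma)\le \mu(B_0)\log m < k\epsilon\log m$, then compare $\gamma$ to $\alpha$ by appending a null atom to $\alpha$, matching $V_i\leftrightarrow A_i$ and $B_0\leftrightarrow\emptyset$, and applying the Fano-type bound on conditional entropy in terms of total symmetric difference. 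With this small repair the argument is complete and gives $H_\mu(\beta|\alpha)<\gamma/(2|F|)$ as required.
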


\begin{proof}
This is a slight modification of the proof in \cite[Lemma 3.6]{hyz11}.
\end{proof}

Proposition \ref{lem 3.21} allows us   to formulate an equivalent definition for the  local measure-theoretic entropy of  open covers.

\begin{prop}\label{prop 3.22}
Let  $(X,d,G)$ be a $G$-system and $\mu \in M(X,G)$. Then for every $\UU\in \mathcal{C}_X^o$, we have
$$h_{\mu}(G,\UU)=\inf_{\alpha \succ \UU, \mu(\partial \alpha)=0}h_{\mu}(G,\alpha).$$
\end{prop}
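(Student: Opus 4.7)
The plan is to verify both inequalities. The direction $h_\mu(G,\UU) \leq \inf_{\alpha \succ \UU,\,\mu(\partial \alpha)=0} h_\mu(G,\alpha)$ is immediate from the definition $h_\mu(G,\UU) = \inf_{\alpha \succ \UU} h_\mu(G,\alpha)$, since on the right one is restricting the infimum to a (possibly smaller) subclass of partitions refining $\UU$. The substantive content lies in the reverse inequality.

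For the reverse direction, I would fix $\gamma > 0$ and an arbitrary $\alpha \in \mathcal{P}_X$ refining $\UU$, and invoke Proposition \ref{lem 3.21} with the choice $F = \{e_G\} \in \FF(G)$. This at once produces a partition $\beta \in \mathcal{P}_X$ that refines $\UU$, satisfies $\mu(\partial \beta)=0$, and fulfils
$$H_\mu(\beta | \alpha) = H_\mu(\beta_{\{e_G\}} | \alpha_{\{e_G\}}) < \frac{\gamma}{2}.$$
Next I would use the standard amenable-group estimate $h_\mu(G,\beta) \leq h_\mu(G,\alpha) + H_\mu(\beta | \alpha)$: subadditivity of conditional entropy on joins together with the $G$-invariance of $\mu$ gives $H_\mu(\beta_F | \alpha_F) \leq |F|\,H_\mu(\beta | \alpha)$ for every $F \in \FF(G)$, so that dividing $H_\mu(\beta_{F_n}) \leq H_\mu(\alpha_{F_n}) + H_\mu(\beta_{F_n} | \alpha_{F_n})$ by $|F_n|$ and letting $n \to \infty$ along a F\o lner sequence of $G$ yields the desired comparison.

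Combining these two steps gives $h_\mu(G,\beta) < h_\mu(G,\alpha) + \gamma/2$, hence
$$\inf_{\beta \succ \UU,\,\mu(\partial\beta)=0} h_\mu(G,\beta) \leq h_\mu(G,\alpha) + \frac{\gamma}{2}.$$
Taking the infimum over all $\alpha \succ \UU$ and then sending $\gamma \to 0$ delivers the reverse inequality, completing the proof. I do not foresee any serious obstacle here: the delicate work of approximating a given partition by one with $\mu$-null boundary has already been absorbed into Proposition \ref{lem 3.21}, and the only other ingredient is the classical comparison $h_\mu(G,\beta) \leq h_\mu(G,\alpha) + H_\mu(\beta | \alpha)$, whose derivation for amenable actions is entirely analogous to the $\mathbb{Z}$-case and follows directly from Lemma \ref{lem 2.1} applied to the set function $F \mapsto H_\mu(\beta_F)$.
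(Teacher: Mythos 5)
Your proposal is correct, but it takes a genuinely different route from the paper. You invoke Proposition~\ref{lem 3.21} with the singleton $F=\{e_G\}$ to obtain $H_\mu(\beta\mid\alpha)<\gamma/2$, and then rely on the classical dynamical estimate $h_\mu(G,\beta)\leq h_\mu(G,\alpha)+H_\mu(\beta\mid\alpha)$, which you correctly derive from $H_\mu(\beta_F\mid\alpha_F)\leq |F|\,H_\mu(\beta\mid\alpha)$ via subadditivity and $G$-invariance. The paper instead uses Proposition~\ref{lem 3.21} at full strength: it first selects a particular Følner set $F_0$ for which the finite average $H_\mu(\alpha_{F_0})/|F_0|$ is within $\gamma/2$ of $h_\mu(G,\alpha)$, then applies the lemma directly at $F_0$ to get $H_\mu(\beta_{F_0}\mid\alpha_{F_0})<\gamma/2$, and concludes by bounding $\inf_{\mathcal{P}\succ\UU,\mu(\partial\mathcal{P})=0}h_\mu(G,\mathcal{P})$ by $H_\mu(\beta_{F_0})/|F_0|$ through the infimum characterization $h_\mu(G,\mathcal{P})=\inf_{F}H_\mu(\mathcal{P}_F)/|F|$ (recalled in Section~\ref{sec 2} from \cite{hyz11}). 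Your approach buys modularity: it cleanly separates the boundary-nullification step (done once, at the identity) from the entropy comparison, which is a standard fact in its own right. The paper's approach buys self-containment: it never needs to invoke the dynamical comparison inequality, working entirely with finite stages and the infimum formula, at the price of needing the full generality of Proposition~\ref{lem 3.21} over all $F\in\FF(G)$. One small imprecision in your write-up: the comparison $h_\mu(G,\beta)\leq h_\mu(G,\alpha)+H_\mu(\beta\mid\alpha)$ does not really ``follow directly from Lemma~\ref{lem 2.1} applied to $F\mapsto H_\mu(\beta_F)$''—Lemma~\ref{lem 2.1} only guarantees existence of the limit defining $h_\mu(G,\cdot)$; the comparison itself comes from the conditional-entropy subadditivity chain you wrote out earlier, which is the correct and complete justification.
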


\begin{proof}
By  comparing the definitions, it suffices to show  $$h_{\mu}(G,\UU)\geq \inf_{\alpha \succ \UU, \mu(\partial \alpha)=0}h_{\mu}(G,\alpha).$$
Let  $\alpha \succ \UU$. Given  $\gamma >0$, we choose $F_0\in \FF(G)$ such that
$\frac{H_{\mu}(\alpha_{F_0})}{|F_0|}<h_{\mu}(G,\alpha)+\frac{\gamma}{2}.$ By Proposition \ref{lem 3.21},  there exists a  finite Borel partition  $\beta $ of $X$ with $\mu(\partial \beta)=0$ refining $ \UU$  such that  $H_{\mu}(\beta_{F_0}|\alpha_{F_0})<\frac{\gamma}{2}$. Therefore, one has
\begin{align*}
\frac{H_{\mu}(\beta_{F_0})}{|F_0|}\leq  \frac{H_{\mu}(\alpha_{F_0})}{|F_0|}+H_{\mu}(\beta_{F_0}|\alpha_{F_0})
<h_{\mu}(G,\alpha)+\gamma.
\end{align*}
This yields that
\begin{align*}
\inf_{\mathcal{P} \succ \UU, \mu(\partial \mathcal{P})=0}h_{\mu}(G,\mathcal{P})&=\inf_{\mathcal{P} \succ \UU, \mu(\partial \mathcal{P})=0}~\inf_{F\in \FF(G)} \frac{H_{\mu}(\mathcal{P}_{F})}{|F|}\\
&=\inf_{F\in \FF(G)}  ~\inf_{\mathcal{P} \succ \UU, \mu(\partial \mathcal{P})=0}\frac{H_{\mu}(\mathcal{P}_{F})}{|F|}\\
&\leq\inf_{\mathcal{P}\succ \UU, \mu(\partial \mathcal{P})=0}\frac{H_{\mu}(\mathcal{P}_{F_0})}{|F_0|}\leq \frac{H_{\mu}(\beta_{F_0})}{|F_0|}<h_{\mu}(G,\alpha)+\gamma.
\end{align*}
Since $\alpha$ is  arbitrary and $\gamma>0$ can be chosen sufficiently small, this implies the desired  inequality.
\end{proof}

\begin{prop}\label{lem 3.23}
Let $(X,d,G)$ be a $G$-system  and $\{F_n\}$ be a F\o lner sequence of $G$.  Let $\epsilon >0$ and  $\{X_n\}$ be a  sequence of non-empty subsets of $X$. Suppose that  $E_n$ is a  $(d_{F_n},\epsilon)$-separated set of $X_n$ with the largest cardinality. Define
$$\mu_n=\frac{1}{\#E_n}\sum_{x\in E_n}\mathcal{E}_{F_n}(x).$$
Assume that $\mu_n \rightarrow \mu$ in the weak$^{*}$-topology. Then $\mu \in M(X,G)$ and 
\begin{align*}
\limsup_{n \to \infty}\frac{1}{|F_n|} \log s(X_n,d_{F_n},\epsilon)
\leq \inf_{ 
\diam (\alpha)\leq \epsilon,   \mu(\partial \alpha)=0}h_{\mu}(G,\alpha).
\end{align*}
\end{prop}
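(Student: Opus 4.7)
The plan is a Misiurewicz-style argument adapted to the amenable setting. First I would verify that $\mu\in M(X,G)$: for each $g\in G$ and $f\in C(X,\mathbb{R})$, expanding $\mu_n$ yields $|\int f\,d(g_*\mu_n)-\int f\,d\mu_n|\le 2\|f\|_\infty\cdot|gF_n\triangle F_n|/|F_n|\to 0$ by the F\o lner condition, so $g_*\mu=\mu$ for every $g\in G$.

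Next, fix any $\alpha$ with $\diam(\alpha)\le\epsilon$ and $\mu(\partial\alpha)=0$, and let $\nu_n=(\#E_n)^{-1}\sum_{x\in E_n}\delta_x$. If $x\neq y$ lie in $E_n$, then some $g\in F_n$ satisfies $d(gx,gy)>\epsilon\ge\diam(\alpha)$, forcing $x,y$ into distinct atoms of $\alpha_{F_n}$; hence $\log\#E_n\le H_{\nu_n}(\alpha_{F_n})$. For the core Misiurewicz bound, fix $T\in\FF(G)$ containing $e_G$ and observe that the family $\{Th\cap F_n\}_{h\in T^{-1}F_n}$ covers each $g\in F_n$ exactly $|T|$ times. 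Viewing $\alpha_{F_n}$ as the random vector $(\alpha(g\cdot))_{g\in F_n}$, Shearer's entropy inequality together with $\alpha_{Th}=h^{-1}\alpha_T$ gives
\begin{equation*}
|T|\cdot H_{\nu_n}(\alpha_{F_n})\le \sum_{h\in T^{-1}F_n}H_{h_*\nu_n}(\alpha_T),
\end{equation*}
and concavity of $\nu\mapsto H_\nu(\alpha_T)$ bounds the right-hand side by $|T^{-1}F_n|\cdot H_{\tilde\mu_n}(\alpha_T)$, where $\tilde\mu_n=|T^{-1}F_n|^{-1}\sum_{h\in T^{-1}F_n}h_*\nu_n$.

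The F\o lner property yields $|T^{-1}F_n|/|F_n|\to 1$ and $|T^{-1}F_n\triangle F_n|/|F_n|\to 0$, from which $\tilde\mu_n\to\mu$ in the weak$^*$-topology follows via the total-variation bound $\|\tilde\mu_n-\mu_n\|_{TV}=O(|T^{-1}F_n\triangle F_n|/|F_n|)$. Since $\mu$ is $G$-invariant and $\mu(\partial\alpha)=0$, we obtain $\mu(\partial\alpha_T)\le\sum_{g\in T}\mu(g^{-1}\partial\alpha)=0$, and the standard continuity of partition entropy along weak$^*$-convergent sequences with null-boundary partitions gives $H_{\tilde\mu_n}(\alpha_T)\to H_\mu(\alpha_T)$. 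Therefore $\limsup_n |F_n|^{-1}\log\#E_n\le H_\mu(\alpha_T)/|T|$; minimizing over $T$ and applying the Ornstein--Weiss identity $h_\mu(G,\alpha)=\inf_F|F|^{-1}H_\mu(\alpha_F)$ quoted in the preliminaries yields $\limsup_n|F_n|^{-1}\log\#E_n\le h_\mu(G,\alpha)$. Taking the infimum over admissible $\alpha$ completes the proof.

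The main obstacle is this Misiurewicz step: transferring the entropy of the non-invariant empirical measure $\nu_n$ on the long window $F_n$ into an entropy rate of its weak$^*$ limit $\mu$ on a bounded block $T$. Shearer's entropy inequality gives the cleanest route; one could instead use Ornstein--Weiss quasi-tilings of $F_n$ by almost disjoint translates of $T$, at the cost of carrying an additive boundary error. The delicate point either way is controlling the averaged measure produced by concavity of entropy, and confirming that it still converges weakly$^*$ to $\mu$, which the total-variation estimate from the F\o lner condition handles transparently.
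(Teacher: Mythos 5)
Your proof is correct and follows essentially the same route as the paper's, which simply defers to the Misiurewicz-type argument in Kerr--Li \cite[Theorem 9.48]{kl16}: the covering-multiplicity entropy inequality you invoke as Shearer's inequality is precisely the combinatorial lemma used there, and the remaining steps (turning the separated set into a uniform measure with $H_{\nu_n}(\alpha_{F_n}) = \log \#E_n$, averaging the pushforwards $h_*\nu_n$ over $T^{-1}F_n$, applying concavity of partition entropy, and passing to the limit via the Portmanteau theorem once $\mu(\partial\alpha_T)=0$ is verified) are identical. You have merely spelled out in detail the argument that the paper leaves to its citation.
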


\begin{proof}
The statement $\mu \in M(X,G)$ is due to the fact $\lim_{n\to\infty}\frac{|gF_n\triangle F_n|}{|F_n|}=0$ for all $g\in G$.  The remaining  inequality can be proved using the arguments in \cite[Theorem 9.48, p.227-228]{kl16}.  
\end{proof}
\begin{lem}\label{lem 3.24}
Let  $(X,d,G)$ be a $G$-system and $\mu \in E(X,G)$, and let $\{F_n\}$ be a F\o lner sequence of $G$. Then for every $\epsilon >0$, $$ PS_\mu(\{F_n\},\epsilon)\leq  \inf_{ 
\diam (\alpha)\leq \epsilon,   \mu(\partial \alpha)=0}h_{\mu}(G,\alpha).$$
\end{lem}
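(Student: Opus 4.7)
The plan is to argue by contradiction, using Proposition \ref{lem 3.23} along a diagonally chosen subsequence of $\{F_n\}$. Suppose that
$$PS_\mu(\{F_n\},\epsilon) > \inf_{\diam(\alpha)\leq\epsilon,\,\mu(\partial\alpha)=0}h_\mu(G,\alpha).$$
Then one can pick a partition $\alpha_0$ with $\diam(\alpha_0)\leq\epsilon$ and $\mu(\partial\alpha_0)=0$, together with a constant $\gamma_0>0$, such that $PS_\mu(\{F_n\},\epsilon)>h_\mu(G,\alpha_0)+\gamma_0$. The goal is to construct empirical measures $\mu_k$ that converge weak-$*$ to $\mu$ while witnessing an exponential separation rate strictly larger than what Proposition \ref{lem 3.23} allows for the limit measure $\mu$.

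Since $M(X)$ with the weak-$*$ topology is compact metrizable and locally convex, I fix a decreasing sequence $\{O_k\}$ of \emph{convex} open neighborhoods of $\mu$ with $\bigcap_k O_k=\{\mu\}$. By the very definition of $PS_\mu$, for each $k$ one has $\limsup_{n\to\infty}\tfrac{1}{|F_n|}\log s(X_{F_n,O_k},d_{F_n},\epsilon)\geq PS_\mu(\{F_n\},\epsilon)>h_\mu(G,\alpha_0)+\gamma_0$. Hence indices $n_k$ can be chosen strictly increasing so that
$$\frac{1}{|F_{n_k}|}\log s(X_{F_{n_k},O_k},d_{F_{n_k}},\epsilon)>h_\mu(G,\alpha_0)+\frac{\gamma_0}{2}.$$
Now let $E_k$ be a $(d_{F_{n_k}},\epsilon)$-separated subset of $X_{F_{n_k},O_k}$ of maximal cardinality and define $\mu_k:=\tfrac{1}{\#E_k}\sum_{x\in E_k}\mathcal{E}_{F_{n_k}}(x)$. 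Since $\mathcal{E}_{F_{n_k}}(x)\in O_k$ for every $x\in X_{F_{n_k},O_k}$ and $O_k$ is convex, each $\mu_k$ lies in $O_k$; hence $\mu_k\to\mu$ in the weak-$*$ topology.

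Since the subsequence $\{F_{n_k}\}$ is itself a F\o lner sequence of $G$, applying Proposition \ref{lem 3.23} to the family $\{X_{F_{n_k},O_k}\}_k$ with the separated sets $E_k$ yields
$$\limsup_{k\to\infty}\frac{\log\#E_k}{|F_{n_k}|}\leq\inf_{\diam(\beta)\leq\epsilon,\,\mu(\partial\beta)=0}h_\mu(G,\beta)\leq h_\mu(G,\alpha_0),$$
which directly contradicts the lower bound $h_\mu(G,\alpha_0)+\gamma_0/2$ built into the choice of $n_k$. The main technical obstacle is precisely the bookkeeping in this diagonal step: without the convexity of the $O_k$'s, the averaged measures $\mu_k$ could drift outside $O_k$ and we would lose the crucial identification of the weak-$*$ limit as $\mu$, which is exactly what allows Proposition \ref{lem 3.23} to produce a bound in terms of entropies computed at $\mu$ rather than at some other invariant measure. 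The argument does not invoke ergodicity; the hypothesis $\mu\in E(X,G)$ enters only through the downstream applications in Theorem \ref{thm 2.15}.
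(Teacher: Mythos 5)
Your argument is correct and follows essentially the same path as the paper's: a contradiction is derived by diagonalizing over shrinking convex neighborhoods of $\mu$, using the convexity of these neighborhoods to force the empirical measures $\mu_k$ to converge to $\mu$, and then invoking Proposition \ref{lem 3.23}. The only difference is cosmetic: the paper repackages the choices $(L_k,m_k)$ into a piecewise-constant family $O_n$ indexed along the full F\o lner sequence before applying Proposition \ref{lem 3.23}, whereas you pass directly to the subsequence $\{F_{n_k}\}$, correctly noting that a subsequence of a F\o lner sequence is again a F\o lner sequence.
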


\begin{proof}
Suppose that  the inequality is false.  Then  there exists $\delta >0$ such that $$\inf_{ 
\diam (\alpha)\leq \epsilon,  \mu(\partial \alpha)=0} \limits h_{\mu}(G,\alpha)  +\delta< PS_\mu(\{F_n\},\epsilon).$$ 
Choose a   dense set $\{f_n\}_{n=1}^{\infty}$   of $C(X,\mathbb{R})$. The following metric  on $M(X)$ $$D(\mu,m):=\sum_{n\geq 1}\frac{|\int f_n d\mu- \int f_n dm|}{2^n(||f||+1)}$$
is  compatible  with the weak$^{*}$-topology of $M(X)$ (cf. \cite[Theorem 6.4, p.148]{w82}).
We let $L_{k}:=\overline{B_{D}(\mu,\frac{1}{k})}$. Then $L_k$ is a   closed  convex set of $M(X)$ and $\cap_{k\geq 1}L_k=\{\mu\}$. We choose a strictly increasing sequence $\{m_k\}$ of positive integers such that
$$\inf_{ 
	\diam (\alpha)\leq \epsilon,  \mu(\partial \alpha)=0} \limits h_{\mu}(G,\alpha)  +\delta<\frac{\log s(X_{F_{m_k},L_k}, d_{F_{m_k}},\epsilon)}{|F_{m_k}|}.$$	
For every $n\in [m_k, m_{k+1})$, we define $O_n:=L_k$. Then $\{O_n\}$  is a  decreasing  sequence of  closed  convex sets  in $M(X)$ such that $\cap_{n\geq 1}O_n=\{\mu\}$ and 
$$\inf_{ 
	\diam (\alpha)\leq \epsilon,  \mu(\partial \alpha)=0} \limits h_{\mu}(G,\alpha)  +\delta<\frac{\log s(X_{F_{m_k},O_{m_k}}, d_{F_{m_k}},\epsilon)}{|F_{m_k}|}.$$	
This yields that
$\inf_{
\diam (\alpha)\leq \epsilon,  \mu(\partial \alpha)=0}h_{\mu}(G,\alpha) +\delta\leq \limsup_{n\to \infty}\frac{1}{|F_n|}\log s(X_{F_n, O_n},d_{F_n},\epsilon).$
Let $E_n $ be a $(d_{F_n},\epsilon)$-separated set of $ X_{F_n, O_n}$ with the largest cardinality. We  define
$$\mu_n=\frac{1}{\#E_n}\sum_{x\in E_n}\mathcal{E}_{F_n}(x).$$
Then $\mu_n \to \mu$ as $n \to \infty$ by the choice of $O_n$. By  Proposition \ref{lem 3.23}, we have 
$$PS_\mu(\{F_n\},\epsilon)\leq \limsup_{n\to \infty}\frac{1}{|F_n|}\log s(X_{F_n, O_n},d_{F_n},\epsilon)
\leq \inf_{
\diam (\alpha)\leq \epsilon,  \mu(\partial \alpha)=0}h_{\mu}(G,\alpha),$$
a contradiction with the assumption!
\end{proof}

Finally, we clarify the relations between the Kolmogorov-Sinai $\epsilon$-entropy and Katok's $\epsilon$-entropy  of ergodic measures.

\begin{lem}\label{lem 3.25}
Let  $(X,d,G)$ be a $G$-system, and let $\{F_n\}$ be a  F\o lner sequence of $G$. Then for every $\epsilon >0$,
\begin{itemize}
\item [(1)]  if $\mu \in M(X,G)$, then $\inf_{\diam (\alpha)\leq \epsilon} \limits h_{\mu}(G,\alpha)\leq \inf_{
\diam (\alpha)\leq \epsilon,  \mu(\partial \alpha)=0} \limits h_{\mu}(G,\alpha) \leq \inf_{\diam (\UU) \leq \epsilon} \limits  h_{\mu}(G,\UU)
\leq  \sup_{\diam (\UU) \leq \epsilon,  \Leb (\UU) \geq \frac{\epsilon}{4} }\limits h_{\mu}(G,\UU) \leq  \sup_{\Leb (\UU) \geq \frac{\epsilon}{4} } \limits h_{\mu}(G,\UU)
\leq \inf_{
\diam (\alpha)\leq \frac{\epsilon}{8}}\limits h_{\mu}(G,\alpha);$
\item [(2)]if $\mu \in E(X,G)$, then   $\inf_{
\diam (\alpha)\leq \epsilon}\limits h_{\mu}(G,\alpha)\leq  \underline{h}_{\mu}^K(\{F_n\},\frac{\epsilon}{4})\leq \overline{h}_{\mu}^K(\{F_n\},\frac{\epsilon}{4})$;
\item [(3)] if $\mu \in M(X,G)$, then  $ \overline{h}_{\mu}^K(\{F_n\},\epsilon)\leq  \lim_{\delta \to 0} \inf\{h_{top}(G, Z,d,\{F_n\},\epsilon):\mu(Z)\geq 1-\delta\}.$
\end{itemize}
\end{lem}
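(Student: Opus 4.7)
My plan is to establish (1), (2), (3) in sequence, leaning on three tools: Proposition \ref{prop 3.22} to trade partitions subject to $\diam(\alpha)\leq\epsilon$ for partitions with $\mu(\partial\alpha)=0$ refining a given cover, Lemma \ref{lem 3.17} to identify $h_\mu(G,\UU)$ with Shapira's local entropies when $\mu$ is ergodic, and the Lebesgue-number property to pass between $(d_{F_n},\epsilon/4)$-Bowen balls and elements of the joined cover $\UU_{F_n}$.

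For part (1) I would walk through the six-term chain one inequality at a time. The first is immediate since the latter infimum runs over fewer partitions. The second uses Proposition \ref{prop 3.22}: for any $\UU$ with $\diam(\UU)\leq\epsilon$, every $\alpha\succ\UU$ automatically satisfies $\diam(\alpha)\leq\epsilon$, so $h_\mu(G,\UU)=\inf_{\alpha\succ\UU,\mu(\partial\alpha)=0}h_\mu(G,\alpha)\geq\inf_{\diam(\alpha)\leq\epsilon,\mu(\partial\alpha)=0}h_\mu(G,\alpha)$; now infimize over $\UU$ on the left. The third is the elementary bound $\inf_A\leq\sup_B$ for a non-empty $B\subset A$, with non-emptiness of $B$ supplied by the $\epsilon/4$-net construction recalled in the preliminaries. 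The fourth is trivial. The fifth is the only substantive step: if $\Leb(\UU)\geq\epsilon/4$ and $\diam(\alpha)\leq\epsilon/8$, then every atom of $\alpha$ has diameter below the Lebesgue number of $\UU$ and hence lies in some element of $\UU$, so $\alpha\succ\UU$ and $h_\mu(G,\UU)\leq h_\mu(G,\alpha)$; taking supremum on the left and infimum on the right closes the chain.

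For part (2) the inequality $\underline{h}_\mu^K\leq\overline{h}_\mu^K$ is trivial. For the first inequality, I combine part (1) with Lemma \ref{lem 3.17}. Fix $\UU$ with $\Leb(\UU)\geq\epsilon/4$. A repeat of the Lebesgue-number argument used in part (1), applied coordinatewise along $F_n$, shows that every $(d_{F_n},\epsilon/4)$-Bowen ball is contained in some element of $\UU_{F_n}$; consequently any minimum family of such balls whose union has $\mu$-mass at least $1-\delta$ can be replaced by an equally large subfamily of $\UU_{F_n}$ with the same mass, giving $N_\mu(\UU_{F_n},\delta)\leq R_\mu(F_n,\delta)$ (for balls of radius $\epsilon/4$). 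Taking $\liminf_n\frac{1}{|F_n|}\log$ and then $\lim_{\delta\to 0}$ yields $\overline{h}_\mu^S(G,\UU,\{F_n\})\leq\underline{h}_\mu^K(\{F_n\},\epsilon/4)$, and Lemma \ref{lem 3.17} identifies the left side with $h_\mu(G,\UU)$ because $\mu$ is ergodic. Taking supremum over admissible $\UU$ and invoking part (1) finishes the step.

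For part (3), fix $\delta\in(0,1)$ and a Borel $Z\subset X$ with $\mu(Z)\geq 1-\delta$, and let $E_n$ be a maximal $(d_{F_n},\epsilon)$-separated subset of $Z$. Maximality forces $E_n$ to be $(d_{F_n},\epsilon)$-spanning for $Z$, so the closed Bowen balls of radius $\epsilon$ about points of $E_n$ cover $Z$, yielding $R_\mu(F_n,\delta)\leq s(Z,d_{F_n},\epsilon)$ (absorbing the open-versus-closed subtlety in the definition of $R_\mu$ by an arbitrarily small enlargement of $\epsilon$ if needed). Apply $\limsup_n\frac{1}{|F_n|}\log(\cdot)$, infimize over $Z$, and send $\delta\to 0$. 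The main, though modest, obstacle lies in part (2): one must use ergodicity precisely to force Shapira's upper and lower local entropies to agree with $h_\mu(G,\UU)$, and one must carefully calibrate the Bowen-ball radius $\epsilon/4$ with the Lebesgue number $\epsilon/4$ so that the cover-side and ball-side quantities line up. Parts (1) and (3) are essentially bookkeeping once the definitions are unfolded.
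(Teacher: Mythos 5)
Your proof is correct and follows essentially the same route as the paper: Proposition~\ref{prop 3.22} and the Lebesgue-number calibration for the chain in (1), Lemma~\ref{lem 3.17} together with the Bowen-ball-inside-a-cover-element observation for (2), and the maximal-separated-set comparison for (3). The only difference is one of exposition—the paper leaves the inequality $N_\mu(\UU_{F_n},\delta)\leq R_\mu(F_n,\delta)$ in part (2) and the entirety of part (3) implicit, while you spell them out (and, to your credit, you flag the open-versus-closed Bowen-ball subtlety in (3), which the paper's ``clear by comparing their definitions'' silently absorbs).
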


\begin{proof}
(1). Let $\UU \in \mathcal{C}_X^o$  with  $\diam (\UU)\leq  \epsilon$. Notice that the diameter of each $\alpha \in \mathcal{P}_X$ refining  $\UU \in \mathcal{C}_X^o$  is at most $\epsilon$. Then, by Proposition \ref{prop 3.22} we  have 
$$\inf_{
\diam (\alpha)\leq \epsilon,   \mu(\partial \alpha)=0}h_{\mu}(G,\alpha)\leq \inf_{\diam (\UU) \leq \epsilon} h_{\mu}(G,\UU).$$
Choose $\VV \in \mathcal{C}_X^o$ with  $\diam (\VV) \leq 
\epsilon$ and $\Leb (\VV) \geq \frac{\epsilon}{4}$. Then
\begin{align*}
\inf_{\diam (\UU) \leq \epsilon} h_{\mu}(G,\UU)\leq h_{\mu}(G,\VV) \leq&  \sup_{\diam (\UU) \leq \epsilon,  \Leb (\UU) \geq \frac{\epsilon}{4} } h_{\mu}(G,\UU)\\
 \leq&  \sup_{\Leb (\UU) \geq \frac{\epsilon}{4} } h_{\mu}(G,\UU) 
\leq  \inf_{ 
\diam (\alpha)\leq \frac{\epsilon}{8}}h_{\mu}(G,\alpha),
\end{align*}
where  we used  the fact that each $\alpha \in \mathcal{P}_X$  with  $\diam  (\alpha) \leq 
\frac{\epsilon}{8}$ refines $\UU$ for the last inequality.

(2). Choose $\UU \in \mathcal{C}_X^o$ with  $\diam (\UU) \leq 
\epsilon$ and $\Leb (\UU) \geq \frac{\epsilon}{4}$ again. Then, by Lemma \ref{lem 3.17} we have
\begin{align*}
\inf_{ 
\diam (\alpha)\leq \epsilon}h_{\mu}(G,\alpha)\leq  h_{\mu}(G,\UU)\leq \underline{h}_{\mu}^K(\{F_n\},\frac{\epsilon}{4})\leq \overline{h}_{\mu}^K(\{F_n\},\frac{\epsilon}{4}). 
\end{align*}

(3).  It is clear by comparing their definitions. 
\end{proof}

With the help of these lemmas, we  are ready to prove Theorem \ref{thm 2.15}.
\begin{proof}[Proof of Theorem \ref{thm 2.15}]
By Lemmas \ref{lem 3.20}, \ref{lem 3.24} and \ref{lem 3.25},  for every $\epsilon >0$, we have the following inequality:
\begin{align}\label{equ 2.10}
\begin{split}
&\lim_{\delta \to 0}\inf\{h_{top}(G, Z,d,\{F_n\},\epsilon):\mu(Z)\geq 1-\delta\}\\
\leq &
PS_\mu(\{F_n\},\epsilon)
\leq \inf_{\diam (\UU) \leq \epsilon} h_{\mu}(G,\UU)
\leq  \sup_{\diam (\UU) \leq \epsilon,  \Leb (\UU) \geq \frac{\epsilon}{4} } h_{\mu}(G,\UU)\\
\leq & \sup_{\Leb (\UU) \geq \frac{\epsilon}{4} } h_{\mu}(G,\UU)
\leq 
\inf_{\diam (\alpha)\leq \frac{\epsilon}{8}}h_{\mu}(G,\alpha)
\leq \underline{h}_{\mu}^K(\{F_n\},\frac{\epsilon}{32})\\
\leq& \overline{h}_{\mu}^K(\{F_n\},\frac{\epsilon}{32})
\leq  \lim_{\delta \to 0} \inf\{h_{top}(G, Z,d,\{F_n\},\frac{\epsilon}{32}):\mu(Z)\geq 1-\delta\}.
\end{split}
\end{align}

As for Katok's $\epsilon$-entropies and rate-distortion functions,  the authors \cite[Propositions 4.1 and 4.2]{ljzz25} have proved the following two inequalities: for every $\epsilon >0$,
 \begin{align}\label{equu 3.11}
R_{\mu,L^{\infty}}(\{F_n\},\epsilon) \leq \overline{h}_{\mu}^K(\{F_n\},\epsilon) \leq R_{\mu,L^{\infty}}(\{F_n\},\frac{\epsilon}{8}),
\end{align}
and  for  $L>2$,
\begin{align}\label{equu 3.12}
R_{\mu,L^{1}}(\{F_n\},2\epsilon) \leq \overline{h}_{\mu,L^1}^K(\{F_n\},\epsilon) \leq \frac{L}{L-1}R_{\mu,L^{1}}(\{F_n\},\frac{\epsilon}{8L+2}).
\end{align}

Then, by (\ref{equ 2.10}) and (\ref{equu 3.11}) the divergent rates of the  measure-theoretic $\epsilon$-entropies in $\mathcal{E}$ are as desired.

Now assume that $(X,d)$ has the tame growth of covering numbers. By the proof of \cite[Theorem 1.3, p.14]{ljzz25}, for any $\theta \in (0,1)$ we have 
\begin{align}\label{equu 3.13}
\begin{split}
 \overline{h}_{\mu}^K(\{F_n\},2\epsilon^{1-\theta}) 
\leq& \log 2 + \epsilon^{\theta}\log r(X,d,\epsilon) +\overline{h}_{\mu,L^1}^K(\{F_n\},\epsilon). 
\end{split}
\end{align}
Then we have \begin{align*}
(1-\theta)\cdot \limsup_{\epsilon \to 0}\frac{ \overline{h}_{\mu}^K(\{F_n\},2\epsilon^{1-\theta}) }{\log \frac{1}{2\epsilon^{1-\theta}}}\leq& \limsup_{\epsilon \to 0}\frac{\log 2 + \epsilon^{\theta}\log r(X,d,\epsilon) +\overline{h}_{\mu,L^1}^K(\{F_n\},\epsilon)}{\logf}\\
=&\limsup_{\epsilon \to 0}\frac{\overline{h}_{\mu,L^1}^K(\{F_n\},\epsilon)}{\logf}.
\end{align*}
Letting $\theta \to 1$ and  noticing that $\overline{h}_{\mu,L^1}^K(\{F_n\},\epsilon) \leq \overline{h}_{\mu}^K(\{F_n\},\epsilon)$ for every $\epsilon>0$,  we conclude that  $\overline{h}_{\mu,L^1}^K(\{F_n\},\epsilon)$ and $ \overline{h}_{\mu}^K(\{F_n\},\epsilon)$   have the same divergent rates by  taking $\limsup_{\epsilon \to 0}$(or $\liminf_{\epsilon \to 0}$).  Together with  (\ref{equu 3.12}), we know that  $R_{\mu,L^{1}}(\{F_n\},\epsilon)$ can be included in $\mathcal{E}$. For the case $1<p<\infty$,  we have
\begin{align}\label{equu 3.14}
R_{\mu,L^{1}}(\{F_n\},2\epsilon)\leq R_{\mu,L^{p}}(\{F_n\},2\epsilon)\leq R_{\mu,L^{\infty}}(\{F_n\},\epsilon)
\end{align}
by H\"{o}lder's inequality and comparing their definitions. Thus, we include $R_{\mu,L^{p}}(\{F_n\},\epsilon)$ into $\mathcal{E}$. This  completes the proof.
\end{proof}

Since $\lim_{\epsilon \to 0}\inf_{\diam (\UU) \leq \epsilon} h_{\mu}(G,\UU)=h_{\mu}(G,X)$ for every $\mu \in E(X,G)$, as an additional result of  (\ref{equ 2.10}) and (\ref{equu 3.11}), we summarize the  known  measure-theoretic entropy formulae as follows:

\begin{thm}
Let  $(X,G)$ be a $G$-system and $\mu \in E(X,G)$, and let $\{F_n\}$ be a tempered F\o lner sequence of $G$. Then for every $h_{\mu}(G,\{F_n\},\epsilon) \in \mathcal{E}$,
$$\lim_{\epsilon \to  0} h_{\mu}(G,\{F_n\},\epsilon)=h_{\mu}(G,X).$$
\end{thm}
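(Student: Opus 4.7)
The plan is to derive this theorem as an immediate corollary of the chain of inequalities \eqref{equ 2.10} together with the rate-distortion sandwich \eqref{equu 3.11}, once we identify the $\epsilon\to 0$ limit of two distinguished entries of that chain as $h_\mu(G,X)$. Since every quantity in the candidate set $\mathcal{E}$ lies between these two pinning points (up to the harmless rescalings $\epsilon\mapsto \epsilon/8$, $\epsilon/32$ that appear in \eqref{equ 2.10} and \eqref{equu 3.11}), the theorem then follows by a single squeeze argument.

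For the first pinning point, I would use the cover-side identity
\[
\lim_{\epsilon\to 0}\inf_{\diam(\UU)\leq\epsilon}h_\mu(G,\UU)=h_\mu(G,X),
\]
which is recalled in the sentence preceding the theorem. Its proof is a short exercise: the quantity is monotone nonincreasing in $\epsilon$ and bounded above by $\sup_{\UU\in\mathcal{C}_X^o}h_\mu(G,\UU)=h_\mu(G,X)$ (by \cite[Theorem 3.5]{hyz11}); conversely, given $\gamma>0$ pick $\UU_0\in\mathcal{C}_X^o$ with $h_\mu(G,\UU_0)>h_\mu(G,X)-\gamma$, and observe that any $\UU\in\mathcal{C}_X^o$ with $\diam(\UU)<\Leb(\UU_0)$ refines $\UU_0$ by the Lebesgue number property, hence $h_\mu(G,\UU)\geq h_\mu(G,\UU_0)$ since the infimum defining the local entropy is taken over a smaller family of partitions.

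For the second pinning point, I would invoke \cite[Theorem 3.1]{zcy16} (cited in Section~3.2.3), which uses the temperedness of $\{F_n\}$ to yield $\lim_{\epsilon\to 0}\overline{h}_\mu^K(\{F_n\},\epsilon)=h_\mu(G,X)$, and the analogous identity for $\underline{h}_\mu^K(\{F_n\},\epsilon)$. Combined with \eqref{equ 2.10}, this squeezes all the Pfister--Sullivan, cover-based, and partition-based quantities in $\mathcal{E}$ between two functions of $\epsilon$ that share the common limit $h_\mu(G,X)$, which handles every member of $\mathcal{E}$ except $R_{\mu,L^\infty}$. For the last member, the two-sided inequality \eqref{equu 3.11} provides exactly the required squeeze between $\overline{h}_\mu^K(\{F_n\},\epsilon)$ and $\overline{h}_\mu^K(\{F_n\},\epsilon/8)$, and sending $\epsilon\to 0$ completes the proof.

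There is no serious obstacle here: the entire argument is bookkeeping on top of the chain already assembled in the proof of Theorem~\ref{thm 2.15}. The only conceptual input is the standard fact that $\sup_{\UU}h_\mu(G,\UU)$ is approached by any sequence $\UU_n$ with $\diam(\UU_n)\to 0$, which is immediate from the Lebesgue number comparison above; the rest is passing to limits through a finite collection of pointwise inequalities in $\epsilon$.
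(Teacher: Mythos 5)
Your overall strategy matches the paper's: squeeze every candidate in $\mathcal{E}$ via the chain \eqref{equ 2.10}, and identify the common limit through one entry whose $\epsilon\to 0$ behavior is known. Your argument for the cover-side identity $\lim_{\epsilon\to 0}\inf_{\diam(\UU)\leq\epsilon}h_\mu(G,\UU)=h_\mu(G,X)$ (monotonicity plus the Lebesgue-number refinement) is correct, as is the final $R_{\mu,L^\infty}$ step via \eqref{equu 3.11}.

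The one genuine gap is your second pinning point. You invoke \cite[Theorem 3.1]{zcy16} for $\lim_{\epsilon\to 0}\overline{h}_\mu^K(\{F_n\},\epsilon)=h_\mu(G,X)$ and describe it as requiring only temperedness, but as the paper recalls in Section~3.2.3 that result additionally assumes $\lim_{n\to\infty}|F_n|/\log n=\infty$, which is not among the hypotheses of the present theorem. As written, your proof therefore only covers tempered F\o lner sequences satisfying that extra growth condition. It also makes the paper's subsequent remark---that the growth condition in \cite[Theorem 3.1]{zcy16} is redundant---impossible to draw, since your argument presupposes that very result.

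The repair is already contained in the chain you use. The chain \eqref{equ 2.10} is cyclic: it begins and ends with $\lim_{\delta\to 0}\inf\{h_{top}(G,Z,d,\{F_n\},\cdot):\mu(Z)\geq 1-\delta\}$, evaluated at $\epsilon$ and at $\epsilon/32$. Each quantity in the chain is non-decreasing as $\epsilon\to 0$, so the two endpoints converge to the same limit and squeeze every intermediate quantity to that common value; no second independently-known pinning point is needed. Identifying the common value via the cover-side identity you already established then finishes the proof for all of $\mathcal{E}$, including the Katok entries (and indeed proves the Katok entropy formula under temperedness alone, which is the content of the paper's remark). This self-contained cyclicity argument, using a single pinning point, is what the paper actually does.
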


\begin{rem}
We remark that the assumption $\lim_{n\to \infty} \frac{|F_n|}{\log n}=\infty$ imposed on the F\o lner sequence for the  Katok's entropy formula \cite[Theorem 3.1]{zcy16} is redundant.
\end{rem}

\subsection{Measure-theoretic metric mean dimension of invariant measures} 
In this subsection, we introduce the notion of  measure-theoretic metric mean dimension for invariant measures.

We roughly explain our strategy to define the metric mean dimension of invariant measures.

(1) A nice and reasonable definition for the  measure-theoretic metric mean dimension of invariant measures should be independent of the choice of candidates in $\mathcal{E}$. Otherwise, it may lead to different types of definitions of  measure-theoretic metric mean dimension.

(2) By the proof of Theorem \ref{thm 2.15}, if $\mu$ is an ergodic measure, then for every measure-theoretic $\epsilon$-entropy $h_{\mu}(G,\{F_n\},\epsilon) \in \mathcal{E}$, there exist $C_1>0$ and $C_2>0$ such that for all $\epsilon >0$,
\begin{align}\label{equu 3.15}
	f(\mu,C_1\epsilon,\{F_n\})\leq h_{\mu}(G,\{F_n\},\epsilon)\leq f(\mu,C_2\epsilon,\{F_n\}),
\end{align}
where $f(\mu,\epsilon,\{F_n\}):=\lim_{\delta \to 0}\inf\{h_{{top}}(G, Z,d,\{F_n\},\epsilon):\mu(Z)\geq 1-\delta\}$.

(3) To realize (1), we extend the inequality \eqref{equu 3.15} to all non-ergodic measures. We complete this by the well-known Krein-Milman theorem, which asserts that $M(X,G)=\overline{\mathrm{co}(E(X,G))}$. Hence, we first define the measure-theoretic $\epsilon$-entropy for the convex hull $\mathrm{co}(E(X,G))$ of ergodic measures, and then define the  measure-theoretic metric mean dimension of invariant measures using suitable approximations from the elements of $\mathrm{co}(E(X,G))$.

More precisely, this idea proceeds via the following steps.

\begin{df}\label{df 3.26}
$(a)$ Let $\{F_n\}$ be a (tempered) F\o lner sequence of $G$ and $\epsilon  >0$. For $\mu \in  \mathrm{co}(E(X,G))$ and $h_{\mu}(G,\{F_n\},\epsilon) \in \mathcal{E}$, we write\footnote[3]{In light of the well-known Ergodic Decomposition Theorem, each element in  $\mathrm{co}(E(X,G))$ can be uniquely  expressed as a finite convex combination of the ergodic measures of $X$.} $\mu=\sum_{j=1}^k\lambda_j \mu_j,~\mu_j \in E(X,G), \sum_{j=1}^{k}\lambda_j=1,~0\leq \lambda_j\leq 1,~j=1,...,k$, and  define the measure-theoretic $\epsilon$-entropy of $\mu$ as
\begin{align*}
F(\mu, \{F_n\}, \epsilon):=\sum_{j=1}^{k}\lambda_jh_{\mu_j}(G,\{F_n\},\epsilon).
\end{align*}

$(b)$ For $\mu \in  M(X,G)$, let $M_{G}(\mu)$  denote the collection of all families $\{\mu_{\epsilon}\}_{\epsilon >0} \subset \mathrm{co}(E(X,G))$  that converge to $\mu$  as $\epsilon \to 0$ in the weak$^{*}$-topology. We define  the  measure-theoretic upper and lower  metric mean dimensions of $\mu$ as
\begin{align*}
\begin{split}
\overline{\rm {mdim}}_M(\mu,\{F_n\},d)&=\sup_{(\mu_{\epsilon})_{\epsilon}\in M_{G}(\mu) }\{\limsup_{\epsilon \to 0}\frac{F(\mu_{\epsilon},\{F_n\}, \epsilon)}{\log \frac{1}{\epsilon}}\},\\
\underline{\rm {mdim}}_M(\mu,\{F_n\},d)&=\sup_{(\mu_{\epsilon})_{\epsilon}\in M_{G}(\mu) }\{\liminf_{\epsilon \to 0}\frac{F(\mu_{\epsilon},\{F_n\}, \epsilon)}{\log \frac{1}{\epsilon}}\},
\end{split}
\end{align*}
respectively.
\end{df}


\begin{rem}\label{rem 3.24}
$(i)$. By the inequality (\ref{equu 3.15}) and the above steps (a) and (b), any candidate  $h_{\mu}(G,\{F_n\},\epsilon)$ from $\mathcal{E}$ yields the same   measure-theoretic metric mean dimension. Therefore,  Definition \ref{df 3.26}  is independent of the choice of  candidates in $\mathcal{E}$\footnote[4]{In the rest of this paper, before calculating the measure-theoretic metric mean dimension of invariant measures, we will first clarify which candidate \( h_{\mu}(G, \{F_n\}, \epsilon) \in \mathcal{E} \) to use.}, while inherently relying on the compatible metrics on $X$. 

$(ii)$. The readers may pose another definition of measure-theoretic (upper) metric mean dimension using Theorem \ref{thm 2.15} and Ergodic Decomposition Theorem.  More precisely, by Theorem \ref{thm 2.15}, for every ergodic measure  it is reasonable to define:
\begin{align*}
\overline{\rm {mdim}}_M^{*}(\mu,\{F_n\},d):=\limsup_{\epsilon \to 0}\frac{h_{\mu}(G,\{F_n\},\epsilon) }{\logf},
\end{align*}
where $h_{\mu}(G,\{F_n\},\epsilon) \in \mathcal{E}$ and $\mu \in E(X,G)$. 

Now let $\mu \in M(X,G)$ and $\mu=\int_{E(X,G)} m~ d\tau (m)$ be the ergodic decomposition of $\mu$, where $\tau$ is a  Borel probability measure on $M(X,G)$ and $\tau(E(X,G))=1$. We define the measure-theoretic upper metric mean dimension of $\mu$ as
\begin{align*}
\overline{\rm {mdim}}_M^{*}(\mu,\{F_n\},d):=\int_{E(X,G)} \overline{\rm {mdim}}_M^{*}(m,\{F_n\},d) d\tau(m).
\end{align*}

As is shown in forthcoming Example \ref{ex 4.4}, there exists a $G$-system  such that the variational principle  of upper metric mean dimension \emph{fails}  in terms of $\overline{\rm {mdim}}_M^{*}(\mu,\{F_n\},d)$. To establish the variational principle of metric mean dimension for all $G$-systems, we define the measure-theoretic metric mean dimension  of invariant measures as $\overline{\text{mdim}}_M(\mu, \{F_n\}, d)$, rather than using $\overline{\text{mdim}}_M^*(\mu, \{F_n\}, d)$.

$(iii)$. Notice that  $\inf_{\diam (\alpha)\leq \epsilon} h_{\mu}(G,\alpha)$  does not depend on  the  choice of  tempered F\o lner sequences of $G$.  Any two different  tempered F\o lner sequences of $G$ lead to the same  measure-theoretic metric mean dimension.  

$(iv)$. For every  $\epsilon >0$, the measure-theoretic $\epsilon$-entropy $F(\mu, \{F_n\}, \epsilon)$ is ``well-defined" in the sense that it is finite for all  $\mu \in  \mathrm{co}(E(X,G))$. To see this, fix  $\epsilon >0$ and consider the Katok's $\epsilon$-entropy $h_{\mu}(G,\{F_n\},\epsilon)= \overline{h}_{\mu}^K(\{F_n\},\epsilon)$. By definition, one has $$\overline{h}_{\mu}^K(\{F_n\},\epsilon)\leq \hat{h}_{top}(G, X,d,\{F_n\},\epsilon)<\infty$$ for all $\mu \in E(X,G)$. Together with (\ref{equu 3.15}), we have $F(\mu, \{F_n\}, \epsilon)<\infty$ for all  $\mu \in  co(E(X,G))$ and $h_{\mu}(G,\{F_n\},\epsilon) \in \mathcal{E}$.

$(v)$.  If  $(X,d,G)$ is a $G$-system with finite topological entropy, then   $h_{\mu}(G,X)<\infty$ for all $\mu \in M(X,G)$, and hence $\overline{\rm {mdim}}_M(\mu,\{F_n\},d)=0$ for all $\mu \in M(X,G)$. Therefore, the  measure-theoretic metric mean dimension is a metric-dependent  quantity to characterize the dynamics of $G$-systems with infinite topological entropy.

\end{rem}

In particular, if  $F(\mu_{\epsilon},\{F_n\}, \epsilon)=R_{\mu_{\epsilon},L^{\infty}}(\{F_n\},\epsilon)$ or $R_{\mu_{\epsilon},L^{p}}(\{F_n\},\epsilon)$, we apply the above idea to define the modified rate-distortion dimensions of invariant measures.

\begin{df}\label{df 3.27}
Let $(X,d,G)$ be a  $G$-system, and let $\{F_n\}$ be a  (tempered) F\o lner sequence of $G$. For every $\mu \in M(X,G)$, we  respectively
define  the modified  upper $L^{\infty}$ and $L^p$ ($1\leq p< \infty$) rate-distortion dimensions of $\mu$ as
\begin{align*}
\overline{\rm {mdim}}_{M,L^{\infty}}(\mu,\{F_n\},d)&=\sup_{(\mu_{\epsilon})_{\epsilon}\in M_{G}(\mu) }\{\limsup_{\epsilon \to 0}\frac{R_{\mu_\epsilon,L^{\infty}}(\{F_n\},\epsilon)}{\log \frac{1}{\epsilon}}\},\\
\overline{\rm {mdim}}_{M,L^p}(\mu,\{F_n\},d)&=\sup_{(\mu_{\epsilon})_{\epsilon}\in M_{G}(\mu) }\{\limsup_{\epsilon \to 0}\frac{R_{\mu_\epsilon,L^{p}}(\{F_n\},\epsilon)}{\log \frac{1}{\epsilon}}\}.
\end{align*}
\end{df}

Replacing $\limsup_{\epsilon \to 0}$ by $\liminf_{\epsilon \to 0}$ in Definition \ref{df 3.27}, one can   define  $\underline{\rm {mdim}}_{M,L^{\infty}}(\mu,\{F_n\},d)$ and $\underline{\rm {mdim}}_{M,L^{p}}(\mu,\{F_n\},d)$  as the corresponding  modified  lower $L^{\infty}$ and $L^p$ rate-distortion dimensions of $\mu$, respectively.

An elementary but  useful formulation for Definition \ref{df 3.26} is  the following proposition.

\begin{prop} \label{prop 3.27}
Let $(X, d, G)$ be a $G$-system, and let $\{F_n\}$ be a tempered F\o lner sequence of $G$.  Then for every $\lambda >0$, we have  
\begin{align*}
\begin{split}
\overline{\rm {mdim}}_M(\mu,\{F_n\},d)&=\sup_{(\mu_{\epsilon})_{\epsilon}\in M_{G}(\mu) }\{\limsup_{\epsilon \to 0}\frac{F(\mu_{\epsilon},\{F_n\}, \lambda\epsilon)}{\log \frac{1}{\epsilon}}\},\\
\underline{\rm {mdim}}_M(\mu,\{F_n\},d)&=\sup_{(\mu_{\epsilon})_{\epsilon}\in M_{G}(\mu) }\{\liminf_{\epsilon \to 0}\frac{F(\mu_{\epsilon},\{F_n\}, \lambda\epsilon)}{\log \frac{1}{\epsilon}}\},
\end{split}
\end{align*}
for all $\mu \in M(X,G)$.
\end{prop}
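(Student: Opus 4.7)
\textbf{Proof proposal for Proposition \ref{prop 3.27}.} The plan is to reduce the claim to an elementary change of variables based on two observations: (i) for every $\lambda>0$, the reparametrization $\Phi_{\lambda}:\{\mu_{\epsilon}\}_{\epsilon>0}\mapsto \{\mu_{\epsilon/\lambda}\}_{\epsilon>0}$ is a bijection of $M_G(\mu)$ onto itself, with inverse $\Phi_{1/\lambda}$, because $\epsilon\to 0$ iff $\epsilon/\lambda\to 0$ so that the weak$^{*}$-convergence to $\mu$ is preserved; and (ii) $\log\frac{1}{\lambda\epsilon}=\log\frac{1}{\epsilon}-\log\lambda\sim\log\frac{1}{\epsilon}$ as $\epsilon\to 0$. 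Combining (i) and (ii) will allow me to move the factor $\lambda$ from inside the measure-theoretic $\epsilon$-entropy to the parametrization of the family, without altering either the $\limsup$ or the $\liminf$.

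Concretely, I would fix $\lambda>0$ and an arbitrary family $\{\mu_\epsilon\}\in M_G(\mu)$, and set $\nu_\epsilon:=\mu_{\epsilon/\lambda}$; then $\{\nu_\epsilon\}\in M_G(\mu)$ by (i). Substituting $\eta=\epsilon/\lambda$ (so that $\epsilon=\lambda\eta$ and $\log\frac{1}{\epsilon}=\log\frac{1}{\eta}-\log\lambda$) yields the identity
\[
\frac{F(\nu_\epsilon,\{F_n\},\epsilon)}{\log\frac{1}{\epsilon}}=\frac{F(\mu_\eta,\{F_n\},\lambda\eta)}{\log\frac{1}{\eta}-\log\lambda}.
\]
Since $F(\cdot,\{F_n\},\cdot)\geq 0$ by construction, and $(\log\frac{1}{\eta}-\log\lambda)/\log\frac{1}{\eta}\to 1$ as $\eta\to 0$, the standard multiplicative behaviour of $\limsup$ (resp.\ $\liminf$) under multiplication by a positive sequence tending to $1$ gives
\[
\limsup_{\epsilon\to 0}\frac{F(\nu_\epsilon,\{F_n\},\epsilon)}{\log\frac{1}{\epsilon}}=\limsup_{\eta\to 0}\frac{F(\mu_\eta,\{F_n\},\lambda\eta)}{\log\frac{1}{\eta}},
\]
and the analogous equality with $\liminf$ in place of $\limsup$.

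Taking the supremum over $\{\mu_\epsilon\}\in M_G(\mu)$ on both sides, and using the bijectivity of $\Phi_\lambda$ to identify the two suprema, the left-hand sides become $\overline{\rm mdim}_M(\mu,\{F_n\},d)$ and $\underline{\rm mdim}_M(\mu,\{F_n\},d)$ (by Definition \ref{df 3.26}), while the right-hand sides become precisely the two expressions displayed in the proposition. I do not anticipate a substantial obstacle here; the only point requiring care is passing the asymptotic equivalence of the two denominators through the $\limsup/\liminf$, which works cleanly because $F\geq 0$ (so the rule $\limsup(a_\epsilon b_\epsilon)=\limsup a_\epsilon$ applies whenever $a_\epsilon\geq 0$ and $b_\epsilon\to 1$, including the case where the value is $+\infty$).
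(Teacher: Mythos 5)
Your proposal is correct and follows essentially the same route as the paper, which proves the proposition by the one-line hint that $\limsup_{\epsilon\to 0}f(\epsilon)=\limsup_{\epsilon\to 0}f(\lambda\epsilon)$ for nonnegative $f$ (and likewise for $\liminf$). You have merely made the change of variables explicit — packaging it as a bijection $\Phi_\lambda$ of $M_G(\mu)$ and handling the harmless $\log\lambda$ shift in the denominator via $\limsup(a_\epsilon b_\epsilon)=\limsup a_\epsilon$ when $a_\epsilon\geq 0$, $b_\epsilon\to 1$ — which are precisely the details the paper's hint leaves to the reader.
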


\begin{proof}
It can be proved using the fact: if $f:(0,\infty)\rightarrow \mathbb{R}_{\geq 0}\cup\{\infty\}$, then $ \limsup_{\epsilon \to 0}f( \epsilon)=\limsup_{\epsilon \to 0}f( \lambda \epsilon)$ and $ \liminf_{\epsilon \to 0}f( \epsilon)=\liminf_{\epsilon \to 0}f( \lambda \epsilon)$ for every $0<\lambda  <\infty$.
\end{proof}

We calculate the  measure-theoretic metric mean dimension of a product  measure of the full shift $(([0,1]^m)^G, G)$.
\begin{ex}\label{ex 3.28}
Let $m$ be a positive integer. Endow $[0,1]^{m}$ with a metric $$||x-y||_{\mathbb{R}^m}=\max_{1\leq n\leq m}\limits |x_n-y_n|.$$ Consider the full shift  $(([0,1]^m)^G,d, G)$ with the right shift defined by   $\sigma_h(x)=(x_{gh})_g$ for every $x=(x_g)_g \in ([0,1]^m)^G$ as in Example \ref{ex 3.6}.  Then
$$d((x_g),(y_g))=\sum_{g\in G}\alpha_g ||x_g-y_g||_{\mathbb{R}^m},$$
is a compatible product metric on $([0,1]^m)^{G}$, where $(\alpha_g)_g$ is a sequence of positive real  numbers such that $\sum_{g\in G}\alpha_g <\infty$ and $\alpha_{e_G}=1$.

Denote by $\text{Leb}([0,1]^{m})$  the  Lebesgue measure on $[0,1]^m$. Assume that $\mu=(\text{Leb}([0,1]^{m}))^{\otimes G}$ is a product measure  generated by the  Lebesgue measure on $[0,1]^m$.  Then for every  tempered   F\o lner sequence $\{F_n\}$ of $G$,
$$\underline{\rm {mdim}}_M(\mu,\{F_n\},d)=\overline{\rm {mdim}}_M(\mu,\{F_n\},d)=m.$$
\end{ex}

\begin{proof}
Fix a tempered   F\o lner sequence $\{F_n\}$ of $G$. By Example  \ref{ex 3.6}, we  have already known that $\overline{{\rm {mdim}}}_M(G, ([0,1]^m)^G,d)=m.$
As we have mentioned in Remark  \ref{rem 3.24}, for  every ergodic measure $\mu \in E(X,G)$, one has  $$F(\mu, \{F_n\}, \epsilon)=\overline{h}_{\mu}^K(\{F_n\},\epsilon) \leq  \hat{h}_{top}(G, ([0,1]^m)^G,d,\{F_n\},\epsilon).$$ Thus, for any $(\mu_{\epsilon})_{\epsilon}\in M_{G}(\mu)$ it holds that  for every $\epsilon >0$, $$F(\mu_\epsilon, \{F_n\}, \epsilon)\leq \hat{h}_{top}(G, ([0,1]^m)^G,d,\{F_n\},\epsilon).$$ This implies that $\overline{\rm {mdim}}_M(\mu,\{F_n\},d)\leq m.$

Now we let   $h_{\mu}(G,\{F_n\},\epsilon)= \inf_{ 
	\diam (\alpha)\leq \epsilon}h_{\mu}(G,\alpha)$.
	Given  $\epsilon>0$, we define the Brin-Katok's $\epsilon$-entropy of  $\mu$  at $x\in X$ w.r.t.  $\{F_n\}$ as
\begin{align*}
\underline{h}_{\mu}^{BK}(x,\{F_n\},\epsilon)=\liminf_{n \to \infty}-\frac{1}{|F_n|}\log \mu(B_{F_n}(x,\epsilon)).
\end{align*}
Noticing that for all $x\in ([0,1]^m)^G$ and $n\geq 1$,  one has $B_{F_n}(x,\epsilon)\subset \{y\in ([0,1]^m)^G:  ||x_g-y_g||_{\mathbb{R}^m}<\epsilon~~ \forall g \in F_n\},$  and hence
$$\underline{h}_{\mu}^{BK}(x,\{F_n\},\epsilon)\geq   \liminf_{n \to \infty}-\frac{\log (2\epsilon)^{m |F_n|}}{|F_n|} = m \cdot  \log \frac{1}{2\epsilon}.$$
Let $\alpha$ be a finite Borel   partition  of $([0,1]^m)^G$ with  $\diam(\alpha)\leq \frac{\epsilon}{2}$.  By Shannon-McMillan-Breiman Theorem of amenable group \cite[Theorem 1.3]{lin01}, for any tempered F\o lner sequence $\{L_n\}$ of $G$ with $\lim_{n \to \infty}\frac{|L_n|}{\log n}=\infty$, one has
$$\lim_{n \to\infty}-\frac{\log \mu (\alpha_{L_n}(x))}{|L_n|}=h_{\mu}(G,\alpha)$$
for $\mu$-a.e. $x\in X$,  where $\alpha_{L_n}(x)$ is the atom of the partition $\alpha_{L_n}$ to which  $x$ belongs. 

We  may require  that 
the F\o lner sequence $\{F_n\}$ satisfies $|F_n|>n$ for every $n$; otherwise,  one can consider its subsequence $\{F_{n_k}\}_k$  by  the fact $\lim_{n \to \infty} |F_n|=\infty$. Since $\alpha_{F_n}(x) \subset  B_{F_n}(x,\epsilon)$ for every  $n\geq 1$ and $x\in ([0,1]^m)^G$, we get 
$m \cdot  \log \frac{1}{2\epsilon}\leq h_{\mu}(G,\alpha)$, and hence  $$m \cdot  \log \frac{1}{2\epsilon}\leq \inf_{\diam(\alpha)\leq \frac{\epsilon}{2}}h_{\mu}(G,\alpha).$$
Let $\mu_{\epsilon}:=\mu$ for all $\epsilon >0$.  Since $\mu$ is ergodic, we have   $F(\mu_{\epsilon}, \{F_n\}, \frac{\epsilon}{2})= \inf_{\diam(\alpha)\leq \frac{\epsilon}{2}}\limits h_{\mu}(G,\alpha)$. 
By Proposition \ref{prop 3.27}, this yields that 
$$\underline{\rm {mdim}}_M(\mu,\{F_n\},d)\geq \liminf_{\epsilon \to 0}\frac{1}{\log \frac{1}{\epsilon}}\inf_{\diam(\alpha)\leq \frac{\epsilon}{2}}\limits h_{\mu_\epsilon}(G,\alpha)\geq m.$$ 
\end{proof}

The following proposition shows that the upper and lower measure-theoretic metric mean dimensions are  upper semi-continuous  functions on $M(X,G)$. Recall that an extended-valued function $f$ on a compact metric space $X$ is said to be \emph{upper semi-continuous} (u.s.c.) if  $\limsup_{y\to x}f(y)\leq f(x)$ for all  $x\in X$.
\begin{prop}\label{prop 3.26}
Let $(X, d, G)$ be a $G$-system, and let $\{F_n\}$ be a tempered F\o lner sequence of $G$.  Then
\begin{align*}
&\overline{\rm {mdim}}_M(\cdot,\{F_n\},d): M(X,G)\rightarrow [0,+\infty],\\
&\underline{\rm {mdim}}_M(\cdot,\{F_n\},d): M(X,G)\rightarrow [0,+\infty]
\end{align*}  
are upper semi-continuous.
\end{prop}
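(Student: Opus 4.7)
The plan is to prove upper semi-continuity at $\mu$ by a splicing construction that assembles, out of nearly optimal families for measures $\nu_n$ converging to $\mu$, a single family in $M_G(\mu)$ whose normalized $\epsilon$-entropy dominates $\limsup_{n\to\infty}\overline{\rm{mdim}}_M(\nu_n,\{F_n\},d)$.

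Fix a compatible metric $\rho$ on $M(X)$ inducing the weak$^*$-topology. Given $\nu_n \to \mu$ in $M(X,G)$, pass to a subsequence so that $a := \limsup_{n\to\infty}\overline{\rm{mdim}}_M(\nu_n,\{F_n\},d)$ is realized as an honest limit, and for each $n$ select $(\mu^n_\epsilon)_\epsilon \in M_G(\nu_n)$ achieving the supremum in Definition \ref{df 3.26} to within $1/n$. Using the weak$^*$-convergence $\mu^n_\epsilon \to \nu_n$ as $\epsilon \to 0$, choose inductively a strictly decreasing sequence $\delta_n \downarrow 0$ and test values $\epsilon_n \in (\delta_{n+1},\delta_n)$ so that (i) $\rho(\mu^n_\epsilon,\nu_n) < 1/n$ whenever $0 < \epsilon < \delta_n$, and (ii) $F(\mu^n_{\epsilon_n},\{F_n\},\epsilon_n)/\log(1/\epsilon_n) > \overline{\rm{mdim}}_M(\nu_n,\{F_n\},d) - 2/n$. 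Define the spliced family $(\mu_\epsilon)_\epsilon \subset \mathrm{co}(E(X,G))$ by $\mu_\epsilon := \mu^n_\epsilon$ on $[\delta_{n+1},\delta_n)$ (with an arbitrary choice on $[\delta_1,\infty)$). For $\epsilon \in [\delta_{n+1},\delta_n)$ the triangle inequality
\[
\rho(\mu_\epsilon,\mu) \leq \rho(\mu^n_\epsilon,\nu_n) + \rho(\nu_n,\mu) < \frac{1}{n} + \rho(\nu_n,\mu)
\]
guarantees $\mu_\epsilon \to \mu$, so $(\mu_\epsilon) \in M_G(\mu)$. Testing along $\epsilon_n \to 0$ we conclude
\[
\overline{\rm{mdim}}_M(\mu,\{F_n\},d) \geq \limsup_{\epsilon \to 0}\frac{F(\mu_\epsilon,\{F_n\},\epsilon)}{\log(1/\epsilon)} \geq \lim_{n\to\infty}\!\left(\overline{\rm{mdim}}_M(\nu_n,\{F_n\},d)-\frac{2}{n}\right) = a,
\]
which is the desired upper semi-continuity of $\overline{\rm{mdim}}_M(\cdot,\{F_n\},d)$.

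For the lower version $\underline{\rm{mdim}}_M(\cdot,\{F_n\},d)$ the same construction applies after strengthening (ii) to require $F(\mu^n_\epsilon,\{F_n\},\epsilon)/\log(1/\epsilon) > \underline{\rm{mdim}}_M(\nu_n,\{F_n\},d) - 2/n$ for \emph{every} $\epsilon \in (0,\delta_n)$, which is exactly what the liminf in Definition \ref{df 3.26} affords; the liminf along the spliced family then inherits the bound uniformly in $n$. The main obstacle I foresee is this two-stage convergence: each family $(\mu^n_\epsilon)_\epsilon$ converges only to $\nu_n$, not to $\mu$, so naive diagonalization fails — the thresholds $\delta_n$ must be chosen small enough to control simultaneously the $\epsilon$-tail quality of the $n$th family and the residual weak$^*$-distance $\rho(\nu_n,\mu)$, which is exactly what the interleaving in (i)--(ii) accomplishes.
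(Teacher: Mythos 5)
Your proposal is correct and follows essentially the same splicing strategy as the paper's own proof: pass to a subsequence realizing the limsup, choose nearly optimal families $(\mu^n_\epsilon)_\epsilon$ in $M_G(\nu_n)$, splice them on nested scales so the resulting family still converges to $\mu$ in the weak$^*$ topology, and then test along a discrete set of $\epsilon$'s (for $\overline{\rm{mdim}}_M$) or on whole intervals (for $\underline{\rm{mdim}}_M$, where the liminf gives the needed uniformity). The only cosmetic differences are that the paper freezes the $n$th family to a single constant measure on each dyadic interval and phrases the lower case as a proof by contradiction, whereas you keep the $\epsilon$-dependence and argue directly — but these are presentational, not substantive, differences.
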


\begin{proof}
(1). We first show $\overline{\rm {mdim}}_M(\cdot,\{F_n\},d)$ is an u.s.c. function. Suppose that $D$ is a  metric compatible with the weak$^{*}$-topology of $M(X)$.
Fix $\mu \in M(X,G)$. If $\overline{\rm {mdim}}_M(\mu,\{F_n\},d)=+\infty$, it is clear that  the function is  u.s.c. at $\mu$. Now we  consider $\overline{\rm {mdim}}_M(\mu,\{F_n\},d)<+\infty$, and let $\mu_n\in M(X,G)$ such that $\mu_n \to \mu$ as $n \to \infty$. Without loss of generality, we assume that $$\lim_{n \to \infty}\overline{\rm {mdim}}_M(\mu_n,\{F_n\},d)=\limsup_{n \to\infty}\overline{\rm {mdim}}_M(\mu_n,\{F_n\},d);$$ otherwise we  replace $\lim_{n \to \infty}$ by  the $\lim_{k \to \infty}$ for  some  subsequence  $\{{n_k}\}$. Choose a subsequence $\{\mu_{n_j}\}_j$  with
$D(\mu_{n_j},\mu)<\frac{1}{2j}$ for every $j \in \mathbb{N}$. Let $\gamma >0$. For every $j$, we take $(v_{\epsilon}^j)_{\epsilon}\in M_G(\mu_{n_j})$ satisfying
$\overline{\rm {mdim}}_M(\mu_{n_j},\{F_n\},d)-\gamma<\limsup_{\epsilon \to 0} \frac{F(v_{\epsilon}^j,\{F_n\}, \epsilon) }{\logf}.$
Then there is a strictly decreasing  sequence $\{\epsilon_{n_j}\}_j$   converging to $0$ as  $j \to \infty$ such that
\begin{align*}
D(\nu_{\epsilon_{n_j}}^j,\mu_{n_j})<\frac{1}{2j},~\text{and}~
\overline{\rm {mdim}}_M(\mu_{n_j},\{F_n\},d)-\gamma<\ \frac{F(\nu_{\epsilon_{n_j}}^j,\{F_n\}, \epsilon_{n_j})}{\log\frac{1}{\epsilon_{n_j}}}.
\end{align*}
For every $\epsilon >0$, we let $\lambda_{\epsilon}:=\nu_{\epsilon_{n_j}}^j$ if $\epsilon \in (\epsilon_{n_{j+1}}, \epsilon_{n_j}]$, and $\lambda_{\epsilon}:=\mu$ if $\epsilon >\epsilon_{n_1}$. Then $\lambda_{\epsilon} \to \mu$ as $\epsilon  \to 0$ since $\nu_{\epsilon_{n_j}}^j \to \mu$ as $j \to \infty$. Hence, we have
$$\limsup_{n \to\infty}\overline{\rm {mdim}}_M(\mu_n,\{F_n\},d)-\gamma \leq \overline{\rm {mdim}}_M(\mu,\{F_n\},d).$$
This implies the upper semi-continuity  of $\overline{\rm {mdim}}_M(\cdot,\{F_n\},d)$ at $\mu$ by letting $\gamma \to 0$.

(2). We continue to show  $\underline{\rm {mdim}}_M(\cdot,\{F_n\},d)$ is also an u.s.c. function. It suffices to show the function is u.s.c. at these points  $\mu$ of $M(X,G)$ satisfying $\underline{\rm {mdim}}_M(\mu,\{F_n\},d)<+\infty$. Let  $\mu$ be such a measure.  If $\underline{\rm {mdim}}_M(\cdot,\{F_n\},d)$ is not u.s.c. at $\mu$, there exists a sequence $\{\mu_n\}$ of invariant measures converging to $\mu$ such that for some $\delta >0$,
 $$\limsup_{n \to\infty}\underline{\rm {mdim}}_M(\mu_n,\{F_n\},d) > \underline{\rm {mdim}}_M(\mu,\{F_n\},d) +\delta.$$ 
 Without loss of generality, for every $n \geq 1$ we assume that $D(\mu_n,\mu)<\frac{1}{2n}$ and
$$\underline{\rm {mdim}}_M(\mu_n,\{F_n\},d) > \underline{\rm {mdim}}_M(\mu,\{F_n\},d) +\delta.$$ Now for every $n \geq 1$, choose  $(\mu_n)_{\epsilon} \in M_G(\mu_n)$ such that $$\underline{\rm {mdim}}_M(\mu_n,\{F_n\},d)-\frac{\delta}{2}< \liminf_{\epsilon \to 0}\frac{F((\mu_n)_{\epsilon},\{F_n\},\epsilon)}{\log \frac{1}{\epsilon}}.$$
 Then there exists  $\epsilon_n \in (0,\frac{1}{n})$ such that  for all $\epsilon \in (0,\epsilon_n)$, one has $D((\mu_n)_\epsilon, \mu_n)<\frac{1}{2n}$ and 
$$\underline{\rm {mdim}}_M(\mu,\{F_n\},d) +\frac{\delta}{2}<\underline{\rm {mdim}}_M(\mu_n,\{F_n\},d)-\frac{\delta}{2}< \frac{F((\mu_n)_{\epsilon},\{F_n\},\epsilon)}{\log \frac{1}{\epsilon}}.$$
We may require that the resulting sequence $\{\epsilon_n\}$ satisfies $0<\epsilon_{n+1}<\epsilon_n$. If $\epsilon_{n+1}<\epsilon \leq \epsilon_n$, we define $\nu_\epsilon:= (\mu_n)_\epsilon$.  Then $\nu_\epsilon \to \mu$ as $\epsilon \to 0$ since $D((\mu_n)_\epsilon, \mu)<\frac{1}{n}$. Therefore,  for every  $0<\epsilon<\epsilon_1$ with  $\epsilon_{n+1}<\epsilon \leq \epsilon_n$ one has 
$$\underline{\rm {mdim}}_M(\mu,\{F_n\},d) +\frac{\delta}{2}< \frac{F(\nu_{\epsilon},\{F_n\},\epsilon)}{\log \frac{1}{\epsilon}}.$$
This implies that $ \underline{\rm {mdim}}_M(\mu,\{F_n\},d) +\frac{\delta}{2}\leq  \underline{\rm {mdim}}_M(\mu,\{F_n\},d)$, 
a contradiction!
\end{proof}

\section{Variational principles for metric mean dimension}\label{sec 4}
In this section, we prove  Theorems \ref{thm 1.1} and \ref{thm 1.2}.

\subsection{Lindenstrauss-Tsukamoto's variational principle  revisited}

We link the two  different types of metric mean dimensions  by the following variational principles:

\begin{thm}[=Theorem \ref{thm 1.1}]\label{thm 3.30}
Let $(X, d, G)$ be a $G$-system  and $\{F_n\}$ be a tempered F\o lner sequence of $G$.  Then
\begin{align*}
{\overline{\rm  mdim}}_M(G,X,d)&=\max_{\mu \in M(X,G)}\overline{\rm {mdim}}_M(\mu,\{F_n\},d).
\end{align*}
\end{thm}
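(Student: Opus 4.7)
The plan decomposes into showing the two inequalities separately, with the upgrade from supremum to maximum coming from the upper semi-continuity of $\overline{\rm mdim}_M(\cdot,\{F_n\},d)$ on the compact set $M(X,G)$ established in Proposition~\ref{prop 3.26}.

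For the easy direction $\sup_{\mu\in M(X,G)}\overline{\rm mdim}_M(\mu,\{F_n\},d)\le\overline{\rm mdim}_M(G,X,d)$, I would select Katok's entropy $\overline{h}_\mu^K(\{F_n\},\epsilon)$ as the candidate in $\mathcal{E}$. The pointwise estimate $\overline{h}_m^K(\{F_n\},\epsilon)\le\hat{h}_{top}(G,X,d,\{F_n\},\epsilon)$, valid for every ergodic $m$ because a minimum $(d_{F_n},\epsilon)$-spanning set of $X$ already covers every $m$-measurable set of mass at least $1-\delta$, lifts to $F(\mu_\epsilon,\{F_n\},\epsilon)\le\hat{h}_{top}(G,X,d,\{F_n\},\epsilon)$ for every $\mu_\epsilon\in\mathrm{co}(E(X,G))$ by linearity of $F$ in the ergodic decomposition $\mu_\epsilon=\sum_j\lambda_j^\epsilon\mu_j^\epsilon$. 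Dividing by $\log(1/\epsilon)$, passing to $\limsup_{\epsilon\to 0}$, and taking the suprema over $(\mu_\epsilon)\in M_G(\mu)$ and $\mu\in M(X,G)$ then delivers the upper bound.

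For the reverse inequality, the goal is to construct, for each small $\epsilon>0$, a measure $\mu^\epsilon\in\mathrm{co}(E(X,G))$ with $F(\mu^\epsilon,\{F_n\},\epsilon)\gtrsim h_{top}(G,X,d,\{F_n\},\epsilon)-o_\epsilon(1)$, and then to extract a weak-$*$ subsequential limit $\mu^{\epsilon_k}\to\mu$. The construction goes through empirical measures: choose maximum $(d_{F_n},\epsilon)$-separated sets $E_n^{(\epsilon)}\subset X$ realizing the $\limsup$ defining $h_{top}(G,X,d,\{F_n\},\epsilon)$, form $\sigma_n^\epsilon=\frac{1}{|E_n^{(\epsilon)}|}\sum_{x\in E_n^{(\epsilon)}}\mathcal{E}_{F_n}(x)$, and extract a weak-$*$ subsequential limit $\nu^\epsilon\in M(X,G)$. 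Proposition~\ref{lem 3.23} then yields
\[
\inf_{\diam(\alpha)\le\epsilon,\,\nu^\epsilon(\partial\alpha)=0}h_{\nu^\epsilon}(G,\alpha)\;\ge\; h_{top}(G,X,d,\{F_n\},\epsilon).
\]
To convert the possibly non-ergodic $\nu^\epsilon$ into a member of $\mathrm{co}(E(X,G))$, I would ergodic-decompose $\nu^\epsilon=\int m\,d\tau_\epsilon(m)$, fix a partition $\alpha_\epsilon$ of diameter $\le\epsilon$ with $\nu^\epsilon(\partial\alpha_\epsilon)=0$ near-realizing the above infimum, partition $E(X,G)$ (with weak-$*$ topology) into small Borel cells $\{B_j^\epsilon\}_j$, select ergodic representatives $m_j^\epsilon\in B_j^\epsilon$, and set $\mu^\epsilon:=\sum_j\tau_\epsilon(B_j^\epsilon)\,m_j^\epsilon$. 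The affinity $\int h_m(G,\alpha_\epsilon)\,d\tau_\epsilon=h_{\nu^\epsilon}(G,\alpha_\epsilon)$ combined with upper semi-continuity of $m\mapsto h_m(G,\alpha_\epsilon)$ at $\tau_\epsilon$-a.e.~$m$ (valid because $m(\partial\alpha_\epsilon)=0$ for $\tau_\epsilon$-a.e.~$m$) permits choosing the cells fine enough that $\sum_j\tau_\epsilon(B_j^\epsilon)h_{m_j^\epsilon}(G,\alpha_\epsilon)\ge h_{\nu^\epsilon}(G,\alpha_\epsilon)-o_\epsilon(1)$. Extracting a weak-$*$ limit $\mu^{\epsilon_k}\to\mu$ and extending $(\mu^\epsilon)$ piecewise-constantly in $\epsilon$ produces a family in $M_G(\mu)$, with any multiplicative scale adjustment absorbed via Proposition~\ref{prop 3.27}.

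The main obstacle is closing the gap between the partition entropy $h_{m_j^\epsilon}(G,\alpha_\epsilon)$ that the above argument controls and the candidate $\inf_{\alpha'}h_{m_j^\epsilon}(G,\alpha')$ actually appearing in $F(\mu^\epsilon,\{F_n\},\epsilon)$, since that infimum over $\alpha'$ can be strictly smaller than $h_{m_j^\epsilon}(G,\alpha_\epsilon)$ and so runs in the wrong direction for a single near-optimal partition. To bridge this I would instead use the cover-side candidate $\sup_{\diam(\mathcal{U})\le\epsilon,\,\Leb(\mathcal{U})\ge\epsilon/4}h_m(G,\mathcal{U})\in\mathcal{E}$, which is bounded below by $h_m(G,\mathcal{U}_\epsilon)$ for any fixed admissible cover $\mathcal{U}_\epsilon$; a single well-chosen cover then lower-bounds every ergodic-component term simultaneously, and Theorem~\ref{thm 2.15} guarantees that the divergent rate is independent of the specific choice of candidate in $\mathcal{E}$. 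This closes the proof of the equality, and the attainment of the maximum follows at once from Proposition~\ref{prop 3.26}.
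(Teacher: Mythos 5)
Your upper-bound direction is correct and matches the paper's own argument: Katok's $\epsilon$-entropy is dominated by $\hat{h}_{top}(G,X,d,\{F_n\},\epsilon)$ for ergodic measures, linearity of $F$ over the ergodic decomposition extends this to all of $\mathrm{co}(E(X,G))$, and taking the suprema closes that inequality. The first half of your lower-bound argument (empirical measures on maximal separated sets, weak-$*$ subsequential limit $\nu^\epsilon$, and Proposition~\ref{lem 3.23}) is likewise what the paper does, via \cite[Theorem 9.48]{kl16} inside Lemma~\ref{lem 4.2}.

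The gap lies in the conversion of $\nu^\epsilon\in M(X,G)$ to a finite convex combination $\mu^\epsilon\in\mathrm{co}(E(X,G))$. Upper semi-continuity of $m\mapsto h_m(G,\alpha_\epsilon)$ runs in the wrong direction: it only says that $h_{m}(G,\alpha_\epsilon)$ cannot jump \emph{up} when $m$ is perturbed, so replacing $\tau_\epsilon$-typical $m$ in a small cell $B_j^\epsilon$ by a single representative $m_j^\epsilon$ can drop the value of $h_{m_j^\epsilon}(G,\alpha_\epsilon)$ well below the cell average, and hence $\sum_j\tau_\epsilon(B_j^\epsilon)h_{m_j^\epsilon}(G,\alpha_\epsilon)$ need not be close to $\int h_m(G,\alpha_\epsilon)\,d\tau_\epsilon$. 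Your cover-side fix resolves the separate infimum-direction issue inside the candidate $\inf_{\alpha'}h_{m_j^\epsilon}(G,\alpha')$, but it does not rescue the integral approximation. The paper closes this cleanly in Lemma~\ref{lem 4.2} by invoking the exact ergodic decomposition $h_{\nu^\epsilon}(G,\mathcal{V})=\int_{E(X,G)}h_m(G,\mathcal{V})\,d\tau_\epsilon(m)$ from \cite[Theorem 3.13]{hyz11} for one fixed admissible cover $\mathcal{V}$ with $\diam(\mathcal{V})\le\epsilon/8$, $\Leb(\mathcal{V})\ge\epsilon/32$; since the integral identity is exact, there exists a \emph{single} ergodic $\mu_\epsilon$ with $h_{\mu_\epsilon}(G,\mathcal{V})\ge h_{\nu^\epsilon}(G,\mathcal{V})\ge h_{top}(G,X,d,\{F_n\},\epsilon)$, and any partition of diameter at most $\epsilon/64$ refines $\mathcal{V}$, so $\inf_{\diam(\alpha)\le\epsilon/64}h_{\mu_\epsilon}(G,\alpha)\ge h_{top}(G,X,d,\{F_n\},\epsilon)$. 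This makes the finite convex combination, the fine Borel cells, and the semi-continuity argument unnecessary: a single ergodic measure is already in $\mathrm{co}(E(X,G))$, and the rest of your construction (subsequential limit, piecewise-constant extension in $\epsilon$, Proposition~\ref{prop 3.27} for the scale, Proposition~\ref{prop 3.26} to upgrade $\sup$ to $\max$) goes through unchanged.
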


We require an auxiliary lemma for our proof.

\begin{lem}\label{lem 4.2}
Let $(X, d, G)$ be a $G$-system  and  $\{F_n\}$ be a F\o lner sequence of $G$. Then for every  $\epsilon  >0$, there exist $\mu_{\epsilon} \in E(X,G)$ and $ \nu_{\epsilon}\in M(X,G)$ such that
\begin{align*}
h_{top}(G,X,d, \{F_n\},\epsilon)
\leq \inf_{\diam (\alpha)\leq \frac{\epsilon}{8}}h_{\nu_{\epsilon}}(G,\alpha)
\leq \inf_{\diam (\alpha)\leq \frac{\epsilon}{64}}h_{\mu_{\epsilon}}(G,\alpha).
\end{align*}
\end{lem}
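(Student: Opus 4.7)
The plan is to construct $\nu_\epsilon$ and $\mu_\epsilon$ in succession: first an invariant measure $\nu_\epsilon$ produced by empirical averages along maximal separated sets and Lemma~\ref{lem 3.23}, and then an ergodic measure $\mu_\epsilon$ extracted via the local variational principle for finite open covers of amenable group actions.

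For the first inequality, I would let $E_n$ be a maximal $(d_{F_n},\epsilon)$-separated subset of $X$ and set $\mu_n:=\frac{1}{\#E_n}\sum_{x\in E_n}\mathcal{E}_{F_n}(x)\in M(X)$. Passing first to a subsequence of $\{F_n\}$ along which $\frac{1}{|F_n|}\log s(X,d_{F_n},\epsilon)$ realizes the $\limsup$ defining $h_{top}(G,X,d,\{F_n\},\epsilon)$ and then to a further subsequence along which $\mu_n$ converges weakly to some $\nu_\epsilon$ (both subsequences remain F\o lner), Lemma~\ref{lem 3.23} delivers $\nu_\epsilon\in M(X,G)$ together with
\[
h_{top}(G,X,d,\{F_n\},\epsilon)\le \inf_{\diam(\alpha)\le \epsilon,\;\nu_\epsilon(\partial\alpha)=0}h_{\nu_\epsilon}(G,\alpha).
\]
The chain of inequalities in Lemma~\ref{lem 3.25}(1), which bridges the boundary-zero partition infimum at scale $\epsilon$ to the partition infimum at scale $\epsilon/8$ through open-cover entropies, then upgrades this to the first desired inequality.

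For the second inequality, I would fix an auxiliary open cover $\UU_\epsilon$ with $\diam(\UU_\epsilon)\le \epsilon/8$ and $\Leb(\UU_\epsilon)\ge \epsilon/32$, constructed in the standard way as $\UU_\epsilon=\{B_d(x,\epsilon/16):x\in E\}$ for a finite $\epsilon/32$-net $E\subset X$. The Lemma~\ref{lem 3.25}(1) chain applied at scale $\epsilon/8$ yields
\[
\inf_{\diam(\alpha)\le\epsilon/8}h_{\nu_\epsilon}(G,\alpha)\le \inf_{\diam(\UU)\le\epsilon/8}h_{\nu_\epsilon}(G,\UU)\le h_{\nu_\epsilon}(G,\UU_\epsilon)\le h_{top}(G,X,\UU_\epsilon),
\]
with the last step being the variational inequality for the open cover $\UU_\epsilon$. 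The local variational principle for amenable group actions from \cite{hyz11} realizes $h_{top}(G,X,\UU_\epsilon)$ as the maximum of $h_\mu(G,\UU_\epsilon)$ over $M(X,G)$; since $\mu\mapsto h_\mu(G,\UU_\epsilon)=\inf_{\alpha\succ\UU_\epsilon}h_\mu(G,\alpha)$ is concave and each $\mu\mapsto h_\mu(G,\alpha)$ is affine, ergodic decomposition furnishes an ergodic $\mu_\epsilon\in E(X,G)$ with $h_{\mu_\epsilon}(G,\UU_\epsilon)\ge h_{top}(G,X,\UU_\epsilon)$. Because $\Leb(\UU_\epsilon)\ge\epsilon/32$, every partition $\alpha$ with $\diam(\alpha)\le\epsilon/64$ satisfies $\alpha\succ\UU_\epsilon$, so
\[
h_{\mu_\epsilon}(G,\UU_\epsilon)=\inf_{\alpha\succ\UU_\epsilon}h_{\mu_\epsilon}(G,\alpha)\le \inf_{\diam(\alpha)\le\epsilon/64}h_{\mu_\epsilon}(G,\alpha),
\]
and concatenating the estimates closes the second inequality.

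The main obstacle, and the source of the factor $64=8\cdot 8$ in the statement, is the need to (i) bridge partition entropies to open-cover entropies via the sandwich in Lemma~\ref{lem 3.25}(1) in order to apply the local variational principle, which is formulated for open covers, and then (ii) translate back to partition entropies via the Lebesgue-number refinement argument, each step costing a factor of $8$. A secondary technical point is ensuring that the maximum in the local variational principle is attained at an ergodic measure; should \cite{hyz11} deliver attainment only in $M(X,G)$, I would fall back on the concavity of $\mu\mapsto h_\mu(G,\UU_\epsilon)$ and the affinity of $\mu\mapsto h_\mu(G,\alpha)$ on each partition to extract, via the ergodic decomposition theorem, an ergodic component $\mu_\epsilon$ that still dominates $h_{\nu_\epsilon}(G,\UU_\epsilon)$, which suffices for the required inequality.
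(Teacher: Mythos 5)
Your two-step structure matches the paper's: first produce $\nu_\epsilon$ by following the usual proof of the easy half of the variational principle (empirical measures along maximal $(d_{F_n},\epsilon)$-separated sets, pass to subsequences, invoke Lemma \ref{lem 3.23}), then bridge to an ergodic $\mu_\epsilon$ via an open cover with $\diam\leq\epsilon/8$ and $\Leb\geq\epsilon/32$ and Lemma \ref{lem 3.25}. The first half is identical to the paper (which simply cites the proof of \cite[Theorem 9.48]{kl16}). For the second half, you take a detour that the paper avoids: you pass to $h_{top}(G,X,\UU_\epsilon)$ via the variational inequality, invoke the local variational principle of \cite{hyz11}, and only then extract an ergodic component. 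The paper is more direct: it applies the ergodic decomposition theorem for local entropy, \cite[Theorem 3.13]{hyz11}, \emph{directly} to $h_{\nu_\epsilon}(G,\VV)$, immediately producing an ergodic $\mu_\epsilon$ with $h_{\mu_\epsilon}(G,\VV)\geq h_{\nu_\epsilon}(G,\VV)$. Your route works but is a longer path to the same ergodic measure; there is no reason to go up to the topological side when the decomposition can be applied to $\nu_\epsilon$ itself.

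One caution about your fallback justification: ``concavity of $\mu\mapsto h_\mu(G,\UU_\epsilon)$ and affinity of $\mu\mapsto h_\mu(G,\alpha)$'' do \emph{not} by themselves yield an ergodic component dominating $h_{\nu_\epsilon}(G,\UU_\epsilon)$. Concavity gives $h_{\nu_\epsilon}(G,\UU)\geq\int h_m(G,\UU)\,d\tau(m)$, which points the wrong way. What you actually need is the \emph{equality} $h_{\nu_\epsilon}(G,\UU)=\int h_m(G,\UU)\,d\tau(m)$ proved in \cite[Theorem 3.13]{hyz11}; only then can you conclude some ergodic $m$ satisfies $h_m(G,\UU)\geq h_{\nu_\epsilon}(G,\UU)$. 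So the correct citation and the substance of the step coincide with the paper's, but the heuristic you give for why it holds is misleading and should be replaced by a direct appeal to the equality.
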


\begin{proof}
Following the proof  of \cite[Theorem 9.48]{kl16}, whose proof yields a half of variational principle for  topological  entropy,  for every $\epsilon >0$, there exists $\nu_{\epsilon} \in M(X,G)$ such that 
\begin{align*}
h_{top}(G,X,d, \{F_n\}, \epsilon)\leq  \inf_{\diam (\alpha)\leq \epsilon,  \nu_{\epsilon}(\partial \alpha)=0}h_{\nu_{\epsilon}}(G,\alpha) 
\leq  \inf_{\diam (\alpha)\leq \frac{\epsilon}{8}}h_{\nu_{\epsilon}}(G,\alpha),
\end{align*}
where  we used Lemma \ref{lem 3.25}, (1) for the second inequality.

Choose  $\VV\in \mathcal{C}_X^o$ with  $\diam (\mathcal{V})\leq 
\frac{\epsilon}{8}$ and $\Leb (\mathcal{V}) \geq \frac{\epsilon}{32}$. Then  
 $ \inf_{
\diam (\alpha)\leq \frac{\epsilon}{8}} \limits h_{\nu_{\epsilon}}(G,\alpha)\leq h_{\nu_\epsilon}(G,\mathcal{V}).$ Applying the ergodic decomposition theorem  \cite[Theorem 3.13]{hyz11} to  $h_{\nu_{\epsilon}}(G,\mathcal{V})$, there exists $\mu_{\epsilon} \in E(X,G)$ such that
$ \inf_{
\diam (\alpha)\leq \frac{\epsilon}{8}}\limits h_{\nu_{\epsilon}}(G,\alpha)\leq h_{\mu_\epsilon}(G,\mathcal{V})$. Together with the fact
 $h_{\mu_\epsilon}(G,\mathcal{V}) \leq  \inf_{ 
\diam (\alpha)\leq \frac{\epsilon}{64}}\limits h_{\mu_{\epsilon}}(G,\alpha)$, this completes the proof.
\end{proof}

\begin{proof}[Proof of Theorem \ref{thm 3.30}]
(1). We first establish the  variational principle:
$${\overline{\rm  mdim}}_M(G,X,d)=\max_{\mu \in M(X,G)}\overline{\rm {mdim}}_M(\mu,\{F_n\},d).$$

We let $h_{\mu}(G,\{F_n\},\epsilon)= \overline{h}_{\mu}^K(\{F_n\},\epsilon)$. Since for every $\epsilon >0$,  we have $$F(\mu, \{F_n\}, \epsilon) \leq \hat{h}_{top}(G, X,d,\{F_n\},\epsilon) $$ for every $\mu \in \mathrm{co}(E(X,G))$. Let $\mu \in M(X,G)$. It holds that for every $(\mu_{\epsilon})_{\epsilon} \in M_G(\mu)$, 
\begin{align*}
 \limsup_{\epsilon \to 0}\frac{F(\mu_{\epsilon}, \{F_n\}, \epsilon)}{\logf} \leq {\overline{\rm  mdim}}_M(G,X,d).
\end{align*}
Taking the  supremum over all families $(\mu_{\epsilon})  \in M_G(\mu)$ and noting that $\mu \in M(X,G)$ is arbitrary, we conclude that
\begin{align}\label{inequ 2.14}
\sup_{\mu \in M(X,G)}\overline{\rm {mdim}}_M(\mu,\{F_n\},d) \leq {\overline{\rm  mdim}}_M(G,X,d).
\end{align}

Now let  $h_{\mu}(G,\{F_n\},\epsilon)= \inf_{ 
\diam (\alpha)\leq \epsilon}h_{\mu}(G,\alpha)$. Choose a strictly monotonically decreasing sequence $\{\epsilon_n\}$ with $\epsilon_n \to 0$ as $n \to \infty$ satisfying
$${\overline{\rm  mdim}}_M(G,X,d)=\lim_{n \to \infty}\frac{1}{\log \frac{1}{\epsilon_n}} h_{top}(G,X,d, \{F_n\},\epsilon_n).$$
By Lemma \ref{lem 4.2}, for every $\epsilon >0$, there exists $\mu_{\epsilon} \in E(X,G)$ such that 
\begin{align}\label{equu 3.17}
h_{top}(G,X,d, \{F_n\},\epsilon)
\leq&  \inf_{ 
\diam (\alpha)\leq \frac{\epsilon}{64}}h_{\mu_{\epsilon}}(G,\alpha)=F(\mu_{\epsilon},\{F_n\}, \frac{\epsilon}{64}).
\end{align}
Since $M(X,G)$ is compact,  the sequence $\{\mu_{\epsilon_n}\}_n$ has an invariant measure as its accumulation point. We may assume that for  some  $\mu \in M(X,G)$,
$\mu_{\epsilon_n} \rightarrow\mu $ as $n \to \infty$ (by passing to a subsequence if necessary). For every $\epsilon>0$, we define $\nu_\epsilon:=\mu_{
\epsilon_n}$ if $\epsilon \in (\epsilon_{n+1},\epsilon_n]$. Then  $(\nu_{\epsilon})_{\epsilon} \in M_G(\mu)$.
Using the inequality (\ref{equu 3.17}),  we get
\begin{align}\label{equ 3.6}
{\overline{\rm  mdim}}_M(G,X,d)
\leq&  \limsup_{n \to \infty} \frac{1}{\log \frac{1}{\epsilon_n}} F(\mu_{\epsilon_n},\{F_n\}, \frac{\epsilon_n}{64}) \nonumber \\
=& \limsup_{n \to \infty}   \frac{1}{\log \frac{1}{\epsilon_n}} F(\nu_{\epsilon_n},\{F_n\}, \frac{\epsilon_n}{64}) \\
\leq&  \overline{\rm {mdim}}_M(\mu,\{F_n\},d), \nonumber
\end{align}
and $\mu$ maximizes the supremum of the desired variational principle.
\end{proof}

\begin{rem}
 Under the setting of Theorem \ref{thm 1.1},  we have  $$\max_{\mu \in M(X,G)}\limits \underline{\rm {mdim}}_M(\mu,\{F_n\},d)\leq {\underline{\rm  mdim}}_M(G,X,d).$$ However,  it is unclear whether the strict inequality of the variational inequality is possible for some $G$-systems
\end{rem}

\begin{proof}[Proof of Theorem \ref{thm 1.2}]
By  Theorem \ref{thm 1.1}, we have already proved  $${\overline{\rm  mdim}}_M(G,X,d)=\max_{\mu \in M(X,G)}\overline{\rm {mdim}}_{M,L^{\infty}}(\mu,\{F_n\},d)$$
since $R_{\mu,L^{\infty}}(\{F_n\},\epsilon)$ is a candidate considered in $\mathcal{E}$.

Next  we establish the variational principle:
 $${\overline{\rm  mdim}}_M(G,X,d)=\max_{\mu \in M(X,G)}\overline{\rm {mdim}}_{M,L^{p}}(\mu,\{F_n\},d),$$
 where $1\leq p<\infty$.
 
Assume that the compact metric space $(X,d)$ has the tame growth of covering numbers.  Then, by (\ref{equu 3.12}) and (\ref{equu 3.13}), for every $\epsilon >0$, $\theta \in (0,1)$ and $L>2$, we have 
\begin{align}\label{equu 4.4}
\begin{split}
 \overline{h}_{\mu}^K(\{F_n\},2\epsilon^{1-\theta}) &\leq \log 2 + \epsilon^{\theta}\log r(X,d,\epsilon) +\overline{h}_{\mu,L^1}^K(\{F_n\},\epsilon)\\
&\leq \log 2 + \epsilon^{\theta}\log r(X,d,\epsilon)+ \frac{L}{L-1}R_{\mu,L^{p}}(\{F_n\},\frac{\epsilon}{8L+2}). 
\end{split}
\end{align}
for all $\mu \in E(X,G)$. Let   $h_{\mu}(G,\{F_n\},\epsilon)=  \overline{h}_{\mu}^K(\{F_n\},\epsilon)$.  Letting  $\mu \in \mathrm{co}(E(X,G))$, we rewrite $\mu=\sum_{j=1}^m a_j \mu_j,~\mu_j \in E(X,G),~ \sum_{j=1}^{m}a_j=1,~0\leq a_j\leq 1,~j=1,...,m$. Then, by Definition \ref{df 3.27} we have
\begin{align}\label{equu 4.5}
\begin{split}
F(\mu,\{F_n\}, 2\epsilon^{1-\theta})\leq& \log 2 + \epsilon^{\theta}\log r(X,d,\epsilon)+ \frac{L}{L-1} \sum_{j=1}^m a_j R_{\mu_j,L^{p}}(\{F_n\},\frac{\epsilon}{8L+2}),\\
=& \log 2 + \epsilon^{\theta}\log r(X,d,\epsilon)+ \frac{L}{L-1}  R_{\mu,L^{p}}(\{F_n\},\frac{\epsilon}{8L+2}).
\end{split}
\end{align}
Hence, this inequality is true  for all  $\mu \in \mathrm{co}(E(X,G))$.  Now choose $\nu \in M(X,G)$ such that  ${\overline{\rm  mdim}}_M(G,X,d)=\overline{\rm {mdim}}_M(\nu,\{F_n\},d).$ For every $(\nu_{\epsilon})_{\epsilon} \in M_G(\nu)$,  by (\ref{equu 4.5})  we have
\begin{align*}
(1-\theta)\cdot \limsup_{\epsilon \to 0}\frac{F(\nu_{\epsilon},\{F_n\}, 2\epsilon^{1-\theta})}{\log \frac{1}{2\epsilon^{1-\theta}}}\leq  \frac{L}{L-1}\limsup_{\epsilon \to 0}\frac{R_{\nu_{\epsilon},L^{p}}(\{F_n\},\frac{\epsilon}{8L+2})}{\log \frac{1}{\epsilon}}.
\end{align*}
By letting $L\to \infty$ and $\theta \to 1$, this  yields that  $${\overline{\rm  mdim}}_M(G,X,d)\leq  \overline{\rm {mdim}}_{M,L^{p}}(\nu,\{F_n\},d).$$
Recall that in (\ref{equu 3.14}), for  every  $1\leq p<\infty$  we have  $	R_{\mu,L^{p}}(\{F_n\},2\epsilon)\leq R_{\mu,L^{\infty}}(\{F_n\},\epsilon)$ for all $\mu \in M(X,G)$ and $\epsilon>0$. Hence,  one has
 $$ \overline{\rm {mdim}}_{M,L^{p}}(\nu,\{F_n\},d)\leq \overline{\rm {mdim}}_{M,L^{\infty}}(\nu,\{F_n\},d),$$ 
which implies that $$ \max_{\mu \in M(X,G)}\overline{\rm {mdim}}_{M,L^{p}}(\mu,\{F_n\},d) = {\overline{\rm  mdim}}_M(G,X,d).$$
This completes the proof.
\end{proof}

\begin{ex}\label{ex 4.3}
In \cite[Section VIII]{lt18}, Lindenstrauss and Tsukamoto showed that for a certain dynamical system of $\mathbb{Z}$-action, the variational principles for metric mean dimension fail for the $L^p$ ($p\in \{1,\infty\}$) rate-distortion functions. More precisely, let $G = \mathbb{Z}$, $X = \{0\} \cup \left\{\frac{1}{n} : n \geq 1\right\}$, and the metric $D(x, y) = \sum_{n \in \mathbb{Z}} \frac{|x_n - y_n|}{2^{|n|}}$ as in Example \ref{ex 3.6}. For such a full shift $(X^{\mathbb{Z}}, D, G)$, it holds that $\text{mdim}_M(G, X^{\mathbb{Z}}, D) = \frac{1}{2},$
and for all $\mu \in {M}(X, G)$ and $p \in \{1, \infty\}$,
\[
\lim_{\epsilon \to 0} \frac{1}{\logf} R_{\mu, L^p}(\{F_n\}, \epsilon) = 0 \footnote[5]{For the detailed calculation, we refer the reader to \cite[Lemma 3.1]{kd94}.},
\]
where $F_n := \{0, 1, \dots, n-1\}$.

Using the modified $L^p$ ($1 \leq p \leq \infty$) rate-distortion dimension,
Theorem \ref{thm 1.2} suggests that the variational principles of metric mean dimension are valid for all $G$-systems. It is easy to verify that $(X, |\cdot|)$ has the tame growth of covering numbers, and so does $(X^{\mathbb{Z}}, D)$. By Theorem \ref{thm 1.2}, there  exists $\mu \in {M}(X, G)$ such that for $p \in \{1, \infty\}$,
\[
\overline{\text{mdim}}_{M, L^p}(\mu, \{F_n\}, D) = \text{mdim}_M(G, X^{\mathbb{Z}}, D) = \frac{1}{2}.
\]

Furthermore,  Theorem \ref{thm 2.15} implies that
\[
\sup_{\mu \in {M}(X, G)} \overline{\text{mdim}}_M^*(\mu, \{F_n\}, D) = 0 < \text{mdim}_M(G, X^{\mathbb{Z}}, D) = \frac{1}{2},
\]
and hence this clarifies the aforementioned Remark \ref{rem 3.24} (ii).
\end{ex}

Finally, we further present the variants of the variational principles for  upper and lower metric mean dimensions in alternative formulations. 

\begin{thm}\label{thm 3.33}
Let $(X, d, G)$ be a $G$-system,  and  let $\{F_n\}$ be a  tempered F\o lner sequence of $G$. Then 
\begin{align*}
{\overline{\rm  mdim}}_M(G,X,d)&=\limsup_{\epsilon \to 0}\frac{1}{\logf}\sup_{\mu \in E(X,G)} {h}_{\mu}(G,\{F_n\},\epsilon)\\
&=\limsup_{\epsilon \to 0}\frac{1}{\logf}\sup_{\mu \in M(X,G)}{h}_{\mu}(G,\{F_n\},\epsilon) ,
\end{align*}
where ${h}_{\mu}(G,\{F_n\},\epsilon)$ is chosen from  the candidate set
$$\mathcal{E}_1 = \{ \inf_{
\diam (\alpha)\leq \epsilon}\limits h_{\mu}(G,\alpha), \inf_{\diam (\UU) \leq \epsilon}\limits h_{\mu}(G,\UU), \sup_{\diam (\UU) \leq \epsilon, \atop \Leb (\UU) \geq \frac{\epsilon}{4} } \limits h_{\mu}(G,\UU),\sup_{\Leb (\UU) \geq \epsilon} \limits h_{\mu}(G,\UU)\},$$
and 
\begin{align*}
{\overline{\rm  mdim}}_M(G,X,d)&=\limsup_{\epsilon \to 0}\frac{1}{\logf}\sup_{\mu \in E(X,G)} {h}_{\mu}(G,\{F_n\},\epsilon)\\
&=\limsup_{\epsilon \to 0}\frac{1}{\logf}\sup_{\mu \in M(X,G)}{h}_{\mu}(G,\{F_n\},\epsilon)\\
&=\limsup_{\epsilon \to 0}\frac{1}{\logf}\sup_{\mu \in M(X)}{h}_{\mu}(G,\{F_n\},\epsilon) ,
\end{align*}
where ${h}_{\mu}(G,\{F_n\},\epsilon)$ is chosen from  the candidate set
$$\mathcal{E}_2=\{  PS_\mu(\{F_n\},\epsilon), \underline{h}_{\mu}^K(\{F_n\}, \epsilon), \overline{h}_{\mu}^K(\{F_n\},\epsilon)\}.$$

These variational principles also hold for  ${\underline{\rm  mdim}}_M(G,X,d)$ by changing $\limsup_{\epsilon \to 0}$ into $\liminf_{\epsilon \to 0}$.
\end{thm}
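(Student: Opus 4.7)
\textbf{Proof plan for Theorem \ref{thm 3.33}.} The strategy is to sandwich the supremum of each candidate between the topological $\epsilon$-entropy $h_{top}(G,X,d,\{F_n\},\cdot)$ evaluated at two scales of the form $c_1\epsilon$ and $c_2\epsilon$, with constants $c_1,c_2>0$ independent of $\epsilon$. Dividing by $\log(1/\epsilon)$ and passing to $\limsup$ (resp.\ $\liminf$) then collapses both bounds to $\overline{\mathrm{mdim}}_M(G,X,d)$ (resp.\ $\underline{\mathrm{mdim}}_M(G,X,d)$), since $\log(1/(c\epsilon))/\log(1/\epsilon)\to 1$ as $\epsilon\to 0$ for every fixed $c>0$, and all three suprema thereby agree.

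For the upper bound on each candidate in $\mathcal{E}_1$, fix $\epsilon>0$ and pick an open cover $\VV_0$ with $\diam\VV_0\le\epsilon$ and $\Leb\VV_0\ge\epsilon/4$, as guaranteed in Section \ref{sec 2}. Any partition $\alpha\succ\VV_0$ has $\diam\alpha\le\epsilon$, so $\inf_{\diam\alpha\le\epsilon}h_\mu(G,\alpha)\le h_\mu(G,\VV_0)$; the local variational principle of \cite{hyz11} gives $\sup_\mu h_\mu(G,\VV_0)\le h_{top}(G,X,\VV_0)$, and the Lebesgue-number argument shows each Bowen ball $B_{F_n}(x,\epsilon/8)$ lies in a single element of $\VV_{0,F_n}$, whence $N(\VV_{0,F_n})\le r(X,d_{F_n},\epsilon/8)$ and $h_{top}(G,X,\VV_0)\le \hat{h}_{top}(G,X,d,\{F_n\},\epsilon/8)$. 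Lemma \ref{lem 3.25}(1) transfers this bound (possibly at scale $\epsilon/64$) to all four candidates in $\mathcal{E}_1$. For $\mathcal{E}_2$, the estimates $PS_\mu(\{F_n\},\epsilon)\le \hat{h}_{top}(G,X,d,\{F_n\},\epsilon)$ and $\overline{h}_\mu^K(\{F_n\},\epsilon)\le \hat{h}_{top}(G,X,d,\{F_n\},\epsilon)$ follow directly from $s(X_{F_n,O},d_{F_n},\epsilon)\le s(X,d_{F_n},\epsilon)$ and $R_\mu(F_n,\delta)\le r(X,d_{F_n},\epsilon)$, with no invariance required on $\mu$; this is why the supremum can be extended to all of $M(X)$ in the $\mathcal{E}_2$ case.

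For the lower bound, I invoke Lemma \ref{lem 4.2}, producing for each $\epsilon>0$ an ergodic $\mu_\epsilon\in E(X,G)$ with
\begin{equation*}
h_{top}(G,X,d,\{F_n\},\epsilon)\ \le\ \inf_{\diam\alpha\le\epsilon/64}h_{\mu_\epsilon}(G,\alpha).
\end{equation*}
This handles the Kolmogorov--Sinai candidate in $\mathcal{E}_1$ directly, and Lemma \ref{lem 3.25}(1) covers the other three $\mathcal{E}_1$-candidates at slightly adjusted scales. For $\mathcal{E}_2$, Lemma \ref{lem 3.25}(2) propagates the bound to $\underline{h}_{\mu_\epsilon}^K(\{F_n\},\epsilon/256)\le \overline{h}_{\mu_\epsilon}^K(\{F_n\},\epsilon/256)$, while the chain \eqref{equ 2.10} together with Lemmas \ref{lem 3.20} and \ref{lem 3.25}(3) gives $\overline{h}_{\mu_\epsilon}^K(\{F_n\},\epsilon/256)\le PS_{\mu_\epsilon}(\{F_n\},\epsilon/256)$ for ergodic $\mu_\epsilon$, completing the Pfister--Sullivan case.

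Assembling both directions, for every candidate $h_\mu(G,\{F_n\},\epsilon)$ in $\mathcal{E}_1\cup\mathcal{E}_2$ one obtains
\begin{equation*}
h_{top}(G,X,d,\{F_n\},\epsilon)\ \le\ \sup_{\mu\in E(X,G)}h_\mu(G,\{F_n\},c_1\epsilon)\ \le\ \sup_{\mu\in M(X,G)}h_\mu(G,\{F_n\},c_1\epsilon)\ \le\ \hat{h}_{top}(G,X,d,\{F_n\},c_2\epsilon),
\end{equation*}
inserting an outer bound by $\sup_{\mu\in M(X)}$ for $\mathcal{E}_2$. Dividing by $\log(1/\epsilon)$, taking $\limsup_{\epsilon\to 0}$, and invoking Proposition \ref{prop 3.1} (which identifies the metric mean dimensions arising from $h_{top}$ and $\hat{h}_{top}$) yields the variational equalities for $\overline{\mathrm{mdim}}_M$; the $\liminf$ version follows from the identical sandwich. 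The principal obstacle is bookkeeping the scale factors so that $c_1,c_2$ can be chosen uniformly in $\epsilon$ for each candidate, but since only a bounded number of applications of Lemmas \ref{lem 3.25}, \ref{lem 4.2}, and \ref{lem 3.20} enter, the constants remain finite and the argument goes through candidate-by-candidate.
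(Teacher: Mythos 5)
Your argument is correct and follows essentially the same route as the paper: the lower bound comes from Lemma~\ref{lem 4.2} producing an ergodic measure $\mu_\epsilon$, the transfers among candidates use the chain \eqref{equ 2.10} together with Lemma~\ref{lem 3.25}, and the $\mathcal{E}_2$ upper bound over all of $M(X)$ is the same direct definitional comparison with $h_{top}(G,X,d,\{F_n\},\epsilon)$ that the paper invokes. The one cosmetic difference is that you make the inner (local) variational principle of~\cite{hyz11} explicit in the $\mathcal{E}_1$ upper bound, whereas the paper absorbs that step into the earlier inequality~\eqref{equu 3.15}; the underlying mechanism is the same, and your explicit bookkeeping of the scale constants $c_1,c_2$ and the observation that $\log(1/(c\epsilon))/\log(1/\epsilon)\to1$ is exactly what makes the sandwich collapse in both treatments.
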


\begin{proof}
Using Lemma \ref{lem 4.2} again,  for every $\epsilon >0$,  we have
\begin{align*}
h_{top}(G,X,d,\{F_n\},\epsilon)\leq \inf_{\diam (\alpha)\leq \frac{\epsilon}{8}}h_{\nu_{\epsilon}}(G,\alpha)
\leq \inf_{\diam (\alpha)\leq \frac{\epsilon}{64}}h_{\mu_{\epsilon}}(G,\alpha)
\end{align*}
for some $\mu_{\epsilon} \in E(X,G)$ and $\nu_{\epsilon} \in M(X,G)$. 
By definition, for every $\epsilon >0$ one has $$h_{top}(G,X,d,\{F_n\},\epsilon) \geq \overline{h}_{\mu}^{K}(\{F_n\},\epsilon)$$ for all $\mu \in E(X,G)$. Together the above two facts with the inequality (\ref{equu 3.15}),   the  following variational principles hold for all ${h}_{\mu}(G,\{F_n\},\epsilon)\in \mathcal{E}$ in terms of ergodic measures, i.e.,
\begin{align}\label{equ 3.14}
{\overline{\rm  mdim}}_M(G,X,d)&=\limsup_{\epsilon \to 0}\frac{1}{\logf}\sup_{\mu \in E(X,G)}{h}_{\mu}(G,\{F_n\},\epsilon)\\
&=\limsup_{\epsilon \to 0}\frac{1}{\logf}\sup_{\mu \in M(X,G)}\inf_{\diam (\alpha)\leq \epsilon}h_{\mu}(G,\alpha) \nonumber.
\end{align}
Then,  by Lemma \ref{lem 3.25},(1), we know that  the  non-ergodic variational principles (\ref{equ 3.14}) hold  for every  candidate ${h}_{\mu}(G,\{F_n\},\epsilon)$ from  $\mathcal{E}_1$.

To establish the remaining variational principles,  for every  ${h}_{\mu}(G,\{F_n\},\epsilon) \in \mathcal{E}_2$,  by (\ref{equ 3.14}) it suffices  to show  for every $\mu \in M(X)$ and $\epsilon >0$,
$${h}_{\mu}(G,\{F_n\},\epsilon) \leq h_{top}(G,X,d,\{F_n\},\epsilon).$$
This follows directly by comparing their definitions with that of
$h_{top}(G,X,d,\{F_n\},\epsilon)$.
\end{proof}

\begin{rem}
$(i)$.  For $\mathbb{Z}$-actions,  the variational principles\footnote[6]{A summary of  variational principles that relate mean dimensions to different types of measure-theoretic $\epsilon$-entropies of $\mathbb{Z}$-actions can be found in \cite[Theorem 1.3]{ycz25} and \cite[Theorem 1.3]{y25}.} between metric mean dimension and the following 
\begin{itemize}
\item [(a)]  Kolmogorov-Sinai $\epsilon$-entropy $\inf_{
	\diam (\alpha)\leq \epsilon}\limits h_{\mu}(G,\alpha)$ \cite[Theorem 3.1]{gs20},
\item [(b)] Shapira's $\epsilon$-entropy $\inf_{\diam (\UU) \leq \epsilon}\limits h_{\mu}(G,\UU)$ \cite[Theorem 7]{shi},\\
\item [(c)] Katok's $\epsilon$-entropy $\underline{h}_{\mu}^K(\{F_n\}, \epsilon), \overline{h}_{\mu}^K(\{F_n\},\epsilon) $ \cite[Theorem 9, Proposition 19]{shi},\\
\item [(d)] Pfister and Sullivan's  $\epsilon$-entropy  $PS_\mu(\{F_n\},\epsilon)$ \cite[Theorem 1.3]{ycz25},
\end{itemize} 
where $G=\mathbb{Z}$ and $F_n=\{0,...,n-1\}$, have been already established. Here, Theorem \ref{thm 3.33} extends   the ones given in $\mathbb{Z}$-actions \cite{gs20, shi, ycz25} to actions of amenable groups.

$(ii)$. Within the framework of the action of amenable groups, the  variational principle for metric mean dimension  \cite[Theorem A]{l21} in terms of Kolmogorov-Sinai $\epsilon$-entropy  was established  under the assumptions of the F\o lner sequence $\{F_n\}$  with  $\lim_{n \to \infty}\frac{|F_n|}{\log n}=\infty$ and a certain condition imposed on the boundary of finite partitions.  By Theorem \ref{thm 3.33}, these additional conditions are redundant, and hence  can be removed for $G$-systems.
\end{rem}

\subsection{Thermodynamic formalism of $G$-systems with infinite topological entropy}

Motivated by  the classical thermodynamic formalism \cite{wal75,r04}, in this subsection we extend Theorem \ref{thm 1.1} to metric mean dimension with potential, and  characterize the  equilibrium states attaining the supremum of the variational principle.

\subsubsection{Variational principles for metric mean dimension with potential}
Let $(X, d, G)$ be a $G$-system. Let $C(X,\mathbb{R})$ denote  the space of  all continuous real-valued functions on $X$,  endowed with the supremum norm  $\|\cdot\|$.  Given a non-empty subset $Z\subset X$, $f\in C(X,\mathbb{R})$  and  $F\in \FF(G)$, we write $S_{F}f(x)=\sum_{g\in F}f(gx)$ as the Birkhoff sum of $f$ along the orbit-segment $\{gx: g\in F\}$ of $x$, and put
 $$P(Z,f,d,\{F_n\},\epsilon)=\limsup_{n \to \infty} \frac{1}{|F_n|}\log P_{F_n}(Z,f,d,\epsilon),$$
 where $P_F(Z,f,d,\epsilon)=\sup\{\sum_{x\in E}e^{S_Ff(x)}: E~\mbox{is a}~ (d_F,\epsilon)\mbox{-separated set of}~Z\}.$
 
 Inspired by the definition of metric mean dimension with potential for $\mathbb{Z}$-actions \cite{t20},  for actions of amenable groups the corresponding \emph{upper and lower metric mean dimensions of $X$ with potential $f$} \cite{twl20}   are respectively given by
\begin{align*}
{\overline{\rm  mdim}}_M(G,X,f,d)&:=\limsup_{\epsilon \to 0}\frac{1}{\logf}P(X,f\logf,d,\{F_n\},\epsilon),\\
{\underline{\rm  mdim}}_M(G,X,f,d)&:=\liminf_{\epsilon \to 0}\frac{1}{\logf}P(X,f\logf,d,\{F_n\},\epsilon).
\end{align*}

A standard approach shows that the above definitions are independent of the choice of  F\o lner sequences of $G$.
If we define
\begin{align*}
f\in C(X,\mathbb{R})\mapsto  \overline{\Gamma}(f):={\overline{\rm  mdim}}_M(G,X, f,d),\\
f\in C(X,\mathbb{R})\mapsto  \underline{\Gamma}(f):={\underline{\rm  mdim}}_M(G,X, f,d),
\end{align*}
then the two functions possess the following properties:

\begin{prop}\label{prop 3.8}
Let $(X, d, G)$ be a $G$-system and $f,h\in C(X,\mathbb{R})$. Then
\begin{enumerate}
\item  if  $f\leq h$, then $ \overline\Gamma (f)\leq \overline\Gamma (h)$;
\item for all $c\in \mathbb{R}$, $\overline\Gamma(f+c)=\overline\Gamma(f)+c$;
\item  $ {\overline{\rm  mdim}}_M(G,X,d) -\|f\| \leq  \overline\Gamma(f) \leq  {\overline{\rm  mdim}}_M(G,X, d)+\|f\|$;
\item the function  $\overline{\Gamma}(f) $ is either finite or  always infinite;
\item if $\overline\Gamma(f)$ is finite, then $|\overline\Gamma(f)-\overline\Gamma(h)|\leq \|f-h\|$;
\item for any $g\in G$,
$\overline\Gamma(f)=\overline\Gamma(f+h\circ T_g-h)$;
\item for any $p\in [0,1]$, $\overline\Gamma(pf+(1-p)h)\leq p \overline\Gamma(f)+(1-p)\overline\Gamma (h)$;
\item $\overline\Gamma(f+h)\leq \overline\Gamma(f)+\overline\Gamma(h)$;
\item  if  $c\geq 1$, then $\overline\Gamma(cf)\leq c\overline\Gamma(f)$; if  $c\leq 1$, then $\overline\Gamma(cf)\geq c\overline\Gamma(f)$, where we obey the convention $0\cdot \infty:=0$;
\item $|\overline\Gamma(f)|\leq\overline\Gamma(|f|)$.
\end{enumerate}
\end{prop}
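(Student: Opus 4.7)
The plan is to derive every item from the corresponding inequality at the level of the $\epsilon$-pressure
$$P(X, f\logf, d, \{F_n\}, \epsilon) = \limsup_{n \to \infty}\frac{1}{|F_n|}\log P_{F_n}(X, f\logf, d, \epsilon),$$
and then divide by $\logf$ and take $\limsup_{\epsilon \to 0}$. The arguments closely parallel the classical ones for topological pressure (cf.~Walters), the only new feature being the clean factorization $S_F(f\logf) = (\logf)\cdot S_F f$, which lets the scale $\logf$ pass through all multiplicative and H\"older-type manipulations.

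Items (1), (2) and (7) are direct from the definitions: monotonicity of $f \mapsto e^{(\logf) S_F f(x)}$ on a fixed separated set, the identity $S_F((f+c)\logf) = S_F(f\logf) + c|F|\logf$, and H\"older's inequality applied to the separated-set sum $\sum_{x\in E}(e^{S_F(f\logf)(x)})^p(e^{S_F(h\logf)(x)})^{1-p}$. For (3), the pointwise bound $|S_F f(x)| \leq |F|\cdot\|f\|$ yields
$$\epsilon^{|F|\|f\|}\cdot s(X,d_F,\epsilon) \;\leq\; P_F(X, f\logf, d,\epsilon) \;\leq\; \epsilon^{-|F|\|f\|}\cdot s(X,d_F,\epsilon);$$
taking $\frac{1}{|F_n|}\log$, $\limsup_n$, then dividing by $\logf$ and taking $\limsup_{\epsilon\to 0}$ gives both bounds. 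Item (4) is then an immediate corollary of (3): if $\overline\Gamma(f_0)$ is finite for some $f_0$, then ${\overline{\rm mdim}}_M(G,X,d) \leq \overline\Gamma(f_0)+\|f_0\| < \infty$, and the upper bound in (3) forces $\overline\Gamma(h)<\infty$ for every $h$. Item (5) follows by sandwiching $f = h + (f-h)$ with $-\|f-h\|\leq f-h\leq\|f-h\|$ and combining (1) with (2).

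For (6), I would use the amenable-group identity
$$S_F(h\circ T_g - h)(x) = \sum_{g''\in gF\setminus F}h(T_{g''}x) - \sum_{g'\in F\setminus gF}h(T_{g'}x),$$
which gives $|S_F(h\circ T_g - h)(x)|\leq 2\|h\|\cdot|gF\triangle F|$, so the factor $\exp(S_F((h\circ T_g-h)\logf)(x))$ lies between $\epsilon^{\pm 2\|h\|\cdot|gF\triangle F|}$; after $\frac{1}{|F_n|}\log(\cdot)$, the perturbation is bounded by $2\|h\|\cdot\frac{|gF_n\triangle F_n|}{|F_n|}\logf$, which vanishes in $\limsup_n$ by the F\o lner condition, establishing equality of the two $\epsilon$-pressures and hence (6). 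For (9) with $c\geq 1$, the elementary inequality $\sum_i a_i^c\leq(\sum_i a_i)^c$ applied to $(e^{S_F(f\logf)(x)})_{x\in E}$ gives $P_F(X, cf\logf, d,\epsilon)\leq P_F(X, f\logf, d,\epsilon)^c$, hence $\overline\Gamma(cf)\leq c\overline\Gamma(f)$; for $c\in[0,1]$, the reverse inequality $\sum_i a_i^c\geq(\sum_i a_i)^c$ applied to the supremising separated set yields $\overline\Gamma(cf)\geq c\overline\Gamma(f)$; for $c\leq 0$, apply (7) with $p=|c|/(1+|c|)$ to the pair $(f, cf)$ to obtain ${\overline{\rm mdim}}_M\leq\tfrac{|c|}{1+|c|}\overline\Gamma(f)+\tfrac{1}{1+|c|}\overline\Gamma(cf)$, which rearranges (using ${\overline{\rm mdim}}_M\geq 0$) to $\overline\Gamma(cf)\geq c\overline\Gamma(f)$. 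Finally, (8) follows by combining (7) at $p=1/2$ with (9) at $c=2$, and (10) follows from (1) applied to $-|f|\leq f\leq|f|$ together with (9) at $c=-1$.

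The main technical point is (6): one must verify that the cocycle contribution $\frac{|gF_n\triangle F_n|}{|F_n|}\logf$ is negligible inside $\limsup_n$ for each \emph{fixed} $\epsilon$. This is where amenability enters essentially and distinguishes the argument from the $\mathbb{Z}$-action case, in which $S_n(h\circ T - h)$ simply telescopes to $h(T^n x) - h(x)$ and is automatically bounded.
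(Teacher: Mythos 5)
Your proof is correct and follows the same strategy the paper intends: the paper's proof is a one-line deferral to Walters \cite[Theorem 9.7]{w82} plus the hint that $\limsup$ is subadditive, and you have simply filled in the details of that classical argument at the level of separated-set sums, using the factorization $S_F(f\logf)=(\logf)S_F f$ to carry the scale through. The one place where your write-up contributes something genuinely beyond the cited reference is item (6): in the $\mathbb{Z}$-case the coboundary sum telescopes to $h(T^nx)-h(x)$ and is uniformly bounded, whereas you correctly observe that for general amenable $G$ the cocycle sum is only $O(|gF_n\triangle F_n|)$, and that the F\o lner condition is exactly what makes $\|h\|\cdot\frac{|gF_n\triangle F_n|}{|F_n|}\logf\to 0$ for each fixed $\epsilon$ — this is the step the paper's terse citation glosses over, and your treatment of it is correct.
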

\begin{proof}
These statements can be verified by following the arguments in  \cite[\S 9, Theorem 9.7]{w82} and using the  inequality:
$$\limsup_{x\to x_0}(f(x)+g(x))\leq \limsup_{x\to x_0} f(x)+\limsup_{x\to x_0}g(x)$$
for any two real-valued functions $f,g$ on $X$.  
\end{proof}

\begin{rem}
All properties mentioned in Proposition  \ref{prop 3.8} are valid for $\underline \Gamma(f)$ except the properties $(7)$ and $(8)$.
\end{rem}

The following Theorem is an extension of the variational principle for metric mean dimension  to the continuous potentials.

\begin{thm}\label{thm 4.6}
Let $(X, d, G)$ be a $G$-system, and   let $\{F_n\}$ be a tempered F\o lner sequence of $G$.  Then for every $f\in C(X,\mathbb{R})$,
\begin{align*}
{\overline{\rm  mdim}}_M(G,X,f,d)&=\max_{\mu \in M(X,G)}\left\{\overline{\rm {mdim}}_M(\mu,\{F_n\},d)+\int f d\mu\right\}.
\end{align*}
\end{thm}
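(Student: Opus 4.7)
The strategy is to mirror the proof of Theorem \ref{thm 3.30} verbatim, upgrading from topological $\epsilon$-entropy to topological $\epsilon$-pressure of the potential $f\logf$ and carrying the linear functional $\int f\,d\mu$ through every step. Two $\epsilon$-level ingredients are required:
\begin{itemize}
\item[$(\star_1)$] For every $\mu\in E(X,G)$ and every $\epsilon>0$,
$$h_{\mu}(G,\{F_n\},\epsilon)+\logf\int f\,d\mu\;\le\;P\!\left(X,f\logf,d,\{F_n\},c\epsilon\right)$$
for a universal $c>0$, with any candidate $h_\mu(G,\{F_n\},\epsilon)\in\mathcal{E}$.
\item[$(\star_2)$] For every $\epsilon>0$ there exists $\mu_\epsilon\in E(X,G)$ satisfying
$$P\!\left(X,f\logf,d,\{F_n\},\epsilon\right)\;\le\;\inf_{\diam(\alpha)\le\epsilon/64}h_{\mu_\epsilon}(G,\alpha)+\logf\int f\,d\mu_\epsilon.$$
\end{itemize}
Ingredient $(\star_1)$ will be obtained by placing a maximal $(d_{F_n},c\epsilon)$-separated set inside the set where Birkhoff averages of $f$ concentrate near $\int f\,d\mu$ (via Lindenstrauss's pointwise ergodic theorem \cite[Theorem 1.2]{lin01}), so that the pressure sum $\sum_x e^{S_{F_n}(f\logf)(x)}$ is bounded below by $\#E\cdot e^{|F_n|\logf(\int f\,d\mu-\delta)}$. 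Ingredient $(\star_2)$ is the Misiurewicz-type argument in the proof of \cite[Theorem 9.48]{kl16} applied with potential $f\logf$, followed by the ergodic-decomposition step used at the end of Lemma \ref{lem 4.2}.

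For the direction ``$\ge$'', I fix $\mu\in M(X,G)$ and $(\mu_\epsilon)_\epsilon\in M_G(\mu)$, write each $\mu_\epsilon=\sum_j\lambda_j^\epsilon\nu_j^\epsilon$ as its ergodic decomposition (Definition \ref{df 3.26}(a)), apply $(\star_1)$ to each $\nu_j^\epsilon$ and take the convex combination:
$$F(\mu_\epsilon,\{F_n\},\epsilon)+\logf\int f\,d\mu_\epsilon\;\le\;P\!\left(X,f\logf,d,\{F_n\},c\epsilon\right).$$
Dividing by $\logf$, taking $\limsup_{\epsilon\to 0}$, and using $\int f\,d\mu_\epsilon\to\int f\,d\mu$ (weak$^*$-continuity of integration against $f\in C(X,\mathbb{R})$) together with Proposition \ref{prop 3.27}, I arrive at $\overline{\rm mdim}_M(\mu,\{F_n\},d)+\int f\,d\mu\le\overline{\rm mdim}_M(G,X,f,d)$; the supremum over $\mu\in M(X,G)$ completes this direction.

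For the reverse direction with the maximum attained, I select $\epsilon_n\downarrow 0$ realising the outer $\limsup$ in the definition of $\overline{\rm mdim}_M(G,X,f,d)$, apply $(\star_2)$ at each $\epsilon_n$ to obtain $\mu_{\epsilon_n}\in E(X,G)$, pass to a weak$^*$-convergent subsequence $\mu_{\epsilon_n}\to\mu\in M(X,G)$ by compactness of $M(X,G)$, and set $\nu_\epsilon:=\mu_{\epsilon_n}$ for $\epsilon\in(\epsilon_{n+1},\epsilon_n]$ so that $(\nu_\epsilon)_\epsilon\in M_G(\mu)$. Dividing $(\star_2)$ by $\log(1/\epsilon_n)$, letting $n\to\infty$, and using $\int f\,d\mu_{\epsilon_n}\to\int f\,d\mu$ yield
$$\overline{\rm mdim}_M(G,X,f,d)\;\le\;\overline{\rm mdim}_M(\mu,\{F_n\},d)+\int f\,d\mu,$$
so this $\mu$ realises the maximum.

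The main obstacle lies in $(\star_2)$: since $\|f\logf\|_\infty$ diverges like $\logf$, I must verify that the error terms accumulated in the Misiurewicz construction (covering numbers, the partition-perturbation step of Proposition \ref{lem 3.21}, and the Bowen-scale chain $\epsilon\mapsto\epsilon/8\mapsto\epsilon/64$ from Lemma \ref{lem 3.25}(1)) remain $o(\logf)$ so that they vanish after division by $\logf$. Because $f$ enters $(\star_2)$ only as a bounded linear factor commuting with the supremum, this is essentially the entropy case of Lemma \ref{lem 4.2} with the extra term $\logf\int f\,d\mu_\epsilon$ carried passively through every step; no new dynamical idea is needed, only careful tracking of the $\epsilon$-dependent constants.
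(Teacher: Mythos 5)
Your proposal mirrors the paper's strategy exactly: the paper sketches two auxiliary lemmas, the first being precisely your $(\star_2)$ (the Misiurewicz argument of \cite[Theorem 9.48]{kl16} plus the ergodic-decomposition step, as in Lemma~\ref{lem 4.2}) applied with potential $f\log\frac{1}{\epsilon}$, and the second being your $(\star_1)$ in the form of the identity $\lim_{\delta\to 0}\inf\{P(Z,f,d,\{F_n\},\epsilon):\mu(Z)\geq 1-\delta\}=\lim_{\delta\to 0}\inf\{h_{top}(G,Z,d,\{F_n\},\epsilon):\mu(Z)\geq 1-\delta\}+\int f\,d\mu$, proved by the same Lindenstrauss/Birkhoff-concentration argument you describe, and then combined with \eqref{equu 3.15} to convert between $f(\mu,\cdot,\{F_n\})$ and the candidates in $\mathcal{E}$. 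One small remark: your flagged worry about $(\star_2)$ is unfounded — the Misiurewicz argument is carried out at a \emph{fixed} $\epsilon$, so the potential $f\log\frac{1}{\epsilon}$ is simply a fixed continuous function (a constant multiple of $f$) at that scale, and no error terms accumulate across $\epsilon$; the paper accordingly states its first auxiliary lemma for an arbitrary continuous $f$ and substitutes $f\log\frac{1}{\epsilon}$ afterwards without further analysis.
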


The proof of Theorem \ref{thm 4.6} is similar to  Theorem \ref{thm 1.1}. Hence, we only give a sketch of two auxiliary  lemmas and omit the precise proof.

\begin{lem}
Let $(X, d, G)$ be a $G$-system  and  $\{F_n\}$ be a F\o lner sequence of $G$, and let $f\in C(X,\mathbb{R})$. Then for every  $\epsilon  >0$, there exist $\mu_{\epsilon} \in E(X,G)$ and $ \nu_{\epsilon}\in M(X,G)$ such that
\begin{align*}
P(X,f,d,\{F_n\},\epsilon)&
\leq \inf_{\diam (\alpha)\leq \frac{\epsilon}{8}}h_{\nu_{\epsilon}}(G,\alpha)+ \int fd\nu_{\epsilon}\\
&\leq \inf_{\diam (\alpha)\leq \frac{\epsilon}{64}}h_{\mu_{\epsilon}}(G,\alpha)+ \int fd\mu_{\epsilon}.
\end{align*}
\end{lem}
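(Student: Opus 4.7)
The plan is to mimic the proof of Lemma 4.2, but to track the potential $f$ through the standard weighted empirical-measure construction that underlies the half of the variational principle for topological pressure.

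For the first inequality, I would proceed as follows. For each $n$, pick a $(d_{F_n},\epsilon)$-separated set $E_n\subset X$ nearly maximizing the partition sum, i.e.\ $\sum_{x\in E_n}e^{S_{F_n}f(x)}\ge\tfrac12 P_{F_n}(X,f,d,\epsilon)$, and form the weighted point mass $\sigma_n:=\frac{1}{Z_n}\sum_{x\in E_n}e^{S_{F_n}f(x)}\delta_x$ with $Z_n:=\sum_{x\in E_n}e^{S_{F_n}f(x)}$, together with its F\o lner average $\mu_n:=\frac{1}{|F_n|}\sum_{g\in F_n}g_{\ast}\sigma_n$. Passing to a subsequence, $\mu_{n_k}\to\nu_\epsilon$ weak-$*$, and the standard F\o lner argument gives $\nu_\epsilon\in M(X,G)$. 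For any finite Borel partition $\alpha$ with $\diam(\alpha)\le\epsilon$ and $\nu_\epsilon(\partial\alpha)=0$, each atom of $\alpha_{F_n}$ has $d_{F_n}$-diameter $\le\epsilon$, so it contains at most one point of $E_n$ (separation is strict), and a direct computation gives
\[
\log Z_n \;=\; H_{\sigma_n}(\alpha_{F_n}) + \int S_{F_n}f\,d\sigma_n
\;=\; H_{\sigma_n}(\alpha_{F_n}) + |F_n|\int f\,d\mu_n.
\]
I would then invoke the amenable-group Misiurewicz averaging argument (as in \cite[Thm.\ 9.48]{kl16}) applied to a nested tempered F\o lner subsequence, together with weak-$*$ convergence and the boundary condition $\nu_\epsilon(\partial\alpha)=0$, to conclude
$\limsup_n |F_n|^{-1}H_{\sigma_n}(\alpha_{F_n})\le h_{\nu_\epsilon}(G,\alpha)$. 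Combining with the identity above and the choice of $E_n$ yields
\[
P(X,f,d,\{F_n\},\epsilon)\;\le\; h_{\nu_\epsilon}(G,\alpha)+\int f\,d\nu_\epsilon
\]
for every such $\alpha$.

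To upgrade the first inequality from $\diam(\alpha)\le\epsilon,\nu_\epsilon(\partial\alpha)=0$ to $\inf_{\diam(\alpha)\le\epsilon/8}$, observe that the additive term $\int f\,d\nu_\epsilon$ is independent of $\alpha$, so the chain of inequalities in Lemma 3.25$(1)$ applies verbatim to $h_{\nu_\epsilon}(G,\cdot)+\int f\,d\nu_\epsilon$, giving the first stated inequality. For the second inequality, choose $\mathcal{V}\in\mathcal{C}_X^o$ with $\diam(\mathcal{V})\le\epsilon/8$ and $\Leb(\mathcal{V})\ge\epsilon/32$; since $h_{\nu_\epsilon}(G,\mathcal{V})\ge\inf_{\diam(\alpha)\le\epsilon/8}h_{\nu_\epsilon}(G,\alpha)$ need not hold, I instead pick such a $\mathcal V$ satisfying $\inf_{\diam(\alpha)\le\epsilon/8}h_{\nu_\epsilon}(G,\alpha)\le h_{\nu_\epsilon}(G,\mathcal V)$ (possible because every partition with small enough diameter refines $\mathcal V$). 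Now apply the ergodic decomposition theorem \cite[Theorem 3.13]{hyz11} to the affine functional $\mu\mapsto h_\mu(G,\mathcal V)+\int f\,d\mu$ at $\nu_\epsilon$: there exists $\mu_\epsilon\in E(X,G)$ with $h_{\nu_\epsilon}(G,\mathcal V)+\int f\,d\nu_\epsilon\le h_{\mu_\epsilon}(G,\mathcal V)+\int f\,d\mu_\epsilon$. Finally, any partition $\alpha$ with $\diam(\alpha)\le\epsilon/64$ lies strictly inside the Lebesgue ball of $\mathcal V$, hence refines $\mathcal V$, so $h_{\mu_\epsilon}(G,\mathcal V)\le h_{\mu_\epsilon}(G,\alpha)$; taking the infimum completes the proof.

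The main obstacle will be the amenable version of the averaging/Misiurewicz step in the first paragraph. For $\mathbb{Z}$-actions this is routine, but for a general countable amenable group $G$ one has to invoke the Ornstein--Weiss quasi-tiling machinery (or the reformulation used by Kerr--Li) to replace the Bowen-type entropy $|F_n|^{-1}H_{\sigma_n}(\alpha_{F_n})$ by $h_{\nu_\epsilon}(G,\alpha)$ while handling the weighted (rather than uniform) empirical measures; care is needed to ensure the weights $e^{S_{F_n}f(x)}/Z_n$ do not interfere with the concavity/sub-additivity estimates, which is precisely where the hypothesis $\nu_\epsilon(\partial\alpha)=0$ is used to propagate the weak-$*$ limit through $H_{\sigma_n}(\alpha_{F_n})$.
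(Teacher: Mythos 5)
Your proposal is correct and follows essentially the same route the paper has in mind: the paper only states this lemma as a ``sketch'' and refers the reader back to the zero-potential Lemma~4.2, whose proof is exactly the combination you use -- the Kerr--Li amenable Misiurewicz argument \cite[Thm.~9.48]{kl16} for the first inequality, Lemma~3.25(1) to pass from boundary-zero partitions of diameter $\leq\epsilon$ to the infimum over diameter $\leq\epsilon/8$, and then the cover $\mathcal V$ with $\diam(\mathcal V)\leq\epsilon/8$, $\Leb(\mathcal V)\geq\epsilon/32$ together with the ergodic decomposition of $h_\cdot(G,\mathcal V)$ and a refinement argument for the $\epsilon/64$ step. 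Tracking the potential through the weighted empirical measures $\sigma_n$ and using the identity $\log Z_n = H_{\sigma_n}(\alpha_{F_n}) + |F_n|\int f\,d\mu_n$ is the standard pressure extension and is correct.

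One small slip in your write-up: you assert that $h_{\nu_\epsilon}(G,\mathcal V)\geq\inf_{\diam(\alpha)\le\epsilon/8}h_{\nu_\epsilon}(G,\alpha)$ ``need not hold'' and then say you arrange for it. In fact this inequality holds automatically for any $\mathcal V$ with $\diam(\mathcal V)\leq\epsilon/8$, since every $\alpha\succ\mathcal V$ then has $\diam(\alpha)\leq\epsilon/8$, so the infimum defining $h_{\nu_\epsilon}(G,\mathcal V)$ runs over a subset of $\{\alpha:\diam(\alpha)\le\epsilon/8\}$. The parenthetical justification you give (``every partition with small enough diameter refines $\mathcal V$'') is actually the mechanism for the later $\epsilon/64$ refinement step, not for this inequality. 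This is only a mis-stated justification, not a gap; the chain of inequalities you produce is valid.
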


\begin{lem}
Let $(X, d, G)$ be a $G$-system  and  $\{F_n\}$ be a  tempered F\o lner sequence of $G$, and let $f\in C(X,\mathbb{R})$. Then for all  $\epsilon >0$ and  $\mu \in E(X,G)$,
\begin{align*}
&\lim_{\delta \to 0} \inf\{P(Z,f,d,\{F_n\},\epsilon):\mu(Z)\geq 1-\delta\}\\
=& \lim_{\delta \to 0} \inf\{h_{top}(G, Z,d,\{F_n\},\epsilon):\mu(Z)\geq 1-\delta\} +\int fd\mu.
\end{align*}
\end{lem}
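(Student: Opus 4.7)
The plan is to exploit Lindenstrauss's pointwise ergodic theorem to show that on a set of arbitrarily large $\mu$-measure the Birkhoff sum $S_{F_n}f(x)$ differs from $|F_n|\int f\,d\mu$ by at most $\gamma|F_n|$; this lets me sandwich $P_{F_n}(Z',f,d,\epsilon)$ between $s(Z',d_{F_n},\epsilon)\cdot e^{|F_n|(\int f\,d\mu\pm\gamma)}$, thereby reducing the pressure quantity to the corresponding $\epsilon$-topological entropy plus $\int f\,d\mu$. Concretely, since $\mu\in E(X,G)$ and $\{F_n\}$ is tempered, Lindenstrauss's pointwise ergodic theorem gives $\frac{1}{|F_n|}S_{F_n}f(x)\to \int f\,d\mu$ for $\mu$-a.e.\ $x$. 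Applying Egorov's theorem, for each pair $\gamma,\eta>0$ I pick $N=N(\gamma,\eta)\in\mathbb{N}$ and a Borel set $A_{\gamma,\eta}\subset X$ with $\mu(A_{\gamma,\eta})\geq 1-\eta$ such that $\bigl|\tfrac{1}{|F_n|}S_{F_n}f(x)-\int f\,d\mu\bigr|\leq \gamma$ for every $n\geq N$ and every $x\in A_{\gamma,\eta}$.

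Next, fix any Borel $Z\subset X$ with $\mu(Z)\geq 1-\delta$ and put $Z':=Z\cap A_{\gamma,\eta}$, so $\mu(Z')\geq 1-\delta-\eta$. For every $n\geq N$ and every $(d_{F_n},\epsilon)$-separated set $E\subset Z'$ each exponential $e^{S_{F_n}f(x)}$ lies in $[e^{|F_n|(\int f\,d\mu-\gamma)},\,e^{|F_n|(\int f\,d\mu+\gamma)}]$, so summing and maximising over $E$ yields
\begin{equation*}
s(Z',d_{F_n},\epsilon)\,e^{|F_n|(\int f\,d\mu-\gamma)}\leq P_{F_n}(Z',f,d,\epsilon)\leq s(Z',d_{F_n},\epsilon)\,e^{|F_n|(\int f\,d\mu+\gamma)}.
\end{equation*}
Taking $\limsup_{n\to\infty}\frac{1}{|F_n|}\log(\cdot)$ and using that $\int f\,d\mu\pm\gamma$ is constant gives the sandwich
\begin{equation*}
h_{top}(G,Z',d,\{F_n\},\epsilon)+\int f\,d\mu-\gamma\leq P(Z',f,d,\{F_n\},\epsilon)\leq h_{top}(G,Z',d,\{F_n\},\epsilon)+\int f\,d\mu+\gamma.
\end{equation*}

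Finally, I turn this pointwise sandwich into the claimed equality of infima. Combining $Z'\subset Z$ with the monotonicity of $h_{top}(G,\cdot,d,\{F_n\},\epsilon)$ and $P(\cdot,f,d,\{F_n\},\epsilon)$ in the set argument, the sandwich produces both $P(Z',f,d,\{F_n\},\epsilon)\leq h_{top}(G,Z,d,\{F_n\},\epsilon)+\int f\,d\mu+\gamma$ and $h_{top}(G,Z',d,\{F_n\},\epsilon)+\int f\,d\mu-\gamma\leq P(Z,f,d,\{F_n\},\epsilon)$. Since every such $Z'$ satisfies $\mu(Z')\geq 1-\delta-\eta$, taking the infimum over all $Z$ with $\mu(Z)\geq 1-\delta$ gives
\begin{equation*}
\inf_{\mu(Y)\geq 1-\delta-\eta}P(Y,f,d,\{F_n\},\epsilon)\leq \inf_{\mu(Z)\geq 1-\delta}h_{top}(G,Z,d,\{F_n\},\epsilon)+\int f\,d\mu+\gamma
\end{equation*}
together with its symmetric counterpart. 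Coupling $\eta=\delta$ and sending first $\delta\to 0$ and then $\gamma\to 0$ delivers the equality. The main (and essentially only) obstacle is the bookkeeping between the two measure-thresholds $1-\delta$ and $1-\delta-\eta$ appearing on opposite sides of these inequalities; pairing $\eta$ with $\delta$ and using that both infima are monotone in the threshold absorbs this perturbation in the limit.
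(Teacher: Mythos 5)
Your proof is correct and follows essentially the same approach as the paper: both use Lindenstrauss's pointwise ergodic theorem to control the Birkhoff averages on a set of large measure, then sandwich the pressure between scaled separated-set counts to reduce to $\epsilon$-topological entropy plus $\int f\,d\mu$. The only cosmetic differences are that you invoke Egorov's theorem explicitly where the paper builds the increasing exhaustion $X_N$ by hand (which is just a proof of Egorov), you carry separate thresholds $\eta$ and $\delta$ before coupling $\eta=\delta$ where the paper starts directly with $\delta/2$-sets, and you present both inequalities at once via the two-sided sandwich where the paper proves one direction in full and says the reverse is similar.
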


\begin{proof}
It suffices to show 
\begin{align*}
&\lim_{\delta \to 0} \inf\{h_{top}(G, Z,d,\{F_n\},\epsilon):\mu(Z)\geq 1-\delta\} +\int fd\mu\\
\leq & \lim_{\delta \to 0} \inf\{P(Z,f,d,\{F_n\},\epsilon):\mu(Z)\geq 1-\delta\},
\end{align*}
and  the reverse inequality can be proved in a similar manner.

Let  $\delta \in (0,1)$ and $E$ be a Borel set of $X$  with $\mu(E)\geq  1-\frac{\delta}{2}$. By Lindenstrauss's pointwise ergodic theorem  \cite[Theorem  1.2]{lin01},  for $\mu$-a.e. $x\in X$ one has
$\lim_{n\to \infty}\frac{1}{|F_n|}S_{F_n}f(x)=\int fd\mu.$
Fix $\gamma >0$. For every $N\geq 1$, we define $$X_N:=\{x\in X: |\frac{1}{|F_n|}S_{F_n}f(x)-\int fd\mu|<\gamma~ \forall  n\geq N\}.$$
Then $X_N$ is non-decreasing in $N$, and the union  $\cup_{N\geq 1}X_N$  has  full $\mu$-measure. Then there exists $N_0$ such that $\mu(X_{N_0})\geq 1-\frac{\delta}{2}$. So $\mu(X_{N_0}\cap E)\geq 1-\delta$. For every $n\geq N_0$, if $R$ is a $(d_{F_n},\epsilon)$-separated set of $X_{N_0}\cap E$ with the largest cardinality, then
\begin{align*}
P_{F_n}(E,f,d,\epsilon)\geq  \sum_{x\in R}e^{S_{F_n}f(x)} > e^{|F_n|\cdot (\int fd\mu -\gamma)}\cdot s(X_{N_0}\cap E,d_{F_n},\epsilon).
\end{align*}
This yields that
\begin{align}\label{equ 2.12}
\begin{split}
&\inf\{h_{top}(G, Z,d,\{F_n\},\epsilon):\mu(Z)\geq 1-\delta\} +\int fd\mu-\gamma\\
\leq&  h_{top}(G, X_{N_0}\cap E,d,\{F_n\},\epsilon) +\int fd\mu -\gamma\\
\leq & P(E,f,d,\{F_n\},\epsilon).
\end{split}
\end{align}
Letting $\gamma \to 0$, taking infimum over Borel sets $E$ with $\mu(E)\geq 1-\frac{\delta}{2}$ and then letting $\delta \to 0$  on  both sides of the above inequality (\ref{equ 2.12}), we get  the desired inequality.
\end{proof}


\begin{cor}\label{thm 3.39}
Let $(X, d, G)$ be a $G$-system such that ${\overline{\rm  mdim}}_M(G,X,d)=0$,  and let $\{F_n\}$ be a tempered F\o lner sequence of $G$. Then for any  $f\in C(X,\mathbb{R})$,
\begin{itemize}
\item [(1)] ${\overline{\rm  mdim}}_M(G,X,f,d)=\max_{\mu \in E(X,G)}\limits \int f d\mu$;
\item [(2)] there is a  Borel probability measure $\tau$ on $M(X,G)$ with $\tau (E(X,G))=1$ such that for $\tau$-a.e. $m\in E(X,G)$,
$$\lim_{n \to \infty} \frac{1}{|F_n|}\sum_{g\in F_n}f(gx)=\overline{\rm mdim}_M(G,X,f,d)$$
for $m$-a.e. $x\in X$.
\end{itemize}
\end{cor}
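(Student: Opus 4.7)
The plan is to reduce both assertions to the variational principle Theorem \ref{thm 4.6}, once I verify that the hypothesis $\overline{\rm mdim}_M(G,X,d)=0$ forces every measure-theoretic metric mean dimension to vanish. Since $F(\mu_\epsilon,\{F_n\},\epsilon)$ in Definition \ref{df 3.26} is a non-negative convex combination of non-negative entropy-like quantities in $\mathcal{E}$, and since $M_{G}(\mu)$ is nonempty for every $\mu\in M(X,G)$ (using $M(X,G)=\overline{\mathrm{co}(E(X,G))}$), we have $\overline{\rm mdim}_M(\mu,\{F_n\},d)\ge 0$ for every $\mu\in M(X,G)$. Combined with Theorem \ref{thm 1.1}, the zero assumption then forces $\overline{\rm mdim}_M(\mu,\{F_n\},d)=0$ for every invariant measure.

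Plugging this into Theorem \ref{thm 4.6} collapses the variational expression to
\[
\overline{\rm mdim}_M(G,X,f,d)=\max_{\mu\in M(X,G)}\int f\,d\mu,
\]
the maximum being attained because $\mu\mapsto\int f\,d\mu$ is weak$^{*}$-continuous on the compact set $M(X,G)$. To replace $M(X,G)$ by $E(X,G)$, I would pick a maximizer $\mu^{*}$ and use the Ergodic Decomposition Theorem to write $\mu^{*}=\int_{E(X,G)} m\,d\tau(m)$ for some Borel probability measure $\tau$ on $M(X,G)$ with $\tau(E(X,G))=1$. Then
\[
\int f\,d\mu^{*}=\int_{E(X,G)}\!\left(\int f\,dm\right)d\tau(m),
\]
and since each $\int f\,dm$ is bounded above by $\int f\,d\mu^{*}$ by maximality, equality must hold for $\tau$-a.e.\ $m\in E(X,G)$. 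This both proves (1) (the maximum is attained on $E(X,G)$) and simultaneously supplies the probability measure $\tau$ that will be used in (2).

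For (2), on the $\tau$-conull subset of $E(X,G)$ on which $\int f\,dm=\overline{\rm mdim}_M(G,X,f,d)$, Lindenstrauss's pointwise ergodic theorem \cite[Theorem 1.2]{lin01}, which applies precisely because $\{F_n\}$ is tempered, yields
\[
\lim_{n\to\infty}\frac{1}{|F_n|}\sum_{g\in F_n}f(gx)=\int f\,dm=\overline{\rm mdim}_M(G,X,f,d)
\]
for $m$-a.e.\ $x\in X$, which is the claimed convergence. The only point demanding genuine care is justifying the vanishing of $\overline{\rm mdim}_M(\mu,\{F_n\},d)$ for \emph{every} $\mu\in M(X,G)$ from the single scalar hypothesis via Theorem \ref{thm 1.1}; everything afterwards is a clean assembly of ergodic-decomposition and pointwise-ergodic-theorem inputs already in use throughout the paper, so I do not anticipate any substantial technical obstacle.
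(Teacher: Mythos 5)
Your proposal is correct and follows essentially the same route as the paper: invoke Theorem \ref{thm 4.6}, observe that the hypothesis forces the measure-theoretic metric mean dimension term to vanish, then decompose the maximizer ergodically and apply Lindenstrauss's pointwise ergodic theorem. The only difference is that you spell out explicitly (via non-negativity plus Theorem \ref{thm 1.1}) why $\overline{\rm mdim}_M(\mu,\{F_n\},d)=0$ for every $\mu\in M(X,G)$, a step the paper leaves implicit when it drops that term from the variational expression.
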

\begin{proof}
By Theorem \ref{thm 4.6}, we  assume that $\mu$ maximizes the supremum of the variational principle. Let $\mu =\int_{E(X,G)} m d\tau(m)$ be the  ergodic decomposition of $\mu$, where $\tau$ is a Borel probability measure on $M(X,G)$ and $\tau(E(X,G))=1$. Then   $$\overline{\rm mdim}_M(G,X,f,d)=\int f d\mu=\int_{E(X,G)} \int f dm~ d\tau(m).$$
Hence, we  have $\overline{\rm mdim}_M(G,X,f,d)=\int fdm$ for   $\tau$-a.e. $m\in E(X,G)$. This shows (1).  Together with the pointwise ergodic theorem \cite{lin01},  the remaining statement  is verified.
\end{proof}

\subsubsection{Metric mean dimension with potential determines the invariant measures}

It follows from Theorem \ref{thm 1.1} that the metric mean dimension is determined  by the invariant measures of $G$-systems and the  measure-theoretic metric mean dimension of these measures. Conversely, using Theorem \ref{thm 4.6}, we prove that the metric mean dimension with potential also determines the invariant measures and their  measure-theoretic metric mean dimension.

Recall that a finite signed  measure  $\mu$ on $X$ is a  real-valued function defined on $\mathcal{B}(X)$ satisfying the countably additive property.

\begin{thm}\label{thm 4.13}
Let $(X, d, G)$ be a $G$-system  such that $\overline{\rm {mdim}}_M(G,X,d)<\infty$, and let $\mu:\mathcal{B}(X)\rightarrow \mathbb{R}$ be a finite signed measure. Then 
\begin{align*}
\mu\in M(X,G)  &\Leftrightarrow \int f d\mu\leq {\underline{\rm  mdim}}_M(G,X,f,d) ~\forall f\in C(X,\mathbb{R})\\
&\Leftrightarrow \int f d\mu\leq {\overline{\rm  mdim}}_M(G,X,f,d) ~\forall f\in C(X,\mathbb{R}).
\end{align*}
\end{thm}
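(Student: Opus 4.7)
\emph{Proof plan.} The plan is to close the cycle $(1)\Rightarrow(2)\Rightarrow(3)\Rightarrow(1)$, where $(1)$--$(3)$ denote the three conditions in order. The implication $(2)\Rightarrow(3)$ is immediate from the pointwise bound ${\underline{\rm mdim}}_M(G,X,f,d)\leq {\overline{\rm mdim}}_M(G,X,f,d)$ for every $f\in C(X,\mathbb{R})$.

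For $(1)\Rightarrow (2)$, I would fix $\mu\in M(X,G)$ and $f\in C(X,\mathbb{R})$ and exploit that every singleton $\{x\}$ is trivially a $(d_{F_n},\epsilon)$-separated set, so
$$P_{F_n}\!\bigl(X,f\log\tfrac{1}{\epsilon},d,\epsilon\bigr)\geq e^{S_{F_n}f(x)\log(1/\epsilon)}\quad\text{for every }x\in X.$$
Integrating against $\mu$, applying Jensen's inequality to the convex function $\exp$, and using the $G$-invariance of $\mu$ to simplify $\int S_{F_n}f\,d\mu=|F_n|\int f\,d\mu$, one obtains
$$\frac{1}{|F_n|}\log P_{F_n}\!\bigl(X,f\log\tfrac{1}{\epsilon},d,\epsilon\bigr)\geq \log\tfrac{1}{\epsilon}\cdot\int f\,d\mu.$$
Taking $\limsup_{n\to\infty}$, then dividing by $\log\tfrac{1}{\epsilon}$ (valid for $\epsilon<1$) and taking $\liminf_{\epsilon\to 0}$ yields $\int f\,d\mu\leq {\underline{\rm mdim}}_M(G,X,f,d)$.

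For $(3)\Rightarrow (1)$, I would adapt Walters' classical characterization of invariant probability measures via topological pressure, leaning on the properties of $\overline{\Gamma}(f)={\overline{\rm mdim}}_M(G,X,f,d)$ collected in Proposition \ref{prop 3.8}. Feeding the constants $f=\pm c$ into the hypothesis and invoking Proposition \ref{prop 3.8}(2) yields $c(\mu(X)-1)\leq {\overline{\rm mdim}}_M(G,X,d)$ and $c(1-\mu(X))\leq {\overline{\rm mdim}}_M(G,X,d)$; letting $c\to\infty$ pins down $\mu(X)=1$. Next, given any continuous $f\geq 0$, applying the hypothesis to $-nf$ and combining with monotonicity (Proposition \ref{prop 3.8}(1)) gives $-n\int f\,d\mu\leq {\overline{\rm mdim}}_M(G,X,d)$, whence $\int f\,d\mu\geq 0$ by letting $n\to\infty$, so $\mu$ is a Borel probability measure on $X$. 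Finally, to verify $G$-invariance I would set $\phi=f\circ T_g-f$ for arbitrary $g\in G$, $f\in C(X,\mathbb{R})$; Proposition \ref{prop 3.8}(6) yields $\overline{\Gamma}(n\phi)=\overline{\Gamma}(0)={\overline{\rm mdim}}_M(G,X,d)$ for every $n\in\mathbb{Z}$, and applying the hypothesis to both $n\phi$ and $-n\phi$ gives
$$\bigl|n\!\int\!\phi\,d\mu\bigr|\leq {\overline{\rm mdim}}_M(G,X,d)<\infty,$$
so letting $|n|\to\infty$ forces $\int\phi\,d\mu=0$, i.e., $\int f\circ T_g\,d\mu=\int f\,d\mu$, as required.

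The main obstacle is precisely the last step: the cocycle argument is the only place where the standing hypothesis ${\overline{\rm mdim}}_M(G,X,d)<\infty$ is genuinely used, and it is what converts the uniform bound $|n\!\int\!\phi\,d\mu|\leq {\overline{\rm mdim}}_M(G,X,d)$ into the identity $\int\phi\,d\mu=0$. This finiteness hypothesis cannot be dropped without essentially new ideas, reflecting the fact that metric mean dimension is specifically designed to probe systems of infinite topological entropy, for which the analogous Walters-type criterion must be calibrated by the universal bound $\overline{\rm mdim}_M(G,X,d)$ rather than by the (infinite) topological entropy.
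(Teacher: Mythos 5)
Your proof is correct and the cycle $(1)\Rightarrow(2)\Rightarrow(3)\Rightarrow(1)$ is a clean way to organize the equivalences. The direction $(3)\Rightarrow(1)$ follows the same route as the paper: constants to pin down $\mu(X)=1$ and non-negativity, then the cocycle argument with $\phi=f\circ T_g - f$ and the invariance of $\overline{\Gamma}$ under coboundaries (Proposition \ref{prop 3.8}(6)) to force $\int\phi\,d\mu=0$ after letting $|n|\to\infty$. So nothing genuinely new there.

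Where you diverge from the paper is $(1)\Rightarrow(2)$. The paper invokes the ``easy half'' of the variational principle in the form of inequality \eqref{inequ 4.8}, namely that $\underline{\rm mdim}_M(\mu,\{F_n\},d)+\int f\,d\mu\le\underline{\rm mdim}_M(G,X,f,d)$ for all $\mu\in M(X,G)$, and then drops the non-negative first term. You instead give a direct and elementary proof: singletons are trivially $(d_{F_n},\epsilon)$-separated, so $P_{F_n}(X,f\log\frac1\epsilon,d,\epsilon)\ge e^{S_{F_n}f(x)\log(1/\epsilon)}$ for every $x$; averaging against $\mu$ and using Jensen plus $G$-invariance yields the bound, and the claim follows upon dividing by $\log(1/\epsilon)$ and passing to $\liminf_{\epsilon\to0}$. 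In fact Jensen is not even needed here: since the bound holds pointwise, $P_{F_n}\ge\exp(\log(1/\epsilon)\sup_x S_{F_n}f(x))\ge\exp(\log(1/\epsilon)\int S_{F_n}f\,d\mu)$. Your argument has the advantage of being self-contained and not leaning on the machinery of measure-theoretic metric mean dimension or the variational principle, which is worth noting since the paper's inequality \eqref{inequ 4.8} is asserted without a standalone justification at the point it is used. One small imprecision: you say the finiteness of $\overline{\rm mdim}_M(G,X,d)$ is ``genuinely used'' only in the cocycle step, but it is in fact used in exactly the same way in the $\mu(X)=1$ and non-negativity steps (each time one divides a uniform finite bound by a parameter and sends it to infinity). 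Also be aware that the paper's own proof has a minor typographical slip, writing $\underline{\rm mdim}_M$ where $\overline{\rm mdim}_M$ is meant after fixing the $\overline{\rm mdim}$ hypothesis; your version avoids this inconsistency.
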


\begin{proof}
Notice that for every $f\in C(X,\mathbb{R})$,
\begin{align}\label{inequ 4.8}
\max_{\mu \in M(X,G)}\left\{\underline{\rm {mdim}}_M(\mu,\{F_n\},d)+\int f d\mu\right\}\leq {\underline{\rm  mdim}}_M(G,X,f,d).
\end{align} 
 If $\mu\in M(X,G)$, we have  $\int f d\mu\leq {\underline{\rm  mdim}}_M(G,X,f,d)$ for all $f\in C(X,\mathbb{R})$.  

Now assume that $\int f d\mu\leq {\overline{\rm  mdim}}_M(G,X,f,d)$ for all $f\in C(X,\mathbb{R})$.
We  show  that $\mu$ only takes non-negative values on $\mathcal{B}(X)$.  Let  $f\geq 0$ and $\epsilon>0$.  Let $n\in \mathbb{N}$  be sufficiently large such that  $-\overline{\rm {mdim}}_M(G,X,d)+ n\epsilon>0$. Then
\begin{align*}
\int n(f+\epsilon)d\mu&=-\int -n(f+\epsilon)d\mu\geq -{\overline{\rm  mdim}}_M(G,X,-n(f+\epsilon),d)\\
&= -{\overline{\rm  mdim}}_M(G,X, -nf,d)+n\epsilon,~ \text{by Proposition \ref{prop 3.8}, (2)}\\
&\geq-\overline{\rm {mdim}}_M(G,X,d)+ n\epsilon>0, ~ \text{by Proposition \ref{prop 3.8}, (1)}.
\end{align*}
Letting  $\epsilon \to 0$, we have $\int f d\mu\geq 0$. Hence, $\mu$  only takes  non-negative  values. 

We need to show  that $\mu \in M(X,G)$.  If $n\in \mathbb{Z}$,  then $$n\cdot \mu(X)=\int n d\mu\leq {\underline{\rm  mdim}}_M(G,X,n,d)={\underline{\rm  mdim}}_M(G,X,d)+ n.$$ 
This implies that  $\mu(X)\leq 1$ by  letting $n \to \infty$ and  $\mu(X)\geq 1$ by letting $n \to -\infty $.  Hence, $\mu \in M(X)$.

If $n\in \mathbb{Z}$ and $f\in C(X,\mathbb{R})$, then,  by Proposition \ref{prop 3.8},(6) for all $g\in G$ $$ n\int (f\circ T_g-f)d\mu\leq   \underline{\rm {mdim}}_M(G,X,n(f\circ T_g-f),d)= \underline{\rm {mdim}}_M(G,X,d).$$  Letting $n \to \infty$,  we have $\int (f\circ T_g-f)d\mu\leq0$ for all $g\in G$,  which implies   $\mu\in M(X,G)$. 
\end{proof}

Next, we give a characterization  of the lower measure-theoretic metric mean dimension using  the invariant measures of $G$-systems and  the lower metric mean dimension with potential. We invoke  a \emph{separation theorem for convex sets}  presented in \cite[p. 417]{ds88}.

\begin{lem}\label{lem 4.3}
Let $V$ be  a locally convex  real linear Hausdorff
topological space. If $K_1,K_2$ are  two disjoint closed convex subsets of $V$ and  $K_1$ is compact, then there exists a continuous real-valued linear functional $L$ on $V$ such that $L(x)<L(y)$ for all $x\in K_1$ and $y\in K_2$.
\end{lem}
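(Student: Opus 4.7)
The plan is to reduce the strict separation of $K_1$ and $K_2$ to strict separation of a single point (the origin) from a closed convex set, which is the standard geometric form of the Hahn--Banach theorem in locally convex spaces.

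First, I would consider the difference set $C := K_2 - K_1 = \{y - x : x \in K_1,\ y \in K_2\}$. Convexity of $C$ follows immediately from convexity of $K_1$ and $K_2$, and the disjointness hypothesis $K_1 \cap K_2 = \emptyset$ gives $0 \notin C$. The crucial geometric step is to verify that $C$ is closed in $V$, and this is exactly where compactness of $K_1$ enters: if a net $z_\alpha = y_\alpha - x_\alpha$ in $C$ converges to some $z \in V$, then by compactness of $K_1$ a subnet of $(x_\alpha)$ converges to some $x \in K_1$; then $y_\alpha = z_\alpha + x_\alpha$ converges along that subnet to $z + x$, and closedness of $K_2$ forces $z + x \in K_2$, whence $z = (z+x) - x \in C$.

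Next, I would apply the geometric Hahn--Banach theorem in the locally convex space $V$: since $\{0\}$ is a compact convex set, $C$ is a closed convex set, and the two are disjoint, there exist a continuous real-valued linear functional $L$ on $V$ and a constant $c > 0$ such that $L(0) < c \le L(z)$ for every $z \in C$. Unpacking the definition of $C$, this yields $L(y) - L(x) \ge c > 0$ for every $x \in K_1$ and $y \in K_2$, hence $L(x) < L(y)$ as required.

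The main obstacle is the closedness of $C$: without compactness of one of the two sets, $K_2 - K_1$ can fail to be closed even when both summands are closed convex and disjoint (standard counterexamples exist even in $\mathbb{R}^2$), and only weak separation would survive. Once $C$ is known to be closed, the rest is a direct invocation of the strict Hahn--Banach separation theorem for a point and a closed convex set in a locally convex Hausdorff space, as recorded in \cite{ds88}.
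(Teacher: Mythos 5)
Your proof is correct. In the paper this lemma is stated without an argument---it is simply quoted as a known separation theorem from Dunford and Schwartz \cite[p.~417]{ds88}---so there is no ``paper proof'' to compare against; what you have supplied is the standard textbook derivation behind that citation. The structure (form $C=K_2-K_1$, check that $C$ is convex and misses the origin, use a net argument combining compactness of $K_1$ with closedness of $K_2$ to prove $C$ is closed, then strictly separate $0$ from $C$) is exactly right, and your identification of compactness of $K_1$ as the hypothesis responsible for closedness of $C$ is the crucial observation; without it the difference set need not be closed and only weak separation survives. One small remark on phrasing: in the middle of the argument you justify the final separation step by saying that $\{0\}$ is compact convex and $C$ is closed convex and disjoint, which is formally the very statement being proved (with $K_1=\{0\}$). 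What you actually need, and what your opening sentence makes clear you intend, is the more primitive fact that a point outside a closed convex set in a locally convex Hausdorff space can be strictly separated from it; this is proved directly by choosing an open convex neighborhood $U$ of $0$ disjoint from $C$ and applying Hahn--Banach separation of the open convex set $U$ from $C$, with no appeal to the compact-plus-closed form. So there is no circularity, but it would be cleaner to invoke the point-versus-closed-set version explicitly.
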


\begin{thm}\label{thm  3.35}
Let $(X, d, G)$ be a $G$-system  such that $\underline{\rm {mdim}}_M(G,X,d)<\infty$, and  let $\{F_n\}$ be a  tempered F\o lner sequence of $G$. If $${\underline{\rm  mdim}}_M(G,X,f,d)=\max_{\mu \in M(X,G)}\left\{\underline{\rm {mdim}}_M(\mu,\{F_n\},d)+\int f d\mu\right\}$$ for all $f\in C(X,\mathbb{R})$, then  for every  $\mu_0\in M(X,G)$,
$$\underline{\rm {mdim}}_M(\mu_0,\{F_n\},d) =\inf_{f\in C(X,\mathbb{R})}\lbrace {\underline{\rm  mdim}}_M(G,X,f,d)-\int f d\mu_0\rbrace.$$ 
\end{thm}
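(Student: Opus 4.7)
The plan is to mimic the classical Legendre--Fenchel duality argument (in the spirit of \cite[Theorem 9.12]{w82}) and invoke the separation theorem (Lemma \ref{lem 4.3}) to recover the concave upper semi-continuous function $\mu \mapsto \underline{\rm {mdim}}_M(\mu,\{F_n\},d)$ from its conjugate. The easy inequality is immediate: by the hypothesized variational principle applied with $\mu = \mu_0$, each $f \in C(X,\mathbb{R})$ satisfies $\underline{\rm mdim}_M(G,X,f,d) \geq \underline{\rm {mdim}}_M(\mu_0,\{F_n\},d) + \int f d\mu_0$, and taking the infimum over $f$ yields the desired upper bound on $\underline{\rm {mdim}}_M(\mu_0,\{F_n\},d)$.

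For the reverse inequality, I argue by contradiction. Suppose there is $c \in \mathbb{R}$ with
\begin{align*}
\underline{\rm {mdim}}_M(\mu_0,\{F_n\},d) < c < \inf_{f \in C(X,\mathbb{R})}\left\{\underline{\rm mdim}_M(G,X,f,d) - \int f d\mu_0\right\}.
\end{align*}
Let $V := \mathcal{M}(X) \times \mathbb{R}$, where $\mathcal{M}(X)$ denotes the space of finite signed Borel measures on $X$ with the weak$^{*}$-topology; then $V$ is locally convex Hausdorff. Set
\begin{align*}
K_1 := \{(\mu_0,c)\}, \qquad K_2 := \{(\mu,t) : \mu \in M(X,G),\ t \leq \underline{\rm {mdim}}_M(\mu,\{F_n\},d)\}.
\end{align*}
Then $K_1$ is compact and $K_1 \cap K_2 = \emptyset$ by the choice of $c$. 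Closedness of $K_2$ follows from the weak$^{*}$-closedness of $M(X,G)$ together with the upper semi-continuity of $\underline{\rm {mdim}}_M(\cdot,\{F_n\},d)$ in Proposition \ref{prop 3.26}. Convexity of $K_2$ reduces to concavity of $\underline{\rm {mdim}}_M(\cdot,\{F_n\},d)$: given $\mu_i \in M(X,G)$ and families $(\mu_\epsilon^i) \in M_G(\mu_i)$ for $i = 1,2$, the convex combination $\nu_\epsilon := \lambda \mu_\epsilon^1 + (1-\lambda)\mu_\epsilon^2$ still lies in $\mathrm{co}(E(X,G))$ and weak$^{*}$-converges to $\lambda \mu_1 + (1-\lambda)\mu_2$; the affinity of $F(\cdot,\{F_n\},\epsilon)$ on $\mathrm{co}(E(X,G))$ from Definition \ref{df 3.26}(a), combined with superadditivity of $\liminf$ under nonnegative linear combinations, yields the needed concavity.

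Lemma \ref{lem 4.3} now supplies a continuous linear functional $L$ on $V$ strictly separating $K_1$ from $K_2$. By Riesz representation, $L(\nu,s) = \int f d\nu + as$ for some $f \in C(X,\mathbb{R})$ and $a \in \mathbb{R}$, satisfying $L(\mu_0,c) < L(\mu,t)$ on $K_2$. Testing at $(\mu_0, \underline{\rm {mdim}}_M(\mu_0,\{F_n\},d))$ forces $a(c - \underline{\rm {mdim}}_M(\mu_0,\{F_n\},d)) < 0$, and letting $t \to -\infty$ in $(\mu_0, t) \in K_2$ rules out $a \geq 0$; hence $a < 0$, and after rescaling I may assume $a = -1$. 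Since $\underline{\rm {mdim}}_M(\mu,\{F_n\},d) \leq \underline{\rm mdim}_M(G,X,d) < \infty$ for every $\mu$, the choice $t = \underline{\rm {mdim}}_M(\mu,\{F_n\},d)$ is admissible, and the separation inequality rearranges to
\begin{align*}
\underline{\rm {mdim}}_M(\mu,\{F_n\},d) + \int(-f) d\mu < c + \int(-f) d\mu_0 \qquad \forall\, \mu \in M(X,G).
\end{align*}
Setting $g := -f$ and taking the supremum over $\mu$, the hypothesized variational principle produces $\underline{\rm mdim}_M(G,X,g,d) - \int g d\mu_0 \leq c$, contradicting the strict lower bound on $c$. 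The main obstacle is establishing concavity of $\underline{\rm {mdim}}_M(\cdot,\{F_n\},d)$, and a secondary subtlety is confirming that the scalar $a$ arising from the separation must be negative; once both are in hand, the remainder is standard convex-analytic bookkeeping.
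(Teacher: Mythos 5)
Your proposal is correct and uses essentially the same separation-of-convex-sets argument as the paper: both invoke the upper semi-continuity from Proposition \ref{prop 3.26} and the concavity of $\underline{\rm {mdim}}_M(\cdot,\{F_n\},d)$ to apply Lemma \ref{lem 4.3}, identify the separating functional with a pair $(f,a)\in C(X,\mathbb{R})\times\mathbb{R}$, and show the $\mathbb{R}$-coefficient has the right sign before normalizing and taking a supremum over $\mu$. The only cosmetic differences are that the paper works directly with $b>\underline{\rm {mdim}}_M(\mu_0,\{F_n\},d)$ and the bounded set $C=\{(\mu,t):0\le t\le\underline{\rm {mdim}}_M(\mu,\{F_n\},d)\}$ (deducing $m>0$), while you argue by contradiction with the full hypograph and deduce $a<0$ — an equivalent bookkeeping choice.
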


\begin{proof}
 By (\ref{inequ 4.8}), it holds that
$$\underline{\rm {mdim}}_M(\mu_0,\{F_n\},d) \leq\inf_{f\in C(X,\mathbb{R})}\lbrace {\underline{\rm  mdim}}_M(G,X,f,d)-\int f d\mu_0\rbrace.$$ 

Let $b>\underline{\rm {mdim}}_M(\mu_0,\{F_n\},d)$. We  define  $$C=\lbrace (\mu,t)\in M(X,G)\times\mathbb{R}: 0\leq t\leq \underline{\rm {mdim}}_M(\mu,\{F_n\},d)\rbrace.$$
One can think of  $C$ as a subset of $C(X,\mathbb{R})^\ast\times\mathbb{R}$, where  the dual space $C(X,\mathbb{R})^\ast$ is endowed with  the weak$^*$-topology. By Proposition \ref{prop 3.26}, the upper semi-continuity of $\underline{\rm {mdim}}_M(\cdot,\{F_n\},d)$ at $\mu_0$ implies that $(\mu_0,b)\notin \overline{C}$. By Definition  \ref{df 3.26},  $ \underline{\rm {mdim}}_M(\mu,\{F_n\},d)$  is  concave on $M(X,G)$. Then $C$ is a convex set.   By  Lemma \ref{lem 4.3},  there exists a continuous linear functional $F:C(X,\mathbb{R})^\ast\times\mathbb{R}\rightarrow\mathbb{R}$ such that $F((\mu,t))<F((\mu_0,b))$ for all $(\mu,t)\in \overline{C}$. In this case,  $F$ must have  the form
$$F(\mu,t)=\int f d\mu +tm$$ for some $f\in C(X,\mathbb{R})$ and  $m\in \mathbb{R}$. Notice that $(\mu_0,\underline{\rm {mdim}}_M(\mu_0,\{F_n\},d)) \in C$.  We have  $$\int f d\mu_0+m \cdot \underline{\rm {mdim}}_M(\mu_0,\{F_n\},d)<\int f d\mu_0+mb.$$ This shows that  $m>0$, and hence
\begin{align*}
\underline{\rm {mdim}}_M(\mu,\{F_n\},d)+\int\frac{f}{m} d\mu <b +\int\frac{f}{m} d\mu_0
\end{align*}
for all  $\mu\in M(X,G)$. This yields that $\underline{\rm  mdim}_M(G,X,\frac{f}{m},d) \leq b+ \int\frac{f}{m} d\mu_0.$
Thus, we have
\begin{align*}
b\geq &\underline{\rm  mdim}_M(G,X,\frac{f}{m},d)-\int\frac{f}{m} d\mu_0\\
\geq &\inf_{g\in C(X,\mathbb{R})}\lbrace \underline{\rm  mdim}_M(G,X,g,d)-\int g d\mu_0\rbrace.
\end{align*}
Letting $b \to \underline{\rm {mdim}}_M(\mu_0,\{F_n\},d)$, this completes the proof.
\end{proof}

Using the fact that for all $c\in \mathbb{R}$, $\underline\Gamma(f+c)=\underline\Gamma(f)+c$, we have
$$\inf_{g\in C(X,\mathbb{R})}\lbrace \underline{\rm  mdim}_M(G,X,g,d)-\int g d\mu_0\rbrace=\mathcal{H}(\mu):=\inf_{f\in \mathcal{A}}\int f d\mu,$$
where $\mathcal{A}:=\{f\in C(X,\mathbb{R}): \underline{\rm  mdim}_M(G,X,-f,d)=0\}$. Therefore, Theorem \ref{thm  3.35} provides a  criterion whether the variational principle holds  for the under metric mean dimension with potential.
\begin{cor}
Let $(X, d, G)$ be a $G$-system  such that $\underline{\rm {mdim}}_M(G,X,d)<\infty$, and  let $\{F_n\}$ be a  tempered F\o lner sequence of $G$. If there exists   $\mu_0\in M(X,G)$ such that 
$\underline{\rm {mdim}}_M(\mu_0,\{F_n\},d) \not=\mathcal{H}(\mu_0),$ 
then  $$\max_{\mu \in M(X,G)}\left\{\underline{\rm {mdim}}_M(\mu,\{F_n\},d)+\int f d\mu\right\}< {\underline{\rm  mdim}}_M(G,X,f,d)$$ for some $f\in C(X,\mathbb{R})$.
\end{cor}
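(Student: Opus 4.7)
The plan is to prove the corollary by contraposition, extracting it as an immediate consequence of Theorem~\ref{thm  3.35} combined with the identity
$$\inf_{g\in C(X,\mathbb{R})}\{\underline{\rm mdim}_M(G,X,g,d)-\int g\,d\mu_0\}=\mathcal{H}(\mu_0)$$
recorded in the paragraph just above the corollary statement. Since inequality \eqref{inequ 4.8} already gives the ``$\leq$'' half of the variational principle for every continuous potential, the failure of the conclusion is equivalent to having equality in the variational principle for every $f\in C(X,\mathbb{R})$.

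Concretely, I would argue as follows. Assume, toward a contradiction, that
$$\underline{\rm mdim}_M(G,X,f,d)=\max_{\mu \in M(X,G)}\{\underline{\rm mdim}_M(\mu,\{F_n\},d)+\int f\,d\mu\}$$
holds for every $f\in C(X,\mathbb{R})$. This is exactly the hypothesis of Theorem~\ref{thm  3.35}, so applying that theorem to the fixed measure $\mu_0$ yields
$$\underline{\rm mdim}_M(\mu_0,\{F_n\},d)=\inf_{f\in C(X,\mathbb{R})}\{\underline{\rm mdim}_M(G,X,f,d)-\int f\,d\mu_0\}.$$
Substituting the displayed identity from the preceding paragraph, the right-hand side equals $\mathcal{H}(\mu_0)$, so $\underline{\rm mdim}_M(\mu_0,\{F_n\},d)=\mathcal{H}(\mu_0)$, contradicting the standing hypothesis $\underline{\rm mdim}_M(\mu_0,\{F_n\},d)\neq\mathcal{H}(\mu_0)$.

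There is essentially no obstacle beyond verifying the Legendre-type identity linking the infimum over $C(X,\mathbb{R})$ to $\mathcal{H}(\mu_0)$; this is the only ingredient not spelled out in the statement. It follows from Proposition~\ref{prop 3.8}(2) by a pair of bijective substitutions: given $g\in C(X,\mathbb{R})$, put $f:=-g+\underline{\rm mdim}_M(G,X,g,d)$, whence $\underline{\rm mdim}_M(G,X,-f,d)=\underline{\rm mdim}_M(G,X,g,d)-\underline{\rm mdim}_M(G,X,g,d)=0$, so $f\in\mathcal{A}$ and $\int f\,d\mu_0=\underline{\rm mdim}_M(G,X,g,d)-\int g\,d\mu_0$; conversely, for $f\in\mathcal{A}$ take $g=-f$, so that $\underline{\rm mdim}_M(G,X,g,d)=0$ and $\underline{\rm mdim}_M(G,X,g,d)-\int g\,d\mu_0=\int f\,d\mu_0$. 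These two substitutions show the two infima coincide, which closes the argument.
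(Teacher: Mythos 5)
Your proof is correct and is exactly what the paper intends: the corollary is the contrapositive of Theorem~\ref{thm  3.35} combined with the Legendre-type identity recorded in the preceding paragraph, and inequality \eqref{inequ 4.8} ensures that negating the strict inequality amounts to equality in the variational principle for every $f$. Your verification of the identity via the two substitutions (using Proposition~\ref{prop 3.8}(2) and the standing finiteness hypothesis $\underline{\rm mdim}_M(G,X,d)<\infty$ to keep $\underline\Gamma(g)$ finite) correctly spells out the step the paper leaves implicit.
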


\subsubsection{A characterization of measures attaining equilibrium states}
In this subsection, we give the  characterizations of the invariant measures that attain the supremum  of  the variational principle  stated in Theorem \ref{thm 4.6}.

The following notion  is an  analogue of the concept of \emph{equilibrium state}  of  topological pressure in  classical thermodynamic formalism \cite{wal75, w82}.

\begin{df}
Let $(X, d, G)$ be a $G$-system  and $\{F_n\}$ be a F\o lner sequence of $G$, and let $f\in C(X,\mathbb{R})$.  A measure $\mu \in M(X,G)$ is called  an equilibrium state for $f$  if $${\overline{\rm  mdim}}_M(G,X, f,d)=\overline{\rm {mdim}}_M(\mu,\{F_n\},d)+\int f d\mu.$$ 

Specially, if $f=0$ is a zero potential, we call $\mu$ a \emph{maximal metric mean dimension measure} if ${\overline{\rm  mdim}}_M(G,X,d)=\overline{\rm {mdim}}_M(\mu,\{F_n\},d)$. 
\end{df}

Denote by  $M_{f}(G,X,d)$ the set of  all equilibrium states for $f$. For the case of zero potential, we let  $M_{max}(G,X,d):=M_{0}(G,X,d)$. Clearly, if ${\overline{\rm  mdim}}_M(G,X,d)=\infty$, then  $$M_{f}(G,X,d)=\{\mu \in M(X,G):\overline{\rm {mdim}}_M(\mu,\{F_n\},d)=\infty\}={\overline{\rm  mdim}}_M(G,X,d)$$ for all $f\in C(X,\mathbb{R})$. 
\begin{rem}
The non-emptiness of $M_{f}(G,X,d)$  is guaranteed by Theorem \ref{thm 4.6}. Besides, the proof of Theorem \ref{thm 1.1} tells us a way to  find such equilibrium state measures.
\end{rem}

Actually,  the equilibrium state is  closely tied with the notion  of \emph{tangent functional} to  the convex function  ${\overline{\rm  mdim}}_M(G,X, \cdot,d)$.

\begin{df}
Let $(X, d, G)$ be a $G$-system such that ${\overline{\rm  mdim}}_M(G,X,d)<\infty$,  and let  $f\in C(X,\mathbb{R})$. A tangent functional to ${\overline{\rm  mdim}}_M(G,X,\cdot,d)$ at $f$  is a  finite signed measure $\mu$ on $X$ such that for all $g\in C(X,\mathbb{R})$,
$${\overline{\rm  mdim}}_M(G,X, f+g,d)-{\overline{\rm  mdim}}_M(G,X, f,d)\geq \int gd\mu.$$
\end{df}
Denote  by $t_{f}(G,X,d)$ the set of  all tangent functionals of $f$. 
\begin{rem}
As  is  shown in  \cite[p.225, Remark]{w82},  by  Riesz representation  theorem,  one can think of  the  tangent functional $\mu$ to ${\overline{\rm  mdim}}_M(G,X, \cdot,d)$ at $f$  as  an element $L$  of the dual space $C(X,\mathbb{R})^{*}$ of the continuous function space such that for all $g\in C(X,\mathbb{R})$,
$$L(g)\leq {\overline{\rm  mdim}}_M(G,X, f+g,d)-{\overline{\rm  mdim}}_M(G,X, f,d).$$  
Using this relation,  $t_{f}(G,X,d)$ is non-empty by  applying the Hahn-Banach theorem (cf. \cite [Appendix, A.3.2]{r04}).
\end{rem}

\begin{thm}
Let $(X, d, G)$ be a $G$-system such that ${\overline{\rm  mdim}}_M(G,X,d)<\infty$,  and let $\{F_n\}$ be a tempered F\o lner sequence of $G$. Let  $f\in C(X,\mathbb{R})$. Then the following statements hold:
\begin{itemize}
\item  [(1)] The set  $M_{f}(G,X,d)$ is a  non-empty closed subset of $M(X,G)$, and $M_{f}(G,X,d)= \cap_{n\geq 1}L_n$, where 
\begin{align*}
L_n:= \overline{\{\mu \in M(X,G):  \overline{\rm {mdim}}_M(\mu,\{F_n\},d)+\int fd\mu>{\overline{\rm  mdim}}_M(G,X, f,d)-\frac{1}{n}\}}.
\end{align*}

\item [(2)] $M_{f}(G,X,d)\subset t_{f}(G,X,d)\subset M(X,G).$

\item [(3)] There exists a dense  subset $\mathcal{D}$ of $C(X,\mathbb{R})$  such that for any $f\in \mathcal{D}$, $M_{f}(G,X,d)$  has  a unique equilibrium  state.
\end{itemize}
\end{thm}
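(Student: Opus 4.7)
The plan is to establish (1), (2), (3) in order, exploiting the upper semi-continuity of $\mu \mapsto \overline{\rm mdim}_M(\mu,\{F_n\},d)$ on $M(X,G)$ (Proposition \ref{prop 3.26}), the convexity and $1$-Lipschitz continuity of $\overline\Gamma(f) := \overline{\rm mdim}_M(G,X,f,d)$ on $C(X,\mathbb{R})$ (Proposition \ref{prop 3.8}), and the variational principle (Theorem \ref{thm 4.6}). The main obstacle will appear in (3), where a genericity result for Gateaux differentiability of continuous convex functions on separable Banach spaces must be invoked.

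For (1), non-emptiness of $M_f(G,X,d)$ is exactly the attainment of the maximum in Theorem \ref{thm 4.6}. Set $\Phi_f(\mu) := \overline{\rm mdim}_M(\mu,\{F_n\},d) + \int f\,d\mu$, which is upper semi-continuous on $M(X,G)$ by Proposition \ref{prop 3.26} and the weak$^{*}$-continuity of $\mu \mapsto \int f\,d\mu$. The inclusion $M_f(G,X,d) \subset \cap_n L_n$ is immediate, since any equilibrium state satisfies $\Phi_f(\mu) = \overline\Gamma(f) > \overline\Gamma(f) - 1/n$, hence lies in the set whose closure defines $L_n$. Conversely, if $\mu \in \cap_n L_n$, then for each $n$ there exist $\mu_k \to \mu$ with $\Phi_f(\mu_k) > \overline\Gamma(f) - 1/n$; upper semi-continuity yields $\Phi_f(\mu) \geq \overline\Gamma(f) - 1/n$, and letting $n \to \infty$ together with the universal bound $\Phi_f \leq \overline\Gamma(f)$ from Theorem \ref{thm 4.6} forces $\Phi_f(\mu) = \overline\Gamma(f)$, so $\mu \in M_f(G,X,d)$. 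The closedness of $M_f(G,X,d)$ then follows from its description as an intersection of closed sets.

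For (2), the inclusion $M_f(G,X,d) \subset t_f(G,X,d)$ follows directly from Theorem \ref{thm 4.6}: for any $\mu \in M_f(G,X,d)$ and any $g \in C(X,\mathbb{R})$,
$$\overline\Gamma(f+g) \geq \overline{\rm mdim}_M(\mu,\{F_n\},d) + \int (f+g)\,d\mu = \overline\Gamma(f) + \int g\,d\mu.$$
To prove $t_f(G,X,d) \subset M(X,G)$, let $\mu$ be a tangent functional. Testing against constants $g \equiv c$ and invoking Proposition \ref{prop 3.8}(2) gives $c \geq c\mu(X)$ for every $c \in \mathbb{R}$, forcing $\mu(X) = 1$. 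For positivity, testing against $g \leq 0$ together with monotonicity (Proposition \ref{prop 3.8}(1)) yields $0 \geq \int g\,d\mu$, whence $\mu$ is nonnegative. For $G$-invariance, choose $g = t(h \circ T_a - h)$ and apply Proposition \ref{prop 3.8}(6) to obtain $0 \geq t \int (h \circ T_a - h)\,d\mu$ for every $t \in \mathbb{R}$, hence $\int h \circ T_a\,d\mu = \int h\,d\mu$ for all $h \in C(X,\mathbb{R})$ and $a \in G$.

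For (3), I would invoke Mazur's theorem: every continuous convex function on a separable Banach space is Gateaux differentiable on a dense $G_\delta$ subset. Since $(X,d)$ is compact metric, $C(X,\mathbb{R})$ is separable, and by Proposition \ref{prop 3.8}(3), (5), (7), $\overline\Gamma$ is a $1$-Lipschitz convex real-valued function on $C(X,\mathbb{R})$ under the standing hypothesis $\overline{\rm mdim}_M(G,X,d) < \infty$. Let $\mathcal{D} \subset C(X,\mathbb{R})$ be the dense $G_\delta$ set of Gateaux-differentiability points. By the standard duality between Gateaux differentiability of a continuous convex function and uniqueness of its subgradients, at each $f \in \mathcal{D}$ the set $t_f(G,X,d)$ is a singleton; combined with $\emptyset \neq M_f(G,X,d) \subset t_f(G,X,d)$ from (1) and (2), this yields $|M_f(G,X,d)| = 1$. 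The main delicacy is confirming that Mazur's theorem applies in the desired form; this reduces to the convexity and continuity of $\overline\Gamma$ already recorded in Proposition \ref{prop 3.8}, so no additional hypothesis on $(X,d,G)$ is needed.
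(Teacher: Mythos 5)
Your proposal is correct and follows essentially the same route as the paper's proof: (1) via upper semi-continuity of $\overline{\rm mdim}_M(\cdot,\{F_n\},d)$ plus Theorem~\ref{thm 4.6}; (2) via the defining inequality for subgradients/tangent functionals together with Proposition~\ref{prop 3.8}; and (3) via the Mazur--Dunford--Schwartz genericity theorem for Gateaux differentiability of continuous convex functions on separable Banach spaces. The only substantive difference is in the second inclusion of (2): the paper reduces $t_f(G,X,d)\subset M(X,G)$ to a single application of Proposition~\ref{prop 3.8}(8) followed by citing Theorem~\ref{thm 4.13}, whereas you inline a direct verification (testing against constants, non-positive functions, and coboundaries $t(h\circ T_a - h)$ using Proposition~\ref{prop 3.8}(1),(2),(6)) that $\mu$ is a $G$-invariant probability measure --- which in effect re-derives the relevant portion of Theorem~\ref{thm 4.13}. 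Both are valid; the paper's is shorter because it leverages the earlier theorem, while yours is more self-contained.
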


\begin{proof}
$(1)$. It follows from  Theorem \ref{thm 4.6} and the upper semi-continuity of $\overline{\rm {mdim}}_M(\cdot,\{F_n\},d)$ on $M(X,G)$ presented in  Proposition  \ref{prop 3.26}. 

$(2)$. Let $\mu  \in M_{f}(G,X,d)$. Then for every $g\in C(X,\mathbb{R})$, \begin{align*}
 &{\overline{\rm  mdim}}_M(G,X, f+g,d)-{\overline{\rm  mdim}}_M(G,X, f,d)\\
  \geq& (\overline{\rm {mdim}}_M(\mu,\{F_n\},d)+\int f+g d\mu)-(\overline{\rm {mdim}}_M(\mu,\{F_n\},d)+\int f d\mu)\\
  =&\int g d\mu.
\end{align*}
This shows that $\mu \in t_{f}(G,X,d) $, and hence  $M_{f}(G,X,d)\subset t_{f}(G,X,d)$.

 Now assume that  $\mu \in  t_{f}(G,X,d)$. Then 
for all $g\in C(X,\mathbb{R})$,
\begin{align*}
\int gd\mu&\leq {\overline{\rm  mdim}}_M(G,X, f+g,d)-{\overline{\rm  mdim}}_M(G,X, f,d)\\
&\leq   {\overline{\rm  mdim}}_M(G,X, g,d), ~ \text{by Proposition \ref{prop 3.8}, (8)}.
\end{align*}
Then $\mu \in M(X,G)$ by Theorem \ref{thm 4.13}, and hence $t_{f}(G,X,d)\subset M(X,G)$.

$(3)$. Recall a theorem in \cite[p.450]{ds88} that  a convex function on a separable Banach space has  a  unique  tangent  functional at a  dense set  of points. Applying this  fact to  the convex function ${\overline{\rm  mdim}}_M(G,X,\cdot,d)$ on $C(X,\mathbb{R})$,  and noticing that $M_{f}(G,X,d)$ is not empty and $ M_{f}(G,X,d)\subset t_{f}(G,X,d)$ by (2),  this  implies   (3).
\end{proof}

\section{Two applications of the variational principle}\label{sec 5}

In this section, we apply the variational principle to  study $G$-systems with zero metric mean dimension and those with positive metric mean dimension, and prove Theorems \ref{thm 1.3} and  \ref{thm 1.4}.

\subsection{Infinite entropy systems with zero metric mean dimension}\label{subsec 5.1}
In this subsection, for $G$-systems with zero metric mean dimension, we introduce the concept of infinite  entropy dimensions in both topological and measure-theoretic situations, and prove Theorem \ref{thm 1.3}.

\subsubsection{Infinite  entropy dimension of subsets}


To distinguish the zero entropy systems, Carvalho \cite{c97}  first introduced the concept of \emph{entropy dimension} by employing  a sub-exponentially growing function in the definition of topological entropy for $\mathbb{Z}$-actions. This definition has a flavour of Hausdorff dimension  in fractal geometry. More precisely,
given  a non-empty subset $Z\subset X$ and  a F\o lner sequence $\{F_n\}$ of $G$, the $s$-topological entropy of $Z$ w.r.t. $\{F_n\}$ is defined by
$$h_{top}(G,Z,s,\{F_n\})=\lim_{\epsilon \to 0}\limsup_{n \to \infty}\frac{\log s(Z,d_{F_n},\epsilon)}{|F_n|^s}.$$ Then  the  \emph{entropy dimension of $Z$} is  defined by
\begin{align*}
D(G,Z,\{F_n\}):&=\inf\{s>0: h_{top}(G,Z,s,\{F_n\})=0\},\\
&=\sup\{s>0: h_{top}(G,Z,s,\{F_n\}) =\infty\}.
\end{align*}

One may think of the parameter \( s \) as a number which will be determined later. For instance, for zero entropy systems, we decrease \( s \) until we find the largest lower bound of the set \( \{s>0 \mid h_{\text{top}}(G,Z,s,\{F_n\})=0\} \). However, for infinite entropy systems, we increase \( s \) until we find the smallest upper bound of the set \( \{s>0 \mid h_{\text{top}}(G,Z,s,\{F_n\})=\infty\} \). Then we can use  the entropy dimension   to classify the systems with zero  and infinite entropy systems.

Now we inject this idea into \( G \)-systems that are infinite entropy systems with zero metric mean dimension by introducing the so-called \emph{infinite entropy dimension}. The definition is a fusion of the    Hausdorff dimension  and the box dimension  in fractal geometry. It allows us to  understand the dynamics  hidden in  infinite entropy systems with zero metric mean dimension. 

Let $Z$ be a non-empty subset of $X$,  and let $\{F_n\}$ be a  F\o lner sequence of $G$. 
For $s>0$, we  define \emph{$s$-upper metric mean dimension of $Z$ w.r.t. $\{F_n\}$} as 
\begin{align*}
{\overline{\rm  mdim}}_M(G,Z, \{F_n\},s,d)&=\limsup_{\epsilon \to 0}\frac{h_{top}(G,Z,d,\{F_n\},\epsilon)}{(\logf)^s},\\
&=\limsup_{\epsilon \to 0}\frac{\hat{h}_{top}(G,Z,d,\{F_n\},\epsilon)}{(\logf)^s}.
\end{align*}

In particular, if $s=1$,  the above notion reduces to the upper metric mean dimension of $Z$. Observe that $ {\overline{\rm  mdim}}_M(G,Z, \{F_n\},s,d)$ is  non-negative and non-increasing in $s$. There exists $s_0\in [0,\infty]$ such that
\begin{align*}
{\overline{\rm  mdim}}_M(G,Z, \{F_n\},s,d)=
\begin{cases}
\infty,  &\mbox{if}~0<s< s_0\\
0,&\mbox{if}~s>s_0
\end{cases}.
\end{align*} 
This fact allows us to introduce the following concept:

\begin{df}
The infinite  upper entropy dimension  of $Z$ w.r.t. $\{F_n\}$ is defined by
\begin{align*}
\overline{D}_M(G,Z,\{F_n\},d)&=\inf\{s>0:  {\overline{\rm  mdim}}_M(G,Z, \{F_n\},s,d)=0\}\\
&=\sup\{s>0:  {\overline{\rm  mdim}}_M(G,Z, \{F_n\},s,d)=\infty\}.
\end{align*}
\end{df}
Using the $s$-lower metric mean dimension ${\underline{\rm  mdim}}_M(G,Z, \{F_n\},s,d)$ of $Z$,  the similar approach is  applied to  define  the \emph{infinite lower entropy dimension}  $\underline{D}_M(G,Z,\{F_n\},d)$ of $Z$. If $Z\subset X$ is a $G$-invariant subset,   the infinite upper and lower entropy dimensions are independent of the choice of  F\o lner sequences of $G$. For this case, we sometimes write $\overline{D}_M(G,Z,d)$ and $\underline{D}_M(G,Z,d)$ by dropping  the  dependence $\{F_n\}$  in  $\overline{D}_M(G,Z,\{F_n\},d)$ and $\underline{D}_M(G,Z,\{F_n\},d)$.

One immediately relates the infinite entropy dimension with the  metric mean dimension of a subset by comparing their definitions.

\begin{thm}\label{thm 3.41}
Let $(X, d, G)$ be a $G$-system,  and let $\{F_n\}$ be a F\o lner sequence of $G$.  Let $Z$ be  a non-empty subset of $X$. Then 
\begin{itemize}
\item [(1)]  if ${\overline{\rm  mdim}}_M(G,Z, \{F_n\},d)=\infty$, then  $ \overline{D}_M(G,Z,\{F_n\},d)\geq 1$;
\item [(2)]  if ${\overline{\rm  mdim}}_M(G,Z, \{F_n\},d)<\infty$, then  $0\leq \overline{D}_M(G,Z,\{F_n\},d) \leq 1$;
\item [(3)]if $0<{\overline{\rm  mdim}}_M(G,Z, \{F_n\},d)<\infty$, then  $ \overline{D}_M(G,Z,\{F_n\},d)= 1$.
\end{itemize}
\end{thm}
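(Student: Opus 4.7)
The plan is to exploit the monotonicity of the map $s\mapsto \overline{\rm mdim}_M(G,Z,\{F_n\},s,d)$: since $\logf >1$ for $\epsilon<1/e$, the quantity $(\logf)^{s}$ is increasing in $s$ for each fixed small $\epsilon$, so the ratio $h_{top}(G,Z,d,\{F_n\},\epsilon)/(\logf)^{s}$ is decreasing in $s$ and hence its $\limsup$ as $\epsilon\to 0$ is non-increasing in $s$. Combined with the simple algebraic identity
$$
\frac{h_{top}(G,Z,d,\{F_n\},\epsilon)}{(\logf)^{s}}
\;=\;
\frac{h_{top}(G,Z,d,\{F_n\},\epsilon)}{\logf}\cdot (\logf)^{1-s},
$$
this monotonicity will reduce each of (1)--(3) to a short comparison with the behaviour at $s=1$.

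For (1), if $\overline{\rm mdim}_M(G,Z,\{F_n\},d)=\infty$ then, by monotonicity, $\overline{\rm mdim}_M(G,Z,\{F_n\},s,d)=\infty$ for every $s\leq 1$, so the defining set $\{s>0:\overline{\rm mdim}_M(G,Z,\{F_n\},s,d)=0\}$ is contained in $(1,\infty)$ and its infimum is $\geq 1$. For (2), put $M:=\overline{\rm mdim}_M(G,Z,\{F_n\},d)<\infty$. Then for any $s>1$ the first factor in the identity above is eventually bounded by $M+1$ (by the definition of $\limsup$), while the second factor $(\logf)^{1-s}\to 0$ as $\epsilon\to 0$; hence the product tends to $0$, giving $\overline{\rm mdim}_M(G,Z,\{F_n\},s,d)=0$ for every $s>1$ and therefore $\overline{D}_M(G,Z,\{F_n\},d)\leq 1$. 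Non-negativity is automatic from the definition, which completes (2).

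For (3), the upper bound $\overline{D}_M(G,Z,\{F_n\},d)\leq 1$ follows from (2). For the lower bound, the positivity $M>0$ together with monotonicity yields $\overline{\rm mdim}_M(G,Z,\{F_n\},s,d)\geq M>0$ for every $s\leq 1$, so the defining set again sits inside $(1,\infty)$ and $\overline{D}_M(G,Z,\{F_n\},d)\geq 1$. There is no substantive obstacle in this proof; the only point that deserves a brief remark is that the $\limsup$ of a product is not in general the product of $\limsup$s, but in the crucial step of case (2) the finiteness of $M$ ensures that the first factor is eventually bounded, so the vanishing of the other factor legitimately forces the product to $0$.
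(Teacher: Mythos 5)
Your argument is correct, and it is precisely the elementary ``compare definitions'' argument the paper leaves to the reader: the theorem is stated with no proof, introduced only by the remark that it follows ``immediately \dots by comparing their definitions.'' The two ingredients you isolate---monotonicity of $s\mapsto\overline{\rm mdim}_M(G,Z,\{F_n\},s,d)$ (valid since $\log\frac{1}{\epsilon}>1$ for $\epsilon<1/e$), and the factorization that lets the finiteness of the $s=1$ value force the $s>1$ values to vanish---are exactly what is needed, and your caveat about $\limsup$ of a product is the right thing to flag.
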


We verify the definition of infinite entropy dimension by the following examples. 

\begin{ex}\label{ex 4.4}
\item [(1)]  The infinite entropy dimension of any \( G \)-system with finite topological entropy is zero. For example, the \( G \)-full shift over finite symbols and expansive \( G \)-systems are such cases.
\item [(2)] Let $X=[0,1]$, $d=|\cdot|$ and $D$ as in Example \ref{ex 3.6}.  We  have $$\overline{D}_M(G,[0,1]^G,D)=  1.$$ 
\item [(3)]  For any $0<s < 1$, there is a  $G$-system $(X,d,G)$ with infinite entropy and $\overline{{\rm {mdim}}}_M(G, X,d)=0$, while $\overline{D}_M(G,X,d)=s$. 
\end{ex}

\begin{proof}

$(1)$. It  holds by the fact that  the $\epsilon$-topological entropy of $X$  is  uniformly bounded by a constant for all $\epsilon>0$.

$(2)$. Since $\overline{{\rm {mdim}}}_M(G, [0,1]^G,D)=\limsup_{\epsilon \to 0}\frac{1}{\logf}h_{top}(G,[0,1]^G,D,\{F_n\},\epsilon)=1,$ we may think of $h_{top}(G,[0,1]^G,D,\{F_n\},\epsilon)\approx \logf$.
Then for any $s>1$, we  have  $$\overline{{\rm {mdim}}}_M(G, [0,1]^G, s, D)=0,$$ and hence  $\overline{D}_M(G,[0,1]^G,\{F_n\},D)\leq 1$. For any $s<1$, we  have  $\overline{{\rm {mdim}}}_M(G, [0,1]^G, s, D)=\infty$, and hence $\overline{D}_M(G,[0,1]^G,\{F_n\},D)\geq 1$. So   $\overline{D}_M(G,[0,1]^G,D)=1$.

$(3)$. Let $(X,d)$ be a compact metric space. 
Similar to the proof of Example \ref{ex 3.6},  for sufficiently small  $\epsilon >0$, one  has $h_{top}(G,X^G,D,\{F_n\},\epsilon)\approx\log N_{\epsilon}(X,d),$
where $N_{\epsilon}(X,d)$ is  the smallest cardinality of the  open balls $B_d(x,\epsilon)$ needed to cover $X$. Therefore,   for any $s>0$ we have
\begin{align*}
\overline{{\rm {mdim}}}_M(G, X^G,s,D)&=\limsup_{\epsilon \to 0}\frac{\log N_{\epsilon}(X,d)}{(\logf)^s}.
\end{align*}
Using this  fact, for any fixed $s\in (0,1)$ and $0<\alpha <\infty$ we shall construct a compact subset $X \subset [0,1]$ such that $\overline{\rm dim}_B(X,d)=0$ and  $\limsup_{\epsilon \to 0}\frac{\log N_{\epsilon}(X,d)}{(\logf)^s}=\alpha$, and  hence  $\overline{{\rm {mdim}}}_M(G, X^G,D)=0$,  $\overline{D}_M(G,X^G,D)=s.$

Now fix such  $0<s<1$ and $0<\alpha<\infty$. For any $k\geq 1$, let $\epsilon_k= e^{-k^{\frac{1}{s}}}<1$ and  $M_k=\lfloor e^{\alpha k} \rfloor$, where $\lfloor x \rfloor:=\max\{n\in \mathbb{Z}: n\leq x\}$. Fix a $k_0$ sufficiently large such that $f(k):=\epsilon_k\cdot e^{\alpha k}<1$ and is decreasing for all $k\geq k_0$. Consider the set
$E_k:=\{j\epsilon_k: j=1,...,M_k\}$, which is   uniformly distributed in $[0, 1]$  closing to $0$ with  gap $\epsilon_k$. We define  
$$X=\{0\}\bigcup (\bigcup_{k\geq k_0} E_k),$$
which is  a closed subset of $[0,1]$. Given  a sufficiently small $\epsilon>0$, there exists  some $k\geq k_0$ such that $\epsilon_{k+1}<\epsilon\leq \epsilon_k$.   Then   $N_{\epsilon}(X,d)\geq N_{\epsilon_{k}}(E_{k},d)$. Thus, we have 
\begin{align*}
\limsup_{\epsilon \to 0}\frac{\log N_{\epsilon}(X,d)}{(\logf)^s}\geq \limsup_{\epsilon \to 0}\frac{ \log \lfloor e^{\alpha k} \rfloor}{k+1} \frac{k+1}{(\logf)^s}=\alpha.
\end{align*}

On the other hand, if we choose the open balls  centered at the points of $\cup_{j=k_0}^{k+1}E_{j}$ with radius $\epsilon_{k+1}$ to cover $X$,  then $N_{\epsilon}(X,d)$ is bounded above  by  $$N_{\epsilon}(X,d)\leq \sum_{j=k_0}^{k+1}e^{\alpha k}+1 <C\cdot e^{\alpha(k+1)}$$ for some $C>1$ independent of  $k$. Similarly, we obtain  $$\limsup_{\epsilon \to 0}\frac{\log N_{\epsilon}(X,d)}{(\logf)^s}\leq \limsup_{\epsilon \to 0}\frac{ \log (C\cdot e^{\alpha(k+1)})}{k} \frac{k}{(\logf)^s}=\alpha.$$
This shows $\limsup_{\epsilon \to 0}\frac{\log N_{\epsilon}(X,d)}{(\logf)^s}=\alpha$.
\end{proof}

Observe that if $h_{top}(G,Z,\{F_n\})=\infty$ then $D(G,Z,\{F_n\})\geq 1$. It is interesting to compare  the two types of  entropy dimensions  in the framework of infinite entropy systems.

\begin{thm}
Let $(X, d, G)$ be a $G$-system,  and let $\{F_n\}$ be a F\o lner sequence of $G$.  Let $Z$ be a non-empty subset of $X$. Then  
\begin{itemize}
\item [(1)] if  $h_{top}(G,Z,\{F_n\})=\infty$  and ${\overline{\rm  mdim}}_M(G,Z, \{F_n\},d)<\infty$, then
$$\overline{D}_M(G,Z,\{F_n\},d)\leq 1 \leq  D(G,Z,\{F_n\});$$
\item [(2)]  if $0<{\overline{\rm  mdim}}_M(G,Z, \{F_n\},d)<\infty$, then $$\overline{D}_M(G,Z,\{F_n\},d)= 1=D(G,Z,\{F_n\}).$$
\end{itemize}
\end{thm}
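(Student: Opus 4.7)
The plan is to combine Theorem \ref{thm 3.41} with an elementary estimate that compares the ratios
$$\frac{\log s(Z,d_{F_n},\epsilon)}{|F_n|} \quad \text{and} \quad \frac{\log s(Z,d_{F_n},\epsilon)}{|F_n|^s}$$
through the identity $|F_n|^{-s}=|F_n|^{-1}\cdot |F_n|^{1-s}$. In (1), the bound $\overline{D}_M(G,Z,\{F_n\},d)\leq 1$ is immediate from Theorem \ref{thm 3.41}(2). The substance is the reverse direction $D(G,Z,\{F_n\})\geq 1$, for which I will show $h_{top}(G,Z,s,\{F_n\})=\infty$ for every $s\in(0,1)$. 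Fix such $s$ and $M>0$. Since $h_{top}(G,Z,d,\{F_n\},\epsilon)$ is non-increasing in $\epsilon$ and $h_{top}(G,Z,\{F_n\})=\infty$, I can choose $\epsilon_0(M)>0$ with $h_{top}(G,Z,d,\{F_n\},\epsilon)>2M$ for all $\epsilon<\epsilon_0$. For such $\epsilon$, extract a subsequence $(F_{n_k})$ with $\log s(Z,d_{F_{n_k}},\epsilon)/|F_{n_k}|>M$; as $|F_n|\to\infty$ and $s<1$, the factor $|F_{n_k}|^{1-s}\to\infty$, so $\log s(Z,d_{F_{n_k}},\epsilon)/|F_{n_k}|^s\to\infty$. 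Taking $\limsup_n$ and then $\lim_{\epsilon\to 0}$ yields $h_{top}(G,Z,s,\{F_n\})=\infty$, hence $D(G,Z,\{F_n\})\geq 1$.

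For (2), the equality $\overline{D}_M(G,Z,\{F_n\},d)=1$ is exactly Theorem \ref{thm 3.41}(3). To establish $D(G,Z,\{F_n\})=1$ I verify both sides. For $D\geq 1$, I first observe that positivity of $\overline{\rm mdim}_M(G,Z,\{F_n\},d)$ forces $h_{top}(G,Z,\{F_n\})=\infty$: the defining $\limsup$ produces a sequence $\epsilon_k\to 0$ along which $h_{top}(G,Z,d,\{F_n\},\epsilon_k)\to\infty$, and monotonicity in $\epsilon$ promotes this to $\lim_{\epsilon\to 0}h_{top}(G,Z,d,\{F_n\},\epsilon)=\infty$; item (1) then applies. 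For $D\leq 1$, fix $s>1$ and an arbitrary $\epsilon>0$. Because $Z\subset X$ and $X$ is compact, $s(Z,d_{F_n},\epsilon)\leq s(X,d_{F_n},\epsilon)<\infty$, so the quantity $A_\epsilon:=h_{top}(G,Z,d,\{F_n\},\epsilon)$ is finite. The factorization together with $|F_n|^{1-s}\to 0$ gives
$$\limsup_{n\to\infty}\frac{\log s(Z,d_{F_n},\epsilon)}{|F_n|^s}\leq (A_\epsilon+1)\cdot\lim_{n\to\infty}|F_n|^{1-s}=0,$$
so $h_{top}(G,Z,s,\{F_n\})=0$ for every $s>1$, which yields $D(G,Z,\{F_n\})\leq 1$.

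The main subtlety to keep in mind is the interaction between $\limsup_n$ and $\lim_{\epsilon\to 0}$ in the definition of $h_{top}(G,Z,s,\{F_n\})$: for the lower bound in (1) one must fix $\epsilon$ sufficiently small \emph{before} extracting the subsequence in $n$, so that the factor $\log s(Z,d_{F_n},\epsilon)/|F_n|$ is bounded below by a positive constant that can be amplified by $|F_n|^{1-s}$; for the upper bound in (2), it is the finiteness of $A_\epsilon$ for each individual $\epsilon>0$ (guaranteed by compactness of $X$), rather than any uniform-in-$\epsilon$ bound, that suffices to kill the ratio when multiplied by the vanishing sequence $|F_n|^{1-s}$. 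Beyond these order-of-limit issues, the argument is elementary and requires no new dynamical input.
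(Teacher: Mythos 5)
Your argument is correct and follows essentially the same route as the paper's: both read $\overline{D}_M\leq 1$ (resp.\ $\overline{D}_M=1$) directly from Theorem \ref{thm 3.41}, both observe that $0<{\overline{\rm mdim}}_M(G,Z,\{F_n\},d)$ forces $h_{top}(G,Z,\{F_n\})=\infty$ via the monotonicity of $\epsilon\mapsto h_{top}(G,Z,d,\{F_n\},\epsilon)$, and both establish $D\leq 1$ by the same factorization $\frac{\log s(Z,d_{F_n},\epsilon)}{|F_n|^s}=\frac{\log s(Z,d_{F_n},\epsilon)}{|F_n|}\cdot|F_n|^{1-s}$ together with $|F_n|\to\infty$.

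One step in your $D\leq 1$ argument needs tightening. The inference ``$s(Z,d_{F_n},\epsilon)\leq s(X,d_{F_n},\epsilon)<\infty$, hence $A_\epsilon:=h_{top}(G,Z,d,\{F_n\},\epsilon)<\infty$'' is not valid as written: finiteness of each individual $s(X,d_{F_n},\epsilon)$ gives no control on $\limsup_n\frac{\log s(X,d_{F_n},\epsilon)}{|F_n|}$. What actually makes it true is the uniform-in-$n$ exponential bound $s(X,d_{F_n},\epsilon)\leq r(X,d,\epsilon/2)^{|F_n|}$ (obtained by mapping an $\epsilon$-separated set injectively into $E^{F_n}$ for a finite $(d,\epsilon/2)$-spanning set $E$), which yields $A_\epsilon\leq \log r(X,d,\epsilon/2)<\infty$. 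The paper avoids this point altogether by invoking the hypothesis $\overline{\rm mdim}_M<\infty$: setting $a=\overline{\rm mdim}_M(G,Z,\{F_n\},d)+1$, for all small $\epsilon$ and large $n$ one has $\log s(Z,d_{F_n},\epsilon)<a(\log\frac{1}{\epsilon})|F_n|$, and then divides by $|F_n|^{1+\gamma}$. Once your $A_\epsilon<\infty$ claim is justified as above, your version is in fact slightly more general, since it shows $D(G,Z,\{F_n\})\leq 1$ for every non-empty $Z\subset X$ without using finiteness of the metric mean dimension.
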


\begin{proof}
$(1)$. This is due to the definitions.

$(2)$.
The  fact $0<{\overline{\rm  mdim}}_M(G,Z, \{F_n\},d)<\infty$ implies that $h_{top}(G,Z,\{F_n\})=\infty$.
Hence, $D(G,Z,\{F_n\}) \geq 1=\overline{D}_M(G,Z,\{F_n\},d)$. Now it suffices to show $D(G,Z,\{F_n\})\leq 1$.

Fix $\gamma >0$ and let $a:={\overline{\rm  mdim}}_M(G,Z, \{F_n\},d)+1$. There exists $\epsilon_0>0$ such that for every $0<\epsilon<\epsilon_0$,  there is  a sufficiently  large $N_0$ (depending on $\epsilon$) such that
$\log s(Z,d_{F_n},\epsilon) < (a\logf)|F_n|$
for all $n\geq N_0$.  We have
$$\frac{\log s(Z,d_{F_n},\epsilon)}{|F_n|^{1+\gamma}}< \frac{a\cdot \logf}{|F_n|^{\gamma}}.$$
Notice that $\lim_{n \to \infty}|F_n|=\infty$ since $\{F_n\}$ is a F\o lner sequence. This implies that $h_{top}(G,Z,1+\gamma,\{F_n\})=0$, and hence $D(G,Z,\{F_n\})\leq 1+\gamma $. Letting $\gamma\to 0$, we have  $D(G,Z,\{F_n\}) \leq 1$.
\end{proof}

\begin{rem}
$(i)$. By  Example \ref{ex 4.4}, there exists  an infinite-entropy $G$-system  $(X,d,G)$ with zero  metric mean dimension  such that   $\overline{D}_M(G,X,\{F_n\},d)<1 \leq D(G,X,\{F_n\}).$ 

$(ii)$. The relation $\overline{D}_M(G,Z,\{F_n\},d)\leq  D(G,Z,\{F_n\})$ is not true for some systems with infinite  metric mean dimension.  For $\mathbb{Z}$-actions, Burguet and Shi  \cite[Corollary 19]{bs25} showed that if $(X,T)$ is a  positive entropy system with a Lipschitz map $T:X\rightarrow X$ on a  compact metric space $(X,d)$ with finite upper box dimension (e.g. the symbolic systems over finite symbols, the doubling map on unit interval $\mathbb{R}/\mathbb{Z}$), then there exists $\alpha >1$ such that for every  $0<\epsilon <1$, 
$$(\frac{1}{\epsilon})^{\frac{1}{\alpha}}\leq h_{top}(T_{*},M(X),W,\epsilon)\leq (\frac{1}{\epsilon})^{\alpha},$$
where $T_{*}:M(X)\rightarrow M(X)$ is the induced map and $W$ is the   Wasserstein distance on $M(X)$.

Clearly, such  induced systems have infinite metric mean dimension, and  the entropy dimensions of induced systems are $1$. However,  the infinite  upper entropy dimension are $\infty$ since $$\limsup_{\epsilon \to 0}\frac{h_{top}(T_{*},M(X),W,\epsilon)}{(\logf)^s} \geq \limsup_{\epsilon \to 0}\frac{(\frac{1}{\epsilon})^{\frac{1}{\alpha}}}{(\logf)^s}=\infty$$ 
for any $s>1$. 
\end{rem}

The following elementary properties exhibit how the infinite upper entropy dimension of subsets behaves with respect to Lipschitz factor maps, power systems, and product systems.

\begin{prop}
The following statements hold:

$(1)$ If $\pi: (X,d_X)\rightarrow (Y,d_Y)$ is a Lipschitz factor map between the $G$-systems $(X,d_X,G)$ and $(Y,d_Y,G)$, then
$$\overline{D}_M(G,X,d_X)\geq \overline{D}_M(G,Y,d_Y).$$
Additionally, if $\pi$ is a bi-Lipschitz conjugate map, then 
$$\overline{D}_M(G,X,d_X)= \overline{D}_M(G,Y,d_Y).$$

$(2)$ Let $(X, d, G)$ be a $G$-system. If $X$ is a finite union of   closed subsets $K_j$, $j=1,...,m$, then for any  F\o lner sequence $\{F_n\}$ of $G$,
$$\overline{D}_M(G,X,d)=\max_{1\leq j\leq m} \overline{D}_M(G,K_j,\{F_n\},d).$$

$(3)$ Let $(X, d, G)$ be a $G$-system. Suppose that  $H$ is a subgroup of $G$ with finite index in $G$. If for any finite complete set $R\subset G$ of  representatives   of left-cosets $\{gH:g\in G\}$ of $H$, $T|_{R\backslash\{e_G\}}$ are  Lipschitz maps, then
$$ \overline{D}_M(H,X,d)=\overline{D}_M(G,X,d).$$

$(4)$ Let $(X,d_X, G)$  and $(Y,d_Y,G)$ be two  $G$-systems. Then
\begin{align*}
\overline{D}_M(G,X\times Y,d_{X}\times d_Y)
=\max\{\overline{D}_M(G,X,d_{X}),\overline{D}_M(G,Y,d_{Y})\}.
\end{align*}
Consequently,  $\overline{D}_M(G,X\times X,d_{X}\times d_X)=\overline{D}_M(G,X,d_{X}).$
\end{prop}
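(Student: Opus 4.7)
The plan is to sandwich the topological $\epsilon$-entropy of the product between a lower bound (product of separated counts) and an upper bound (product of spanning counts), then divide by $(\log 1/\epsilon)^{s}$ and observe that a sum of two dimension-$s$ quantities that both vanish still vanishes, while the presence of a single diverging factor already forces the product entropy to diverge. My first step is to note that the product Bowen metric equals $\max\{(d_X)_F,(d_Y)_F\}$ for every $F\in\FF(G)$, so that products of spanning (resp.\ separated) sets in the factors are spanning (resp.\ separated) in $X\times Y$. This gives, for all $F\in\FF(G)$ and $\epsilon>0$,
\begin{align*}
s(X,(d_X)_F,\epsilon)\cdot s(Y,(d_Y)_F,\epsilon)\;\le\; s(X\times Y,(d_X\times d_Y)_F,\epsilon)\;\le\; r(X,(d_X)_F,\tfrac{\epsilon}{2})\cdot r(Y,(d_Y)_F,\tfrac{\epsilon}{2}),
\end{align*}
using $s(\cdot,\epsilon)\le r(\cdot,\epsilon/2)$ on the right.

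For the $\le$ direction of the main identity, I would divide the rightmost estimate by $|F_n|(\log 1/\epsilon)^s$, pass to $\limsup_{n\to\infty}$ and then to $\limsup_{\epsilon\to 0}$, and exploit the sub-additivity of $\limsup$ together with the elementary invariance of $\limsup_{\epsilon\to 0}f(\epsilon)/(\log 1/\epsilon)^s$ under $\epsilon\mapsto \lambda\epsilon$ for fixed $\lambda>0$ (since $\log(1/\lambda\epsilon)/\log(1/\epsilon)\to 1$, in the same spirit as Proposition \ref{prop 3.27}). This produces
\begin{align*}
\overline{\rm mdim}_M(G,X\times Y,\{F_n\},s,d_X\times d_Y)\;\le\;\overline{\rm mdim}_M(G,X,\{F_n\},s,d_X)+\overline{\rm mdim}_M(G,Y,\{F_n\},s,d_Y).
\end{align*}
Taking any $s>\max\{\overline{D}_M(G,X,d_X),\overline{D}_M(G,Y,d_Y)\}$ makes both summands vanish and hence forces the left-hand side to be $0$; so $\overline{D}_M(G,X\times Y,d_X\times d_Y)\le s$, and letting $s$ decrease to the max yields the $\le$ inequality.

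For the reverse direction, the cleanest route is to apply part $(1)$ of the present proposition to the coordinate projections $\pi_X:X\times Y\to X$ and $\pi_Y:X\times Y\to Y$; both are Lipschitz factor maps of Lipschitz constant $1$ relative to the max product metric, so $\overline{D}_M(G,X\times Y,d_X\times d_Y)\ge\overline{D}_M(G,X,d_X)$ and symmetrically for $Y$, whence $\ge$ the maximum. (Alternatively, the leftmost inequality in the entropy display directly yields $\overline{\rm mdim}_M(G,X\times Y,\{F_n\},s,d_X\times d_Y)=\infty$ whenever $s<\overline{D}_M(G,X,d_X)$.) The consequence $\overline{D}_M(G,X\times X,d_X\times d_X)=\overline{D}_M(G,X,d_X)$ is then the specialization $Y=X$. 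I do not foresee a serious obstacle: the only delicate point is the constant factor of $2$ in the spanning-set argument, which washes out asymptotically thanks to the scaling invariance noted above.
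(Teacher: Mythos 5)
Your argument addresses only part $(4)$ of the proposition, and parts $(1)$, $(2)$, $(3)$ are left entirely unaddressed; this is a genuine gap, since the claimed statement is the full four-part proposition. The omission is not merely cosmetic: you explicitly invoke part $(1)$ (that Lipschitz factor maps decrease $\overline{D}_M$) to supply the reverse inequality in part $(4)$, so as written your proposal depends on a result you have not established. Part $(1)$ follows quickly from the observation that $h_{top}(G,Y,d_Y,\{F_n\},\epsilon)\leq h_{top}(G,X,d_X,\{F_n\},\epsilon/L)$ when $\pi$ is $L$-Lipschitz, which gives $\overline{\rm mdim}_M(G,Y,s,d_Y)\leq\overline{\rm mdim}_M(G,X,s,d_X)$ for every $s>0$ and hence the dimension inequality; parts $(2)$ and $(3)$ are similarly short (finite-union additivity of $s(\cdot,d_{F_n},\epsilon)$, and the two inequalities from the proof of Proposition \ref{prop 3.9} applied at scale $s$), but they do need to be said.

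Where you do argue, the content is correct and essentially identical to the paper's approach to $(4)$: bound $\overline{\rm mdim}_M(G,X\times Y,s,\cdot)$ above by the sum of the factors' $s$-upper metric mean dimensions (the paper cites the spanning-set inequality from Proposition \ref{prop 3.14} with the same $\epsilon$ on both sides, whereas you pass through $s(X\times Y,\epsilon)\leq r(X,\epsilon/2)r(Y,\epsilon/2)$ and then absorb the factor $2$ via the scaling invariance of $\limsup_{\epsilon\to 0}f(\epsilon)/(\log\frac{1}{\epsilon})^s$; both are fine), take $s$ above the maximum of the two dimensions to kill both summands, and then use the one-Lipschitz projections together with part $(1)$ for the reverse inequality. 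To make the proposal complete you should first prove $(1)$, then prove $(2)$ and $(3)$, and only then present $(4)$, since $(4)$ quotes $(1)$.
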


\begin{proof}
We prove these statements one by one.

$(1)$.  Assume that $L>0$ is the Lipschitz constant of $\pi$ such that $d_Y(\pi(x),\pi(y))\leq L\cdot  d_X(x,y)$ for all $x,y  \in X$. Then  one has $$h_{top}(G,Y,d_Y,\{F_n\},\epsilon)\leq h_{top}(G,X,d_X,\{F_n\},\frac{\epsilon}{L}),$$ and hence ${\overline{\rm  mdim}}_M(G,Y,s,d_Y)\leq {\overline{\rm  mdim}}_M(G,X, s,d_X)$ for all $s> 0$. This implies  (1).

$(2)$. It is clear that $$\overline{D}_M(G,X,d)\geq \max_{1\leq j\leq m}\limits \overline{D}_M(G,K_j,\{F_n\},d).$$  On the other hand, one can show  that for every $s>0$,
$${\overline{\rm  mdim}}_M(G,X,s,d)= \max_{1\leq j \leq m}{\overline{\rm  mdim}}_M(G,K_j, \{F_n\},s,d).$$
Letting $s> \max_{1\leq j\leq m}\limits \overline{D}_M(G,K_j,\{F_n\},d)$, we have $${\overline{\rm  mdim}}_M(G,K_j, \{F_n\},s,d)=0$$ for every $1\leq j \leq m$, and hence ${\overline{\rm  mdim}}_M(G,X,s,d)= 0$. Thus, we have $\overline{D}_M(G,X,d) \leq s$. This shows the inequality
$$\overline{D}_M(G,X,d)\leq \max_{1\leq j\leq m}\limits \overline{D}_M(G,K_j,\{F_n\},d).$$

$(3)$. Let $\{L_n\}$ and $\{F_n\}:=\{RL_n\}$ be  the F\o lner sequences  of $H$  and  $G$, respectively. By the proof of Proposition  \ref{prop 3.9}, for any $s>0$ one has
$${\overline{\rm  mdim}}_M(H,X,\{L_n\},s,d)={\overline{\rm  mdim}}_M(G,X,\{F_n\},s,d)\cdot[G:H],$$
which  yields  that  $\overline{D}_M(G,X,\{F_n\},d)= \overline{D}_M(H,X,\{L_n\},d)$.

$(4)$.  By the proof of Proposition \ref{prop 3.14}, for any $s>0$ we have the following inequality: 
\begin{align*}
{\overline{\rm  mdim}}_M(G,X\times Y,s,d_X\times d_Y)
\leq {\overline{\rm  mdim}}_M(G,X,s,d_X)+{\overline{\rm  mdim}}_M(G,Y,s,d_Y).
\end{align*}
Let $s > \max\{\overline{D}_M(G,X,d_{X}),\overline{D}_M(G,Y,d_{Y})\}$. Then  ${\overline{\rm  mdim}}_M(G,X\times Y,s,d_X\times d_Y) = 0$. Thus, we  deduce that
$$\overline{D}_M(G,X\times Y,d_{X} \times d_Y)\leq \max\{\overline{D}_M(G,X,d_{X}),\overline{D}_M(G,Y,d_{Y})\}.$$

Now consider the coordinate projections \( \pi_X \) and \( \pi_Y \) from \( X \times Y \) to \( X \) and to \( Y \), respectively. Then both $\pi_X$ and $ \pi_Y$ are  one-Lipschitz factor maps. By means of (1), we get the reverse inequality:
$$\overline{D}_M(G,X\times Y,d_X\times d_Y)\geq \max\{\overline{D}_M(G,X,d_{X}),\overline{D}_M(G,Y,d_{Y})\}.$$
\end{proof}

\subsubsection{Infinite  entropy dimension of invariant measures}
Inspired  by the definition of infinite entropy dimension, we  use a similar approach  to  define a measure-theoretic version of infinite entropy dimension. 

\begin{df}\label{df 3.44}  In the setting of  Definition \ref{df 3.26}, recall that for every  $\mu \in  \mathrm{co}(E(X,G))$, we  have defined the measure-theoretic $\epsilon$-entropy:
\begin{align*}
F(\mu, \{F_n\}, \epsilon):=\sum_{j=1}^{k}\lambda_jh_{\mu_j}(G,\{F_n\},\epsilon),
\end{align*}
where $h_{\mu}(G,\{F_n\},\epsilon)$ is chosen from the candidate set $ \mathcal{E}$ in Theorem \ref{thm 2.15}.

(a) Fix $\mu \in  M(X,G)$ and $s>0$. We  define  the $s$-upper and lower  measure-theoretic metric mean dimensions of $\mu$ as
\begin{align*}
\overline{\rm {mdim}}_M(\mu,s,\{F_n\},d)&=\sup_{(\mu_{\epsilon})_{\epsilon}\in M_{G}(\mu) }\{\limsup_{\epsilon \to 0}\frac{F(\mu_{\epsilon},\{F_n\}, \epsilon)}{(\logf)^s}\}.\\
\underline{\rm {mdim}}_M(\mu,s,\{F_n\},d)&=\sup_{(\mu_{\epsilon})_{\epsilon}\in M_{G}(\mu) }\{\liminf_{\epsilon \to 0}\frac{F(\mu_{\epsilon},\{F_n\}, \epsilon)}{(\logf)^s}\},
\end{align*}
respectively.

$(b)$ There exist  crucial values  in $s$ such that $\overline{\rm {mdim}}_M(\mu,s,\{F_n\},d),    \underline{\rm {mdim}}_M(\mu,s,\{F_n\},d)$ jumps from $\infty$ to $0$. We respectively  define  the crucial values as the infinite upper and lower  entropy dimensions of $\mu$:
\begin{align*}
\overline{D}_M(\mu,\{F_n\},d)&=\inf\{s>0:  \overline{\rm {mdim}}_M(\mu,s,\{F_n\},d)=0\},\\
&=\sup\{s>0: \overline{\rm {mdim}}_M(\mu,s,\{F_n\},d)=\infty\}.\\
\underline{D}_M(\mu,\{F_n\},d)&=\inf\{s>0:  \underline{\rm {mdim}}_M(\mu,s,\{F_n\},d)=0\},\\
&=\sup\{s>0: \underline{\rm {mdim}}_M(\mu,s,\{F_n\},d)=\infty\}.
\end{align*}
\end{df}

We remark that the infinite entropy dimension of invariant measures is independent of the choice of the (tempered) Følner sequences of \( G \) and the candidates \( h_{\mu}(G, \{F_n\}, \epsilon) \in \mathcal{E} \), but depends on the compatible metrics on \( X \). 

Analogous to the measure-theoretic metric mean dimension, the following proposition shows that the infinite  entropy dimensions are also upper semi-continuous functions on $M(X,G)$.

\begin{prop}\label{prop 5.8}
	Let $(X, d, G)$ be a $G$-system, and let $\{F_n\}$ be a tempered F\o lner sequence of $G$.  Then
	\begin{align*}
		&\overline{D}_M(\cdot,\{F_n\},d): M(X,G)\rightarrow [0,+\infty],\\
		&\underline{D}_M(\cdot,\{F_n\},d): M(X,G)\rightarrow [0,+\infty]
	\end{align*}  
	are upper semi-continuous.
\end{prop}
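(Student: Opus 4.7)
The plan is to mirror the diagonal construction of Proposition \ref{prop 3.26}, now exploiting the jump behaviour of $s \mapsto \overline{\rm {mdim}}_M(\mu, s, \{F_n\}, d)$ at the critical value $s = \overline{D}_M(\mu, \{F_n\}, d)$. I will handle the upper case in detail; the lower case follows by the same recipe.

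First I would dispose of the trivial situation $\overline{D}_M(\mu, \{F_n\}, d) = +\infty$, where upper semi-continuity is automatic. So assume $c := \overline{D}_M(\mu, \{F_n\}, d) < +\infty$ and argue by contradiction: let $\mu_n \to \mu$ in $M(X, G)$ with $\limsup_n \overline{D}_M(\mu_n, \{F_n\}, d) > c$. Pick parameters $c < s_0 < s_1 < \limsup_n \overline{D}_M(\mu_n, \{F_n\}, d)$ and, after passing to a subsequence $\{\mu_{n_k}\}$, arrange that $\overline{D}_M(\mu_{n_k}, \{F_n\}, d) > s_1$ and $D(\mu_{n_k}, \mu) < \frac{1}{2k}$, where $D$ is a metric compatible with the weak$^*$-topology on $M(X)$.

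The key step relies on the jump property encoded in Definition \ref{df 3.44}: $\overline{D}_M(\mu_{n_k}, \{F_n\}, d) > s_1$ forces $\overline{\rm {mdim}}_M(\mu_{n_k}, s_1, \{F_n\}, d) = +\infty$. I would therefore pick $(\nu_\epsilon^k)_\epsilon \in M_G(\mu_{n_k})$ with $\limsup_{\epsilon \to 0} F(\nu_\epsilon^k, \{F_n\}, \epsilon)/(\log\frac{1}{\epsilon})^{s_1} > k$, and then select a strictly decreasing null sequence $\epsilon_k$ so that $D(\nu_{\epsilon_k}^k, \mu_{n_k}) < \frac{1}{2k}$ and
\[
\frac{F(\nu_{\epsilon_k}^k, \{F_n\}, \epsilon_k)}{(\log\frac{1}{\epsilon_k})^{s_1}} > k.
\]
Assembling the diagonal family $\lambda_\epsilon := \nu_{\epsilon_k}^k$ for $\epsilon \in (\epsilon_{k+1}, \epsilon_k]$ (and $\lambda_\epsilon := \mu$ for $\epsilon > \epsilon_1$), the triangle inequality $D(\lambda_\epsilon, \mu) < 1/k$ on $(\epsilon_{k+1}, \epsilon_k]$ gives $(\lambda_\epsilon)_\epsilon \in M_G(\mu)$. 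Evaluating along $\epsilon = \epsilon_k$ and factoring out the excess power,
\[
\frac{F(\lambda_{\epsilon_k}, \{F_n\}, \epsilon_k)}{(\log\frac{1}{\epsilon_k})^{s_0}} = \frac{F(\nu_{\epsilon_k}^k, \{F_n\}, \epsilon_k)}{(\log\frac{1}{\epsilon_k})^{s_1}} \cdot \Bigl(\log\tfrac{1}{\epsilon_k}\Bigr)^{s_1 - s_0} > k \cdot \Bigl(\log\tfrac{1}{\epsilon_k}\Bigr)^{s_1 - s_0} \to +\infty,
\]
because $s_1 > s_0$ and $\epsilon_k \to 0$. Hence $\overline{\rm {mdim}}_M(\mu, s_0, \{F_n\}, d) = +\infty$, contradicting $s_0 > c = \overline{D}_M(\mu, \{F_n\}, d)$.

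For the lower statement I would rerun the argument with $\limsup$ replaced by $\liminf$: $(\nu_\epsilon^k)_\epsilon \in M_G(\mu_{n_k})$ may then be chosen so that $F(\nu_\epsilon^k, \{F_n\}, \epsilon)/(\log\frac{1}{\epsilon})^{s_1} > k$ holds for \emph{all} sufficiently small $\epsilon$, which upgrades the pointwise blow-up above to a full $\liminf$-statement and forces $\underline{\rm {mdim}}_M(\mu, s_0, \{F_n\}, d) = +\infty$. The main technical hurdle, essentially the same one handled in Proposition \ref{prop 3.26}, is calibrating the $\epsilon_k$'s after the families $(\nu_\epsilon^k)$ so that $\lambda_\epsilon \to \mu$ while the ratio bound persists at the chosen scales; the genuinely new ingredient is the factor $(\log\frac{1}{\epsilon_k})^{s_1 - s_0}$, arising solely because $s_1 > s_0$, which is precisely what converts the hypothesis $s_1 < \overline{D}_M(\mu_{n_k})$ into a contradiction with $s_0 > \overline{D}_M(\mu)$.
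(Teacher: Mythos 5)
Your proof is correct and relies on the same underlying mechanism the paper uses: the diagonal family construction from Proposition \ref{prop 3.26} combined with the jump property of $s\mapsto\overline{\rm {mdim}}_M(\mu,s,\{F_n\},d)$. The paper packages this differently. It first asserts, citing the proof of Proposition \ref{prop 3.26}, that for each fixed $s$ the map $\mu\mapsto\overline{\rm {mdim}}_M(\mu,s,\{F_n\},d)$ is upper semi-continuous, applies this at a value $\alpha$ slightly above $\overline{D}_M(\mu,\{F_n\},d)$ where $\overline{\rm {mdim}}_M(\mu,\alpha,\{F_n\},d)=0$, deduces $\overline{\rm {mdim}}_M(\mu_n,\alpha,\{F_n\},d)\to 0$, and concludes $\overline{D}_M(\mu_n,\{F_n\},d)\le\alpha$ for large $n$ by the jump. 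You instead inline the diagonal construction inside a contradiction argument and work at a value $s_0$ just \emph{above} $\overline{D}_M(\mu,\{F_n\},d)$ but strictly below $s_1$. The genuinely useful wrinkle in your route is the gap $s_1>s_0$: the extra factor $(\log\tfrac{1}{\epsilon_k})^{s_1-s_0}\to\infty$ absorbs all constants, so you may take a family with limsup $>k$ rather than calibrating against $\overline{\rm {mdim}}_M(\mu_{n_k},s_1,\{F_n\},d)-\gamma$. This matters here because $\overline{D}_M(\mu_{n_k},\{F_n\},d)>s_1$ forces $\overline{\rm {mdim}}_M(\mu_{n_k},s_1,\{F_n\},d)=\infty$, so the ``$-\gamma$'' normalisation used verbatim in Proposition \ref{prop 3.26} would need a small amendment; your variant sidesteps that point cleanly. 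Both proofs are sound, and the underlying diagonalisation is the same.
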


\begin{proof}
 Fix $\mu \in M(X,G)$, and let $\{\mu_n\}$ be a sequence of invariant measures converging to $\mu$ in the weak$^{*}$-topology.  Without loss of generality, we may assume that  $\overline{D}_M(\mu,\{F_n\},d)<\infty$. Let $\alpha> \overline{D}_M(\mu,\{F_n\},d)$. Then $\overline{\rm {mdim}}_M(\mu,\alpha,\{F_n\},d)=0$. Similar to the proof of Proposition \ref{prop 3.26}, for every $s >0$, the function  $\overline{\rm {mdim}}_M(\mu,s,\{F_n\},d): M(X,G)\rightarrow [0,+\infty]$  is  upper semi-continuous. Thus,  $\lim_{n \to \infty}\overline{\rm {mdim}}_M(\mu_n,\alpha,\{F_n\},d)=0$. For every  $\gamma >0$, we have  $\overline{\rm {mdim}}_M(\mu_n,\alpha,\{F_n\},d)<\gamma$ for sufficiently large $n$, and hence for such  $n$,  we have  $\overline{D}_M(\mu_n,\{F_n\},d)\leq \alpha$. This shows that  $\limsup_{n \to \infty}\overline{D}_M(\mu_n,\{F_n\},d)\leq \overline{D}_M(\mu,\{F_n\},d) $ by letting $\alpha \to  \overline{D}_M(\mu,\{F_n\},d)$,  i.e.,   $\overline{D}_M(\cdot,\{F_n\},d)$ is u.s.c. at $\mu$.

The same proof works for the function $\underline{D}_M(\cdot,\{F_n\},d): M(X,G)\rightarrow [0,+\infty]$.
\end{proof}
 
\subsubsection{Variational principle for infinite entropy dimensions}

For $\mathbb{Z}$-actions with zero entropy, the known variational principles for zero entropy-like quantities are as follows:
\begin{itemize}
	\item [(a)] Goodman proved that the supremum of measure-theoretic sequence entropy over the set of invariant measures is less than or equal to the topological sequence entropy of the phase space \cite[Theorem 3.1]{g0074}; he also constructed a counterexample to show that the strict inequality is possible for certain dynamical systems \cite[Section 5]{g0074}.
	\item [(b)] Analogous to topological sequence entropy, Carvalho \cite{c97} proved that the supremum of measure-theoretic entropy dimension over the set of invariant measures is less than or equal to the topological entropy dimension of the phase space. Later, the authors in \cite{adp10} revealed  that  the strict inequality is also possible for certain dynamical systems.
\end{itemize}

Thus, the variational principles between the above two pairs of topological and measure-theoretic zero entropy-like quantities are generally not valid. However, for every infinite-entropy $G$-system with zero metric mean dimension, the topological and measure-theoretic infinite entropy dimensions are related by the following variational principles.

\begin{thm}[=Theorem \ref{thm 1.3}]
Let $(X, d, G)$ be a $G$-system, and let  $\{F_n\}$ be a tempered  F\o lner sequence  of $G$. Then
\begin{align*}
\overline{D}_M(G,X,d)&=\max_{\mu \in M(X,G)}\overline{D}_M(\mu,\{F_n\},d).
\end{align*}
\end{thm}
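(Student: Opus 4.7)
The plan is to adapt the strategy used for Theorem \ref{thm 1.1} to the $s$-parametrized setting, exploiting the dual description of the infinite entropy dimension as a critical value:
\[
\overline{D}_M(G,X,d) = \inf\{s>0 : \overline{\rm mdim}_M(G,X,\{F_n\},s,d) = 0\} = \sup\{s>0 : \overline{\rm mdim}_M(G,X,\{F_n\},s,d) = \infty\},
\]
and likewise on the measure-theoretic side. Thus it suffices to match the thresholds at which the two $s$-dimensional quantities flip from $\infty$ to $0$.

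For the easy direction $\max_{\mu}\overline{D}_M(\mu,\{F_n\},d) \leq \overline{D}_M(G,X,d)$, I would fix $\mu \in M(X,G)$ and pick the candidate $h_{\nu}(G,\{F_n\},\epsilon) = \overline{h}^K_{\nu}(\{F_n\},\epsilon)$ from $\mathcal{E}$. Because $\overline{h}^K_{\nu}(\{F_n\},\epsilon) \leq \hat{h}_{top}(G,X,d,\{F_n\},\epsilon)$ for every ergodic $\nu$, taking convex combinations gives $F(\eta,\{F_n\},\epsilon) \leq \hat{h}_{top}(G,X,d,\{F_n\},\epsilon)$ for every $\eta \in \mathrm{co}(E(X,G))$. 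Dividing by $(\log 1/\epsilon)^s$, taking $\limsup_{\epsilon\to 0}$, then the supremum over $(\mu_\epsilon) \in M_G(\mu)$ yields $\overline{\rm mdim}_M(\mu,s,\{F_n\},d) \leq \overline{\rm mdim}_M(G,X,\{F_n\},s,d)$ for every $s>0$, which by the critical-value description forces $\overline{D}_M(\mu,\{F_n\},d) \leq \overline{D}_M(G,X,d)$.

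For the reverse inequality I would first record the $s$-analog of Proposition \ref{prop 3.27}: for every $\lambda>0$,
\[
\overline{\rm mdim}_M(\mu,s,\{F_n\},d) = \sup_{(\mu_\epsilon) \in M_G(\mu)} \limsup_{\epsilon\to 0} \frac{F(\mu_\epsilon,\{F_n\},\lambda\epsilon)}{(\log 1/\epsilon)^s},
\]
which holds because $(\log 1/\epsilon - \log\lambda)^s / (\log 1/\epsilon)^s \to 1$ together with the re-parametrization $\tilde\mu_\epsilon := \mu_{\epsilon/\lambda}$. Now fix $s < \overline{D}_M(G,X,d)$, so $\overline{\rm mdim}_M(G,X,\{F_n\},s,d) = \infty$, and choose a sequence $\epsilon_n \downarrow 0$ with $h_{top}(G,X,d,\{F_n\},\epsilon_n)/(\log 1/\epsilon_n)^s \to \infty$. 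Apply Lemma \ref{lem 4.2} with the candidate $\inf_{\diam(\alpha)\leq \epsilon}h_\mu(G,\alpha)$ to obtain ergodic $\mu_{\epsilon_n}$ satisfying $h_{top}(G,X,d,\{F_n\},\epsilon_n) \leq F(\mu_{\epsilon_n},\{F_n\},\epsilon_n/64)$; compactness of $M(X,G)$ extracts a weak$^*$-limit $\mu_s$, and the step-function interpolation $\nu_\epsilon := \mu_{\epsilon_n}$ on $(\epsilon_{n+1},\epsilon_n]$ yields a family in $M_G(\mu_s)$ that, via the $\lambda=1/64$ analog of Proposition \ref{prop 3.27}, certifies $\overline{\rm mdim}_M(\mu_s,s,\{F_n\},d) = \infty$, hence $\overline{D}_M(\mu_s,\{F_n\},d) \geq s$.

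The hard part is that this construction produces a potentially different maximizer $\mu_s$ for each $s < \overline{D}_M(G,X,d)$, whereas the theorem asserts attainment of the supremum by a single $\mu$. To close this gap I would take $s_k \uparrow \overline{D}_M(G,X,d)$, pass to a weak$^*$-convergent subsequence $\mu_{s_k} \to \mu \in M(X,G)$ by compactness, and invoke the upper semi-continuity of $\overline{D}_M(\cdot,\{F_n\},d)$ from Proposition \ref{prop 5.8}, giving
\[
\overline{D}_M(\mu,\{F_n\},d) \geq \limsup_{k\to\infty} \overline{D}_M(\mu_{s_k},\{F_n\},d) \geq \limsup_{k\to\infty} s_k = \overline{D}_M(G,X,d),
\]
which combined with the easy direction forces equality. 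The extreme cases $\overline{D}_M(G,X,d) \in \{0,\infty\}$ are treated in the same spirit ($s_k \to \infty$ in the latter), with the former trivial since the amenability of $G$ guarantees $M(X,G) \neq \emptyset$ and the easy direction then pins every $\overline{D}_M(\mu,\{F_n\},d)$ to $0$.
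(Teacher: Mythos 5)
Your proposal is correct and follows essentially the same route as the paper: establish the $s$-parametrized variational principle $\overline{\rm mdim}_M(G,X,\{F_n\},s,d)=\max_{\mu}\overline{\rm mdim}_M(\mu,s,\{F_n\},d)$ by rerunning the argument for Theorem \ref{thm 1.1} with $(\log\frac{1}{\epsilon})^s$ in the denominator (which is what the paper calls ``similar to the proof of Theorem \ref{thm 3.30}''), transfer the equality of the $s$-thresholds to the critical values via the $\inf$/$\sup$ characterization, and secure attainment via the upper semi-continuity of $\overline{D}_M(\cdot,\{F_n\},d)$ from Proposition \ref{prop 5.8} together with compactness of $M(X,G)$. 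The one place where you are more explicit than the paper is the attainment step: the paper simply invokes that an u.s.c.\ extended-valued function on a compact space attains its supremum, while you unwind this by extracting a weak$^*$-convergent subsequence of maximizers $\mu_{s_k}$ along $s_k\uparrow\overline{D}_M(G,X,d)$ and applying u.s.c.\ at the limit---this is the standard proof of the general fact, so the content is the same.
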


\begin{proof}
Similar to the proof of Theorem \ref{thm 3.30}, for any $s>0$ we have
$${\overline{\rm  mdim}}_M(G,X,s,d)=\max_{\mu \in M(X,G)}\overline{\rm {mdim}}_M(\mu,s,\{F_n\},d).$$
Let $0<s < \overline{D}_M(G,X,d)$. Then there exists $\mu \in M(X,G)$ such that  $\overline{\rm {mdim}}_M(\mu,s,\{F_n\},d)={\overline{\rm  mdim}}_M(G,X,s,d)=\infty$. This yields that $$s \leq \overline{D}_M(\mu,\{F_n\},d)\leq \sup_{\mu \in M(X,G)}\overline{D}_M(\mu,\{F_n\},d).$$
Letting $s \to  \overline{D}_M(G,X,d)$, we have
$$\overline{D}_M(G,X,d)\leq \sup_{\mu \in M(X,G)}\overline{D}_M(\mu,\{F_n\},d).$$
The reverse inequality $\overline{D}_M(G,X,d)\geq \sup_{\mu \in M(X,G)}\overline{D}_M(\mu,\{F_n\},d)$  is obtained similarly. 

It is well-known that for an extended-valued upper semi-continuous function on a compact metric space, there exists a point attaining the supremum of the function over the space. Then,  by Proposition \ref{prop 5.8}  the supremum in the variational principle for infinite entropy dimension can be attained by some invariant measure of $X$.
\end{proof}

\subsection{Infinite entropy systems with positive metric mean dimension} \label{subsec 5.2}
In this subsection,  we further investigate the topological structures of $G$-systems with positive metric mean dimension from a local viewpoint, and prove Theorem \ref{thm 1.4}.

\subsubsection{Linking   local metric mean dimension and  metric mean dimension}

We start by  introducing the following  definition.

\begin{df}\label{df 4.9}
Let $(X, d, G)$ be a $G$-system and  $\{F_n\}$ be a F\o lner sequence of $G$.  For $x\in X$, we respectively define the   local  upper and lower metric mean dimensions  of $x$ as
\begin{align*}
\overline{\rm mdim}_M(x,\{F_n\},d)=\inf_K\{\overline{\rm mdim}_M(G,K,\{F_n\},d)\},\\
\underline{\rm mdim}_M(x,\{F_n\},d)=\inf_K\{\underline{\rm mdim}_M(G,K,\{F_n\},d)\},
\end{align*}
where the infimum  ranges over all closed neighborhoods $K$ of $x$.
\end{df}

The following   variational principles  establish the  precise  relations between the  metric mean dimension  and the local metric mean dimension in a topological and measure-theoretic manners.

\begin{thm}[=Theorem \ref{thm 1.4}]\label{thm 3.49}
Let $(X, d, G)$ be a $G$-system.  Then for every tempered  two-sided F\o lner sequence  $\{F_n\}$ of $G$,
\begin{align*}
\overline{\rm mdim}_M(G,X,d)
&=\max_{\mu \in M(X,G)}\int \overline{\rm mdim}_M(x,\{F_n\},d) d\mu,\\
&=\max_{x\in X}  \overline{\rm mdim}_M(x,\{F_n\},d).
\end{align*}
\end{thm}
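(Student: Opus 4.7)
My plan is to verify the two equalities via the chain
$$\overline{\rm mdim}_M(G,X,d) \ge \max_{x\in X} g(x) \ge \max_{\mu\in M(X,G)}\int g\,d\mu \ge \overline{\rm mdim}_M(G,X,d),$$
where $g(x):=\overline{\rm mdim}_M(x,\{F_n\},d)$. First I would verify that $g$ is upper semi-continuous: if $x_n\to x$, every closed neighborhood $K$ of $x$ is a closed neighborhood of $x_n$ for all but finitely many $n$, so $g(x_n)\le \overline{\rm mdim}_M(G,K,\{F_n\},d)$, and taking $\limsup$ followed by the infimum over $K$ yields $\limsup_n g(x_n)\le g(x)$. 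Compactness of $X$ then guarantees the existence of $\max_x g(x)$, and a Portmanteau-style argument mirroring Proposition \ref{prop 3.26} upgrades this to u.s.c.\ of $\mu\mapsto \int g\,d\mu$ on the weak$^\ast$-compact set $M(X,G)$, securing the second maximum. The two easy bounds $g(x)\le \overline{\rm mdim}_M(G,X,d)$ (take $K=X$) and $\int g\,d\mu\le \max_x g(x)$ are immediate, and they also show that both maxima are achieved by any $\mu$ supported on the $g$-maximizing set.

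The heart of the argument is constructing a $G$-invariant measure $\mu^\ast$ whose support lies in $\{g=\overline{\rm mdim}_M(G,X,d)\}$. Choose $\epsilon_k\downarrow 0$ realizing the $\limsup$ defining $\overline{\rm mdim}_M(G,X,d)$. Lemma \ref{lem 4.2} supplies ergodic $\mu_k\in E(X,G)$ with
$$h_{top}(G,X,d,\{F_n\},\epsilon_k)\;\le\; \inf_{\diam(\alpha)\le \epsilon_k/64} h_{\mu_k}(G,\alpha).$$
Extracting a subsequence, $\mu_k\to \mu^\ast$ weakly for some $\mu^\ast\in M(X,G)$. The claim is that every $y\in \operatorname{supp}(\mu^\ast)$ satisfies $g(y)=\overline{\rm mdim}_M(G,X,d)$. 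Indeed, fix a closed neighborhood $K$ of $y$; the Portmanteau theorem applied to $\operatorname{int}(K)$ yields $\mu_k(K)\ge \mu^\ast(\operatorname{int}(K))/2=:\alpha>0$ for $k$ large. The key technical lemma I need is a localized Katok transfer,
$$\hat{h}_{top}(G,K,d,\{F_n\},c\,\epsilon_k)\;\ge\; \inf_{\diam(\alpha)\le \epsilon_k/64} h_{\mu_k}(G,\alpha)\;-\;o_k(1)$$
for an absolute constant $c\in(0,1)$. Dividing by $\log(1/\epsilon_k)$ and passing to $\limsup_k$ then gives $\overline{\rm mdim}_M(G,K,\{F_n\},d)\ge \overline{\rm mdim}_M(G,X,d)$, so that $g(y)\ge \overline{\rm mdim}_M(G,X,d)$ by taking infimum over $K$. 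Consequently $\int g\,d\mu^\ast=\overline{\rm mdim}_M(G,X,d)$ and, picking any $y\in\operatorname{supp}(\mu^\ast)$, also $\max_x g(x)\ge g(y)=\overline{\rm mdim}_M(G,X,d)$, closing the chain.

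The main obstacle is establishing the localized Katok transfer. My intended strategy is an averaging-over-translations argument: starting from a near-optimal Katok $(d_{F_n},\epsilon_k/64)$-cover of $X$ whose Bowen-ball centers $\{x_j\}$ witness the Katok $\epsilon$-entropy of $\mu_k$, Lindenstrauss's pointwise ergodic theorem along the tempered $\{F_n\}$ implies that for $\mu_k$-typical centers the orbit density $|\{g\in F_n:g x_j\in K\}|/|F_n|$ converges to $\mu_k(K)\ge \alpha$, so a pigeonhole choice of $g_0\in F_n$ translates a proportion $\ge \alpha$ of the centers into $K$. The two-sided tempered F\o lner hypothesis enters exactly here: it ensures $|F_n g_0\triangle F_n|/|F_n|\to 0$, so that $d_{F_n g_0}$ is asymptotically comparable to $d_{F_n}$ and the Bowen balls of the translated centers $g_0 x_j$ computed with respect to $F_n$ differ from the $g_0$-translates of the original balls only on a coordinate set of density tending to zero. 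This allows the translated collection to still cover a $\mu_k$-measure $\ge 1-2\delta$ subset of $K$ at a scale comparable to $c\,\epsilon_k$, with only a multiplicative loss that is negligible on the metric mean dimension scale. Executing this averaging uniformly in $\epsilon_k$ (rather than one scale at a time) is the delicate step that distinguishes the present setting from the classical Ye--Zhang entropy argument.
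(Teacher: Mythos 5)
Your overall architecture matches the paper's: both proofs reduce the hard direction to a localized Katok transfer (the paper's Lemma \ref{lem 3.12}, your ``localized Katok transfer''), both invoke Lemma \ref{lem 4.2} / the chain (\ref{equ 3.6}) from the proof of Theorem \ref{thm 1.1} to produce an invariant $\mu^\ast = \mu_0$ and ergodic approximants, and both conclude by showing every point of $\operatorname{supp}(\mu_0)$ is a full metric mean dimension point and then closing the chain of inequalities. The easy bounds, u.s.c.\ of $g$, and the covering argument for $\max_x g(x)$ are also as in the paper.

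However, your sketch of the key transfer lemma contains a genuine gap. You propose to choose, for each $n$, a translate $g_0=g_0(n)\in F_n$ by pigeonhole on orbit densities, and then argue that the two-sided Følner condition forces $|F_n g_0\triangle F_n|/|F_n|\to 0$. This is false: the two-sided Følner property asserts $\lim_n |F_n g\triangle F_n|/|F_n|=0$ only for each \emph{fixed} $g\in G$, and gives no control when $g_0$ varies with $n$ inside $F_n$. (Already for $G=\mathbb{Z}$, $F_n=\{0,\dots,n-1\}$, $g_0(n)=\lfloor n/2\rfloor$ gives $|F_n g_0\triangle F_n|/|F_n|=1$.) The paper avoids this entirely: instead of the pointwise ergodic theorem, it uses ergodicity together with countability of $G$ to pick a \emph{fixed} finite set $S\ni e_G$ with $\mu(\bigcup_{g\in S}gA)>1-\delta$; the pigeonhole is then applied over the finitely many $g\in S$, and it is $|F_nS\setminus F_n|/|F_n|\to 0$, for the fixed finite $S$, that the two-sided Følner hypothesis legitimately provides. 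Your argument would need to be repaired along these lines — one cannot average over translates ranging inside $F_n$ without losing control of the Bowen metric.
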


We need the following lemma for our proof.

\begin{lem}\label{lem 3.12}
Let $(X, d, G)$ be a $G$-system, and  let  $\{F_n\}$ be a  two-sided F\o lner sequence of $G$. If $\mu \in E(X,G)$ and $A\subset X$ is a  Borel  set with $\mu(A)>0$, then for any $\epsilon >0$,
$$\overline{h}_{\mu}^K(\{F_n\},\epsilon)\leq h_{top}(G,A,d,\{F_n\},\frac{\epsilon}{8}).$$
\end{lem}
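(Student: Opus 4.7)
The plan is to apply Lindenstrauss's pointwise ergodic theorem to $\mathbf{1}_A$ along the tempered F\o lner sequence $\{F_n\}$. Since $\mu\in E(X,G)$ and $\mu(A)>0$, for every $\delta>0$ I can find $N_0\in\mathbb{N}$ and a Borel set $X_0\subset X$ with $\mu(X_0)\geq 1-\delta$ such that, for all $n\geq N_0$ and $x\in X_0$, the return set $G_n(x):=\{g\in F_n:gx\in A\}$ satisfies $|G_n(x)|\geq \tfrac{1}{2}\mu(A)|F_n|$. In particular, every $x\in X_0$ admits at least one return $g=g(x,n)\in F_n$ to $A$.

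Next I would take a maximal $(d_{F_n},\epsilon/8)$-separated set $E_n\subset A$; then $|E_n|\leq s(A,d_{F_n},\epsilon/8)$ and, by maximality, $E_n$ is $(d_{F_n},\epsilon/8)$-spanning in $A$. For each $x\in X_0$ I select some $g\in G_n(x)$ and some $y\in E_n$ with $d_{F_n}(gx,y)<\epsilon/8$. Substituting $k=hg$ and setting $z:=g^{-1}y$ rewrites the inequality $d(hgx,hy)<\epsilon/8$ ($h\in F_n$) as
\[
d(kx,kz)<\epsilon/8\qquad\text{for every }k\in F_n\cap F_n g.
\]
Thus $x$ lies in the ``partial'' Bowen ball $W(g,y):=\{x':d(kx',kg^{-1}y)<\epsilon/8~\forall k\in F_n\cap F_n g\}$, and the family $\{W(g,y):(g,y)\in F_n\times E_n\}$ covers $X_0$.

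The main obstacle is that $W(g,y)$ need not be a genuine $(d_{F_n},\epsilon)$-Bowen ball, since on the boundary coordinates $F_n\setminus F_n g$ the distance $d(kx',kz)$ is uncontrolled; this is exactly the step where the two-sided F\o lner hypothesis is needed. I would restrict the choice of $g$ to the central window
\[
F_n^\eta:=\{g\in F_n:|F_n\setminus F_n g|<\eta|F_n|\},
\]
where $\eta>0$ is a small parameter. Two-sided F\o lnerness implies $|F_n\setminus F_n^\eta|/|F_n|\to 0$ for each fixed $\eta>0$ (the crucial technical check, which reduces to showing that the Ces\`aro average $\tfrac{1}{|F_n|^2}\sum_{g\in F_n}|F_n\Delta F_n g|$ tends to $0$). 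Combined with the frequency bound $|G_n(x)|\geq\tfrac{1}{2}\mu(A)|F_n|$ from the first paragraph, this guarantees $G_n(x)\cap F_n^\eta\neq\emptyset$ for every $x\in X_0$ once $n$ is large enough. Fixing an $(\epsilon/2)$-dense subset $D\subset(X,d)$ of cardinality $r(X,d,\epsilon/2)$, each $W(g,y)$ with $g\in F_n^\eta$ is then covered by at most $|D|^{|F_n\setminus F_n g|}\leq r(X,d,\epsilon/2)^{\eta|F_n|}$ genuine Bowen balls $B_{d_{F_n}}(\cdot,\epsilon)$, obtained by prescribing $(\epsilon/2)$-approximate values of the orbit on the boundary coordinates and noting that each resulting ``box'' has $d_{F_n}$-diameter less than $\epsilon$.

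Assembling these estimates gives
\[
R_\mu(F_n,\delta)\;\leq\;|F_n|\cdot s(A,d_{F_n},\epsilon/8)\cdot r(X,d,\epsilon/2)^{\eta|F_n|}.
\]
Taking $\tfrac{1}{|F_n|}\log$, then $\limsup_{n\to\infty}$, then $\eta\to 0$, and finally $\delta\to 0$ yields
\[
\overline{h}_\mu^K(\{F_n\},\epsilon)\;\leq\;h_{top}(G,A,d,\{F_n\},\epsilon/8),
\]
as required; the polynomial factor $|F_n|$ and the error $\eta\log r(X,d,\epsilon/2)$ are absorbed in the limits. The hardest part of the argument is the uniform boundary estimate $|F_n\setminus F_n^\eta|/|F_n|\to 0$, which relies essentially on the two-sided (as opposed to merely one-sided) F\o lner property of $\{F_n\}$.
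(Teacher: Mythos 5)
Your argument has a genuine gap at the step you yourself flag as the crucial technical check. The claim that the Ces\`aro average $\tfrac{1}{|F_n|^2}\sum_{g\in F_n}|F_n\Delta F_n g|$ tends to $0$, and hence that $|F_n\setminus F_n^\eta|/|F_n|\to 0$ for each fixed $\eta>0$, is false. Take $G=\mathbb{Z}$ and $F_n=\{0,1,\dots,n-1\}$: for $g\in F_n$ one has $|F_n\Delta(F_n+g)|=2g$, so $\tfrac{1}{|F_n|^2}\sum_{g\in F_n}|F_n\Delta F_n g|=\tfrac{n(n-1)}{n^2}\to 1$, not $0$, and correspondingly $F_n^\eta\approx\{0,\dots,\lfloor\eta n\rfloor\}$ occupies only an $\eta$-fraction of $F_n$. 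The two-sided F\o lner property controls $|F_n\Delta F_n g|/|F_n|$ for each \emph{fixed} $g$ (equivalently, uniformly over $g$ in a \emph{fixed} finite set), but says nothing uniform over $g$ ranging through the growing set $F_n$; for a typical $g\in F_n$, the translate $F_n g$ overlaps $F_n$ in only a bounded proportion. Consequently you cannot ensure $G_n(x)\cap F_n^\eta\neq\emptyset$ for $\eta$ smaller than roughly $1-\tfrac12\mu(A)$, so $\eta$ cannot be sent to $0$ at the end and the error term $\eta\log r(X,d,\epsilon/2)$ does not vanish. (A secondary issue: the lemma does not assume $\{F_n\}$ tempered, so invoking Lindenstrauss's pointwise ergodic theorem introduces an extra hypothesis not present in the statement.)

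The paper avoids this obstruction by a different use of ergodicity: since $\bigcup_{g\in G}gA$ is $G$-invariant with positive measure, it has full measure, so countability of $G$ yields a \emph{fixed} finite set $S\ni e_G$, independent of $n$, with $\mu(\bigcup_{g\in S}gA)>1-\delta$. One then separates points inside $\bigcup_{g\in S}gA$, uses the pigeonhole principle to push a large subcollection into a single translate $g_0A\subset X$, and translates back to $A$, paying only the cost of replacing $d_{F_n}$ by $d_{F_nS}$. The two-sided F\o lner property, now applied to the \emph{fixed} set $S$, gives $|F_nS\setminus F_n|/|F_n|\to 0$, which is exactly what is needed to absorb the boundary coordinates by a factor $r(X,d,\epsilon/4)^{|F_nS\setminus F_n|}$ that is subexponential in $|F_n|$. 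Your scheme, by contrast, asks for boundary control uniformly over return times $g$ that can lie anywhere in $F_n$, and that is simply too much to ask of a F\o lner sequence.
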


\begin{proof}

Fix $\delta \in (0,1)$. Since $\mu$ is ergodic and $G$ is countable, there exists a finite subset $S\subset G$ containing $e_{G}$ satisfying
$\mu(\cup_{g\in S}~gA)>1-\delta.$ For sufficiently large $n$, we assume that $E$ is a $(d_{F_n},\epsilon)$-separated set of $\cup_{g\in S}~ gA$ with the largest cardinality. Then, by Pigeonhole Principle, there is a $g_0\in S$ such that   $\#E_0 \geq \frac{\#E}{|S|}$ for some subset $E_0\subset  g_0 A\cap E.$
Therefore, $g_0^{-1}E_0 \subset A$ and  is a $(d_{F_nS},\epsilon)$-separated set of $A$. So
\begin{align*}
\limsup_{n \to \infty}\frac{\log  \frac{1}{|S|} s(\cup_{g\in S}~gA,d_{F_n},\epsilon)}{|F_n|}&\leq  \limsup_{n \to \infty}\frac{\log  s(A,d_{F_n S},\epsilon)}{|F_n|}\\
&\leq  \limsup_{n \to \infty}\frac{\log  r(A,d_{F_n S},\frac{\epsilon}{2})}{|F_n|}\\
&\leq  \limsup_{n \to \infty}\frac{\log  \left(r(A,d_{F_n},\frac{\epsilon}{4})\cdot r(X,d,\frac{\epsilon}{4})^{|F_nS\backslash F_n|}\right)}{|F_n|}.
\end{align*}
Notice that $\{F_n\}$ is a  two-sided F\o lner sequence of $G$. This means that $\lim_{n \to \infty}\frac{|F_nS \backslash F_n|}{|F_n|}=\lim_{n \to \infty}\frac{|SF_n \backslash F_n|}{|F_n|}=0$. Therefore, we have 
$$h_{top}(G,\cup_{g\in S}~gA,d,\{F_n\},\epsilon)\leq {h}_{top}(G,A,d,\{F_n\},\frac{\epsilon}{4}).$$
Let $F$ be a $(d_{F_n},\epsilon)$-separated set of $ \cup_{g\in S}~gA$ with the largest cardinality. Then $\mu(\cup_{x\in F} B_{F_n}(x,2\epsilon))>1-\delta$.   This implies that
$\overline{h}_{\mu}^K(\{F_n\},2\epsilon) \leq  h_{top}(G,A,d,\{F_n\},\frac{\epsilon}{4}).$ This completes the proof by the arbitrariness of $\epsilon$.
\end{proof}

\begin{proof}[Proof of Theorem \ref{thm 3.49}]

For every $0<q\leq\infty$,   the set  $$X_q:=\{x\in X: 0\leq \overline{\rm mdim}_M(x,\{F_n\},d)<q\}$$ is open.  
Thus, the function $\overline{\rm mdim}_M(x,\{F_n\},d): x\in X\mapsto \mathbb{R}_{\geq 0} \cup \{\infty\}$ is  Borel measurable. By definitions, it is  obvious that $$\overline{\rm mdim}_M(G,X,d) \geq \sup_{\mu \in M(X,G)}\int \overline{\rm mdim}_M(x,\{F_n\},d) d\mu.$$ 

By the inequality (\ref{equ 3.6}) presented in  the proof Theorem \ref{thm 1.1}, there exist $\mu_0 \in M(X,G)$ and  $\nu_\epsilon \in E(X,G)$  for every sufficiently small $\epsilon >0$ such that $\nu_\epsilon \to \mu_0$ as $\epsilon \to 0$  and  $${\overline{\rm  mdim}}_M(G,X,d)=\overline{\rm {mdim}}_M(\mu_0,\{F_n\},d)=\limsup_{\epsilon \to 0}   \frac{1}{\log \frac{1}{\epsilon}} \inf_{ 
\diam (\alpha)\leq \frac{\epsilon}{64}}h_{\nu_{\epsilon}}(G,\alpha).$$
	
By (\ref{equu 3.15}), we know that
\begin{align}\label{equu 5.1}
{\overline{\rm  mdim}}_M(G,X,d)=\limsup_{\epsilon \to 0}   \frac{1}{\log \frac{1}{\epsilon}} \overline{h}_{\nu_\epsilon}^K(\{F_n\},\frac{\epsilon}{64}).
\end{align}	 Let $x_0$ be a point of  the support  $\text{supp}(\mu_0)$ of $\mu_0$. Fix an open neighborhood $O$    of $x_0$.  Then   $\nu_{\epsilon}(O)>0$ for every sufficiently small $\epsilon >0$.   Letting  $h_{\mu}(G,\{F_n\},\epsilon)= \overline{h}_{\mu}^K(\{F_n\},\epsilon)$, by Lemma  \ref{lem 3.12}, for some $C>0$ one has
\begin{align}\label{inequ 4.2}
F(\nu_{\epsilon},\{F_n\}, \frac{\epsilon}{64})\leq h_{top}(G,O,d,\{F_n\},C\cdot \epsilon),
\end{align}
for all sufficiently large $\epsilon >0$. Together with  (\ref{equu 5.1}), 
this implies that $$
{\overline{\rm  mdim}}_M(G,X,d)\leq   \overline{\rm mdim}_M(x_0,\{F_n\},d),$$ and hence
\begin{align}\label{inequ 4.1}
{\overline{\rm  mdim}}_M(G,X,d)=\overline{\rm {mdim}}_M(\mu_0,\{F_n\},d)\leq  \overline{\rm mdim}_M(x,\{F_n\},d)
\end{align} 
for all  $x\in \text{supp}(\mu_0)$.  Therefore, we get $${\overline{\rm  mdim}}_M(G,X,d)\leq \int  \overline{\rm mdim}_M(x,\{F_n\},d) d\mu_0,$$
and $\mu_0$  realizes the supremum of the variational principle.

$(2)$. It is clear that $$\sup_{x\in X}  \overline{\rm mdim}_M(x,\{F_n\},d) \leq  \overline{\rm mdim}_M(G,X,d).$$ Assume the above inequality is strict. Then for every $x\in X$, there is an open neighborhood $O_x$ of $x$ such that $\overline{\rm mdim}_M(G,O_x,\{F_n\},d)< \overline{\rm mdim}_M(G,X,d).$
Now cover $X$ with a  finite subcover $\{O_{x_j}\}_{j=1}^m$ of $\{O_x\}_{x\in X}$. Then
\begin{align}\label{ineq 3.3}
\overline{\rm mdim}_M(G,X,d)=\max_{1\leq j\leq m} \overline{\rm mdim}_M(G,O_{x_j},\{F_n\},d) < \overline{\rm mdim}_M(G,X,d),
\end{align}
a contradiction. This shows that
$$\sup_{x\in X} \overline{\rm mdim}_M(x,\{F_n\},d) =  \overline{\rm mdim}_M(G,X,d).$$
Furthermore, the same reasoning yields that the supremum can be attained for some $x\in X$.

\end{proof}


\subsubsection{Application to (full) metric mean dimension point}

Next, we characterize the topological structure of the set of points that attain the maxima in Theorem \ref{thm 1.4}.

\begin{df}\label{df 3.51}
Let $(X, d, G)$ be a $G$-system and $\{F_n\}$  be  a F\o lner sequence  of $G$. One says that
\begin{itemize}
\item [(1)]  $x\in X$  is a  metric mean dimension point if $\overline{\rm mdim}_M(x,\{F_n\},d)>0$;
\item [(2)]   $x\in X$  is a  full metric mean dimension point if  $${\overline{\rm  mdim}}_M(G,X,d)=\overline{\rm mdim}_M(x,\{F_n\},d)>0.$$
\end{itemize}
\end{df}

Denote by $ME_p(X,G)$ and $ME_p^f(X,G)$ the sets of metric mean dimension points and of  full metric mean dimension points, respectively. It follows from the Definitions \ref{df 3.51} that a  metric mean dimension point  $x$ means that all closed neighborhoods $K$ of $x$ have positive metric  mean dimension, and  a full metric mean dimension point  $x$ means that all  closed neighborhoods $K$ of $x$ not only have positive metric  mean dimension, but also  have full metric mean dimension. The Definition \ref{df 3.51} is highly inspired by the concepts of \emph{entropy point and full entropy point} of $\mathbb{Z}$-actions introduced by Ye and Zhang \cite{yz07}.  

Set $$h_{top}(x,\{F_n\})=\inf_{K}h_{top}(G,K,\{F_n\}),$$
where $K$ ranges over all  closed neighborhoods $K$ of $x$.

A point $x$ is an \emph{entropy point} if  $h_{top}(x,\{F_n\})>0$, and  is a \emph{full entropy point} if  $h_{top}(G,X)=h_{top}(x,\{F_n\})>0$.  Denote the sets of entropy points and full entropy points by \( E_p(X,G) \) and \( E_p^f(X,G) \), respectively.

For  $G$-systems with positive metric mean dimension, the following theorem  characterizes the structures of  the aforementioned  four types of sets, and gives a quantitative relationship  involving  the metric mean dimensions of these sets and the whole phase space.

\begin{thm}\label{thm 3.52}
 Let $(X, d, G)$ be a $G$-system such that ${\overline{\rm  mdim}}_M(G,X,d)>0$, and let $\{F_n\}$ be a tempered  two-sided F\o lner sequence   of $G$.
\begin{itemize}
\item [(1)]  The sets $ME_p(X,G)$ and $ME_p^f(X,G)$ are non-empty $G$-invariant closed subsets of $X$. In particular, $ ME_p^f(X,G)= ME_p(X,G)=X$ if $(X,G)$ is minimal.
\item [(2)] There exists $\mu \in  M_{max}(G,X,d)$ such that $$\text{supp}(\mu)\subset ME_p^f(X,G) \subset  ME_p(X,G).$$ 
\item [(3)] It holds that $ME_p^f(X,G) \subset ME_p(X,G)\subset E_p^f(X,G) \subset E_p(X,G) $.
\item [(4)] If there exists $\mu \in E(X,G)$ such that ${\overline{\rm  mdim}}_M(G,X,d)=\limsup_{\epsilon \to 0}\frac{1}{\logf}h_{\mu}(G,\{F_n\},\epsilon)$ for some  measure-theoretic $\epsilon$-entropy $h_{\mu}(G,\{F_n\},\epsilon)$ in  Theorem  \ref{thm 2.15},  then
\begin{align*}
{\overline{\rm  mdim}}_M(G,X,d)&=\max_{\mu \in M_{max}(G,X,d)} {\overline{\rm  mdim}}_M(G,  \text{supp}(\mu),d)\\
&={\overline{\rm  mdim}}_M(G, A,d)
\end{align*}
for every 
$A\in \{ME_p^f(X,G),ME_p(X,G),E_p^f(X,G),E_p(X,G)\}$.
\end{itemize}
\end{thm}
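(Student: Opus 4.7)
The plan is to obtain Parts (1) and (2) together, derive Part (3) from definitions, and then leverage all of these for Part (4). Following the proof strategy of Theorem 3.49, I first obtain $\mu_0 \in M_{max}(G,X,d)$ arising as the weak$^*$ limit of ergodic measures $\nu_\epsilon$ with
\[
\overline{\rm mdim}_M(G,X,d) = \limsup_{\epsilon\to 0}\frac{1}{\logf}\overline{h}_{\nu_\epsilon}^K(\{F_n\},\epsilon/64).
\]
For any $x_0 \in \text{supp}(\mu_0)$ and any open neighborhood $O$ of $x_0$ one has $\nu_\epsilon(O)>0$ for all small $\epsilon$, so Lemma 3.12 yields $\overline{h}_{\nu_\epsilon}^K(\{F_n\},\epsilon/64) \leq h_{top}(G,O,d,\{F_n\},C\epsilon)$ for some absolute $C>0$. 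Dividing by $\logf$, taking $\limsup$, and infimizing over closed neighborhoods of $x_0$ produces $\overline{\rm mdim}_M(x_0,\{F_n\},d) \geq \overline{\rm mdim}_M(G,X,d)$; the reverse inequality is automatic since $X$ is itself a closed neighborhood. Hence $\text{supp}(\mu_0) \subset ME_p^f(X,G) \subset ME_p(X,G)$, proving Part (2) and non-emptiness in Part (1). $G$-invariance of both sets follows from $\overline{\rm mdim}_M(G,g^{-1}K,\{F_n\},d) = \overline{\rm mdim}_M(G,K,\{F_n\},d)$, which uses the two-sided F\o lner assumption to pass between Bowen metrics indexed by $F_n$ and $F_n g$ with vanishing error. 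For closedness, upper semi-continuity of $x \mapsto \overline{\rm mdim}_M(x,\{F_n\},d)$ is standard: any closed neighborhood of a limit point is eventually a closed neighborhood of the approaching sequence. Thus $ME_p^f = \{x : \overline{\rm mdim}_M(x,\{F_n\},d) \geq \overline{\rm mdim}_M(G,X,d)\}$ is closed as a superlevel set of a u.s.c.\ function. Writing $ME_p(X,G) = \bigcup_{\delta>0} A_\delta$ with $A_\delta := \{x : \overline{\rm mdim}_M(G,K,\{F_n\},d)\geq\delta\text{ for every closed nbhd }K \text{ of }x\}$, each $A_\delta$ is closed by the same neighborhood argument. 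In the minimal case, $G$-invariance forces the local mdim function to be constant, equal to $\overline{\rm mdim}_M(G,X,d)$ by Theorem 3.49, so $ME_p=ME_p^f=X$.

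For Part (3), $ME_p^f \subset ME_p$ and $E_p^f \subset E_p$ are immediate from the definitions. The middle inclusion $ME_p(X,G) \subset E_p^f(X,G)$ rests on the observation that a positive divergent rate of $h_{top}(G,Z,d,\{F_n\},\epsilon)/\logf$ as $\epsilon\to 0$ forces $h_{top}(G,Z,\{F_n\})=\infty$. In particular $\overline{\rm mdim}_M(G,X,d)>0$ gives $h_{top}(G,X)=\infty$, and for $x \in ME_p(X,G)$ every closed neighborhood $K$ of $x$ has $\overline{\rm mdim}_M(G,K,\{F_n\},d)>0$, hence $h_{top}(G,K,\{F_n\})=\infty=h_{top}(G,X)$, placing $x \in E_p^f(X,G)$.

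For Part (4), the hypothesis yields an ergodic maximizer $\mu \in E(X,G)$ realizing $\overline{\rm mdim}_M(G,X,d)=:c$ via some $h_\mu(G,\{F_n\},\epsilon)\in\mathcal{E}$; by Theorem 2.15 the same $c$ is realized via the Katok candidate $\overline{h}_\mu^K(\{F_n\},\epsilon)$. Applying Lemma 3.12 with $A=\text{supp}(\mu)$ (which has $\mu$-measure one) gives $\overline{h}_\mu^K(\{F_n\},\epsilon) \leq h_{top}(G,\text{supp}(\mu),d,\{F_n\},\epsilon/8)$; dividing by $\logf$ and taking $\limsup$ yields $c \leq \overline{\rm mdim}_M(G,\text{supp}(\mu),d)$, while monotonicity provides the reverse inequality, so $\overline{\rm mdim}_M(G,\text{supp}(\mu),d) = c$ and this $\mu$ realizes the maximum in the first equality. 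The same Lemma 3.12 argument, applied with $A = O \cap \text{supp}(\mu)$ for open $O$ meeting $\text{supp}(\mu)$, further shows $\text{supp}(\mu) \subset ME_p^f(X,G)$, so by Part (3) we have the chain $\text{supp}(\mu) \subset ME_p^f \subset ME_p \subset E_p^f \subset E_p \subset X$, which sandwiches $\overline{\rm mdim}_M(G,A,d)$ between $c$ and $c$ for every $A$ in the list. The main obstacle I anticipate is establishing the closedness of $ME_p(X,G)$ in full generality: while the $F_\sigma$-decomposition above is routine, upgrading to closedness likely requires an additional uniform bound, perhaps by identifying $ME_p(X,G)$ with the closed union $\bigcup \text{supp}(\mu)$ over ergodic measures with positive measure-theoretic metric mean dimension and invoking semicontinuity of the latter as in Proposition 3.26.
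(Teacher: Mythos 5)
Your proposal follows essentially the same route as the paper throughout. Part~(2) reuses the support argument from the proof of Theorem~\ref{thm 3.49} (the inequality~(\ref{inequ 4.1})); Part~(3) rests on the observation that positive upper metric mean dimension of a subset forces infinite $\epsilon$-topological entropy; and Part~(4) uses Theorem~\ref{thm 2.15} to pass to the Katok candidate, then applies Lemma~\ref{lem 3.12} to $A=\text{supp}(\mu)$ and sandwiches the metric mean dimensions of the nested sets between $c$ and $c$, exactly as the paper does. Your verification of the minimal case --- a $G$-invariant u.s.c.\ function is constant on a minimal system --- is correct and is a detail the paper does not spell out.

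That said, the issue you flagged at the end is a genuine gap that your proposal does not resolve. The set $ME_p(X,G)=\{x: \overline{\rm mdim}_M(x,\{F_n\},d)>0\}$ is a \emph{strict} superlevel set of a u.s.c.\ function, hence only $F_\sigma$ in general. Upper semi-continuity gives $\overline{\rm mdim}_M(x)\geq \limsup_n \overline{\rm mdim}_M(x_n)$ for $x_n\to x$, which is perfectly compatible with $\overline{\rm mdim}_M(x_n)>0$ for all $n$ while $\overline{\rm mdim}_M(x_n)\to 0$ and $\overline{\rm mdim}_M(x)=0$. Note that the paper's own proof merely cites Theorem~\ref{thm 1.4}; the openness of $X_q$ established there gives closedness of the non-strict superlevel sets $\{x:\overline{\rm mdim}_M(x)\geq q\}$ for $q>0$ --- hence of $ME_p^f(X,G)$, since its threshold $\overline{\rm mdim}_M(G,X,d)$ is positive --- but says nothing about $ME_p(X,G)$. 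Your suggested repair (identifying $ME_p$ with $\bigcup_\mu \text{supp}(\mu)$ over ergodic $\mu$ of positive measure-theoretic metric mean dimension) is not viable as stated: that union ranges over uncountably many closed sets and is not automatically closed, and the identification with $ME_p$ would itself need proof. As written, neither your argument nor the paper's furnishes closedness of $ME_p(X,G)$; one would need something additional, e.g.\ a uniform positive lower bound for $\overline{\rm mdim}_M(\cdot)$ on $ME_p$, and it is not clear such a bound holds in general.
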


\begin{proof}
$(1)$. By Theorem \ref{thm 1.4}, $ME_p(X,G)$ and $ME_p^f(X,G)$ are non-empty closed sets of $X$, and are $G$-invariant  since  $\overline{\rm mdim}_M(gx,\{F_n\},d)=\overline{\rm mdim}_M(x,\{F_n\},d)$ for all $g\in G$ and $x\in X$.

$(2)$.  It is due to the inequality (\ref{inequ 4.1}).

$(3)$. It directly follows from
the fact that if $A$ is a non-empty subset of $X$ such that ${\overline{\rm  mdim}}_M(G,A,\{F_n\},d)>0$, then $h_{top}(G,A,\{F_n\})=\infty$.

$(4)$. Assume that $\mu \in E(X,G)$ such that ${\overline{\rm  mdim}}_M(G,X,d)=\limsup_{\epsilon \to 0}\frac{1}{\logf}h_{\mu}(G,\{F_n\},\epsilon)$. Then, by Theorem \ref{thm 2.15} and Definition \ref{df 3.26} we have
$${\overline{\rm  mdim}}_M(G,X,d)=\limsup_{\epsilon \to 0}\frac{1}{\logf}\overline{h}_{\mu}^K(\{F_n\},\epsilon)=\overline{\rm {mdim}}_M(\mu,\{F_n\},d).$$ 
For such $\mu$, the inequality (\ref{inequ 4.1})  tells us $\text{supp}(\mu)\subset ME_p^f(X,G)$.
Together with Lemma  \ref{lem 3.12},  we have 
\begin{align*}
{\overline{\rm  mdim}}_M(G,X,d)&\leq {\overline{\rm  mdim}}_M(G,  \text{supp}(\mu),d)\\
&\leq {\overline{\rm  mdim}}_M(G, ME_p^f(X,G),d)\\
&\leq {\overline{\rm  mdim}}_M(G, X,d).
\end{align*} 
By the relations presented in (3), we get desired equalities.
\end{proof}

\begin{rem}
$(i)$. For every infinite entropy system $(X,d,G)$ with zero metric mean dimension (e.g. taking $X=\{0,\frac{1}{2},\frac{1}{2^2},...\}$, $d=|\cdot|$ and $D$ as in   Example \ref{ex 3.6}), we have $ME_p^f(X,G)=\emptyset$, but $ E_p^f(X,G)\not=\emptyset$.

$(ii)$. There are  certain  $G$-systems satisfying  the assumption in  Theorem \ref{thm 3.52}, (4).  By the proof of Example  \ref{ex 3.28}, for the full shift over $[0,1]^m$ and   $\mu=(\text{Leb}([0,1]^{m}))^{\otimes G}$, we have
\begin{align*}
{\overline{\rm  mdim}}_M(G,([0,1]^m)^G,d)=&\overline{\rm {mdim}}_M(\mu,\{F_n\},d)\\
=&\limsup_{\epsilon \to 0}\frac{1}{\logf}\inf_{\diam \alpha \leq \epsilon}h_{\mu}(G,\alpha) = m.
\end{align*}
However,  it is unclear  whether these additional conditions can be removed. 
\end{rem}


\section*{Acknowledgement} 
 
The joint work with Prof. Xiaoyao Zhou  was  completed  when Rui Yang was staying at  the Mathematical Center of Chongqing University. The authors would like to thank Prof. Hanfeng Li for a careful reading and many  valuable comments and suggestions that   lead to a  great improvement of this manuscript.  The first author was  supported by  the China Postdoctoral Science Foundation (No. 2024M763856) and  the Postdoctoral Fellowship Program of CPSF  (No. GZC20252040).  
The  second author   was  supported by the National Natural Science Foundation of China (No. 11971236) and Qinglan project of Jiangsu Province.  






\begin{thebibliography}{HD82}

\normalsize
\baselineskip=15pt
\bibitem[AKM65]{akm65} R. Adler, A. Konheim and M. McAndrew,   Topological entropy,  \emph {Trans. Amer. Math. Soc.} \textbf{114} (1965), 309-319.


\bibitem[ADP10]{adp10} Y. Ahn, D. Dou and K. Park, Entropy dimension and variational principle, \emph {Studia Math.} \textbf{199} (2010), 295-309.

\bibitem[Bow71]{bow71}R. Bowen, Entropy for group endomorphisms and homogeneous spaces,  \emph {Trans. Amer. Math. Soc.}  \textbf{153} (1971), 401-414.

\bibitem[BS25]{bs25}D. Burguet and R. Shi, Topological mean dimension of induced systems,  \emph {Trans. Amer. Math. Soc.}  \textbf{378} (2025), 3085-3103.


\bibitem[Ca97]{c97} M. Carvalho, Entropy dimension of dynamical systems, \emph {Portugal. Math.} \textbf{54} (1997), 19-40.

\bibitem[CPV24]{cpv24} M. Carvalho, G.  Pessil and P. Varandas, A convex analysis approach to the metric mean dimension: limits of scaled pressures and variational principles, \emph{Adv. Math.} \textbf{436} (2024), Paper No. 109407, 54 pp.



\bibitem[CDZ22]{cdz22} E. Chen, D. Dou and D. Zheng,  Variational principles for amenable metric mean dimensions, \emph {J. Diff.  Equ.} \textbf{319} (2022), 41-79.












\bibitem[Coo15]{coo15}  M. Coornaert, Topological dimension and dynamical systems, Universitext
Springer, Cham, 2015.

\bibitem[CT06]{ct06} T. Cover and J. Thomas, Elements of information theory, second edition, Wiley, New York, 2006.





\bibitem[DZ15]{dz15} A. Dooley and G. Zhang, Local entropy theory of a random dynamical system, \emph{Mem. Amer. Math. Soc.} \textbf{233} (2015), no. 1099, vi+106 pp.






\bibitem[DS88]{ds88} N.  Dunford and J. T. Schwartz,  Linear operators, Part 1,  John Wiley \& Sons, 1988.

\bibitem[GG25]{gg25}  F. Garc\'{i}a-Ramos and  Y. Gutman, Local mean dimension theory for sofic group actions, \emph {Groups Geom. Dyn.}, 2025. DOI: 10.4171/GGD/862. 

\bibitem[Goo74]{g0074} T. Goodman, Topological sequence entropy,  \emph {Proc. London Math. Soc.} \textbf{29} (1974), 331-350.

\bibitem[Gra11]{gra 11}R. Gray, Entropy and information theory, Springer, New York, Second edition, 2011.

\bibitem[Gro99]{gromov} M. Gromov,  Topological invariants of dynamical systems and spaces of holomorphic maps: I, \emph{Math. Phys, Anal. Geom.} \textbf{4} (1999), 323-415.

\bibitem[Gut15]{g15} Y. Gutman, Mean dimension and Jaworski-type theorems, \emph{Proc. Lond. Math. Soc.} \textbf{111} (2015),  831-850.

\bibitem[Gut17]{g17} Y. Gutman, Embedding topological dynamical systems with periodic points in cubical shifts, \emph{Ergodic Theory Dynam. Syst.} \textbf{37} (2017), 512-538.

\bibitem[GLT16]{glt16} Y. Gutman, E. Lindenstrauss and M. Tsukamoto, Mean dimension of $\mathbb{Z}^{k}$ actions, \emph{Geom. Funct. Anal.} \textbf{26} (2016),  778-817.

\bibitem[GS20]{gs} Y. Gutman and A. \'Spiewak,  Metric mean dimension and analog compression, \emph{IEEE Trans. Inform. Theory} \textbf{66} (2020), 6977-6998.

\bibitem[GS21]{gs20} Y. Gutman and A. \'Spiewak,  Around the variational principle for metric mean dimension,  \emph {Studia Math.} \textbf{261} (2021), 345-360.

\bibitem[GT20]{gt20}  Y. Gutman and M. Tsukamoto, Embedding minimal dynamical systems into Hilbert cubes, \emph {Invent. Math.} \textbf{221} (2020), 113-166.






\bibitem[H08]{h08} W. Huang, Stable sets and   $\epsilon$-stable sets in positive-entropy systems, \emph {Comm. Math. Phys.} \textbf{279} (2008), 535-557.

\bibitem[HYZ11]{hyz11} W. Huang, X. Ye and G. Zhang,  Local entropy theory for a countable discrete amenable
group action, \emph{J. Funct. Anal.} \textbf{261} (2011), 1028-1082.

\bibitem[JQ24]{jq24} L. Jin and Y. Qiao, Mean dimension of product spaces: a fundamental formula, \emph{Math. Ann.} \textbf{388} (2024), 249-259.

\bibitem[Kat80]{k80} A. Katok,  Lyapunov exponents, entropy and periodic orbits for diffeomorphisms, \emph{Publ. Math. Inst. Hautes \'{E}tudes Sci.} \textbf{51} (1980), 137-173.

\bibitem[KD94]{kd94} T. Kawabata  and A. Dembo,  The rate-distortion dimension of sets and measures, \emph{IEEE Trans. Inform. Theory} \textbf{40} (1994), 1564-1572.

\bibitem[KL11]{kl11} D. Kerr  and H. Li,  Entropy and the variational principle for actions of sofic groups, \emph{Invent. Math.} \textbf{186} (2011), 501-558.

\bibitem[KL16]{kl16} D. Kerr and H. Li, Ergodic theory, Independence and dichotomies,  \emph{Springer Monographs in Mathematics. Springer, Cham.},  2016.

\bibitem[Kol58]{kol58} A.  Kolmogorov, A new metric invariant of transient dynamical systems and automorphisms
of Lebesgue spaces,  \emph{Dokl. Akad. Sci. SSSR} \textbf{119} (1958), 861-864.


\bibitem[Li13]{l13}  H. Li, Sofic mean dimension, \emph{Adv. Math. } \textbf{244} (2013), 570-604. 

\bibitem[Li21]{l21} Z. Li,   Amenable upper mean dimensions, \emph{Anal. Math. Phys.} \textbf{11} (2021), Paper No. 99, 12 pp. 

\bibitem[LL19]{ll19} H. Li and B. Liang, Sofic mean length, \emph{Adv. Math.} \textbf{353} (2019), 802-858.

\bibitem[LY12]{ly12} B. Liang and K. Yan,  Topological pressure for sub-additive potentials of amenable group actions, \emph{J. Funct. Anal.} \textbf{262} (2012), 584-601.


\bibitem[LJZZ25]{ljzz25} J. Li, Y. Ji, T. Zhan and Y. Zhang, Variational principle of metric mean dimension and rate distortion dimension for amenable group actions, \emph{J. Math. Anal. Appl.} \textbf{546} (2025), no. 1, Paper No. 129282, 16 pp.

\bibitem[Lin01]{lin01} E.  Lindenstrauss, Pointwise theorems for amenable groups, \emph{Invent Math.} \textbf{146} (2001),  259-295.

\bibitem[Lin99]{l99} E. Lindenstrauss,  Mean dimension, small entropy factors and an embedding theorem, \emph{Publ. Math. Inst. Hautes \'Etudes Sci.} \textbf{89} (1999), 227-262.

\bibitem[LT18]{lt18} E. Lindenstrauss  and M. Tsukamoto, From rate distortion theory to metric mean dimension: variational principle, \emph{IEEE Trans. Inform. Theory} \textbf{64} (2018), 3590-3609.

\bibitem[LW00]{lw00} E. Lindenstrauss and B. Weiss,  Mean topological dimension, \emph{Israel J. Math.} \textbf{115} (2000), 1-24. 



\bibitem[Mis76]{mis75} M. Misiurewicz,  A short proof of the variational principle for a $\mathbb {Z} _ {+}^{N} $ action on a compact space, \emph{Ast\'{e}risque} \textbf{40} (1976), 227-262.

\bibitem[OP82]{op82} J. Ollagnier and D. Pinchon,  The variational principle, \emph{Studia Math.} \textbf{72} (1982), 151-159.

\bibitem[OW87]{ow87}D. Ornstein and B. Weiss,  Entropy and isomorphism theorems for actions of amenable groups, \emph{J. Anal. Math.} \textbf{48} (1987), 1-141.


\bibitem[Pes97]{p97} Y.B. Pesin, Dimension theory in dynamical systems, University of Chicago Press, 1997.


\bibitem[RTZ23]{rtz23} X. Ren, X.Tian and Y. Zhou, On the topological entropy of saturated sets for amenable group actions, \emph{J. Dynam. Diff.  Equ.} {\bf 35} (2023), 2873-2904.

\bibitem[RS12]{rs12} J. Robinson and N. Sharples, Strict inequality in the box-counting dimension product formulas, \emph{Real Anal. Exchange} {\bf 38} (2012), 95-119.

\bibitem[Rue04]{r04} D. Ruelle, Thermodynamic formalism: the mathematical structure of equilibrium statistical mechanics, Cambridge University Press, 2004.


\bibitem[S22]{shi} R. Shi, On variational principles for metric mean dimension, \emph{IEEE Trans. Inform. Theory} \textbf{68} (2022), 4282-4288.

\bibitem[S59]{s59} Y. Sinai,  On the concept of entropy for a dynamical system, \emph{Dokl. Akad. Nauk SSSR} \textbf{124} (1959), 768-771.



\bibitem[ST80]{st80} A. Stepin, and A.Tagi-Zade, Variational characterization of topological pressure of the amenable groups of transformations, \emph{Dokl. Akad. Nauk SSSR} \textbf{254} (1980), 545-549.


\bibitem[TWL20]{twl20} D. Tang, H. Wu and Z. Li, Weighted upper metric mean dimension for amenable group actions, \emph{Dyn. Syst.} \textbf{35} (2020), 382-397.


\bibitem[Tsu19]{t19} M. Tsukamoto, Mean dimension of full shifts, \emph{Israel J. Math.} \textbf{230} (2019), 183-193.

\bibitem[Tsu20]{t20} M. Tsukamoto, Double variational principle for mean dimension with potential, \emph{Adv. Math.} \textbf{361} (2020), 106935, 53 pp. 



\bibitem[VV17]{vv17}  A. Velozo and R. Velozo, Rate distortion theory, metric mean dimension and measure theoretic entropy, arXiv:1707.05762.



\bibitem[Wal75]{wal75} P. Walters, A variational principle for the pressure of continuous transformations, \emph{Amer. J. Math.} \textbf{97} (1975),  937-971. 

\bibitem[Wal82]{w82} P. Walters, An introduction to ergodic theory,  Springer-Verlag, New York, 1982.

\bibitem[WWW16]{www16} C. Wei, S. Wen and  Z. Wen, Remarks on dimensions of Cartesian product sets, \emph{Fractals} \textbf{24} (2016), no. 3,  8 pp.


\bibitem[YCZ22]{ycz22b} R. Yang, E. Chen and X. Zhou, \emph{On variational principle for upper metric mean dimension with potential},  arXiv:2207.01901. 


\bibitem[YCZ25]{ycz25} R. Yang, E. Chen and X. Zhou,  
Measure-theoretic metric mean dimension, \emph{Studia Math.} \textbf{40} (2025),  1-25.

\bibitem[Y25]{y25} R. Yang,  
Mean dimension and rate-distortion function revisited (preprint), arXiv:2510.08051.

\bibitem[YZ07]{yz07}X. Ye and G. Zhang,  Entropy points and applications,  \emph{Trans. Amer. Math. Soc.} \textbf{359}(2007), 6167-6186.



\bibitem[ZCY16]{zcy16} D. Zheng,  E. Chen and J. Yang,  On large deviations for amenable group actions, \emph{Discrete Contin. Dyn. Syst.} \textbf{36} (2016),  7191-7206.

\end{thebibliography}

\end{document}